
\documentclass{tran-l}

\usepackage{amsmath}   
\usepackage{amssymb}
\usepackage{hyperref}
\newcommand{\note}[1]{}
\renewcommand\marginpar[1]{}



\newtheorem{theorem}{Theorem}[section]
\newtheorem{lemma}[theorem]{Lemma}

\theoremstyle{definition}

\theoremstyle{remark}

\numberwithin{equation}{section}


\newcommand{\rh}{\mathcal{B}}
\newcommand{\crease}{\Lambda}
\newcommand{\comment}[1]{}

\newcommand{\sring}[1]{\Sigma_{#1}}
\newcommand{\funsol}{\Xi}
\newcommand{\average}{-\!\!\!\!\!\!\int}

\newcommand{\ball}[2]{B_{#2}(#1)}
\newcommand{\sball}[2]{\Delta_{#2}(#1)}

\newcommand{\locdom}[2]{\Omega_{#2}(#1)}
\newcommand{\nontan}[1]{#1^*}

\newcommand{\ntar}[1]{\Gamma(#1)}
\newcommand{\neugre}{{\mathcal N}}
\newcommand{\reals}{\mathbf{R}}

\newcommand{\dball}[2]{\Psi _{ #2}(#1)}
\newcommand{\dist}{\mathop{\rm dist}\nolimits}
\renewcommand{\div}{{\mathop{\rm div}\nolimits}}
\newcommand{\luspace}{{\mathcal B}}
\newcommand{\osc}{\mathop{\rm osc}}

\newcommand{\sobolev}[2]{W^{#2, #1}}
\providecommand{\tangrad}{\nabla _t}



\begin{document}

\title[The mixed problem with general decompositions of the boundary]
{The mixed problem in Lipschitz domains with
general decompositions of the boundary}


\author{J.L.~Taylor}
\address{Department of Mathematics, Murray State University, Murray, Kentucky}
\curraddr{}
\email{jtaylor52@murraystate.edu}

\author{K.A.~Ott}
\address{Department of Mathematics, University of Kentucky, Lexington, Kentucky}
\email{katharine.ott@uky.edu}
\thanks{Research supported, in part, by the National
Science Foundation.}

\author{R.M.~Brown}
\address{Department of Mathematics, University of Kentucky, Lexington, Kentucky}
\email{russell.brown@uky.edu}
\thanks{Research supported, in part, by a grant from the Simons Foundation.}

\subjclass[2010]{Primary 35J25, 35J05. }

\date{\today}


\begin{abstract}
This paper continues the study of the mixed
problem for the Laplacian. We consider a bounded Lipschitz
domain $\Omega\subset \reals^n$, $n\geq2$, with boundary
that is decomposed as $\partial\Omega=D\cup N$,
$D$ and $N$ disjoint. We let $\Lambda$ denote the
boundary of $D$ (relative to $\partial\Omega$) and impose
conditions on the dimension and shape of $\Lambda$ and the sets $N$
and $D$.
Under these geometric criteria, we show that there exists
$p_0>1$ depending on the domain $\Omega$ such that for $p$ in the interval $(1,p_0)$,
the mixed problem with Neumann data in the space $L^p(N)$ and
Dirichlet data in the Sobolev space $\sobolev p 1(D) $ has a unique
solution with the  non-tangential maximal function of the gradient of
the solution
 in $L^p(\partial\Omega)$. We also obtain results for $p=1$
when the Dirichlet and Neumann data comes from Hardy spaces, and
a result when the boundary data comes from weighted
Sobolev spaces.
\end{abstract}

\maketitle

\section{Introduction}

In this paper we consider the mixed problem, or Zaremba's
problem, for the Laplacian.
Let $\Omega$ be a bounded
Lipschitz domain in $\reals ^n$ and write $\partial\Omega=D\cup N$,
where
$D$ is an open set of the boundary and
$N=\partial\Omega\setminus D$.
We define the \textit{$L^p$-mixed problem} as
the following boundary value problem
\begin{equation} \label{MP}
\left\{
\begin{array}{ll}
\Delta u = 0, \qquad & \mbox{in }  \Omega\\
u = f_D, \qquad &  \mbox{on }  D\\[2pt]
\frac { \partial u }{ \partial \nu} = f_N,\qquad
& \mbox{on } N \\[2pt]
(\nabla u)^* \in L^p (\partial\Omega).
\end{array}
\right.
\end{equation}
Here, $(\nabla u)^*$ stands for the non-tangential maximal
function of $\nabla u$. The normal
derivative $\partial u/ \partial \nu$ is defined as $\nabla u \cdot \nu$,where
$\nu$ is the outward unit normal vector defined a.e.
on $\partial\Omega$.
Throughout the paper, all boundary values of
$u$ and $\partial u/\partial \nu$ are defined
as non-tangential limits.
See Section 2 for precise definitions.

The study of the mixed problem is a natural
continuation of the program of study of boundary
value problems in Lipschitz domains which began
over thirty years ago. Dahlberg \cite{BD:1977}
treated the Dirichlet problem for the Laplacian in
Lipschitz domains, while Jerison and Kenig
\cite{JK:1982c}
treated the Neumann problem with boundary data in
$L^2$ and the regularity
problem with Dirichlet data having one derivative in $L^2$.
Verchota \cite{GV:1984} studied the regularity
problem with Dirichlet data with one derivative in $L^p$, and Dahlberg
and Kenig
studied the Neumann problem with $L^p$ data
\cite{DK:1987}.   The mixed boundary value
problem in Lipschitz domains appears as an open
problem in
Kenig's CBMS lecture notes
\cite[Problem ~3.2.15]{CK:1994}.
There is a large literature on boundary value problems in polyhedral
domains and we do not attempt to summarize this work here.
See the work of B{\u{a}}cu{\c{t}}{\u{a}}{\em
  et.~al.~}\cite{MR2735986} 
for recent results for the mixed problem in
polyhedral domains and additional references.

Under mild restrictions on the boundary data we can use energy
estimates to show that there exists a solution of the mixed problem
with $\nabla u$ in $L^2$ of the domain. Our goal in this paper is to
obtain more regularity of the solution and, in particular, to show
that $ \nabla u$ lies in $L^p(\partial \Omega)$.
Brown \cite{RB:1994b} showed that the solution
satisfies $\nabla u \in L^{2}(\partial\Omega)$
when the data $f_N$ is in $L^{2}(N)$ and $f_D$
is in the Sobolev space $\sobolev 2 1 (D)$ for a certain
class of Lipschitz domains. Roughly speaking, his
results hold when the Dirichlet and Neumann portions
of the boundary meet at an angle strictly less than
$\pi$. In this same class of domains, Sykes and
Brown \cite{SB:2001} obtain $L^p$ results for $1<p<2$ and
I.~Mitrea and M.~Mitrea \cite{MM:2007} establish well-posedness in an essentially
optimal range of function spaces.
Lanzani, Capogna and Brown \cite{LCB:2008} establish
$L^p$ results in two dimensional graph domains when
the data comes from weighted $L^2$-spaces and the
Lipschitz constant is less than one. The aforementioned
results rely on a variant of the
Rellich identity. The Rellich identity cannot be used
in the same way in general Lipschitz domains because it
produces estimates in $L^2$, and even in smooth domains
simple examples show that we cannot expect to have
solutions with gradient in $L^2(\partial\Omega)$.

Ott and Brown \cite{OB:2009} establish conditions on
$\Omega$, $N$, and $D$ which ensure uniqueness of solutions
of the $L^p$-mixed problem
and they also establish conditions on $\Omega$, $N$, $D$ and
$f_N$ and $f_D$ which ensure that solutions to the $L^p$-mixed
problem exist. All of this work is done under an additional
geometric assumption on the boundary of $D$. More specifically,
the authors address solvability of the mixed problem for
the Laplacian in bounded Lipschitz domains under the
assumption that the boundary between $D$ and $N$
(relative to $\partial\Omega$) is locally given by a Lipschitz
graph. Under these conditions on $\Omega$, $N$, and $D$, they
prove that there exists $p_0>1$ depending on the Lipschitz
constant of the domain and on the dimension $n$, such that
for $p$ in the interval $1<p<p_0$, the $L^p$-mixed
problem with Neumann data in $L^p (N)$ and Dirichlet
data in the Sobolev space $\sobolev p 1 (D)$ has a solution
and this solution is unique in the class of functions
satisfying $(\nabla u)^* \in L^{p}(\partial\Omega)$. In
the case $p=1$, they prove results for the mixed problem
with data from Hardy spaces. The novelty of this
paper is to address
existence and uniqueness of solutions of the $L^p$-mixed
problem under more general conditions on  the decomposition of the
boundary into sets $N$ and $D$.  
Our proof relies on a technique of Shen \cite{ZS:2007} to use reverse
H\"older inequalities to establish existence of solutions to the $L^p$-mixed problem.
This technique
allows for an immediate extension to the mixed boundary value problem
with data from weighted spaces. We carry out a study of the mixed
problem in weighted spaces in Section 7. As one step of this argument,
we need to consider the regularity problem with boundary data
in weighted $L^p$-spaces.

The boundary between $D$ and $N$ is an important feature
of the domain in the study of the mixed problem. Assume 
that $D$ is a relatively open subset of
$\partial\Omega$ and let $\Lambda$ denote the boundary
of $D$ (relative to $\partial\Omega$). Before stating our
assumptions on $\Omega$, $N$, and $D$, we
introduce the following notation. We will use
$\delta(y)=\mbox{dist}(y,\Lambda)$ to denote the
distance from a point $y$ to $\Lambda$. Let
$\ball x r = \{y: |y-x|<r\}$ denote the standard ball
in $\reals ^n$ and let $\Psi_r(x)=B_r(x)\cap \Omega$.
For $ x\in \bar \Omega$, 
let $\sball x r =
B_r(x)\cap \partial\Omega$ denote a {\it surface ball}. 
We note that the term surface ball is not
ideal since the ``center'' $x$ may not lie on the boundary. In
addition, we will need to be careful in places because  $\sball x r$
may not be a connected set. 
See Section 2 for other relevant definitions.

Our assumptions on $\Omega$  and $D$ are stated
here. We will obtain results only when the parameter $ \epsilon$ in
(\ref{SurfProp}) is small. See
Section 2 for a definition of the constant $r_0$.
\begin{equation}
\label{Lip}
\Omega \subset \reals^n, n\geq 2, \, \mbox{is a bounded Lipschitz
  domain of constant $M$}.
\end{equation}
The set 
 $\crease$ is an \textit{Ahlfors $(n-2+\epsilon)$--regular set}: There exists
$M>0$ such that
\begin{equation}\label{SurfProp}
 \mathcal{H}^{n-2+\epsilon}(\sball x r \cap \crease)
\leq M r^{n-2+\epsilon}, \quad \mbox{for all}\,\,
x\in \crease,\ 0<r< r_0,
\end{equation}
with $\epsilon \geq 0$. The notation $\mathcal{H}^{n-2+\epsilon}(E)$ denotes
the $(n-2+\epsilon)$--dimensional Hausdorff measure of a set $E$.
Our third main assumption is that 
the set $D$ satisfies the corkscrew
condition relative to $ \partial \Omega$. There exists $M>0$ such that
\begin{equation} 
\label{NTA}
\parbox[t]{4in}
{
for all $ x\in \crease,\,\, 0<r< r_0$, 
there exists $  \tilde{x}\in D $ such that 
$ |x-\tilde{x}| < r$ and  $ \delta(\tilde{x})>
 M^{-1}r$ .}
\end{equation}
\comment{Alternative conditions
\begin{eqnarray}
\mbox{for all}\,\, x\in \crease,\,\, 0<r< r_0,\,\,
\mbox{there exists} \,\, \tilde{x}\in D\,\,\mbox{such that}
\,\, \nonumber\\
 |x-\tilde{x}|\leq r\,\,\mbox{and}\,\, \sball {\tilde x}  {M^ { -1}r}
 \subset D,\label{NTA}\\
\mbox{for all}\,\,x\in \crease,\,\, 0<r<r_0,\,\,
\mbox{there exists}\,\, \tilde{x}\in N\,\,\mbox{such that}\,\,
\nonumber \\
|x-\tilde{x}|\leq r\,\,\mbox{and}\,\,
\sball {\tilde x}  {M^ { -1}r}
 \subset N,\label{NTA2}
\end{eqnarray}
}

Several previously studied cases of the mixed problem fall under the
conditions of assumptions (\ref{Lip}), (\ref{SurfProp}), and
(\ref{NTA}). Venouziou and Verchota \cite{VV:2008} establish a
solution to the $L^p$-mixed problem (\ref{MP}) in polyhedral domains
in $\reals^3$. In one particular case, they are able to solve the
mixed boundary value problem in the pyramid in $\reals^3$, when
Dirichlet and Neumann data are assigned to alternating faces. This example
is not covered by the earlier work of Ott and Brown \cite{OB:2009} because
at the apex of the pyramid, the boundary between $D$ and $N$ is 
not locally given by a Lipschitz graph. The pyramid example is covered by
the results in this paper. Another
example that is covered by this work, but not the earlier work of Ott and
Brown \cite{OB:2009}, is the case where the boundary of $D$
is a Koch snowflake of dimension slightly greater than $n-2$.

We now state the main theorem of the paper. The definitions
are given in Section \ref{prelim}.  Since we do not
assume that the Dirichlet set $D \subset \partial\Omega$ is an extension
domain for Sobolev spaces, note that we must assume that the Sobolev space $W^ {
  1,p}( D)$ is defined by restricting elements in $W^ {1,p}(\partial\Omega)$
to $ D$.  See Section \ref{prelim} for a discussion of the constants in the
estimates of this theorem.

\begin{theorem}\label{main}
Let $\Omega$ and $D$ satisfy conditions
(\ref{Lip}), (\ref{SurfProp}), and (\ref{NTA}). There exists
an exponent $q_0 > 2$, which depends on  $M$ and $n$,  such that if $\Lambda$
satisfies (\ref{SurfProp}), with
$0 \leq  \epsilon < (q_0 -2 )/(q_0  - 1)$,  such that the following 
statements are true.

a) If $ p \geq 1$, the  $L^p$-mixed problem has at most one solution.

b) If $f_N$ lies in $H^1(N)$ and $f_D$ lies in $H^{1,1}(D)$,
the $L^1$-mixed problem has a  solution $u$ which satisfies
the estimate
$$
\| (\nabla u )^*\| _ { L^ 1 (\partial \Omega )} \leq C ( \| f_N\|_ { H
  ^ 1( N) } + \|f_D\| _ { H ^ { 1,1 } (D)}) .
$$

c) If $p_0=q_0((1-\epsilon)/(2-\epsilon)) >1$, then for $p$ in the
interval $(1,p_0)$ the following holds: If $f_N$ lies in $L^{p}(N)$ and $f_D$ lies
in $\sobolev p 1 (D)$, then the $L^p$-mixed problem has a solution $u$
which satisfies
%
$$
\| (\nabla u ) ^ * \| _ { L^ p ( \partial \Omega ) }
\leq C ( \| f_N \| _{L^ p(N) } + \|f_D \| _ { \sobolev p1  (D)} ) .
$$
\end{theorem}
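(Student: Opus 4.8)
The plan is to follow the by-now-standard strategy of Shen, as announced in the introduction, reducing all three parts to one $L^2$-solvability result plus a real-variable ``reverse-H\"older-implies-$L^p$'' machine. First I would establish existence and uniqueness for the $L^2$-mixed problem in the domains satisfying (\ref{Lip}), (\ref{SurfProp}), (\ref{NTA}) with $\epsilon=0$ (the critical dimension $n-2$ for $\Lambda$): this should come from the energy/Lax--Milgram solution together with a square-function or Rellich-type argument, but because we cannot assume $\Lambda$ is a Lipschitz graph the $L^2$-bound for $(\nabla u)^*$ must be obtained locally, patching an interior/Neumann/Dirichlet Rellich estimate away from $\Lambda$ with an estimate near $\Lambda$ that exploits the corkscrew condition and the smallness of $\mathcal{H}^{n-2}(\Lambda)$. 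Uniqueness for all $p\ge1$ (part a) is then the uniqueness statement of Ott--Brown \cite{OB:2009} adapted to these hypotheses, or is proved directly by an integration-by-parts argument using that a solution with $(\nabla u)^*\in L^p$, $p\ge1$, is in the energy class.

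The core of the argument is part (c). With the $L^2$-mixed problem solved, I would prove a \emph{reverse H\"older inequality}: there is $q_0>2$ so that for a solution $u$ of the mixed problem with \emph{zero} data on a surface ball $\sball x {2r}$,
$$
\Big(\average_{\sball x r}|\nabla u|^{q_0}\,d\sigma\Big)^{1/q_0}
\le C\Big(\average_{\sball x {2r}}|\nabla u|^{2}\,d\sigma\Big)^{1/2}.
$$
This is established by a Caccioppoli/reverse-H\"older bootstrap at the boundary: away from $\Lambda$ one has the known reverse H\"older estimates for the Neumann and regularity problems (Verchota, Dahlberg--Kenig), and near $\Lambda$ the corkscrew condition (\ref{NTA}) plus $(n-2+\epsilon)$-regularity (\ref{SurfProp}) let one absorb the contribution of a neighborhood of $\Lambda$ because its surface measure, raised to the right power, is summable when $\epsilon<(q_0-2)/(q_0-1)$ — this is exactly where that numerical threshold on $\epsilon$ enters. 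Then I would invoke Shen's real-variable lemma (a good-$\lambda$ / $L^p$-extrapolation result): given $L^2$-solvability and such a reverse H\"older inequality with exponent $q_0$, the operator $f\mapsto (\nabla u)^*$ is bounded on $L^p$ for $2\le p<q_0$; dualizing / interpolating against the $H^1$ endpoint then fills in the range $1<p<2$, and an exponent-counting computation shows the admissible range is exactly $p<p_0=q_0(1-\epsilon)/(2-\epsilon)$.

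For part (b), the $H^1$ endpoint, I would use the atomic decomposition of $H^1(N)$ and $H^{1,1}(D)$: for a single atom the $L^2$ theory gives a solution, and the localized reverse H\"older estimate upgrades the decay of $(\nabla u)^*$ enough to show $\|(\nabla u)^*\|_{L^1(\partial\Omega)}\le C$ uniformly over atoms, after which linearity and the atomic decomposition give the stated bound; this also serves as the $p=1$ endpoint for the interpolation in (c).

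The main obstacle, I expect, is the reverse H\"older estimate near the interface $\Lambda$. Unlike in \cite{OB:2009}, $\Lambda$ is not locally a Lipschitz graph, so one cannot flatten it or split the boundary cleanly into a ``Dirichlet side'' and a ``Neumann side''; instead the argument must run entirely through (\ref{NTA}) and (\ref{SurfProp}), controlling $\nabla u$ on $\sball x r\cap\{\delta(y)<t\}$ by the interior maximal function on cylinders of comparable size and then using the Hausdorff-measure bound to sum the dyadic shells — the smallness of $\epsilon$ and of the Lipschitz constant $M$ are both needed here to make the geometric series converge and to keep $q_0$ above $2$. Once this local estimate is in hand, the passage to the global $L^p$ bound and the endpoint statements are comparatively routine applications of Shen's framework.
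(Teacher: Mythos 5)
The proposal has a fundamental gap in its starting point. You propose to first establish $L^2$-mixed-problem solvability, meaning $(\nabla u)^*\in L^2(\partial\Omega)$, and then push up via a reverse H\"older inequality and down via the $H^1$ endpoint. But $L^2$ solvability is precisely what is \emph{not} available here, and the paper says so explicitly in the introduction: the Rellich identity produces $L^2$ bounds only in restricted geometries, and even in smooth domains the mixed problem need not have $(\nabla u)^*\in L^2(\partial\Omega)$. Moreover, the advertised range is $(1,p_0)$ with $p_0=q_0(1-\epsilon)/(2-\epsilon)$; with $\epsilon=0$ this is $q_0/2$, which may well lie in $(1,2)$, so the theorem does not even claim $L^2$ solvability. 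The paper's argument is built the other way around: the foundation is the $H^1$ atomic estimate (Theorem \ref{Hardythm}), proved through Green-function estimates — H\"older continuity of local solutions of the mixed problem (Theorem \ref{Holder}, via De~Giorgi iteration) and pointwise decay of the Green function (Lemma \ref{Green}) — followed by the representation formula and the molecule characterization of $H^1(\partial\Omega)$. Shen's framework is then run \emph{starting from the $H^1$ estimate}, as reworked in \cite{OB:2009}; it is not an interpolation between $L^2$ and $H^1$.

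A second gap is in the form of the reverse H\"older inequality. You propose, for a solution with zero data on $\sball x {2r}$, to bound the boundary $L^{q_0}$ average of $\nabla u$ by its boundary $L^2$ average. Near $\Lambda$ this is false: local solutions of the mixed problem with zero data need not satisfy any boundary $L^2$ estimate for $\nabla u$, which is again the failure of $L^2$ solvability. The paper's Theorem \ref{RHBoundary} instead bounds the boundary $L^p$ average (for $p<q_0(1-\epsilon)/(2-\epsilon)$) by an \emph{interior} $L^1$ average of $\nabla u$ plus the Neumann-data term; the exponent $q_0>2$ comes from the interior Meyers-type estimate of Lemma \ref{RHEstimate}, not from a boundary-to-boundary self-improvement. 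The specific relation $p_0=q_0(1-\epsilon)/(2-\epsilon)$ and the constraint $\epsilon<(q_0-2)/(q_0-1)$ arise from choosing the weight exponent $\rho$ in a H\"older split involving $\delta^{1-\rho}$ and Lemma \ref{integrability}, not merely from summing a dyadic series near $\Lambda$. Finally, nothing in the theorem requires the Lipschitz constant $M$ to be small; that hypothesis does not appear in the paper and is not needed.
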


The proof of the main theorem will proceed as follows.
We begin by recognizing that it suffices to prove Theorem \ref{main}
in the case where the Dirichlet data is zero. This is because
non-tangential
maximal function estimates for the gradient of the solution
to the Dirichlet problem are known. When the Dirichlet data comes
from a Sobolev space, these estimates were treated for
$p=2$ by Jerison and Kenig \cite{JK:1982c} and  for  $ 1< p <  2$
by Verchota \cite{GV:1982,GV:1984}.  The case of the Dirichlet
problem with data from a Hardy space was treated by
Dahlberg and Kenig \cite{DK:1987} and by D.~Mitrea in two dimensions
\cite[Theorem 3.6]{MR1883390}.

The first main result presented in the paper is existence of solutions of the mixed
problem when the Neumann data is an atom for a Hardy space. The proof
of this result is contained in Section 4. The key ingredient of the
proof is an estimate of the Green function for the mixed
problem, which is proved in Section 3. In Section 5 we prove uniqueness of solutions
to the $L^p$-mixed problem, $p\geq 1$.
Section 6 contains the proof of the $L^p$ result and Section 7 contains
the proof of the weighted result.

{\em Acknowledgment. }The authors thank the referee for their
helpful remarks.  Part of this work was carried out while Russell
Brown was visiting the Mathematical
Sciences Research Institute in Berkeley, California, whose hospitality
is gratefully acknowledged.

\section{Preliminaries}\label{prelim}

Throughout the paper, we will work under the assumption (\ref{Lip})
that $ \Omega$ is a bounded Lipschitz domain. A bounded, connected open set $\Omega$ is called a
Lipschitz domain with Lipschitz constant $M$
if locally $\Omega$ is a domain which lies above the graph of a Lipschitz
function. More precisely,
for $M>0$, $x\in \partial\Omega$,
and $r>0$, define a \textit{coordinate cylinder} $Z_r(x)$ to
be $Z_r(x) = \{ y: |y'-x'|< r, |y_n-x_n|< (1+M)r \}$. Use
coordinates $(x',x_n)\in \reals^{n-1}\times \reals$ and assume
that this coordinate system is a translation and rotation of the
standard coordinates. Then $\Omega$ is a \textit{Lipschitz
domain} if for each $x\in \partial\Omega$ there exists a coordinate
cylinder and a Lipschitz function $\phi:\reals^{n-1}\rightarrow \reals$
with Lipschitz constant $M$ such that
\begin{eqnarray*}
\Omega\cap Z_r(x) & = & \{ (y', y_n): y_n> \phi(y')\}\cap Z_r(x),\\
\partial\Omega \cap Z_r(x) & = & \{(y', y_n): y_n = \phi(y') \}\cap Z_r(x).
\end{eqnarray*}
Fix a covering of the boundary by coordinate cylinders $\{Z_{r_i}
(x_i)\}_{i=1}^{L}$ such that each $Z_{100r_i\sqrt{1+M^2}}(x_i)$ is also
a coordinate cylinder. Let $r_0 = \min\{r_i: i=1,\ldots ,L\}$.

For a Lipschitz domain $\Omega$ we define a \textit{decomposition
of the boundary for the mixed problem}, $\partial\Omega = D\cup N$ as follows.
Assume that $D$ is a relatively open subset of $\partial\Omega$,
$N=\partial\Omega \setminus D$, and let $\Lambda$ be the boundary of
$D$ (relative to $\partial\Omega$). The assumptions on the decomposition
of the boundary for the mixed problem are given in (\ref{SurfProp}) and (\ref{NTA}).
Recall that $\delta(y) = \dist(y,\Lambda)$ denotes the distance from a point $y$ to the boundary
of $D$.

Many of our estimates will be of a local, scale invariant nature and
will hold for $ r $ less than a multiple of $ r_0$, and with a
constant that depends only on the constant $M$ 
in assumptions (\ref{Lip})-(\ref{NTA}), $\epsilon $ in
(\ref{SurfProp}), the dimension $n$, and any $L^p$-indices that appear
in the estimate.  We say that an estimate depends on the global
character of the domain if it depends on the above and also on the 
collection of coordinate cylinders which cover $\partial
\Omega$ and the constant in the coercivity condition (\ref{coerce}).
The notation $A \approx B$ will mean that $c^{-1}B\leq A \leq cB$
for some constant $c$ depending only on $M$ and $n$.

We now prove several consequences of the conditions (\ref{SurfProp}) and
(\ref{NTA}) that we will appeal to later in the paper. 

\begin{lemma}\label{newball}
Let $\Omega$ satisfy (\ref{Lip}) and let $r$ be such that
$0<r<r_0$. If $x\in \partial\Omega$ satisfies $\delta(x) \geq  r\sqrt{1+M^2}$,
then $\sball x r \subset N$ or $\sball x r \subset D$.
\end{lemma}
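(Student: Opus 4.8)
The plan is to argue by contradiction: suppose $\sball x r$ meets both $D$ and $N$. Since $D$ is relatively open in $\partial\Omega$ and $N=\partial\Omega\setminus D$ is relatively closed, the surface ball $\sball x r = B_r(x)\cap\partial\Omega$ — which is relatively open in $\partial\Omega$ — would then be disconnected into the nonempty relatively open pieces $\sball x r \cap D$ and $\sball x r\cap \mathrm{int}(N)$ unless some point of $\crease$ lies in $\sball x r$. More directly: if $\sball x r$ contains a point of $D$ and a point of $N$, then because $\sball x r$ is connected (this is where the geometry enters) it must contain a boundary point of $D$ relative to $\partial\Omega$, i.e. a point of $\crease$. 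So it suffices to show that under the hypothesis $\delta(x)\geq r\sqrt{1+M^2}$ we have $\sball x r \cap \crease = \emptyset$, together with the fact that $\sball x r$ is connected.

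The disjointness $\sball x r\cap\crease=\emptyset$ is immediate: if $y\in\sball x r$ then $|y-x|<r$, and if moreover $y\in\crease$ then $\delta(x)\leq |x-y| < r < r\sqrt{1+M^2}$, contradicting the hypothesis. So the real content is the connectedness of the surface ball $\sball x r$. Here I would use the Lipschitz-graph structure recorded in Section~\ref{prelim}: the point $x$ lies in $\partial\Omega$, hence in some coordinate cylinder $Z_{r_i}(x_i)$ in which $\partial\Omega$ is the graph $\{y_n=\phi(y')\}$ of a Lipschitz function of constant $M$. The factor $\sqrt{1+M^2}$ is exactly the Lipschitz bound on the graphing map $y'\mapsto (y',\phi(y'))$, so a ball $B_r(x)$ of radius $r$ intersected with the graph is the image of a set containing the Euclidean ball $\{|y'-x'| < r/\sqrt{1+M^2}\}$ in parameter space — a convex, hence connected, set — and contained in $\{|y'-x'| < r\}$; one needs $r$ small relative to $r_0$ so that all of this stays inside a single coordinate cylinder $Z_{100r_i\sqrt{1+M^2}}(x_i)$, which is guaranteed by the choice of $r_0$ and $r<r_0$. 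Thus $\sball x r$ is the continuous image of a connected set, hence connected.

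I expect the main (though modest) obstacle to be the bookkeeping with the coordinate cylinders: verifying that for $x\in\partial\Omega$ and $0<r<r_0$ the portion $B_r(x)\cap\partial\Omega$ genuinely coincides with the graph piece over a connected parameter set and does not "wrap around" or leave the cylinder in which the graph representation is valid — this is where the enlarged cylinders $Z_{100r_i\sqrt{1+M^2}}(x_i)$ and the definition of $r_0$ are used. Once connectedness of $\sball x r$ is in hand, the dichotomy follows: $\sball x r$ is a connected set contained in $D\cup N$ with $\sball x r\cap\crease=\emptyset$, and since $\crease$ is the relative boundary of $D$, the sets $\sball x r\cap D$ and $\sball x r\setminus\bar D$ partition $\sball x r$ into two relatively open sets, forcing one of them to be empty; that is, $\sball x r\subset D$ or $\sball x r\subset N$.
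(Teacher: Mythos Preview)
Your argument has a genuine gap: the surface ball $\sball x r$ need \emph{not} be connected, and the paper explicitly warns of this in Section~\ref{prelim}. Your step ``$\sball x r$ is the continuous image of a connected set'' does not follow from the observation that the projection of $\sball x r$ to parameter space is sandwiched between two Euclidean balls; a set lying between two connected sets is not forced to be connected. Concretely, in dimension $n=2$ take $\phi$ piecewise linear with $\phi(0)=\phi(1)=0$, $\phi(1/2)=M/2$, and $M$ large. With $x=(0,0)$ and $r$ slightly larger than $1$, both $(0,0)$ and $(1,0)$ lie in $\sball x r$, but the graph point $(1/2,M/2)$ does not, and in fact the projection $\{s:\ s^2+\phi(s)^2<r^2\}$ splits into two intervals. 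So the topological dichotomy you invoke fails.

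The paper's proof sidesteps connectedness of $\sball x r$ altogether. For each $y\in\sball x r$ it builds the explicit path $\gamma(t)=((1-t)x'+ty',\,\phi((1-t)x'+ty'))$ in $\partial\Omega$. Since $|y'-x'|<r$ and the graph map is $\sqrt{1+M^2}$--Lipschitz, this path has length strictly less than $r\sqrt{1+M^2}$; hence $|\gamma(t)-x|<r\sqrt{1+M^2}$ for every $t$, and the $1$--Lipschitz property of $\delta$ together with $\delta(x)\geq r\sqrt{1+M^2}$ gives $\delta(\gamma(t))>0$. The path therefore avoids $\Lambda$ and forces $y$ to lie in the same piece ($D$ or $N$) as $x$. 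Note that $\gamma$ is allowed to exit $\sball x r$ --- this is exactly what rescues the argument in the disconnected case above --- and this is precisely where the factor $\sqrt{1+M^2}$ in the hypothesis is used, not merely to show $\sball x r\cap\Lambda=\emptyset$ (for which $\delta(x)\geq r$ would already suffice).
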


\begin{proof} Let $x \in \partial\Omega$ and fix $y\in \sball x r$.
Since $0<r<r_0$, we may find a coordinate cylinder $Z$ which contains
$\sball x r$. Let $\phi$ be the function whose graph gives $\partial\Omega$
near $Z$. Since $y\in \sball x r$, we have that $|y'-x'|< r$. Now define a
function $x':[0,1]\rightarrow \reals^{n-1}$,
$x'(t) = (1-t)x'+ty'$. Then $\gamma(t) = (x'(t),\phi(x'(t)))$ gives a
path contained in $\partial\Omega$ that connects $x$ and $y$ and has
length less than  $r\sqrt{1+M^2}$. Since $\delta(x)\geq r\sqrt{1+M^2}$, and $\delta$ is
Lipschitz with constant one, we have that $\delta(\gamma(t))>0$
for $0\leq t \leq 1$. Since $\gamma(t)$ does not pass through $\Lambda$, we have
that both $x$ and $y$ must lie in either $D$ or $N$. The point $y$ was arbitrarily
chosen in $\sball x r$, therefore $\sball x r\subset D$ or $\sball x r \subset N$.
\end{proof}

The following lemma is adapted from a result
found in Lehrb\"{a}ck \cite[pp.~254-255]{MR2442898}.
Here and throughout the paper, we use $\sigma$ to denote
surface measure.

\begin{lemma}\label{measure}
Let $\Omega$  and $D$ satisfy
(\ref{Lip}) and (\ref{SurfProp}) and let
$r$ satisfy $0<r<r_0$. Then for each $x\in \Lambda$ and 
$0<t<2r$, 
\begin{equation}
\sigma\left( \sball x r \cap \{y: \delta(y)< t\} \right)
\leq C t^{1-\epsilon}  r^{n-2+\epsilon}.
\end{equation}

\end{lemma}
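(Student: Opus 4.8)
The plan is to prove the estimate by a Whitney-type decomposition of the annular region $\sball x r \cap \{y : \delta(y) < t\}$ into surface balls on which $\delta$ is comparable to a fixed scale, and then sum the contributions using the Ahlfors regularity hypothesis (\ref{SurfProp}). Fix $x \in \crease$ and $0 < t < 2r$. For each integer $j \geq 0$ set $t_j = 2^{-j} t$ and consider the dyadic shells $A_j = \{ y \in \sball x r : t_{j+1} \leq \delta(y) < t_j \}$, so that $\sball x r \cap \{\delta < t\}$ is the disjoint union of the $A_j$ over $j \geq 0$. The point is that every $y \in A_j$ has $\delta(y) \approx t_j$, and since $\delta$ is $1$-Lipschitz one can find points $z \in \crease$ with $|y - z| \leq 2 t_j$; moreover all such $y$ lie within distance $r + t < 3r$ of $x$. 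I would cover $A_j$ by surface balls $\sball{z_k}{c t_j}$ centered at points $z_k \in \crease$ with $|z_k - x| < 3r$, chosen so the balls $\sball{z_k}{c t_j / 5}$ are pairwise disjoint (a Vitali argument); by (\ref{SurfProp}) applied at scale $3r$ (which is still $< $ a fixed multiple of $r_0$, up to adjusting constants), the number $N_j$ of such balls satisfies $N_j \, (c t_j)^{n-2+\epsilon} \lesssim \mathcal{H}^{n-2+\epsilon}(\sball x {3r} \cap \crease) \lesssim r^{n-2+\epsilon}$, hence $N_j \lesssim (r / t_j)^{n-2+\epsilon}$.

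Next I would estimate the surface measure of each piece $\sball{z_k}{c t_j} \cap \partial\Omega$. Since $\partial\Omega$ is locally a Lipschitz graph with constant $M$ (assumption (\ref{Lip})) and $c t_j < r < r_0$, each such surface ball has $\sigma(\sball{z_k}{c t_j}) \lesssim (c t_j)^{n-1}$, with the implied constant depending only on $M$ and $n$. Therefore
$$
\sigma(A_j) \leq \sum_{k=1}^{N_j} \sigma\left(\sball{z_k}{c t_j}\right) \lesssim N_j \, t_j^{n-1} \lesssim \left(\frac{r}{t_j}\right)^{n-2+\epsilon} t_j^{n-1} = r^{n-2+\epsilon} \, t_j^{1-\epsilon}.
$$
Summing over $j \geq 0$ and using $t_j = 2^{-j} t$ with $1 - \epsilon > 0$ (note $\epsilon < 1$ is forced by the hypothesis of Theorem \ref{main}, and the case $\epsilon$ close to $1$ only improves convergence of the geometric series, while $\epsilon = 0$ gives ratio $1/2$), we obtain
$$
\sigma\left( \sball x r \cap \{ y : \delta(y) < t \} \right) = \sum_{j \geq 0} \sigma(A_j) \lesssim r^{n-2+\epsilon} \sum_{j \geq 0} (2^{-j} t)^{1-\epsilon} = C \, t^{1-\epsilon} r^{n-2+\epsilon},
$$
which is the claimed bound.

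The main obstacle I anticipate is bookkeeping with the scales relative to $r_0$: the covering argument naturally produces surface balls centered at points of $\crease$ that may be slightly outside $\sball x r$ and forces applying (\ref{SurfProp}) at radius a fixed multiple of $r$ rather than $r$ itself, and one must check this stays within the regime $0 < r < r_0$ up to harmless constant adjustments (using that the covering cylinders were chosen in Section \ref{prelim} with $Z_{100 r_i \sqrt{1+M^2}}(x_i)$ also a coordinate cylinder). A second, more technical point is justifying $\sigma(\sball{z}{\rho}) \lesssim \rho^{n-1}$ uniformly: this is standard for Lipschitz boundaries but requires that $\rho$ be small enough that $\sball z \rho$ lies in a single coordinate cylinder, which again is guaranteed by the constraint $\rho < r < r_0$. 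Beyond these routine geometric verifications, the argument is a clean dyadic summation and I do not expect further difficulty; indeed this is precisely the mechanism in Lehrb\"{a}ck \cite[pp.~254--255]{MR2442898} that the lemma is adapted from.
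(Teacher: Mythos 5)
Your argument is correct and uses the same core mechanism as the paper --- a Vitali-type covering of a neighborhood of $\crease$ by disjoint surface balls, the Ahlfors hypothesis to bound their number, then surface-measure bounds on the dilated balls --- but it layers an unnecessary dyadic decomposition on top. The paper covers $\sball x r \cap\{\delta<t\}$ at the \emph{single} scale $t$: take disjoint $\sball{y_i}{t}$ with $y_i\in\sball x r\cap\crease$ so that the $\sball{y_i}{3t}$ cover $\sball x r\cap\crease$, bound $m$ by $C(r/t)^{n-2+\epsilon}$, observe that every $y\in\sball x r$ with $\delta(y)<t$ lies in some $\sball{y_i}{4t}$, and conclude $\sigma\leq Cmt^{n-1}\leq Ct^{1-\epsilon}r^{n-2+\epsilon}$. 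You instead slice $\{\delta<t\}$ into shells $A_j=\{2^{-j-1}t\leq\delta<2^{-j}t\}$, cover each at its own scale, and sum a geometric series. That works, but it imports the side condition $\epsilon<1$ (needed for the sum to converge) which the single-scale cover avoids --- harmless in context, since Theorem 1.1 forces $\epsilon<(q_0-2)/(q_0-1)<1$, but Lemma 2.3 itself is stated for all $\epsilon\geq0$. The multi-scale slicing is the right tool one step later, for integrating $\delta^s\,d\sigma$ in Lemma 2.4, but the estimate here genuinely lives at one scale.

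One point worth flagging, inherited from the paper rather than introduced by you: both your step $N_j(ct_j)^{n-2+\epsilon}\lesssim\mathcal{H}^{n-2+\epsilon}(\sball x{3r}\cap\crease)$ and the paper's step $t^{n-2+\epsilon}m\leq C\sum_i\mathcal{H}^{n-2+\epsilon}(\sball{y_i}{t}\cap\crease)$ require a \emph{lower} Ahlfors estimate $\mathcal{H}^{n-2+\epsilon}(\sball z s\cap\crease)\geq cs^{n-2+\epsilon}$ on the disjoint small balls centered on $\crease$, not merely the upper bound actually displayed in (\ref{SurfProp}). The hypothesis as written records only the upper half, so both proofs silently invoke the full two-sided regularity that the phrase ``Ahlfors regular'' connotes. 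Since you faithfully reproduce the paper's covering-number bound, this is not a defect of your argument relative to theirs; but be aware that this is precisely where the lower-density information enters, and the proposal does not supply it from (\ref{SurfProp}) alone any more than the paper does.
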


\begin{proof}
Fix $x\in\crease$,  $0<r<r_0$ and $ t $ with $ 0 < t < 2r$.
By a standard covering lemma, there exists
a finite, disjoint collection of surface balls
$\{\sball {y_i} t\}$ with $y_i\in \sball x r \cap \crease$,
$i=1,2,\ldots, m$, such that
 $(\sball x r \cap \crease)
\subset \bigcup_{i=1}^m \sball {y_i} {3t}$. Using   that the collection of
balls $ \{ \Delta _ t ( y _i ) \} _ { i =1} ^ m $ is disjoint and  the
$(n-2+\epsilon)$--regularity of $\crease$ given in
(\ref{SurfProp}), we have
\begin{eqnarray*}
t^{n-2+\epsilon} m & \leq & C\sum_{i=1}^{m} \mathcal{H}^{n-2+\epsilon}\left(\sball {y_i} t \cap \crease \right) \\
& \leq & C \mathcal{H}^{n-2+\epsilon}\left(\sball x {3r} \cap \crease\right) \\
& \leq & C r^{n-2+\epsilon}.
\end{eqnarray*}
This calculation yields the estimate
$m \leq  m_t = C(r/t)^{n-2+\epsilon}$ for $t<2r$,
where $C$ depends on $M$ and the dimension $n$.

Now,
\begin{eqnarray*}
\sigma\left(\sball x r \cap \{y: \delta(y)< t \} \right)
& \leq & C\sum_{i=1}^{m_t} \sigma\left(\sball {y_i} {4t} \right) \\
& \leq & Ct^{n-1}m_t  \\
& \leq & Ct^{n-1}\left(\frac{r}{t}\right)^{n-2+\epsilon}
= Ct^{1-\epsilon}r^{n-2+\epsilon},
\end{eqnarray*}
which proves the Lemma.
\end{proof}

\begin{lemma}
\label{CorkEverywhere}
Let $\Omega$ and $D$ satisfy (\ref{Lip}) and (\ref{NTA}).  There exists
a constant $c$ such that if  
$x\in D$ and $ 0 < r < r_0$, then there exists
$x_D \in  D $ such that $|x-x_D | \leq r$
and $\sball {x_D} {c r} \subset D$.  Furthermore,
\begin{equation}
\sigma (  \sball x r \cap D  )  \geq  c  r^ { n-1}. \label{Dbig}
\end{equation}
\end{lemma}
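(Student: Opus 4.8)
The plan is to reduce the general point $x \in D$ to the corkscrew condition (\ref{NTA}), which is only stated for points on $\crease$, by splitting into two cases according to how far $x$ is from $\crease$. First I would handle the easy case: if $\delta(x) = \dist(x, \crease)$ is comparable to or larger than $r$, say $\delta(x) \geq r/(2\sqrt{1+M^2})$, then $x$ itself (or a nearby point) already sits in the interior of $D$ at the right scale. Indeed, by Lemma \ref{newball}, the surface ball $\sball x {\rho}$ with $\rho = \delta(x)/\sqrt{1+M^2}$ is entirely contained in $D$ or $N$; since $x \in D$ it must be contained in $D$, so we may take $x_D = x$ and $c \approx 1$, noting $\rho \approx \delta(x) \gtrsim r$ (and then shrinking the radius if necessary so that $cr \leq \rho$). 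In the hard case, $\delta(x) < r/(2\sqrt{1+M^2})$, I would pick a point $x^\sharp \in \crease$ with $|x - x^\sharp| = \delta(x) < r$, and apply the corkscrew condition (\ref{NTA}) at $x^\sharp$ with radius $r/2$: this produces $\tilde x \in D$ with $|x^\sharp - \tilde x| < r/2$ and $\delta(\tilde x) > M^{-1} r/2$. Setting $x_D = \tilde x$, the triangle inequality gives $|x - x_D| \leq |x - x^\sharp| + |x^\sharp - \tilde x| < r/2 + r/2 = r$; and since $\delta(x_D) \gtrsim r$, Lemma \ref{newball} again forces $\sball {x_D}{c r} \subset D$ for an appropriate $c \approx M^{-1}$ (more precisely, $c = (2M\sqrt{1+M^2})^{-1}$ so that $cr \leq \delta(x_D)/\sqrt{1+M^2}$, using that $\sball{x_D}{cr}$ meets $D$ at $x_D$).

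For the second assertion (\ref{Dbig}), I would use the containment $\sball{x_D}{cr} \subset D$ just established, so that
$$
\sigma(\sball x r \cap D) \geq \sigma(\sball{x_D}{cr} \cap D) = \sigma(\sball{x_D}{cr}),
$$
provided $\sball{x_D}{cr} \subset \sball x r$, which holds after replacing $c$ by $c/2$ since $|x - x_D| < r$ implies $\ball{x_D}{(1-c/2\cdot\text{const})\,r}$ sits inside $\ball x r$ once $c$ is small. The lower bound $\sigma(\sball{x_D}{\rho}) \gtrsim \rho^{n-1}$ for surface balls of radius $\rho < r_0$ centered at boundary points is a standard consequence of the Lipschitz graph structure (\ref{Lip}): the surface ball contains the graph over a Euclidean ball of radius $\approx \rho$ in $\reals^{n-1}$, whose surface measure is $\gtrsim \rho^{n-1}$. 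This gives (\ref{Dbig}) with a constant $c$ depending only on $M$ and $n$.

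The main obstacle, and the step requiring the most care, is the bookkeeping of constants so that all the radii line up: the corkscrew radius must be chosen a fixed fraction of $r$ so that the triangle inequality still yields $|x - x_D| \leq r$, and simultaneously $\delta(x_D)$ must be large enough (relative to the final radius $cr$) to invoke Lemma \ref{newball} and conclude $\sball{x_D}{cr} \subset D$ rather than $\subset N$ — the latter is excluded precisely because $x_D \in D$. One also has to make sure $cr < r_0$ so that Lemma \ref{newball} and the graph representation apply; this is automatic since $r < r_0$ and $c < 1$. None of these are deep points, but the interlocking choices of the fractions (here $r/2$ for the corkscrew radius and $c = (2M\sqrt{1+M^2})^{-1}$, or similar) are the substance of the argument.
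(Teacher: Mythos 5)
Your proposal is correct and follows essentially the same route as the paper's proof: split on the size of $\delta(x)$ relative to $r$, use Lemma~\ref{newball} directly in the ``far'' case, and in the ``near'' case pick a nearest point of $\Lambda$, apply the corkscrew condition~(\ref{NTA}) at a scaled-down radius, and then invoke Lemma~\ref{newball} at the resulting corkscrew point; the measure bound~(\ref{Dbig}) then follows by shrinking constants so the small surface ball lands inside $\sball x r$. The only soft spot is your phrasing of the final containment — a strict inequality $|x-x_D|<r$ alone does not force $\sball{x_D}{cr}\subset\sball x r$; the clean fix (implicit in both your sketch and the paper's ``after perhaps making $c$ smaller'') is to run the first part of the lemma at radius $r/2$ so that $|x-x_D|\leq r/2$ and the desired inclusion holds for any $c\leq 1$.
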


\begin{proof}
Let $x\in D$ and $0<r<r_0$. We will break up the proof into two cases.
First suppose that $r/2<\delta(x)$, and let $x_D = x$. Let $ Z_r(x_D)$
be the coordinate cylinder centered at $x_D$ with radius $r$. Using
Lemma \ref{newball} we see that $ \delta (x_D ) > r/2$ implies that
$\sball x {c_1 r} \subset D$ for $ c_1=  1/(2\sqrt{
  1+M^2})$.

Now consider the case where $\delta(x)\leq r/2$.  According to
(\ref{NTA}) and Lemma \ref{newball}, given $\hat{x}\in\Lambda$, there
exists $\bar x \in D $ 
such that $|\hat{x}-\bar x |\leq r/2$ and $\sball { \bar x }{
  c_2 r } \subset D$ with $c_2= 1/ (2M\sqrt{1+M^2})$.  Recall that
$\delta(x)\leq r/2$, and choose
$\hat{x}$ on $\Lambda$ so that $|x-\hat{x}|=\delta(x)$. By the remark
above, there exists $\bar{x} $ with $|\hat{x}-\bar{x} |\leq r/2$ and
$\sball {\bar x }{ c_1r } \subset D$. Let $x _D =\bar{x}$. Then
\begin{eqnarray*}
|x-x_D | & \leq & |x-\hat{x}| + |\hat{x}-   x _D | \\
& \leq &  r.
\end{eqnarray*}
Thus, we obtain $\sball x {cr} \subset D$ if $ c \leq \min(c_1, c_2)$. 
Conclusion (\ref{Dbig}) is an immediate consequence, after perhaps
making $c$ smaller.
\end{proof}

\note
{
{\em Fun fact. } Suppose  that condition (\ref{Dbig})
holds. Then we have $ \sobolev 2 {1/2} _D ( \partial \Omega )
\subset L^ 2 (\delta ^ {  -1} d\sigma) $ and hence
$ L^ 2( \delta \, d\sigma ) \subset \sobolev 2 {-1/2}  _D(\partial
\Omega)$.

Proof?

Example: Let $ \Omega = \{ x : | x| < 1\}$
and define $ D = \cup _{ k  = 1} ^ \infty
\{ (\cos \theta, \sin \theta ) : \theta \in ( 2^ { -2k} , 2^ {1 -2k}) \}$
and $ N = \partial \Omega \setminus D$. This domain
satisfies the conditions (\ref{SurfProp}) -- (\ref{NTA})
with $\epsilon =0$.}

The next two lemmas establish integrability of the distance function
$\delta$. 

\begin{lemma} \label{integrability}
Let $ \Omega$ and $D$ satisfy (\ref{Lip}), (\ref{SurfProp}), and
(\ref{NTA}). If $ x\in\partial \Omega$ and $r$ satisfies $ 0 < r <
r_0$, then for $-1+\epsilon < s< \infty$,
$$
\int _ { \sball x r } \delta (y ) ^ s \, d\sigma  \approx r ^ { n-1}
\max ( r, \delta (x) ) ^ s.
$$
.
\end{lemma}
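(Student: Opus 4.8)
The plan is to estimate $\int_{\sball x r}\delta(y)^s\,d\sigma$ by splitting according to whether we are close to $\crease$ or not, and to use the distribution-function identity $\int_E \delta^s\,d\sigma = s\int_0^\infty t^{s-1}\sigma(E\cap\{\delta<t\})\,dt$ when $s>0$ (and its analogue for $-1+\epsilon<s\leq 0$), with Lemma \ref{measure} providing the key bound $\sigma(\sball x r\cap\{\delta<t\})\leq C t^{1-\epsilon}r^{n-2+\epsilon}$. The exponent $1-\epsilon>0$ and the hypothesis $s>-1+\epsilon$ are exactly what make the resulting $t$-integral near $0$ converge, which is the heart of the matter.

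I would organize the argument by cases on the size of $\delta(x)$ relative to $r$. \emph{Case 1: $\delta(x)\geq Cr$ for a suitable large constant.} Then by the Lipschitz property of $\delta$ (constant one), $\delta(y)\approx\delta(x)$ for all $y\in\sball x r$, so both bounds follow immediately from $\sigma(\sball x r)\approx r^{n-1}$ (which holds by the Lipschitz graph structure, cf. the computations in Lemma \ref{measure}). \emph{Case 2: $\delta(x)< Cr$.} Here $\max(r,\delta(x))\approx r$, so we must show $\int_{\sball x r}\delta^s\,d\sigma\approx r^{n-1+s}$. For the upper bound: pick $\hat x\in\crease$ with $|x-\hat x|=\delta(x)\leq Cr$, so $\sball x r\subset\sball{\hat x}{C'r}$, and apply the distribution-function formula on $\sball{\hat x}{C'r}$. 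For the part $t<C'r$ use Lemma \ref{measure}; for $t\geq C'r$ the set $\{\delta<t\}$ contains all of $\sball{\hat x}{C'r}$ and one just uses $\sigma(\sball{\hat x}{C'r})\approx r^{n-1}$. The integral $\int_0^{C'r} t^{s-1}\cdot t^{1-\epsilon}r^{n-2+\epsilon}\,dt = r^{n-2+\epsilon}\int_0^{C'r}t^{s-\epsilon}\,dt\approx r^{n-1+s}$ converges precisely because $s-\epsilon>-1$. When $s<0$ one runs the same computation with the reverse distribution identity $\int_E\delta^s = |s|\int_0^\infty t^{s-1}\sigma(E\setminus\{\delta<t\})\,dt$ plus the elementary bound $\sigma(E\setminus\{\delta<t\})\leq\sigma(E)$; near $t=0$ the $\{\delta<t\}$ correction is controlled by Lemma \ref{measure} again and the same convergence condition $s>-1+\epsilon$ appears.

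For the lower bound in Case 2, I would use the corkscrew condition. By Lemma \ref{CorkEverywhere} there is $x_D\in D$ with $|x-x_D|\leq r/2$ and $\sball{x_D}{cr}\subset D$; combined with Lemma \ref{newball}, points $y$ in (a fixed fraction of) $\sball{x_D}{cr}$ have $\delta(y)\gtrsim r$, and this ball has $\sigma$-measure $\gtrsim r^{n-1}$, giving $\int_{\sball x r}\delta^s\,d\sigma\gtrsim r^s\cdot r^{n-1}$ when $s\geq 0$. When $s<0$ the lower bound is instead trivial: $\delta(y)\leq|y-x|+\delta(x)\leq Cr$ on $\sball x r$, so $\delta^s\geq (Cr)^s$ there, and integrating over $\sball x r$ gives $\gtrsim r^{n-1+s}$.

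The main obstacle I anticipate is bookkeeping the case $s<0$ cleanly: one must make sure the near-$t=0$ singularity is genuinely integrable (this is where $s>-1+\epsilon$ is used and where Lemma \ref{measure} is essential), and must handle the possibility that $\sball x r$ is disconnected or that $x\notin\partial\Omega$ lies slightly off the boundary, so that $\sball x r\approx r^{n-1}$ still holds — these are exactly the caveats flagged in Section \ref{prelim}. Everything else is a routine scaling argument.
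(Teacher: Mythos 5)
Your plan matches the paper's proof in its essentials: split on the size of $\delta(x)$ relative to $r$, use the corkscrew condition for the lower bound when $\delta(x)$ is small, and control the upper bound for $s<0$ near $\Lambda$ via Lemma \ref{measure} together with a decomposition over level sets of $\delta$ (the paper uses the dyadic rings $\{2^{-k-1}r<\delta\le 2^{-k}r\}$, which is the discrete version of your distribution-function integral). The hypothesis $s>-1+\epsilon$ enters in exactly the same place. So the route is the same.

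There are, however, a few slips you should fix. First, your distribution-function identity for $s<0$ is written with $\sigma(E\setminus\{\delta<t\})$; the correct identity is $\int_E \delta^s\,d\sigma = |s|\int_0^\infty t^{s-1}\sigma(E\cap\{\delta<t\})\,dt$ (the version you wrote has $\sigma(E\setminus\{\delta<t\})\to\sigma(E)>0$ as $t\to 0^+$, so the integral would diverge). Second, for the lower bound you appeal to Lemma \ref{CorkEverywhere}, but that lemma is stated only for $x\in D$, whereas here $x$ is an arbitrary boundary point; for $x\in N$ (or $x\in\Lambda$) one should first locate $\hat x\in\Lambda$ with $|x-\hat x|=\delta(x)$ and apply the corkscrew condition (\ref{NTA}) at $\hat x$, as the paper does. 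Third, a single threshold ``$\delta(x)\geq Cr$, $C$ large'' does not suffice: the corkscrew argument for the lower bound with $s\geq 0$ needs the corkscrew ball to sit inside $\sball x r$, which forces $\delta(x)$ small compared to $r$, while the comparison $\delta(y)\approx\delta(x)$ for the $s<0$ far case needs $\delta(x)$ large compared to $r$. The paper resolves this by using different thresholds ($r/4$ when $s\geq 0$, $4r$ when $s<0$); alternatively, when $r/4\le \delta(x)< Cr$ and $s\ge 0$ you can get the lower bound simply from $\delta\gtrsim r$ on $\sball x {r/8}$ without any corkscrew. None of these issues changes the structure of the argument, but as stated the formula and the lemma citation are wrong and the case split is incomplete.
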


\begin{proof}  Fix $ x\in \partial \Omega$, $r$ in the interval $
  (0, r_0)$ and consider several cases: a) $s\geq 0$ and $ \delta (x)
<  r /4$,
b) $s\geq 0$ and $ \delta (x)
\geq   r /4$,
 c) $s <  0$ and $ \delta (x)
< 4 r $, and
d) $s <  0$ and $ \delta (x)
\geq  4  r $.

{\em Case a)} Assume that $s\geq 0$, $\delta(x) < r/4$.  Since $ \delta (x) < r /
4$ and $ \delta $ is Lipschitz with constant one, we have $ \delta (y)
\leq 5r/4$ for all $ y \in \sball x r$. As $ \partial \Omega$ is the
boundary of a  Lipschitz domain, we have $ \sigma ( \sball x r)
\approx r ^ { n -1}$. Thus when $ s \geq 0$,  the upper bound $ \int _ { \sball x r }
\delta (y) ^ s \, d\sigma  \leq C r ^ { n -1+s}$  follows easily.

To obtain a lower bound, we begin by finding $ \hat x \in \Lambda$ such
that $ |x-\hat x| = \delta (x) < r /4$. Then the corkscrew condition
for $ D$ (\ref{NTA}) gives a point $\tilde x $ with $|x-\tilde x | < r
/4$ and $ \delta (\tilde x) > r /(4M)$.  If $ y \in \sball {\tilde x} { r/(8M)} $
we have that $ \delta ( y ) \geq r / ( 8M)$,  and  $ \sball { \tilde x} {
  r/(8M)} \subset \sball x r $ since $|x-\tilde x | < r /2$. Thus the lower bound
$ \int _ { \sball x r } \delta (y ) ^ s \,d\sigma \geq \int _ { \sball
 {\tilde  x}  { r/( 8M) } }  \delta ( y ) ^ s \, d\sigma \geq C r ^ {
  n -1+s}$ follows.
\note{
I think the argument for the lower bound can only work if we split the
cases at $ \theta r$  with $ \theta < 1$.
}

{\em Case b) } Assume that $s\geq 0$, $\delta(x) \geq  r/4$.
Observe that if $ y  \in \sball x r$, then
$ \delta (y) \leq \delta (x) +r\leq 5 \delta (x) $ and  the upper
bound
$\int _ { \sball x r } \delta (y) ^ s \, d\sigma \leq r ^ { n -1}
\delta ( x) ^ s $ follows.

To obtain the lower bound, we use that if $ \delta (x) \geq r / 4$ and
$ y \in \sball x { r/8}$, then $ \delta (x) \leq 2 \delta (y )$ and
hence we have $ \int _ {\sball x r  } \delta (y ) ^ s \, d \sigma \geq
  \int _ { \sball x { r/8 }} \delta ( y ) ^ s\, d\sigma \geq c r ^ {
      n-1} \delta(  x)  ^ s$.
\note{
$\delta (x ) \leq \delta ( y ) + r/8 = \delta ( y ) + r / ( 2\cdot
  4)\leq \delta(y )   + \delta (x) /2$.
}

{\em Case c) } Assume that $ -1+\epsilon < s < 0$,  $ \delta (x) < 4r$.  We divide the
surface ball $ \sball x r $ using level sets of the  distance function
and then use Lemma \ref{measure} to obtain
\begin{eqnarray*}
\int _ {\sball x r } \delta ( y ) ^ s d\sigma & = &
\sum _ { k = 0 } ^ \infty \int _ { \sball x r \cap \{ y : 2 ^ { -k-1}
  r < \delta (y) \leq  2^ { -k  }r \}}\delta (y ) ^ s \, d\sigma \\
& \leq & C  \sum _ { k = 0 } ^ \infty r ^ { n -2+\epsilon } ( 2 ^ { -k
}r)^ { s + 1 - \epsilon }\\
& \leq & Cr^ { n -1 + s}. 
\end{eqnarray*}
In the last inequality above we use the assumption that $ s > -1 + \epsilon $ 
to sum the geometric
series.

To obtain the lower bound, we observe that if $ \delta (x) \leq 4r$,
then for $y \in \sball x r $, $ \delta (y ) \leq \delta (x) + r <
5r$. Since $ s < 0$, it follows that $ \delta (y ) ^ s \geq ( 5r ) ^
s$ and the lower bound $ \int _ {\sball x r} \delta(y)^s\, d\sigma \geq C r^ { n -1 +
  s} $ follows easily.

{\em Case d) } Assume that $-1+\epsilon <  s < 0 $,  $\delta (x) \geq 4r$.  If $ y \in \sball
x r $, then we have
$ 3 \delta (x) /4 \leq \delta (y ) \leq 5 \delta ( x) /4$. Thus, we obtain
$\int _ {\sball x r } \delta (y ) ^s \, d\sigma \approx r ^ { n -1}
\delta (x) ^ s$.

\note{
And this case seems to require that we split at $Ar$ with $A>  1$.

The comparability follows since:
$$
\delta ( y ) \leq \delta ( x ) +r \leq \delta (x) + \delta (x) /4
$$
and
$$
\delta ( x ) \leq \delta ( y ) +r \leq \delta (y) + \delta (x) /4
$$
}

The result of the Lemma follows easily from the  four cases above.
\end{proof}

\comment{
\begin{lemma}\label{integrability}
Let $\Omega$ and $D$ satisfy (\ref{Lip}), (\ref{SurfProp}), and
(\ref{NTA}). Let $r$ satisfy $0<r<r_0$. Fix $x\in \partial\Omega$. Then
\begin{equation}
\int_{\sball x r} \delta(y)^{s} \, d\sigma
\approx \left\{
    \begin{array}{ll}
        r^{n-1}\delta(x)^s & \mbox{if}\,\,\delta(x)\geq 10r, \\
        r^{n-1+s} & \mbox{if}\,\,\delta(x)<10r,
        \end{array}
            \right.
                \end{equation}
provided that $-1+\epsilon<s<\infty$.
\end{lemma}

\begin{proof}
Let $x\in \partial\Omega$ and $0<r<r_0$.
Once again we will break the proof into two cases: when
$x$ is far away from $\crease$ and when $x$ is near $\crease$.
We first address the case where $x$ is far away from
$\crease$, {\it i.e.} fix $x\in\partial\Omega$ such
that $\delta(x)\geq r/10$. By Lemma \ref{newball},
$\sball x {\frac{\delta(x)}{\sqrt{1+M^2}}} \subset D$
or $\sball x {\frac{\delta(x)}{\sqrt{1+M^2}}} \subset N$. In either case,
$\sigma(\sball x r) \approx r^{n-1}$. Using the fact that
the function $\delta$ is Lipschitz with constant one,
we deduce that $\delta(x) \approx \delta(y)$ for $y\in
\sball x r$. Then for $y\in \sball x r$,
\begin{equation}
\int_{\sball x r} \delta(y)^s\, dy \approx \sigma(\sball x r)\delta(x)^s
\approx r^{n-1}\delta(x)^s.
\end{equation}
This concludes the case when $\delta(x)\geq r/10$.

Now assume that $\delta(x)<r/10$. We make the following
claim: if $x\in N$ and $\delta(x)<r/10$, then there exists
$\hat{x}\in D$ such that $|x-\hat{x}|<r/5$ and $\sball {\hat{x}} {M^{-1}r/10}
\subset D$. To prove the claim, let $x\in N$ with $\delta(x)<r/10$.
Then let $\bar{x}\in \crease$ such that $|x-\bar{x}|=\delta(x)$.
By Lemma \ref{CorkEverywhere}, there exists a point $x_D\in D$ so
that $|\bar{x}-x_D|\leq r/10$ and $\sball {x_D} {M^{-1}r/10}\subset D$. Let
$\hat{x}=x_D$ and the claim is proved.

We will first
establish that $\int_{\sball x r} \delta(y)^s dy \geq Cr^{n-1+s}$.
Given $x\in\partial\Omega$ such that $\delta(x)<r/10$,
there exists $\tilde{x}$ satisfying $|x-\tilde{x}|<r/5$
and $\delta(\tilde{x})\geq r/(10M)$. This statement is a result
of Lemma (\ref{NTA}) for $x\in D$ and a result of the claim
above if $x\in N$. Then
$\delta(y)\approx \delta(\tilde{x})$ for
$y\in \sball {\tilde{x}} {M^{-1}r/20}$. In the case
$s>0$ we have,
\begin{equation}
\int_{\sball x r} \delta(y)^s dy \geq
\sigma(\sball {\tilde{x}} {r/(20M)} ) \delta(\tilde{x})^s
\geq Cr^{n-1+s},
\end{equation}
since $\delta(\tilde{x})\geq r/(10M)$ and by
the $(n-1)$-Ahlfors regularity of the boundary, $ \partial \Omega$.
For $s\leq 0$, use the fact that $\delta(\tilde{x})
\leq |x-\tilde{x}| + \delta(x) \leq Cr$ to obtain
$\delta(\tilde{x})^s \geq Cr^s$, then the desired conclusion
follows immediately.

The last step is to prove that
$\int_{\sball x r} \delta(y)^s dy \leq Cr^{n-1+s}$.
The case $s\geq0$ is immediate using the $(n-1)$--regularity
of $\partial\Omega$ and the fact that $\delta(x)< r/10$
implies that $\delta(y)<11r$ for $y\in \sball x r$. Finally
we must consider $s < 0$. Dividing $\sball x r$ into
level sets of the distance function $\delta(y)$ yields

\begin{eqnarray}
\int_{\sball x r}\delta(y)^s dy & = &
\sum_{k=0}^{\infty} \int_{\sball x r \cap
\{y: 2^{-k-1}r < \delta(y) \leq 2^{-k}r\}} \delta(y)^s dy \nonumber \\
& \leq & \sum_{k=0}^{\infty} r^{n-2+\epsilon}
(2^{-k}r)^{1-\epsilon}(2^{-k-1}r)^{s},
\end{eqnarray}
where we use Lemma \ref{measure}. Finally,
\begin{eqnarray}
\int_{\sball x r}\delta(y)^s dy  & = &
Cr^{n-1+s}\sum_{k=0}^{\infty} 2^{-k(1-\epsilon+s)}\nonumber \\
& = & Cr^{n-1+s},
\end{eqnarray}
provided that $1-\epsilon+s >0$ or $s>-1+\epsilon$.
\end{proof}
}

The integrability of the function $\delta$ over
interior balls is a straightforward adaptation of
the previous result.

\begin{lemma}\label{interiorint}
Let $\Omega$ and $D$ satisfy (\ref{Lip}), (\ref{SurfProp}), and
(\ref{NTA}), and let $r$ satisfy  $0<r<r_0$. Then for $s\in (-2+\epsilon, \infty)$, 
\begin{equation*}
\int_{\Psi_r (x)} \delta(y)^s \, dy \approx
r^ {n-2} \max (r, \delta(x))^s.
\end{equation*}
\end{lemma}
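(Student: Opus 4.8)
The plan is to reduce the interior estimate to the boundary estimate of Lemma \ref{integrability} by slicing $\Psi_r(x) = B_r(x) \cap \Omega$ into surfaces parallel to $\partial\Omega$. Concretely, after choosing a coordinate cylinder $Z_\rho(x_*)$ (for a suitable $x_* \in \partial\Omega$ near $x$ and $\rho \approx r$) in which $\partial\Omega$ is the graph $y_n = \phi(y')$, one writes the integral over $\Psi_r(x)$ as an iterated integral: integrate first over the horizontal slices $\{y_n = \phi(y') + h\}$ and then over the vertical parameter $h$ ranging over an interval of length $\approx r$. Since $\delta$ is Lipschitz with constant one, on each slice at height $h$ above the boundary we have $\delta(y) \approx \max(h, \delta(\pi(y)))$, where $\pi$ is vertical projection onto $\partial\Omega$; and $d\sigma$ on the boundary is comparable to $dy'$. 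Thus
\begin{equation*}
\int_{\Psi_r(x)} \delta(y)^s\, dy \approx \int_0^{Cr} \int_{\sball {x_*} {Cr}} \max(h, \delta(z))^s \, d\sigma(z)\, dh .
\end{equation*}

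The inner integral is handled by Lemma \ref{integrability}: one more slicing of $\sball{x_*}{Cr}$ into the region where $\delta(z) \le h$ and its complement, together with Lemma \ref{measure} to control $\sigma(\sball{x_*}{Cr} \cap \{\delta < h\})$, gives
$$
\int_{\sball {x_*} {Cr}} \max(h,\delta(z))^s\, d\sigma(z) \approx r^{n-1} \max(h, r, \delta(x))^s
$$
up to constants depending on $M$, $n$, $\epsilon$ — here one needs $s > -1+\epsilon$ so that the level-set sum in Lemma \ref{measure} converges, exactly as in Case c) of Lemma \ref{integrability}. (When $\delta(x) \ge 4r$ the slice estimate is just $\approx r^{n-1}\delta(x)^s$ with no level-set decomposition needed, as in Case d).) Then integrating in $h$ over $(0, Cr)$: if $\delta(x) \gtrsim r$ the integrand is $\approx r^{n-1}\delta(x)^s$ throughout, giving $r^{n-1}\cdot r \cdot \delta(x)^s = r^n \delta(x)^s \approx r^{n-2}\max(r,\delta(x))^s \cdot r^2$ — wait, one must track the power of $r$ carefully: the slice has $(n-1)$-dimensional measure $\approx r^{n-1}$ and there is one extra integration in $h$, but the target exponent is $r^{n-2}$, so in fact the correct normalization comes from taking $\rho \approx r$ only for the $h$-integration while the horizontal extent is also $\approx r$; the bookkeeping yields $\int_0^{Cr} r^{n-1}\max(h,\delta(x))^s\,dh$. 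When $\delta(x) < r$ this is $\approx r^{n-1}\int_0^{Cr}\max(h,\delta(x))^s\,dh \approx r^{n-1}\cdot r^{1+s} = r^{n+s}$, which must be compared against the claimed $r^{n-2}r^s = r^{n-2+s}$ — so the horizontal ball in the slicing must have radius $\approx r$ but contributes $r^{n-2}$, not $r^{n-1}$, meaning I should instead slice so that the cross-sections are $(n-1)$-dimensional but I overcount by one dimension. The resolution: the interior integral genuinely scales like $r^{n-2}$ only because Lemma \ref{integrability}'s right-hand side already carries $r^{n-1}$ and dividing by $r$ (the $h$-integration length when $\delta(x)<r$ contributes $r^{1+s}$, but we want $r^{n-2+s}$ starting from $r^{n-1}$, i.e. we want to \emph{lose} a factor $r$) — so the slicing is by codimension-one surfaces of horizontal measure $\approx r^{n-1}$ but the distance-weighted integral over each, by Lemma \ref{integrability} with the role of $r$ there played by $\max(h,\delta(x))$ rather than $r$, gives $\max(h,\delta(x))^{n-1}\cdot(\ldots)$; I will instead cite Lemma \ref{integrability} directly on slices of radius $\max(h,\delta(x))$ and assemble.

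Given the delicacy flagged above, the honest statement of the plan is: slice $\Psi_r(x)$ by the level sets $\{y : \delta(y) \in (2^{-k-1}r, 2^{-k}r]\}$ together with the region $\delta(y) > \max(r,\delta(x))/2$, estimate the volume of each interior level set by combining Lemma \ref{measure} (for the boundary trace) with the coarea/Fubini slicing to gain the extra power of distance, obtaining $|\Psi_r(x) \cap \{\delta \approx t\}| \lesssim r^{n-2} t$ for $t \lesssim r$ (when $\delta(x) \lesssim r$) — note the linear-in-$t$ gain over the boundary bound $r^{n-2+\epsilon}t^{1-\epsilon}$ comes from the extra interior dimension plus Lipschitz regularity of $\delta$ — then sum $\sum_k r^{n-2}(2^{-k}r)(2^{-k}r)^s = r^{n-1+s}\sum_k 2^{-k(1+s)}$, convergent for $s > -1 > -2+\epsilon$... which again gives $r^{n-1+s}$, off by $r$ from the target. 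This persistent discrepancy tells me the \textbf{main obstacle}, and the step I expect to require genuine care, is getting the correct interior volume bound for level sets of $\delta$: the right estimate must be $|\Psi_r(x) \cap \{\delta < t\}| \lesssim r^{n-2+\epsilon}t^{2-\epsilon}$ (mirroring Lemma \ref{measure} with $n-1 \to n$ and $1-\epsilon \to 2-\epsilon$), proved by the identical covering-by-surface-balls argument but now with $n$-dimensional balls $B_{4t}(y_i)$ of volume $t^n$ in place of surface balls of measure $t^{n-1}$, while the count $m_t \lesssim (r/t)^{n-2+\epsilon}$ is unchanged since it only counts balls centered on $\Lambda$. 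Summing $\sum_k (r^{n-2+\epsilon})(2^{-k}r)^{2-\epsilon}(2^{-k}r)^s$ then gives $r^{n+s}$ when $s>-1$, still wrong — so I abandon the level-set route entirely and conclude the cleanest correct argument is the direct Fubini slicing: parametrize $\Psi_r(x)$ (roughly) as $\{(z, z_n) : z \in \sball{x_*}{Cr},\ \phi(z) < z_n < \phi(z) + Cr\}$, substitute $h = z_n - \phi(z)$, use $\delta(z,z_n) \approx h + \delta(z)$, and compute $\int \int_0^{Cr}(h+\delta(z))^s\,dh\,d\sigma(z)$; the $h$-integral is $\approx \max(Cr,\delta(z))^{1+s} - \delta(z)^{1+s}$ handled by cases on the sign of $1+s$, and the remaining boundary integral is exactly Lemma \ref{integrability}, producing $r^{n-1}\max(r,\delta(x))^{1+s}$ when $\delta(x) \lesssim r$ — and I now see $r^{n-1}\cdot r^{1+s}$ is meant to be read with the understanding that the lemma's claimed $r^{n-2}$ already absorbs one factor incorrectly transcribed, OR (more likely) the coordinate patch forces the horizontal ball to have radius only $\approx r/\sqrt{1+M^2}$ while $\Psi_r$ genuinely has $n$-dimensional measure $\approx r^n$ not $r^{n-1}\cdot r$; in any case the upper and lower bounds track identically and the $s$-range $s > -2+\epsilon$ is precisely what makes the $h$-integral converge at $h=0$ after the worst-case substitution $\delta(z) \to 0$ combined with the Lemma \ref{measure} weight $\sigma(\delta < h) \lesssim h^{1-\epsilon}r^{n-2+\epsilon}$, so that $\int_0 h^s \cdot h^{-\epsilon}\,dh \cdot (\ldots)$ needs $s - \epsilon > -1$... no: $s + 1 - \epsilon$ from pairing $h^s$ against $d(\sigma(\delta<h)) \approx h^{-\epsilon}r^{n-2+\epsilon}dh$, convergent iff $s > -1+\epsilon$, whereas the stated hypothesis is $s > -2+\epsilon$, the extra unit again being the interior dimension. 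I will therefore, in the write-up, prove the sharp interior level-set bound $\int_{\Psi_r(x)\cap\{\delta<t\}} 1\,dy \lesssim r^{n-2+\epsilon}t^{2-\epsilon}$ via the covering argument with $n$-balls, then do the $s<0$ case by summing geometric series (convergent for $s>-2+\epsilon$) and the $s\ge 0$ case trivially, and handle the $\delta(x)\ge 4r$ regime separately where $\delta \approx \delta(x)$ is constant on $\Psi_r(x)$ and the volume is $\approx r^{n-2}\cdot r^2$; reconciling the exact power of $r$ with the statement's $r^{n-2}$ is then just arithmetic I will carry out in the actual proof.
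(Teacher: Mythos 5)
Your final plan --- cover $\Lambda \cap \sball x {3r}$ by $m_t \lesssim (r/t)^{n-2+\epsilon}$ balls of radius $t$ centered on $\Lambda$ (the count is unchanged from Lemma \ref{measure} since it only depends on the Ahlfors regularity of $\Lambda$), deduce the interior level-set bound $|\Psi_r(x) \cap \{\delta < t\}| \lesssim r^{n-2+\epsilon}t^{2-\epsilon}$ from the fact that the covering balls now have $n$-volume $\approx t^n$, then sum a geometric series (convergent precisely for $s > -2+\epsilon$) for the $s<0$ upper bound, and handle $s\ge 0$ and $\delta(x)\gtrsim r$ by direct comparison --- is exactly the ``straightforward adaptation'' of Lemma \ref{integrability} that the paper intends. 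The persistent discrepancy that sent you through the slicing, Fubini and co-area detours is not an error in your computation: the exponent $r^{n-2}$ in the printed statement of Lemma \ref{interiorint} is a misprint and should read $r^{n}$. You can see this at once by taking $s=0$, where the left-hand side is $|\Psi_r(x)| \approx r^n$ in any Lipschitz domain, not $r^{n-2}$; it is also the normalization that the paper uses when invoking the lemma, e.g.\ in the derivation of \eqref{eqn07}, where
$\bigl(\int_{\Psi_{8r}(x)}\delta^{-\rho q/(q-2)}\,dy\bigr)^{1/2-1/q}\approx \bigl(r^{\,n-\rho q/(q-2)}\bigr)^{(q-2)/(2q)}$
produces the stated factor $r^{(n-\rho)/2}$ after multiplying by $|\Psi_{8r}(x)|^{1/q}\approx r^{n/q}$. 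Once you replace $r^{n-2}$ by $r^{n}$, all of your calculations land on target: the Fubini slicing gives $r\cdot r^{n-1+s}=r^{n+s}$, the level-set sum gives $r^{n-2+\epsilon}\sum_k(2^{-k}r)^{2-\epsilon+s}\approx r^{n+s}$, and both equal $r^{n}\max(r,\delta(x))^s$ when $\delta(x)\lesssim r$. (Both routes are valid, but the covering-and-geometric-series argument is the one that literally mirrors the paper's proof of Lemma \ref{integrability}; the Fubini route is fine but redundant.)

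One gap to close in the actual write-up: you only sketch upper bounds. The lower bounds go through exactly as in Cases a)--d) of Lemma \ref{integrability}, with $\sigma(\sball x r)\approx r^{n-1}$ replaced by $|\Psi_r(x)|\approx r^n$. In particular, for $s\ge 0$ and $\delta(x)$ small you must still invoke the corkscrew condition \eqref{NTA} to produce a boundary point $\tilde x$ near $x$ with $\delta(\tilde x)\gtrsim r$, and then use the Lipschitz continuity of $\delta$ to obtain a full $n$-dimensional ball inside $\Psi_r(x)$ on which $\delta\gtrsim r$; for $s<0$ and $\delta(x)\lesssim r$ it suffices that $\delta\le Cr$ on $\Psi_r(x)$; and for $\delta(x)\ge 4r$ one has $\delta\approx\delta(x)$ on all of $\Psi_r(x)$.
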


\note{
The previous lemma provides a condition on the exponent
$k$ such that $\delta(y)^k$ is an $A_{2/p}$ weight. We have,
$$
\left(\frac{1}{\sigma(\sball x r)} \int_{\sball x r}
\delta(y)^k\, d\sigma \right)\left( \frac{1}{\sigma(\sball x r)}
\int \delta(y)^{-k(p/(2-p))} \,d\sigma \right)^{(2-p)/p} \leq A < \infty,
$$
for all surface balls $\sball x r$ with $x\in \reals^{n-1}$
and $0<r<r_0$, under the restriction that
$$
k(p/(2-p))< 1-\epsilon\quad \mbox{or}\quad k<(1-\epsilon)((2-p)/p).
$$
We may use this with $k = 1-\rho$ in a future paper.
}

Throughout this work, the main tool for estimating solutions
will be the non-tangential maximal function. Fix $ \alpha >0$
and for  $ x\in \partial \Omega$,  the  {\em non-tangential
approach region } is defined by
$$
\Gamma(x) = \{ y \in \Omega : |x-y | \leq ( 1+ \alpha)
\dist (y, \partial\Omega) \}.
$$
Given a function $u$ defined on $ \Omega$,
the {\em  non-tangential maximal function } is defined as
$$
\nontan{u} (x) = \sup _{ y \in \ntar x } |u (y) |,
\qquad x \in \partial \Omega.
$$
We will also utilize a truncated non-tangential
approach region, 
$$
\Gamma_r (x) = \Gamma(x) \bigcap B_r(x),
$$
and, respectively, a truncated non-tangential
maximal function,
$$
\nontan{u}_r (x) = \sup_{y\in \Gamma_r (x)} |u(y)|,
\qquad x\in \partial\Omega.
$$
It is well known that for different values of
$ \alpha$, the non-tangential maximal functions have
comparable $L^p$-norms. Thus, we suppress the value
of $\alpha $ in our notation.

The restrictions of $u$ and $\nabla u$ to the boundary
in (\ref{MP}) are understood as non-tangential limits.
Precisely, for a function $v$ defined on $\Omega$ and
$x\in \partial\Omega$,
$$
v(x) = \lim _{ \Gamma (x) \ni y \rightarrow x } v(y),
$$
provided that the limit exists. It is well-known that for
a Lipschitz domain $\Omega$ and $v$ a harmonic function
in $\Omega$, the non-tangential limits exist at
almost every point where the non-tangential maximal
function is finite.

We now recall the definitions of atoms and atomic
Hardy spaces. A function $a$ is an \emph{atom for
$\partial\Omega$} if $\mbox{supp}\,a \subset \Delta_r (x)$
for some $x\in\partial\Omega$,
$\| a\|_{L^{\infty}(\partial\Omega)} \leq 1/\sigma
(\Delta_r (x))$
and $\int_{\partial\Omega} a \, d\sigma =0$. In our
treatment of the mixed problem, we will consider
atoms for the subset $N\subset \partial\Omega$. We
say that $\tilde{a}$ is \emph{an atom for $N$} if $\tilde{a}$
is the restriction to $N$ of an function $a$ which is
an atom for $\partial\Omega$. The Hardy space $H^{1}(N)$,
where $N\subset \partial\Omega$, is the collection of
functions $f$ which can be represented as
$\sum_j \lambda_j a_j$, where each $a_j$ is an atom for
$N$ and the coefficients  $\lambda_j$ satisfy
$\sum_j |\lambda_j| < \infty$. In the case where
$N=\partial\Omega$ this definition gives the standard
definition of the Hardy space $H^{1}(\partial\Omega)$.
The Hardy-Sobolev space $H^{1,1}(\partial\Omega)$ is
defined as the set of functions with one derivative
in $H^{1}(\partial\Omega)$. More precisely, we say
that a function $A$ is an \emph{atom for
$H^{1,1}(\partial\Omega)$} if $A$ is supported in a
surface ball $\Delta_r (x)$ for some $ x \in \partial \Omega$  and
$\| \tangrad A \|_{L^{\infty}(\partial\Omega)}
\leq 1/ \sigma(\Delta_r(x))$.  If $v$
is a smooth function
defined in a neighborhood of $\partial\Omega$
then the \emph{tangential gradient} of $v$ is defined as
$\tangrad v = \nabla v - (\nabla
v\cdot \nu )\nu$, where $\nu$ is the outward
unit normal vector.
Then $\tilde{A}$ is an \emph{atom for $H^{1,1}(D)$}
if $\tilde{A}$ is  the restriction to $D$ of an atom $A$ in
$H^{1,1}(\partial\Omega)$. The space $H^{1,1}(D)$
is the collection of all functions which can be
represented as $\sum_j \lambda_j A_j$ where each $A_j$ is
an element of $H^{1,1}(D)$ and $\sum_j |\lambda_j|<\infty$.

Finally, we define the Sobolev space $ \sobolev p1 (\partial \Omega)$
to be the collection of functions in $ L^ p ( \partial \Omega)$ whose
tangential gradient also lies in $ L^ p ( \partial \Omega)$.

\section{Green function estimates and reverse H\"older inequalities}

An important step in the proof of the main theorem is to
show decay of the solution to the mixed problem with
Neumann data an $H^1(N)$ atom as
we move away from the support of the atom. This decay is
encoded in estimates for the Green function for the mixed
problem which are proved in this section. The argument that
ensues only requires that
$ \Omega$ be a Lipschitz domain  and  that $D$ satisfies
(\ref{Dbig}).
\comment{
\begin{equation}\label{DBigLoc}
\sigma(\Delta_r (x) \cap D)\geq M^{-1}r^{n-1}.
\end{equation}
The condition above is the only assumption on the
boundary necessary to establish the results of this section.
}

When working near the boundary, we will want to assume
that part of the boundary is flat. This can always be
arranged in a Lipschitz domain by  flattening the boundary
with a change of coordinates. Since flattening the boundary
will change the coefficients,  we need to  consider operators $L$
with bounded and measurable coefficients. Assume
that $ L = \div A \nabla$, and assume that the coefficient matrix $A$
is real,   bounded, and
measurable, satisfies  $A^t=A$, and satisfies the ellipticity condition
that
for every  $\xi \in \reals^2$, there exists a $\lambda>0$ such that 
$$
\lambda |\xi |^2  \leq A\xi \cdot \xi \leq \lambda ^ { -1} | \xi |^ 2,
$$
The optimal 
$\lambda$ for which the above condition holds is called
the ellipticity constant for $L$. 

We now define a weak formulation of
the mixed problem for solutions of divergence form
operators whose gradients lie in $L^{2}(\Omega)$. Our
goal is to prove that under appropriate assumptions
on the data, the weak solution will have a gradient in
$L^{p}(\partial\Omega)$ for $1<p<p_0$, for some $p_0>1$.
For $k=1,2,\ldots$,  $\sobolev p k (\Omega)$ denotes
the Sobolev space of functions having $k$ derivatives
in $L^{p}(\Omega)$.  For $D$ an open subset of the
boundary, let $\sobolev 2 1 _{D}(\Omega)$ be the closure
in $\sobolev 2 1 (\Omega)$ of functions in
$C^{\infty}_{0}(\reals^n)$ whose support is disjoint from
$\bar{D}$.  Let
$\sobolev 2  {1/2} _D (\partial\Omega)$ be the restrictions to
$\partial\Omega$ of functions in  $W^{1,2}_D (\Omega)$ and define
$W^{-1/2,2}_D (\partial\Omega)$ to be the dual of
$W^{1/2,2}_{D} (\partial\Omega)$.
We assume that the Dirichlet data is zero and the Neumann data $f_N$ lies
in the space $W^{-1/2,2}_{D}(\partial\Omega)$.

Consider the mixed problem
\begin{equation} \label{MP3}
\left\{
\begin{array}{ll}
\mbox{div}A\nabla u = 0, \qquad & \mbox{in }  \Omega\\
u = 0,  \qquad &  \mbox{on }  D\\
A\nabla u \cdot \nu= f_N,\qquad  & \mbox{on } N.
\end{array}
\right.
\end{equation}
We say that $u$ is a \emph{weak solution} of the problem
(\ref{MP3}) if $u \in W^{1,2}_{D} (\Omega)$ and 
\begin{equation}\nonumber
\int_{\Omega} A\nabla u \cdot \nabla \phi \, dy =
\langle f_N, \phi \rangle, 
\quad \phi \in W^{1,2}_{D}(\Omega).
\end{equation}
%
To establish the existence of weak solutions to the
mixed problem, we assume the following coercivity
condition
\begin{equation}\label{coerce}
\|u \|_{L^{2}(\Omega)} \leq c \|\nabla u \|_{L^{2}(\Omega)},
 \quad u \in W^{1,2}_{D}(\Omega).
\end{equation}
Under this assumption, the existence and uniqueness of
weak solutions to the boundary value problem (\ref{MP3})
are a consequence of the Lax-Milgram theorem. In our
applications, $\Omega$ will be a connected, bounded
Lipschitz domain and $D$ will be an open subset of the
boundary. These assumptions are sufficient to ensure
that (\ref{coerce}) holds.

We also need to define a weak solution of the mixed problem
on a subset of $\Omega$. Let $\Omega '$ be an open subset
of $\Omega$. We say that $u$ is a weak solution to $Lu=f$
in $\Omega '$ with zero boundary data for the mixed problem on
$\partial\Omega' \cap \partial\Omega$ if
$u\in W^{1,2}_{\partial\Omega' \cap D} (\Omega')$ and for
each test function $\phi$ which lies in
$W^{1,2}_{\partial\Omega' \cap (D\cup \Omega)} (\Omega')$,
we have
$$
\int _ { \Omega ' } A \nabla u \cdot \nabla \phi \, dy = - \int_{\Omega'} f \phi
\, dy .
$$

\comment{
We fix $x\in \Omega $ and  $ \rho >0$ and define $G_\rho(x,\cdot)$ as the
solution of the mixed problem
$$
\left \{
\begin{array}{ll}
L G _ \rho (x, \cdot ) =  \frac {1} {| B_\rho | } \chi _ {\ball x \rho } (
\cdot), \qquad & \mbox{in } \Omega \\
G_\rho (x, \cdot ) =0, \qquad & \mbox{on } D\\
\frac {\partial  G_ \rho}{\partial \nu} (x, \cdot ) = 0, \qquad  & \mbox {on } N. \end{array}
\right.
$$
Thus, $ G _\rho (x,\cdot ) \in W^ { 1,2 }_D( \Omega)$ and
$$
\int _\Omega A \nabla G_\rho (x, \cdot ) \cdot \nabla \phi  \, dy
= - \frac 1 { |B_\rho | } \int _ {\ball x \rho } \phi\, dy , \qquad \phi \in
W^ { 1,2 }_D ( \Omega).
$$
We will prove estimates which are uniform in $ \rho$ and then use
these estimates to take the limit as  $\rho$ tends to zero.
}

Solutions of the mixed problem are bounded. If $ u$ is a
solution of $Lu =0$ in the domain $\dball x r =B_r (x)\cap \Omega$, $x\in \Omega$,
and $u$ has zero data for the mixed problem on
$ \partial \Omega \cap \partial   \dball x r $, then there exists a constant
$C>0$ such that
\begin{equation} \label{Moser}
|u(x) | \leq C \average_ { \dball x r}  |u(y) | \, dy .
\end{equation}
This may be proved by the Moser iteration method \cite{JM:1961}, for
example.

Finally, we give an estimate on the boundary H\"older continuity of
solutions of the mixed problem. For this estimate we consider domains
$ \dball x r = \ball x r $ with $ x \in \Omega$  and $ r < \dist(x, \partial
\Omega) $, or $ \dball x r = \ball x r \cap \Omega  $ with $ x \in \partial \Omega$
and $ r < r_0$. In the second case, we assume that $ \partial \Omega
\cap \ball xr $ lies in a hyperplane.
A study of H\"older continuity of solutions of elliptic equations may
be found in work of Stampacchia \cite{GS:1960}. Stampacchia gives a
general framework for studying H\"older continuity of
solutions using the method of De Giorgi \cite{EG:1957}. This framework is
applied to the mixed problem in the case where the
boundary of $D$ is the bi-Lipschitz image of a hyperplane of
co-dimension 2. Our assumptions allow for more general subdivisions of the
boundary.

\begin{theorem}\label{Holder} 
Let $ x\in \partial \Omega$ and assume 
that $  0 < r < r_0$. 
 Let $ u $ be a weak solution of the mixed problem in $   { \dball x r} $ 
 with zero data for the mixed problem on $ \partial
   {\dball x r} \cap \partial \Omega$. Then there exists an exponent
   $ \beta>0 $ such that
$$
| u(z) -u(y) |  \leq  C \left ( \frac { |z-y| } r \right ) ^ \beta
\sup _ { \dball x r } |u(y)| , \qquad z, y \in \dball x {r/2} .
$$
The constant $C$ and the exponent $\beta $ depend only on the
ellipticity constant $\lambda$ as well as  $M$ and
$n$. 
\end{theorem}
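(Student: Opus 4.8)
The plan is to establish the De Giorgi–Nash–Moser type oscillation decay directly, following the classical De Giorgi/Stampacchia machinery, but carried out in the geometry allowed by our hypotheses rather than assuming $\Lambda$ is a bi-Lipschitz image of a codimension-$2$ hyperplane. The key point is that all we really need is a \emph{density estimate} for the Dirichlet set: by \eqref{Dbig} (which follows from \eqref{NTA} via Lemma~\ref{CorkEverywhere}), for every $x\in\partial\Omega$ and $0<r<r_0$ we have $\sigma(\Delta_r(x)\cap D)\geq c\,r^{n-1}$ when $x$ is within $O(r)$ of $D$; and of course when $\Delta_r(x)\subset N$ the solution satisfies a homogeneous Neumann condition on all of $\partial\Omega\cap B_r(x)$, where the interior-type De Giorgi estimate applies with no boundary hypothesis at all beyond Lipschitz character. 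So the proof splits according to whether the ball sees the Dirichlet set or not.

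First I would record the two building blocks. (i) A Caccioppoli (energy) inequality: if $u$ is a weak solution of the mixed problem in $\Psi_{2\rho}(x_0)$ with zero Dirichlet data on $D\cap B_{2\rho}(x_0)$, then for $k\in\reals$ and cutoff $\eta$ supported in $B_{2\rho}(x_0)$,
$$
\int \eta^2 |\nabla (u-k)^\pm|^2\,dy \;\leq\; C\int |\nabla\eta|^2\,|(u-k)^\pm|^2\,dy,
$$
with the caveat that for the subsolution part $(u-k)^+$ with $k\geq 0$ one may test with $\eta^2(u-k)^+\in W^{1,2}_{D}$, while for $(u-k)^-$ with $k\le 0$ one uses $\eta^2(u-k)^-$; this is where the sign of the Dirichlet boundary value (namely $0$) is used. (ii) A Sobolev–Poincaré inequality on the level sets that incorporates the density of $D$: on a ball $B_\rho(x_0)$ meeting $D$ in a set of measure $\gtrsim\rho^{n-1}$ of the boundary, a function in $W^{1,2}$ vanishing on $D\cap B_\rho$ satisfies $\|v\|_{L^{2^*}(\Psi_\rho)}\le C\rho^{?}\|\nabla v\|_{L^2(\Psi_\rho)}$ with no mean-value correction — this is the standard Sobolev inequality with zero boundary trace on a set of positive relative capacity, and the capacity lower bound is exactly \eqref{Dbig} together with the Lipschitz (hence Ahlfors $(n-1)$-regular) character of $\partial\Omega$. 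When the ball lies entirely in $N$ we instead use the ordinary Poincaré inequality with mean value on the Lipschitz domain $\Psi_\rho(x_0)$, exploiting that $u$ is then a solution of the Neumann problem there.

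The heart of the argument is the De Giorgi iteration producing a decay of oscillation. With (i) and (ii) in hand one shows, for a suitable $\theta=\theta(\lambda,M,n)\in(0,1)$,
$$
\osc_{\Psi_{\rho/2}(x_0)} u \;\leq\; \theta\,\osc_{\Psi_\rho(x_0)} u,
$$
by the usual dichotomy. In the interior and in the pure-Neumann case this is Nash–Moser/De Giorgi verbatim (using $\osc$, since no pointwise boundary value is pinned). In the case where $D$ is seen with positive density, one does \emph{not} track oscillation but rather the sup and inf separately relative to $0$: because $u=0$ on $D\cap B_\rho(x_0)$, the measure of $\{u\leq 0\}\cap\Psi_\rho$ (and of $\{u\geq 0\}\cap\Psi_\rho$) is bounded below by $c\rho^n$ once one notes that $D$-density plus the corkscrew/Lipschitz geometry forces a definite-measure neighborhood inside $\Omega$ where the zero trace propagates via the Caccioppoli/Sobolev estimate; then the De Giorgi "good $\lambda$" lemma (level-set measure shrinks geometrically) applies to \emph{both} $(u-k)^+$ and $(u-k)^-$, yielding $\sup_{\Psi_{\rho/2}}u\le(1-\mu)\sup_{\Psi_\rho}u$ and the symmetric bound on $\inf$, hence the oscillation decay with $k=0$ always available as an admissible level. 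Iterating $\osc_{\Psi_{2^{-j}r}(x)}u\le\theta^j\,\osc_{\Psi_r(x)}u\le\theta^j\cdot 2\sup_{\Psi_r(x)}|u|$ and converting the geometric decay into the power $|z-y|/r)^\beta$ with $\beta=\log_2(1/\theta)$ gives the stated estimate for $z,y\in\Psi_{r/2}(x)$, the constant and exponent depending only on $\lambda$, $M$, $n$ as claimed.

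The main obstacle I anticipate is the bookkeeping in the mixed (Dirichlet-meets-Neumann) regime: one must verify that the zero-trace Sobolev inequality on $\Psi_\rho(x_0)$ holds with a constant uniform in how $\Lambda$ wiggles through $B_\rho(x_0)$, i.e.\ depending only on the relative-capacity lower bound \eqref{Dbig} and not on any graph structure of $\Lambda$. This is the place where the generality of our hypotheses (as opposed to the bi-Lipschitz-hyperplane assumption of Stampacchia's setup) must be cashed in, and it relies on the Maz'ya-type equivalence between relative capacity and measure for subsets of the Ahlfors-regular set $\partial\Omega$ together with the corkscrew condition \eqref{NTA} to guarantee that the portion of $D$ seen near $x_0$ is not "squeezed" against a low-dimensional set but carries full $(n-1)$-measure in a way that controls the trace. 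Once that uniform Sobolev inequality is isolated as a lemma, the remainder is routine De Giorgi iteration.
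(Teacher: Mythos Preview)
Your overall strategy---De Giorgi iteration built on a Caccioppoli inequality and a Sobolev--Poincar\'e inequality, with a case split according to whether the ball meets $D$---is exactly the paper's approach, carried out in the Ladyzhenskaya--Ural\cprime tseva framework. The paper packages the Caccioppoli estimate into membership in a class $\luspace(\dball x r,\gamma)$, proves a level-set shrinking lemma (if $|A_{k,r}|\le\theta_1 r^n$ then $\sup_{\dball x{r/2}}(u-k)\le H/2$), and then an oscillation-decay lemma $\osc_{\dball x r}u\le(1-2^{1-s})\osc_{\dball x{4r}}u$; the theorem follows by iteration.

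There is, however, one concrete error in your handling of the case $\dball x{2r}\cap D\neq\emptyset$. You claim that the zero trace on $D$ forces $|\{u\le 0\}\cap\Psi_\rho|\ge c\rho^n$, i.e.\ a definite \emph{interior} volume for the sublevel set, and you plan to feed this into the De Giorgi density lemma. This is false in general: a harmonic function vanishing on a boundary piece can be strictly positive throughout the interior (e.g.\ $u(x',x_n)=x_n$ in a half-ball), so the interior sublevel set can be empty. The trace being zero on a set of positive $(n-1)$-measure does not, by itself, propagate to an interior set of positive $n$-measure.

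The paper avoids this entirely. In the Dirichlet case it does \emph{not} try to control $|\dball x r\setminus A_{k,r}|$. Instead, after replacing $u$ by $-u$ if necessary so that $\bar M_{4r}=\tfrac12(M_{4r}+m_{4r})\ge 0$, all the levels $k=M_{4r}-\omega/2^t$ used in the iteration are nonnegative, and one applies the zero-trace Sobolev--Poincar\'e inequality
\[
(\ell-k)\,|A_{\ell,r}|^{1-1/n}\;\le\;C\int_{A_{k,r}\setminus A_{\ell,r}}|\nabla u|\,dy,\qquad k\ge 0,
\]
proved by applying the Sobolev inequality (with zero boundary values on a set of capacity $\gtrsim r^{n-2}$, which is exactly what \eqref{Dbig} supplies) to the truncation $v=\min\{(u-k)^+,\ell-k\}$. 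No complement-measure factor $|\dball x r|/|\dball x r\setminus A_{k,r}|$ appears, so no interior density is needed. Your final paragraph in fact identifies this zero-trace Sobolev inequality as the crux; you should route the Dirichlet case through it directly and drop the interior-measure claim.
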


\def\cprime{$'$}
The proof here will follow the method of de Giorgi \cite{EG:1957} as
given in the monograph of Ladyzhenskaya and Ural\cprime tseva
\cite{MR0244627}. Fix $ \dball  {x_0} R$ as above. We say that a
bounded function  $u $ lies in the space
$  \luspace ( \dball {x_0} R, \gamma)$ if for each
$\dball x s  \subset \dball {x_0} R$, $ \sigma \in (0,1)$ and $k$
as below we have
$$
\int _ { A_{ k,s-\sigma s} } |\nabla u | ^ 2 \, dy \leq \frac { \gamma }{
  \sigma ^ 2 s^ 2 } \sup _{ A_ { k,s}} ( u -k ) ^ 2|A_{ k,s}|.
$$
Here, $ A_ { k,s} = \{ y \in \dball x s : u(y ) > k \}$ with $k$  an
arbitrary real number if $ \partial  \dball x s  \cap D = \emptyset$ and
$k \geq 0$ if $  \partial  \dball x s  \cap D \neq  \emptyset$.

\begin{lemma}
\label{Lemma0}
Let $x\in \partial\Omega$ and let $r$ satisfy
$0<r<r_0$.
If $ u$ is a solution  of the elliptic operator $L$ in
$ \dball x r$ with zero data for the mixed problem on
$ \partial \dball x r \cap \partial \Omega$, then
$ u \in \luspace ( \dball x r , \gamma)$ and
$ \gamma $ depends only on the ellipticity constant for $ L$.
\end{lemma}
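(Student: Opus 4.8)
The plan is to verify the two defining inequalities of the class $\luspace(\dball x r,\gamma)$ directly from the weak formulation by using Caccioppoli-type (energy) estimates with a suitable cutoff function. Fix a subdomain $\dball{z}{s}\subset\dball x r$, a truncation level $k$ (nonnegative if $\partial\dball z s\cap D\neq\emptyset$, arbitrary otherwise), and $\sigma\in(0,1)$. Let $\eta\in C_0^\infty(\reals^n)$ be a cutoff with $\eta\equiv 1$ on $\ball z{s-\sigma s}$, $\mathrm{supp}\,\eta\subset \ball z s$, $0\le\eta\le 1$, and $|\nabla\eta|\le C/(\sigma s)$. Set $w=(u-k)^+$, the positive part, and use $\phi=\eta^2 w$ as a test function. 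The key point for the boundary behavior is that $\phi$ is an admissible test function in $W^{1,2}_{\partial\dball z s\cap(D\cup\Omega)}(\dball z s)$: it vanishes on the part of $\bdry\dball z s$ interior to $\Omega$ because of $\eta$, and on $D\cap\partial\dball z s$ it vanishes because $u\in W^{1,2}_{\partial\dball z s\cap D}$ forces $(u-k)^+=0$ there when $k\ge 0$ (when $D\cap\partial\dball z s=\emptyset$ no sign restriction is needed and the cutoff alone handles it). On $N$ the Neumann condition is natural, so no boundary term appears.

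Next I would plug $\phi=\eta^2 w$ into $\int A\nabla u\cdot\nabla\phi=0$ and expand: $\nabla\phi=\eta^2\nabla w+2\eta w\nabla\eta$, and note $\nabla u\cdot\nabla w=|\nabla w|^2$ on $\{u>k\}$ while both are zero a.e.\ on $\{u\le k\}$. Ellipticity gives $\lambda\int\eta^2|\nabla w|^2\le -2\int \eta w\,A\nabla w\cdot\nabla\eta$, and Cauchy--Schwarz together with the bound on $A$ and the standard $2\varepsilon$-trick (absorb $\int\eta^2|\nabla w|^2$ into the left side) yield
$$
\int_{\dball z s}\eta^2|\nabla w|^2\,dy\le \frac{C}{\lambda^2}\int_{\dball z s}w^2|\nabla\eta|^2\,dy\le \frac{C}{\lambda^2\sigma^2 s^2}\int_{A_{k,s}}(u-k)^2\,dy.
$$
Since $\eta\equiv 1$ on $\ball z{s-\sigma s}$, the left side dominates $\int_{A_{k,s-\sigma s}}|\nabla u|^2\,dy$. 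Finally bound the right-hand integrand crudely by its supremum: $\int_{A_{k,s}}(u-k)^2\,dy\le \bigl(\sup_{A_{k,s}}(u-k)\bigr)^2|A_{k,s}|$. This produces exactly the required inequality with $\gamma=C/\lambda^2$ depending only on the ellipticity constant, as claimed. (One should apply the same argument with $k$ replaced by $-k$ and $u$ by $-u$ to handle the case $\partial\dball x s\cap D=\emptyset$ and negative $k$; note $-u$ solves the same type of problem since $A^t=A$.)

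The main obstacle, and the step deserving the most care, is the admissibility of the test function $\phi=\eta^2(u-k)^+$ in the correct Sobolev space with partial vanishing on $D$ — in particular, justifying that $(u-k)^+$ inherits the vanishing trace of $u$ on $D\cap\partial\dball z s$ when $k\ge 0$, and that truncation and multiplication by the Lipschitz cutoff preserve membership in $W^{1,2}_{\partial\dball z s\cap(D\cup\Omega)}(\dball z s)$. This is a standard fact about $W^{1,2}$ spaces with partial Dirichlet conditions (the positive part of a function vanishing on a set again vanishes there, and one can approximate by smooth functions supported away from $\bar D$), but it is where the geometry of the decomposition and the definition of $W^{1,2}_D$ genuinely enter; everything else is the routine Caccioppoli computation sketched above.
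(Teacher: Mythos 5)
Your proof is correct and takes essentially the same approach as the paper: test the weak formulation with $\phi=\eta^2(u-k)^+$, expand, use ellipticity and Young's inequality to absorb, and bound $\int_{A_{k,s}}(u-k)^2$ by $\bigl(\sup_{A_{k,s}}(u-k)\bigr)^2|A_{k,s}|$. One small remark: when $\partial\dball z s\cap D=\emptyset$ and $k<0$, the test function $\phi=\eta^2(u-k)^+$ is already admissible directly (vanishing on the interior boundary is supplied by $\eta$, and there is no Dirichlet portion to worry about), so the replacement of $u$ by $-u$ you mention at the end is not needed.
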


\note{
We might consider omitting this proof as it is the same as in
Ladyzhenskaya.  If we do, we need to define $x^+$ before
}

\begin{proof}
Fix $ \dball y s$ which is contained in $ \dball x r$ and
$ \sigma \in (0,1)$. Let $\eta$ be a smooth cutoff function that
is supported in $ \ball y s $,  with
$ \eta =1$ on $\ball y { s-\sigma s}$ and satisfies $ |\nabla \eta |
\leq C_0 / ( \sigma s)$.   Let $u ^+ = \max(u,0)$  denote the
positive part of a function $u$.
If $k$ is as in the definition of the space $ \luspace$, then $ ( u-k
)^+\eta ^2$ may be used as a test function in the weak formulation of $
\div A \nabla  u =0$ and thus we have
$$
\int_ { \dball y s}  \eta ^ 2 A \nabla u \cdot \nabla  (u - k) ^ +\, dy
= -2 \int  _ { \dball y s} \eta
( u-k ) ^ +  A \nabla  u  \cdot  \nabla \eta \, dy .
$$
Using the symmetry and non-negativity  of $A$  and Young's inequality,
we obtain 
$$
2  \int _ { A_ { k,  s} } \eta^2 A \nabla u \cdot \nabla u  \, dy
\leq  \int_ { A_ {k,s}} \frac 1 2  \eta ^2 A \nabla u \cdot \nabla u  +
2  ( u -k ) ^ 2 A \nabla \eta \cdot \nabla \eta \, dy. 
$$
 Subtracting
the first term on  the  right and using the ellipticity of $A$ gives
$$
\frac \lambda 2 \int _ { A_ {k,s}}\eta^2 |\nabla u | ^ 2  \, dy \leq \frac 2 {\lambda } \int _ { A_
  {k,s}}( u-k ) ^ 2|\nabla \eta | ^ 2 \, dy .
$$
Recalling the estimate  $ |  \nabla \eta | \leq C_0 / ( \sigma s)$ and 
that $ \eta =1 $ on $ \ball y { s- \sigma s}$, we conclude that
%
$$
\int _ { A_ {k, s- \sigma s } } |\nabla u | ^ 2 \, dy
\leq \frac \gamma { ( \sigma s ) ^ 2} | A _{k,s} | \sup _{ A_ {k,s}}(
u-k)^2.
$$
Thus $u$ is in the space $\luspace (\dball x r , \gamma)$ with $
\gamma = 4 C_0^2/ \lambda ^2$.
 \end{proof}

\begin{lemma} \label{Lemma1}
Let $x\in \partial\Omega$ and assume that $0<r<r_0$.
If $u  \in \luspace (\dball x r , \gamma)$ and   $k$ is
as in the definition of this space, 
then there exists $ \theta _1=\theta_1(n,\gamma)$ such
that if $|A_{ k, r} | \leq \theta _1 r^n$ and $H = \sup_{ A_{ k,r} }
(u-k) > 0 $, then
$|A _ { k+ H/2, r/2} | =0$
and hence
$$
\sup _ { \dball x {r/2} } (u-k ) \leq H/2.
$$
\end{lemma}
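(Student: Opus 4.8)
\emph{Plan of proof.} This is the classical De~Giorgi iteration: I would work on a dyadic family of shrinking balls and rising levels, extract a nonlinear recursion for the measures of the corresponding super-level sets, and finish with the standard fast-convergence lemma for numerical sequences. For $j = 0,1,2,\dots$ put
\[
r_j \;=\; \frac r2 + \frac r{2^{\,j+1}}, \qquad k_j \;=\; k + \frac H2 - \frac H{2^{\,j+1}},
\]
so $r_0 = r$, $k_0 = k$, $r_j \downarrow r/2$, and $k_j \uparrow k + \tfrac H2$; since $k_j \ge k$ each $k_j$ is an admissible level in the definition of $\luspace$ (nonnegative whenever $k$ is). Write $A_j = \{\, y \in \dball x {r_j} : u(y) > k_j \,\}$, $w_j = (u-k_j)^+$, and normalize $\varphi_j = |A_j|/r^n$. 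Since $r_j \ge r/2$ and $k_j \le k + \tfrac H2$ we have $A_{k+H/2,\,r/2} \subseteq A_j$ for every $j$, so it suffices to prove $\varphi_j \to 0$.

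Three estimates give the recursion. First, the hypothesis $u \in \luspace(\dball x r, \gamma)$ applied on $\dball x {r_j}$ at level $k_j$ with $\sigma r_j = r_j - r_{j+1} = r\,2^{-j-2}$, together with the crude bound $\sup_{A_j}(u - k_j) \le H$ (valid because $A_j \subseteq A_{k,r}$), is the Caccioppoli estimate $\int_{\dball x {r_{j+1}}} |\nabla w_j|^2\,dy \le C\gamma\,4^{\,j}H^2 r^{-2}\,|A_j|$. Second, after flattening the boundary $\dball x {r_{j+1}}$ is (a piece of) a half-ball, so the Sobolev inequality gives $\|w_j\|_{L^q(\dball x {r_{j+1}})} \le C(\|\nabla w_j\|_{L^2} + r^{-1}\|w_j\|_{L^2})$ with $q = 2n/(n-2)$ when $n \ge 3$; bounding the lower-order term by $\|w_j\|_{L^2(\dball x {r_{j+1}})}^2 \le H^2 |A_j|$ (same $|A_j|$-homogeneity as the gradient term) and then using H\"older on $\{w_j > 0\}$ yields $\int_{\dball x {r_{j+1}}} w_j^2\,dy \le \|w_j\|_{L^q}^2\,|A_j|^{\,1-2/q} \le C\gamma\,4^{\,j}H^2 r^{-2}\,|A_j|^{\,1+2/n}$. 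Third, since $w_j = u - k_j \ge k_{j+1} - k_j = H\,2^{-j-2}$ on $A_{j+1} \subseteq \dball x {r_{j+1}}$, we have $(H 2^{-j-2})^2|A_{j+1}| \le \int_{\dball x {r_{j+1}}} w_j^2\,dy$. Combining these and dividing by $r^n$ — the powers of $r$ cancel exactly because the Sobolev gain is precisely $2/n$ — gives
\[
\varphi_{j+1} \;\le\; C\gamma\,16^{\,j}\,\varphi_j^{\,1+2/n}.
\]

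To conclude I would apply the standard iteration lemma (see, e.g., Ladyzhenskaya--Ural\cprime tseva \cite{MR0244627}): a nonnegative sequence with $\varphi_{j+1} \le C_0 b^{\,j}\varphi_j^{1+\alpha}$, $C_0,\alpha>0$, $b>1$, and $\varphi_0 \le C_0^{-1/\alpha}b^{-1/\alpha^2}$ tends to $0$. With $\alpha = 2/n$, $b = 16$, $C_0 = C\gamma$, set $\theta_1 = (C\gamma)^{-n/2}16^{-n^2/4}$, which depends only on $n$ and $\gamma$; then $|A_{k,r}| \le \theta_1 r^n$ is exactly $\varphi_0 \le \theta_1$, so $\varphi_j \to 0$ and hence $|A_{k+H/2,\,r/2}| \le \lim_j |A_j| = 0$. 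This says precisely that $u \le k + \tfrac H2$ a.e.\ on $\dball x {r/2}$, i.e.\ $\sup_{\dball x {r/2}}(u - k) \le H/2$.

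The routine part is everything above; the two points that need care are that the truncations $w_j$ vanish only on $D \cap \partial\Omega$, not on all of $\partial(\dball x {r_{j+1}})$, so there is no Poincar\'e inequality with which to delete the lower-order Sobolev term (it is, however, harmless, since it carries the same power of $|A_j|$ as the gradient term); and that in dimension $n = 2$ the critical exponent $q = \infty$ is unavailable, so one must instead use the sharp inequality $\|v\|_{L^2} \le C\|\nabla v\|_{L^1}$, applied after an even reflection of $w_j$ across the flat Neumann portion of the boundary, in order to reach the critical Sobolev gain that makes the recursion scale invariant.
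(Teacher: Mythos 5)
Your proof is correct and follows the same overall De~Giorgi iteration scheme as the paper: same dyadic radii $r_j$ and levels $k_j$, same Caccioppoli input from the $\luspace(\dball x r,\gamma)$ hypothesis, and the same fast-convergence lemma from Ladyzhenskaya--Ural\cprime tseva at the end. The one genuine technical divergence is in the Sobolev step. The paper invokes the truncated Sobolev--Poincar\'e inequalities of Lemma~\ref{SPLemma}, which are the $W^{1,1}\hookrightarrow L^{n/(n-1)}$ embeddings applied to a two-level truncation; the constant in inequality~(\ref{SP2A}) involves the factor $|\dball x r|/|\dball x r\setminus A_{k,r}|$, and this is the first of two reasons the paper shrinks $\theta_1$. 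Combining with Cauchy--Schwarz, the paper's recursion is $\varphi_{h+1}^{\,1-1/n}\le C\gamma^{1/2}2^{2h+4}\varphi_h$. You instead use the full $W^{1,2}\hookrightarrow L^{2n/(n-2)}$ on the half-ball (with its unavoidable lower-order term, which you correctly observe carries the same $|A_j|$-homogeneity as the gradient term and so costs nothing), followed by H\"older on $\{w_j>0\}$, landing on $\varphi_{j+1}\le C\gamma\,16^{\,j}\varphi_j^{\,1+2/n}$. Both recursions feed the same iteration lemma; the exponents differ for $n\ge3$ ($n/(n-1)$ versus $1+2/n$) but that is immaterial. What your route buys is that $\theta_1$ only plays its role in the iteration lemma, not in controlling a Sobolev constant. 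What it costs is dimensional uniformity: the $W^{1,1}\hookrightarrow L^{n/(n-1)}$ step in the paper treats $n=2$ and $n\ge3$ identically, whereas your $2n/(n-2)$ exponent degenerates at $n=2$ and you must patch it. Your proposed $n=2$ patch is a bit overengineered; rather than reflecting and invoking the critical $W^{1,1}\hookrightarrow L^2$, it is simpler to note that in two dimensions $W^{1,2}(\dball x {r_{j+1}})\hookrightarrow L^q$ for any finite $q>2$ with a scale-invariant constant, and any such $q$ yields a usable gain $|A_j|^{1-2/q}$ and a workable recursion. That repair aside, your argument is sound and in the same spirit as the paper's.
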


Before giving the proof of Lemma \ref{Lemma1}, we need
to give two versions of the Sobolev-Poincar\'e inequality.

\begin{lemma}  \label{SPLemma}
Let $x\in \Omega$ and let $r$ satisfy $0<r<r_0$.
If $u \in W^ { 1,1 } ( \dball x r )$ and  $ \ell > k$, then 
\begin{equation}\label{SP2A}
( \ell -k ) |A_ { \ell,r} | ^ { 1- 1/n} \leq C \frac {| \dball x r |}
{| \dball x r \setminus A _ {k  ,r } |}\int _ { A_{ k,r} \setminus A_
  {\ell,r } }|\nabla u | \, dy .
\end{equation}

If  $ \ball x { r/2} \cap D \neq  \emptyset$,  $u \in W^ {1,1} ( \dball x
r)$, and $ k \geq 0$,
then
\begin{equation}\label{SP2B}
( \ell -k ) |A_ { \ell,r} | ^ { 1- 1/n} \leq C
\int _ { A_{ k,r} \setminus A_
  {\ell,r } }|\nabla u | \, dy .
\end{equation}
\end{lemma}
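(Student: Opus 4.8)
The plan is to establish both inequalities from the same construction: truncate $u$ between the levels $k$ and $\ell$, and feed the truncation into a first-order Sobolev--Poincar\'e inequality on $\dball x r$. The two parts differ only in which form of that inequality is available.

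For both parts, set $w=\min\{(u-k)^+,\ell-k\}$. Then $w\in W^{1,1}(\dball x r)$, $0\le w\le\ell-k$, $w\equiv\ell-k$ on $A_{\ell,r}$, $w\equiv 0$ on $\dball x r\setminus A_{k,r}$, and $\nabla w=\chi_{A_{k,r}\setminus A_{\ell,r}}\nabla u$ a.e., so that $\int_{\dball x r}|\nabla w|\,dy=\int_{A_{k,r}\setminus A_{\ell,r}}|\nabla u|\,dy$. For (\ref{SP2A}) I would use the relative Sobolev--Poincar\'e inequality on $\dball x r$, which is a ball in the interior case and, after flattening, a half-ball in the boundary case, hence a Lipschitz domain with constants depending only on $n$ and $M$: writing $m$ for the mean of $w$ over $\dball x r$,
$$\Big(\int_{\dball x r}|w-m|^{n/(n-1)}\,dy\Big)^{(n-1)/n}\le C\int_{\dball x r}|\nabla w|\,dy.$$
Because $w$ vanishes off $A_{k,r}$ and $w\le\ell-k$, one has $m\le(\ell-k)\,|A_{k,r}|/|\dball x r|$, so on $A_{\ell,r}$ the integrand on the left is $w-m=(\ell-k)-m\ge(\ell-k)\,|\dball x r\setminus A_{k,r}|/|\dball x r|>0$. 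Discarding all of $\dball x r$ except $A_{\ell,r}$ on the left and rearranging yields (\ref{SP2A}).

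For (\ref{SP2B}) the extra hypotheses let us dispense with the factor $|\dball x r|/|\dball x r\setminus A_{k,r}|$. In every application of the lemma $u$ has zero Dirichlet data on $D\cap\partial\dball x r$, and since $k\ge0$ the truncation $w$ then vanishes on $D\cap\partial\dball x r$. Picking $x_1\in D$ with $|x-x_1|<r/2$ and applying the corkscrew estimate (\ref{Dbig}) of Lemma~\ref{CorkEverywhere} at $x_1$ with radius $r/2$ gives $\sigma(D\cap\partial\dball x r)\ge\sigma(\sball{x_1}{r/2}\cap D)\ge c\,r^{n-1}$, a fixed fraction of $\sigma(\partial\dball x r)\approx r^{n-1}$. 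Hence $w$ vanishes on a portion of $\partial\dball x r$ of definite relative measure, and the corresponding Sobolev--Poincar\'e inequality for such functions gives
$$\Big(\int_{\dball x r}|w|^{n/(n-1)}\,dy\Big)^{(n-1)/n}\le C\int_{\dball x r}|\nabla w|\,dy;$$
restricting the left side to $A_{\ell,r}$, where $w\equiv\ell-k$, produces (\ref{SP2B}).

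The truncation bookkeeping is routine, and both Sobolev--Poincar\'e inequalities are scale invariant, so no rescaling is needed. The one point requiring care is producing those two inequalities with constants depending solely on $n$ and $M$: the relative one because $\dball x r$ is a ball or flattened half-ball, and the vanishing-trace one because the corkscrew condition (\ref{Dbig}) forces the Dirichlet part of $\partial\dball x r$ to occupy a fixed proportion of the boundary. This last link to the corkscrew condition is the step I would expect to need the most attention.
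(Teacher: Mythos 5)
Your construction coincides with the paper's: the authors apply a Sobolev inequality to the very same truncation $v=\min\{(u-k)^+,\ell-k\}$, and for (\ref{SP2B}) they likewise invoke the lower bound (\ref{Dbig}) on $\sigma(\sball{\cdot}{\cdot}\cap D)$ to secure a Poincar\'e--Sobolev inequality with constants depending only on $n$ and $M$; the paper's proof is just extremely terse and refers to Section~3 of \cite{OB:2009} for the details you have spelled out. You are also right to flag that (\ref{SP2B}) as written needs the additional (unstated) hypothesis that $u$ vanishes on $D\cap\partial\dball x r$ in the trace sense, so that the truncation has zero trace there when $k\geq0$; without this, a constant $u>k$ furnishes a counterexample. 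That hypothesis holds in every application (it is built into the definition of the class $\luspace$ via the sign restriction on $k$), so your reading is the intended one.
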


\begin{proof}
In each case, the estimate follows by applying a Sobolev inequality  to
the function
$$
v ( y ) = \left \{ \begin{array}{ll}
( u(y) -k ) ^+  , \qquad &  u (y ) \leq \ell  \\
\ell -k  , & u(y )  >  \ell
\end{array}
\right.
$$
The estimate (\ref{SP2B}) uses our assumption that $D$ satisfies
(\ref{Dbig}).  See Section  3  of \cite{OB:2009}, for example.
\end{proof}

\note{
Claim: If $ u $ is in $W^ { 1, 2}_D( \dball x {2r} )$ and $ \partial \dball x
r  \cap D  \neq \emptyset$, then
$$
\left ( \int _ {   \dball x {2r} } |u|^ { n /(n-1)}\, dy\right
  ) ^ { ( n-1)/n}  \leq CM \int _ { \dball x {2r}}|\nabla u | \, dy
  .
$$

Recall that we are assuming that $ \partial \Omega$ is a hyperplane
near $x$.

1. Since the inequality is scale invariant, we may assume that $ r
=1$.

2. We let $ E = \ball x {2}  \cap D$ and set $ \tilde E = \{ ( y', y _n ) :
( y' , 0 ) \in E \} \cap \dball x {2}$.

We claim that
$$
\average _{ \tilde E } u ( y ) \, dy \leq CM \int _ { \dball x 2}
|\nabla u | \, dy .
$$

To establish the claim, recall that $u$ vanishes on $E$ and thus we
have
$$
u(y', y _n )  = \int _ 0 ^ { \sqrt {4 - |y'|^2}}\frac { \partial u } {
  \partial y _n }( y ', z_n ) \, dz_n , \qquad y \in \tilde E.
$$
Integrating this identity  for $y$ in $ \tilde E$  gives
$$
\int _ { \tilde E } u \, dy = \int _ { \tilde E } \int _0 ^ { \sqrt {
    4 - |y'|^2}} \frac { \partial u }{ \partial y _n } ( y', z_n ) \,
dz_n \, dy
\leq C  \int _ { \dball x 2 } |\nabla u | \, dy .
$$
We observe that the  assumption on the boundary, (\ref{Dbig})
implies that we have
$|\tilde E| \geq CM^ { -1}$ and the claim follows once we divide by $
|\tilde E|$.

3. We define an extension operator $T : \sobolev  2 1   ( \dball x 2)
  \rightarrow \sobolev 2 1 ( \reals ^n)$. We will write $T= T_1\circ T_2$
  where
$$
T _ 1 u (y) = \left \{ \begin{array}{ll} u(y), \qquad & y _ n \geq 0
  \\
u (y', -y _n ) , \qquad & y _ n < 0
\end{array}
\right.
$$
and then
$$
T_2 u ( y ) = \left \{ \begin{array}{ll} u(y) , \qquad & |y| < 2\\
\eta (y)u ( 2y / |y|^2) , \qquad & |y| \geq 2 .
\end{array}
\right.
$$
In the definition of $T_2$, $ \eta $ is a smooth function which is 1
if $ |y | < 2$ and 0 if $ |y | > 3 $.

By a calculation, one may show that
$$
\int _{ \reals ^ n}| \nabla T u | \, dy \leq C \int  _ { \dball x 2 }
|\nabla u | + |u |\, dy .
$$

4. We use the Sobolev inequality and the extension operator from point
3 to obtain that
\begin{eqnarray*}
\left ( \int _ { \dball x 2 } |u |^ { n / ( n-1)}\, dy \right )  ^ { (
  n-1)/n }
 & \leq &
\left ( \int _ {  \reals ^ n  } |T u |^ { n / ( n-1)}\, dy \right)  ^ { (
  n-1)/n }  \\
& \leq &
C  \int _ {  \reals ^ n  } |\nabla T u | \, dy  \\
& \leq &  C \int _ { \dball x 2 } |\nabla u | + |u | \, dy .
\end{eqnarray*}

Finally, we estimate
$$
\int _ { \dball x 2 } |u| \, dy \leq \int _ { \dball x 2 } |u -\bar u
_ { \tilde E}| \, dy + |\dball x 2| |\bar u _ { \tilde E} |
\leq C\int _ { \dball x 2} |\nabla u | \, dy + CM \int _ {\dball x 2}
|\nabla u | \, dy .
$$
Here, we are using $ \bar u _S$ to denote the average of the function
$u$ on the set $S$.
The last line uses the Poincar\' e inequality  (see
\cite[p.~164]{GT:1983}, for example) and the estimate in point 1.
The desired estimate follows.
}

\begin{proof}[Proof of Lemma \ref{Lemma1}]   Fix $\dball x r $ and let $u
  \in \luspace(\dball x r    , \gamma)$. For $h =0,1,2,\dots$, set
$$
r_h  =  r/2 + r/(2^ { h+1}) \qquad\mbox{and}\qquad
k_h  =    k + H/2 - H /  (2 ^ { h +1}),
$$
where $H$ is as in the statement of Lemma \ref{Lemma1}.
We will use the notation $\sigma _h = ( r_h - r _ { h+1} ) / r_h$. Since $ u
\in \luspace(\dball x r,\gamma)$, we have
\begin{equation} \label{eqnA}
\int _ { A_ { k _h , r _ { h+1} }}|\nabla u |^2 \, dy
\leq \gamma \frac 1 { ( r_ h - r _ {h +1} ) ^2} \sup_{ A _ { k_h, r _h
 }   } ( u - k_h  ) ^ 2
\leq \gamma  \frac { 2 ^ { 2h +4} } {r^ 2} H ^2 | A_ { k_h, r _h} |
.
\end{equation}
We use the inequality (\ref{SP2A}) from Lemma \ref{SPLemma} to obtain
that
\begin{equation} \label{eqnB}
(k_ { h+1}-k_h) | A_{ k_{h+1}, r _ { h+1}}| ^ { 1-1/n} \leq C \int _ {
  A_ { k _h, r_ { h+1}}}|\nabla u | \, dy,
\end{equation}
where we choose  $ \theta _1$  small in
order to obtain a uniform bound on the constant in the Sobolev
inequality in (\ref{SP2A}).
Now (\ref{eqnA}) and (\ref{eqnB}) give that
\begin{eqnarray*}
\frac H { 2 ^ { h +2}} | A_{ k _ { h+1}, r _ { h+1}}|^ { 1 - 1/n}
& \leq &  C \int _ { A_ { k _{h} , r _{h+1} } } |\nabla u | \, dy  \\
& \leq &  C \left ( \int _ { A_ { k_h, r_{h+1}}} |\nabla u | ^ 2 \, dy \right
) ^ { 1/2} | A_ { k _h, r _ {h +1}} | ^ { 1/2} \\
& \leq &  C \gamma ^ { 1/2 } 2 ^ { h + 2} \frac H r | A_ { k _h, r _ h } |
.
\end{eqnarray*}
Thus, we may conclude that
\begin{equation} \label{Recursion}
\left ( \frac { | A _ { k_ {h +1}, r _ { h + 1} }| } { r^ n }  \right)
^ { 1- 1/n} \leq C  \gamma ^ {1/2}
2 ^ { 2h + 4} \frac{ | A_ { k _h, r_h } | } { r ^ n } .
\end{equation}
According to Lemma 4.7 in  the monograph of Ladyzhenskaya and
\def\cprime{$'$}
Ural\cprime tseva
\cite[p.~66]{MR0244627}  if $ \theta _1$ is
sufficiently small, then the recursion relation (\ref{Recursion})
implies that
$
\lim _  { h \rightarrow \infty }  { |A_ { k _h, r _h } |}/{r^ n } = 0
.
$
\end{proof}

We now give the main step in the proof of H\"older continuity of
solutions of the mixed problem.   Before stating the result, we
introduce the notation $ \osc _E u = \sup _E u - \inf _E u $  for the {\em oscillation } of a
real-valued function
$u$ on a set $E$.

\begin{lemma}\label{MainStep}
Let $x\in\partial\Omega$ and assume that $0<r<r_0$.
Let $u$ be a solution of $Lu =0$ in $ \Psi _r (x)$ and  suppose that $u$ has
zero data for the mixed problem on $ \partial \Psi _r (x)  \cap
\partial \Omega$. Then there exists an integer $s$ such that
$$
\osc _ {\dball x r}  u  \leq ( 1 - 2 ^ { 1-s} ) \osc _{\dball x {4r} } u
.
$$
\end{lemma}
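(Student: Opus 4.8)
The plan is to carry out the standard De Giorgi oscillation argument, using Lemma \ref{Lemma1} for the halving step and the Sobolev--Poincar\'e inequalities of Lemma \ref{SPLemma} to drive the measure of a super-level set below the threshold $\theta_1$ demanded by Lemma \ref{Lemma1}. Write $\omega=\osc_{\dball x{4r}}u$; we may assume $\omega>0$. Set $\mu=\sup_{\dball x{4r}}u$ and $m=\inf_{\dball x{4r}}u$. By Lemma \ref{Lemma0}, both $u$ and $-u$ lie in $\luspace(\dball x{4r},\gamma)$, hence also in $\luspace(\dball x{2r},\gamma)$.

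First I isolate the iteration. Fix $w\in\{u,-u\}$, put $\mu'=\sup_{\dball x{4r}}w$, fix a base level $K_0<\mu'$, and set $\ell_j=\mu'-2^{-j}(\mu'-K_0)$, so that $\ell_j\uparrow\mu'$ and $\ell_{j+1}-\ell_j=2^{-j-1}(\mu'-K_0)$. Let $a_j=|A_{\ell_j,2r}|$, where $A_{k,s}=\{y\in\dball xs:w(y)>k\}$. Applying the appropriate inequality of Lemma \ref{SPLemma} at the consecutive levels $\ell_j<\ell_{j+1}$ on the ball $\dball x{2r}$, then Cauchy--Schwarz, and then the energy bound defining $\luspace(\dball x{4r},\gamma)$ on the ball $\dball x{4r}$ with $\sigma=\tfrac12$ (which bounds $\int_{A_{\ell_j,2r}}|\nabla w|^2$ by $C\gamma r^{n-2}(2^{-j}(\mu'-K_0))^2$, since $\sup_{A_{\ell_j,4r}}(w-\ell_j)\le\mu'-\ell_j$ and $|A_{\ell_j,4r}|\le Cr^n$), one reaches $a_{j+1}^{2-2/n}\le C\gamma r^{n-2}(a_j-a_{j+1})$ with $C$ depending only on $n$. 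Since $a_j$ is nonincreasing and $a_0\le|\dball x{2r}|\le Cr^n$, this forces $a_j-a_{j+1}\ge cr^n$ at every step for which $a_{j+1}>\theta_1(2r)^n$; hence there is an integer $N_0=N_0(n,\gamma)$ with $a_{N_0}\le\theta_1(2r)^n$. Lemma \ref{Lemma1}, applied on $\dball x{2r}$ at the level $\ell_{N_0}$, then gives $\sup_{\dball x r}w\le\ell_{N_0}+\tfrac12(\mu'-\ell_{N_0})=\mu'-2^{-N_0-1}(\mu'-K_0)$ (the case of nonpositive $H$ being even better).

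It remains to choose $w$ and $K_0$ so that (i) every $\ell_j$ is an admissible level for the spaces $\luspace$ in play, (ii) the correct member of Lemma \ref{SPLemma} is available, and (iii) $\mu'-K_0\gtrsim\omega$. \emph{If $B_{4r}(x)\cap D=\emptyset$} (the locally pure Neumann case), there is no restriction on admissible levels; let $w=u$ if $\{u>\tfrac{\mu+m}2\}\cap\dball x{2r}$ has measure at most $\tfrac12|\dball x{2r}|$ and $w=-u$ otherwise (at least one alternative holds), and let $K_0$ be the midpoint of the range of $w$ over $\dball x{4r}$. Then $\mu'-K_0=\tfrac12\omega$ and $\{w\le\ell_j\}\cap\dball x{2r}$ has measure at least $\tfrac12|\dball x{2r}|$, so (\ref{SP2A}) applies with its geometric factor bounded by $2$; the mechanism yields $\sup_{\dball x r}w\le\mu'-2^{-N_0-2}\omega$, which gives $\osc_{\dball x r}u\le(1-2^{-N_0-2})\omega$ (an improvement of $\sup u$ if $w=u$, of $\inf u$ if $w=-u$). \emph{If $B_{4r}(x)\cap D\neq\emptyset$}, then the corkscrew condition (\ref{NTA}) --- via Lemma \ref{CorkEverywhere}, together with Lemma \ref{newball} when $\delta(x)$ is large compared to $r$ --- provides a surface ball contained in $D$ at distance comparable to $r$ from $x$, so the trace-free inequality (\ref{SP2B}) is available at every nonnegative level. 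Take $K_0=0$ and let $w\in\{u,-u\}$ be the one with $\mu'=\sup_{\dball x{4r}}w\ge\tfrac12\omega$ (one of $\sup u$, $-\inf u$ is, since their sum is $\omega$); the level $0$ is admissible because ``zero mixed data'' means $u\in W^{1,2}_{\partial\dball x{4r}\cap D}(\dball x{4r})$, so $u^+$ and $(-u)^+$ vanish on $D\cap B_{4r}(x)$ in the sense used in Lemmas \ref{Lemma0} and \ref{SPLemma}. Now $\sup_{\dball x r}w\le\mu'-2^{-N_0-1}\mu'\le\mu'-2^{-N_0-2}\omega$, and again $\osc_{\dball x r}u\le(1-2^{-N_0-2})\omega$. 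In every case, setting $s=N_0+3$ gives $\osc_{\dball x r}u\le(1-2^{1-s})\osc_{\dball x{4r}}u$.

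The technical heart is the case $B_{4r}(x)\cap D\neq\emptyset$: near the Dirichlet portion of the boundary negative levels are forbidden, so the whole iteration must be anchored at $k=0$ and must use the trace-free Sobolev--Poincar\'e inequality (\ref{SP2B}), whose validity at scale $r$ relies on the corkscrew condition producing a definite piece of $D$ inside a ball comparable to $B_r(x)$. Matching the radii in that geometric step to the scales $r$ and $4r$ of the statement may require an auxiliary intermediate scale --- for instance, first disposing of the case $\delta(x)\gtrsim r$, where Lemma \ref{newball} forces the local boundary piece to be entirely $D$ or entirely $N$ --- but this is routine once the iteration above is in place.
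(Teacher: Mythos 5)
Your proposal is correct and follows essentially the same De Giorgi argument as the paper: Lemma \ref{Lemma0} for the Caccioppoli bound, Lemma \ref{SPLemma} at consecutive dyadic levels followed by Cauchy--Schwarz to obtain the measure recursion, termination of the recursion to reach the threshold $\theta_1$ needed in Lemma \ref{Lemma1}, and the split into a pure-Neumann case (where the midpoint normalization makes (\ref{SP2A}) applicable) and a near-Dirichlet case (where levels are anchored at $0$ so that (\ref{SP2B}) applies). The only cosmetic difference is how termination is phrased — you observe that each step decreases $|A_{\ell_j,2r}|$ by at least $cr^n$, while the paper sums the recursion inequalities and chooses $s$ large; both are the same estimate.
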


\begin{proof}
Since $\osc u = \osc (-u)$, it suffices to prove the Lemma for either
$u$ or $-u$. We will take advantage of this in the proof below.  We
define
\begin{eqnarray*}
M_r & =  & \sup \{ u(x): x \in \dball x r\} \\
m _ r & = & \inf \{ u(x) : x\in \dball x r \} \\
\omega _r&  =  & M_r - m _r = \osc _ { \dball x r } u \\
\bar M _r & = & \frac 1 2 ( M_r + m _r ).
\end{eqnarray*}
In what follows, set $\omega = \omega _ {4r} $ and
$$
D_t = A _ { M _ {4r} - \omega /2^ t , 2r} \setminus A _ { M _ { 4r} -
  \omega /2 ^ { t+1} , 2r } , \qquad t =1,2,\dots, s,
$$
where $s$ remains to be determined.

There are a few details that are different in the cases when
$\dball x {2r} \cap D
\neq
\emptyset$  and  $ \dball x { 2r} \cap D = \emptyset$ and we will
point out the differences when they arise.

In the case when $ \dball x {2r} \cap D = \emptyset $, we may assume
that
\begin{equation} \label{IntStart}
| A_ { \bar M _ { 4r} , 2r} | \leq \frac 1 2 |\dball x { 2r} |,
\end{equation}
for if the condition (\ref{IntStart}) fails, we may replace $u$
by $ -u$.
We next use the inequality (\ref{SP2A}) with $k = M _ { 4r} - \omega
/2^t$ and $ \ell = M _ {4r } - \omega / 2^ { t+1} $ to conclude that
\begin{equation}
\label{eqnC}
\frac \omega { 2 ^ { t+1} } | A _ { M _ { 4r} - \omega / 2 ^ { t+1},
  2r  } | ^ { 1-1/n} \leq C \int _ {D_t} |\nabla u | \, dy .
\end{equation}
Now since $D _ t \subset A _ { M _ { 4r}- \omega / 2 ^ { t} , 2r}$ and
$u \in \luspace ( \dball x r , \gamma) $, we have that
$$
\int _ {D_t} |\nabla u | ^ 2 \, dy \leq  \frac \gamma { 4 r^ 2}
|A_ { M_{ 4r}-\omega /2^t,     4r} |
 \sup  _
     { A_{ M_{4r} - \omega / 2^t , 4r }}  (u-(M_{4r}- \omega/2^t))^2
\leq C \left ( \frac \omega { 2^t} \right)^2 r ^ { n-2} .
$$
%
Thus, from (\ref{eqnC}) and the Cauchy-Schwarz inequality,  we have that
\begin{equation} \label{eqnD}
\left ( \frac \omega { 2 ^ { t+1}}\right )^2 |A _ { M _ {4r}- \omega
  /2^{t+1} , 2r } | ^ { 2 - 2/n}  \leq C |D_t| \left( \frac \omega { 2^ t}
\right) ^2 r ^ { n-2} .
\end{equation}
If we sum (\ref{eqnD}) from $ t =1 , \dots, s-3$, we conclude that
\begin{equation} \label{eqnF}
( s-3) |A _ { M _ { 4r}- \omega / 2 ^ { s-2}, 2r }| ^ { 2-2/n} \leq C r
    ^ { n-2} \sum _ { t=1} ^ { s-3} |D_t |
\leq C_0 r^ {  2n-2} .
\end{equation}
Choose $s$ such that
$$
\left ( \frac  { C_0 } { s-3} \right ) ^ { n / ( 2n -2)}\leq \theta
_1,
$$
with $ \theta _1$ as in Lemma \ref{Lemma1}.

Now let $ k = M _ { 4r} - \omega  / 2 ^ { s-2} $ and $H =
\sup  _ { \dball x { r}}( u - k) = M _ {r} - ( M _ { 4r} - \omega / 2 ^
     { s-2} ) $. If $H>0$, we may apply Lemma \ref{Lemma1} to obtain
$$
\sup _ { \dball x r } u \leq k + H /2
\leq M _ { 4r} - \omega / 2 ^ { s-2} + \frac 1 2 ( M _ r - M _ {4r} -
\omega / 2 ^ { s- 2} ).
$$
Simplifying, we find that
$$
M _ r \leq  M _  { 4r} - \omega / 2 ^ { s-1} .
$$
This inequality also follows easily if $H\leq 0$.
It is immediate to see that  $ -m_{ 4r } \geq - m _ r $ and if we recall
that  $ \omega =\omega _ {4r}$, we may conclude that
$$
\omega _r \leq ( 1 - 1 / 2 ^ { s-1} )\omega _ { 4r} .
$$

Next we consider the case when $  \dball x { 2r} \cap D \neq
 \emptyset$. In this situation we use the freedom to replace $u$ by $-u$ to impose
 the condition that $ \bar M _ {4r} \geq 0$ and, as a result,  (\ref{IntStart}) is
 not guaranteed to hold.

Since $ M _{4r} - \omega /2^t \geq \bar M _ {4r} \geq 0$, we may use the
Sobolev inequality  (\ref{SP2B}) to  obtain
$$
\frac \omega { 2 ^ { t+1}}| A _ { M _ { 4r}- \omega / 2 ^ t, 2r } | ^
      { 1-1/n} \leq C \int _ { D' _t} | \nabla u | \, dy,
$$
where $D'_t = A _ { M _ { 4r - \omega /2 ^ t , 4r }} \setminus A _ { M
  _ { 4r - \omega / 2 ^ { t+1}, 4r }} $. This  replaces (\ref{eqnC})
in the argument that  leads to (\ref{eqnF}).
The rest of the argument goes
  throughout without change.
\end{proof}

\begin{proof}[Proof of Theorem \ref{Holder}]
The Theorem follows immediately from Lemma \ref{MainStep}.
\end{proof}

\comment{
We list several properties of $ G_ \rho$. First,
\begin{equation} \label{Positive}
G_ \rho \leq 0.
\end{equation}
This may be proven using the argument in Gr\"uter and
Widman  \cite[p.~5]{MR657523}.  We thank S.~Mayboroda for several
helpful conversations regarding the construction of the  Green
function and explaining some of the ideas in her work with Maz$'$ya \cite{MR2470109}.

We have the Sobolev inequality
\begin{equation} \label {SP3}
\left( \int _ \Omega u ^q \, dy \right) ^ { 1/q} \leq C \left ( \int _
\Omega |\nabla u | ^ p\, dy \right ) ^ { 1/p}, \qquad u \in W_D^ { 1,2}
( \Omega) \cap \sobolev p 1  (\Omega).
\end{equation}
Here, $p$ and $q$ are related by $ 1/q = 1/p -1/n$ and $ 1\leq p  <
n$. When $n\geq 3$, we only need the estimate for $p =2$. For $n =2$,
we will use the estimate for $ 1\leq p <2$ and observe that the
constant in (\ref{SP3}) is of the form
$$
C\leq  C ( \Omega) q ^ { 1/p'}.
$$
\note{ Something to check here. If $u$ vanishes in the $W^ { 1,2 }$
  sense, does it vanish in the $W^ {1,p}$ sense?
}
This follows from the coercivity assumption (\ref{coerce}) and the sharp
constant in the Sobolev inequality (see  \cite[p.~158]{GT:1983}, for
example). Let $d=\mbox{diam}(\Omega)$.

\begin{lemma}  The approximate Green function satisfies the estimate
$$
| G _ \rho ( x,y ) | \leq C |x- y | ^ { 2-n } , \qquad | x-y | \geq
2\rho
$$
for $n \geq 3$ and
$$
| G _ \rho ( x,y ) | \leq C ( \log ( d  /|x- y | ) +1
)  , \qquad | x-y | \geq
2\rho
$$
if $n =2$.
The constant in these estimates depends on the domain through the
constant in the Sobolev inequality $(\ref{SP3})$.
\end{lemma}

\begin{proof} When $n \geq 3$, the estimate for the Green function
follows exactly as in Gr\"uter and Widman \cite{MR657523}.   When $n=2$
the result may be well-known, however we were unable to find a
detailed proof that includes mixed boundary conditions or a proof that
extended easily to cover this case.

To give a detailed proof when $ n=2$, we begin by fixing $ \alpha
>0$ and observing that
since $ G_ \rho \leq 0$, we have that $ \phi ( \cdot )  = -( 1/\alpha +  1/
G_\rho ( x, \cdot ) ) ^ + $ lies in $ W ^ { 1,2}_D( \Omega)$. Using
this $\phi$ in the weak formulation of the mixed problem, we obtain
that
\begin{eqnarray}
\int _ {\{| G_ \rho (x, \cdot ) |> \alpha \}} |\nabla \log G_ \rho ( x,
\cdot ) | ^ 2 \, dy
& = & \int _ \Omega \nabla G_\rho (x, \cdot ) \cdot
\nabla \phi \, dy \nonumber \\
& = & \frac 1{  | \ball x \rho |} \int _{\ball x \rho } \phi\, dy \leq \alpha
^{-1}.\label{GradEst}
\end{eqnarray}
Then from Chebyshev's inequality, Sobolev's inequality (\ref{SP3}) and
H\"older's inequality, we obtain that  for each $q$ in $ [2, \infty)$,
we have
\begin{eqnarray*}
|\{G_ \rho ( x, \cdot ) >e  \alpha \}|^ { \frac 1 q}  & \leq &  \left( \int
_{\{ |G_\rho ( x, \cdot ) | > \alpha \}}
 (\log ( |G_ \rho (x, \cdot )/\alpha |)) ^ q \, dy \right ) ^
{\frac  1 q}  \\
& \leq &  C q ^ { \frac 1 {p'}}\left ( \int _ {\{ |G_\rho ( x, \cdot ) | > \alpha \}}
| \nabla  \log (| G_ \rho (x, \cdot )|)
  | ^ p \,  dy \right ) ^ { \frac 1 p} \\
& \leq &  C q ^ {\frac  1 {p'} }  \left ( \int _
{\{ |G_\rho ( x, \cdot ) | > \alpha \}}
 |\nabla \log (|G_\rho (x, \cdot |)|^2 \, dy \right ) ^ { \frac 1 2} \\
&& \qquad\qquad\qquad \qquad\qquad\cdot |\{|G_ \rho(x, \cdot ) | > \alpha \} | ^ { \frac  1 q} .
\end{eqnarray*}
Using the estimate (\ref{GradEst}) and noting that $ 1/p'< 1/2$ gives
\begin{equation}
\label{Dist1}
|\{ | G_ \rho( x, \cdot ) |  > e \alpha \} |  \leq ( C_0 \sqrt { q/\alpha }) ^ q|\{
|G_\rho (x, \cdot ) | > \alpha \}|.
\end{equation}
Now we fix $ q = \alpha /(4C_0^2)$
we can conclude that $ |\{ | G _ \rho (x, \cdot ) |> e\alpha \} | \leq (
1/2) ^ { \alpha / (4C_0^2)}|\Omega|$ when $ q \geq 2$ or $ \alpha \geq 8 C_0 ^
2$. From this it is easy to conclude that there exists constants $C$
and $C'$ so that
\begin{equation}
\label{GreenDecay}
|\{ | G_\rho (x, \cdot ) |  > \alpha \}| \leq C
\exp(-\alpha/C')|\Omega|.
\end{equation}

To obtain a pointwise estimate for $ G_\rho$, we begin with
(\ref{Moser}) which gives that for $ y \in \Omega$ with $ |x-y | > 2
\rho$ we have  with $ r = |x-y | /2$
$$
|G_\rho(x,y) | \leq \frac C { r^ 2} \int _ { \Psi_r(y) } |G_ \rho
(x,z) | \, dz.
$$
We let $ s > 0$ and consider
\begin{eqnarray*}
\frac C { r^ 2} \int _ { \Psi_r (y) } |G_ \rho
(x,z) | \, dz  & \leq  &  \frac C { r^2} \left(  \int _ 0 ^ s |
\Psi_r (y) \cap \{ |G_ \rho (x, \cdot
) | > \alpha \} | \, d\alpha  \right.  \\
  & &  \left.  \qquad
+ \int _ s ^ \infty |\{| G_ \rho (x, \cdot ) | > \alpha \}|\, d\alpha \right)  \\
& \leq & C (  \pi s +  \frac C { r^ 2} |\Omega | \exp ( - s/C') ).
\end{eqnarray*}
In the last line we use that $
| \Psi_r (y) \cap \{ |G_ \rho (x, \cdot
) | > \alpha \} |  \leq \pi r ^2$ for  $ \alpha $ small and we use
estimate (\ref{GreenDecay}) for $ \alpha $ large.
If we now choose $ s = C' \log ( d^2 / r ^ 2)$ we arrive at the
estimate
$$
|G_\rho(x,y) | \leq C ( \log ( d / |x-y | ) + 1).
$$
\end{proof}

The H\"older estimate of Theorem \ref{Holder} implies that the functions
$G_\rho (x, \cdot ) $ extend continuously to $ \partial \Omega$
and that for each $x$, we may  find a sequence $
\rho _j $ tending to zero such that the sequence of functions $G_{\rho _
j}(x, \cdot ) $  converges uniformly on compact subsets of $ \bar
\Omega \setminus \{x\}$. We call the resulting limit $ G(x,y)
$.


\begin{lemma} Let $u$ be a weak solution of the mixed problem
$$
\left\{
\begin{array}{ll}
\Delta u = 0 , \qquad & \mbox{in } \Omega \\
u = 0 , \qquad & \mbox{on } D \\
\frac { \partial u }{ \partial \nu } = a,   \qquad    & \mbox{on } N.
\end{array}
\right.
$$
If $a$ is in $L^1( \partial \Omega)$,
then we have
$$
u(x) = \int _ { N} G(x,y) a(y) \, dy .
$$
\end{lemma}

\begin{proof} Fix $x$ and let $ \rho > 0$. We let $G_ \rho$ be the
  approximate Green function defined above. Since $u$ lies in $ W^
  {1,2 }_D( \Omega)$, we have that
$$
\int \nabla G_ \rho (x, \cdot ) \cdot \nabla u \, dy = - \frac 1 { |B_
  \rho (x) | } \int _ { B_ \rho (x) } u \, dy
$$
from the weak formulation of the equation $
\Delta G_ \rho(x, \cdot ) = \chi _{ \ball x \rho }/ |\ball x \rho | $.
While from the weak formulation of the  equation for $u$, we have
$$
\int \nabla G_ \rho (x, \cdot ) \cdot \nabla u \, dy = \int _ N a \,G_
\rho (x, \cdot )\, d\sigma.
 $$
Thus, we have
$$
\int _ N a\, G_
\rho (x, \cdot )\, d\sigma
=
 - \frac 1 { |B_
  \rho (x) | } \int _ { B_ \rho (x) } u \, dy .
$$
If we now let $ \rho $ tend to zero and recall that $ u$ is
continuous, we obtain the representation formula of the Lemma.
\end{proof}

}

We now return to working with only the Laplacian as this will simplify
the uniqueness argument below. 
We define a {\em Green function with pole at $x$  for the mixed
  problem} to be a function $G(x,\cdot) \in \sobolev 1 1 (
\Omega) $  which satisfies a) $ G(x, \cdot ) \in \sobolev 2 1 _D ( \Omega \setminus \ball x r )
$ for each $r>0$ and b) if $\phi $ is in $C^
\infty ( \bar \Omega)$ and vanishes on $D$, then
\begin{equation}  \label{DefProp}
\int _ \Omega \nabla G(x,\cdot ) \cdot \nabla  \phi  \, dy = - \phi  (x).
\end{equation}
The Green function is unique. If there are two
candidates for the Green function with pole at $x$, $G_1(x,\cdot ) $
and $G_2(x, \cdot) $, then $u = G_1(x, \cdot ) - G_2(x, \cdot) $ will
satisfy
$$
\int _ \Omega \nabla u \cdot \nabla \phi \, dy =0
$$
for all $ \phi \in C^ \infty ( \bar \Omega)$ which vanish on $D$. From
Weyl's lemma, $u$ is smooth in the interior of $ \Omega$. Then the
assumption that each $G_i ( x, \cdot) $ lies in $\sobolev 2 1_D ( \Omega
\setminus \ball x r )$ for each $ r >0$  implies that $ u $  is in
$\sobolev 2 1_D ( \Omega)$ Then $u$ is a weak solution of the mixed
problem with zero data and hence $u =0$.
The properties of the Green function for the mixed problem
that we will need in the sequel of this paper are summarized in the
following Lemma.

\begin{lemma} \label{Green}
Consider the mixed problem in a Lipschitz domain $\Omega$ with $D$
satisfying (\ref{Dbig}). Then there exists a Green function
for the mixed problem
 which satisfies: 1) If $G_x(y) = G(x,y)$,
then $ G_x \in\sobolev 2 1 _D ( \Omega \setminus \ball x r )$
for all $r>0$,  2) $\Delta G_x = \delta _x$, the Dirac
$\delta$-measure at $x$,   3) If $f_N \in \sobolev 2 {-1/2}  _D ( \partial \Omega)$, 
then  the weak  solution of the mixed problem
with $f_D=0$ can be represented by
$$
u ( x) =  - \langle f_N , G_x\rangle ,
$$
4) The Green  function  is H\"older continuous away from the pole and
satisfies the estimates
$$
|G(x,y) - G(x,y')| \leq \frac { C|y-y'|^ \beta } { |x-y |^ { n-2+\beta
  }} , \qquad |x-y| > 2 |y-y'|,
$$
$$
| G(x,y) | \leq \frac C { |x-y|^ { n-2} }, \qquad n\geq 3,
$$
and
$$
| G(x,y) | \leq C( 1+ \log  (d/|x-y|)) , \qquad n =  2.
$$
Above, the exponent $\beta$ is as in Theorem \ref{Holder}.
\end{lemma}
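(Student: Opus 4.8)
The plan is to construct $G(x,\cdot)$ as a limit of approximate Green functions $G_\rho(x,\cdot)$, where $G_\rho(x,\cdot) \in \sobolev 2 1_D(\Omega)$ is the unique weak solution of the mixed problem with right-hand side $|\ball x \rho|^{-1}\chi_{\ball x \rho}$, zero Dirichlet data on $D$ and zero Neumann data on $N$; existence and uniqueness of each $G_\rho(x,\cdot)$ follow from Lax--Milgram together with the coercivity condition (\ref{coerce}), which holds since $\Omega$ is a bounded Lipschitz domain and $D$ is a nonempty open subset of the boundary. First I would establish the pointwise size estimates $|G_\rho(x,y)|\leq C|x-y|^{2-n}$ for $n\geq 3$ (and the logarithmic bound for $n=2$) uniformly in $\rho$, for $|x-y|>2\rho$. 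For $n\geq 3$ this is the standard Gr\"uter--Widman argument \cite{MR657523}: combine the energy estimate coming from testing against a truncation of $G_\rho$, the Sobolev inequality on $\sobolev 2 1_D(\Omega)$, and the interior/boundary $L^\infty$ bound (\ref{Moser}). For $n=2$ one instead tests against $-(1/\alpha + 1/G_\rho(x,\cdot))^+$ to get the gradient bound (\ref{GradEst}), iterates via Chebyshev and Sobolev to obtain exponential decay of the distribution function $|\{|G_\rho(x,\cdot)|>\alpha\}|$, and then feeds this into (\ref{Moser}) to get the logarithmic pointwise bound.

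Next I would use the boundary H\"older continuity estimate of Theorem \ref{Holder}, applied to $G_\rho(x,\cdot)$ (which is a weak solution of the mixed problem with zero data away from the pole, after flattening the boundary locally), to get equicontinuity of the family $\{G_\rho(x,\cdot)\}$ on compact subsets of $\bar\Omega\setminus\{x\}$ that reach up to the boundary; here the local flattening is legitimate because Theorem \ref{Holder} is stated for operators $L=\div A\nabla$ with bounded measurable elliptic coefficients, and flattening only changes $\Delta$ into such an $L$. Together with the uniform size bounds, Arzel\`a--Ascoli furnishes a sequence $\rho_j\to 0$ along which $G_{\rho_j}(x,\cdot)\to G(x,\cdot)$ uniformly on compact subsets of $\bar\Omega\setminus\{x\}$, and the limit inherits the size bound (item 2 in the sense that $|G(x,y)|\leq C|x-y|^{2-n}$, resp.\ the log bound) and the H\"older bound (item 4). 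That the limit is genuinely a Green function in the sense of (\ref{DefProp}) — i.e.\ $\int_\Omega \nabla G(x,\cdot)\cdot\nabla\phi\,dy = -\phi(x)$ for $\phi\in C^\infty(\bar\Omega)$ vanishing on $D$ — follows by passing to the limit in the weak formulation $\int_\Omega \nabla G_{\rho_j}(x,\cdot)\cdot\nabla\phi\,dy = -|\ball x {\rho_j}|^{-1}\int_{\ball x {\rho_j}}\phi\,dy$, using the local energy bounds to control $\nabla G_{\rho_j}$ near the pole (standard: test $G_\rho$ against itself times a cutoff vanishing near $x$ to see $G_x \in \sobolev 2 1_D(\Omega\setminus \ball x r)$ uniformly, giving weak convergence there) and the continuity of $\phi$ on the right. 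Uniqueness of the Green function was already shown in the text via Weyl's lemma, so item 2 ($\Delta G_x = \delta_x$) is just a restatement of (\ref{DefProp}).

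Finally, for item 3, the representation formula $u(x) = -\langle f_N, G_x\rangle$ when $f_N\in\sobolev 2 {-1/2}_D(\partial\Omega)$: test the weak formulation of the mixed problem for $u$ against $G_\rho(x,\cdot)\in\sobolev 2 1_D(\Omega)$ to get $\int_\Omega \nabla u\cdot\nabla G_\rho(x,\cdot)\,dy = \langle f_N, G_\rho(x,\cdot)\rangle$, and test the weak formulation of the equation for $G_\rho(x,\cdot)$ against $u$ to get the same integral equals $-|\ball x \rho|^{-1}\int_{\ball x \rho} u\,dy$; equating and letting $\rho=\rho_j\to 0$, the left side tends to $-u(x)$ by continuity of $u$ at $x$, while $\langle f_N, G_{\rho_j}(x,\cdot)\rangle\to\langle f_N, G_x\rangle$ provided $G_{\rho_j}(x,\cdot)\to G_x$ in $\sobolev 2 {1/2}_D(\partial\Omega)$, equivalently $G_{\rho_j}(x,\cdot)\to G_x$ in $\sobolev 2 1_D(\Omega)$. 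I expect the main obstacle to be exactly this last convergence claim, i.e.\ upgrading the locally-uniform (and local-energy) convergence of $G_{\rho_j}$ to convergence in the full energy norm $\sobolev 2 1_D(\Omega)$: one needs a uniform-in-$\rho$ bound on $\|\nabla G_\rho(x,\cdot)\|_{L^2(\ball x r)}$ with the right behavior as $r\to 0$, which in dimension $n\geq 3$ follows from the $|x-y|^{2-n}$ size bound plus Caccioppoli on dyadic annuli (the energy on the annulus of radius $\sim 2^{-k}$ is $\lesssim 2^{-k(n-2)}$, summable), but in $n=2$ the logarithmic growth is borderline and requires a slightly more careful argument, or one restricts attention to $n\geq 3$ where needed and handles $n=2$ separately using the finer distribution-function estimate. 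The other routine-but-nontrivial point is checking that the H\"older estimate of Theorem \ref{Holder} can be applied with the pole excised and through a boundary flattening, which is why the section was set up for divergence-form operators with measurable coefficients in the first place.
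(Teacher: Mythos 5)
Your approach is a valid alternative, but it is genuinely different from the paper's detailed proof. The paper only carries out the Gr\"uter--Widman-style construction (approximate Green functions $G_\rho$ with smoothed source, uniform pointwise bounds, equicontinuity via Theorem \ref{Holder}, Arzel\`a--Ascoli, pass to the limit) implicitly for $n\geq 3$: it simply cites \cite{MR657523} for that case. For $n=2$, which is the case the paper proves in detail, the authors instead build $G(x,\cdot)$ by \emph{correcting} a known fundamental solution with the right Neumann behavior: they invoke the Kenig--Ni global fundamental solution for $\div A\nabla$ in $\reals^2$, construct the Neumann Green function $\neugre$ for a graph domain by reflection following Dahlberg--Kenig, cut off near the pole and write $G(x,\cdot)=\eta(\neugre(x,\cdot)-\bar\neugre)+u$, solve a variational problem for the correction $u\in\sobolev 2 1 _D(\Omega)$, and derive the size and H\"older bounds from the BMO estimate of Chanillo--Li plus the local boundedness (\ref{Moser}) and Theorem~\ref{Holder}. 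That route sidesteps the regularized Green function entirely and absorbs the logarithmic growth into a BMO framework, whereas your route has to fight with the log bound directly via the distribution-function iteration. Both work; the paper's is more self-contained in $n=2$ while yours is more uniform across dimensions. (Interestingly, the iteration you sketch for the $n=2$ pointwise bound -- testing against $-(1/\alpha+1/G_\rho)^+$, Chebyshev plus Sobolev to get exponential decay of $|\{|G_\rho|>\alpha\}|$ -- appears almost verbatim as commented-out material in the source; the authors considered it and chose the other path.)

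One technical point in your write-up for item~3 is off. You write that the convergence $\langle f_N,G_{\rho_j}(x,\cdot)\rangle\to\langle f_N,G_x\rangle$ requires $G_{\rho_j}(x,\cdot)\to G_x$ in $\sobolev 2 {1/2}_D(\partial\Omega)$, ``equivalently $G_{\rho_j}(x,\cdot)\to G_x$ in $\sobolev 2 1 _D(\Omega)$.'' That equivalence is wrong, and in fact the second convergence is impossible: the limit $G_x$ is only in $\sobolev 1 1(\Omega)$ (this is built into the paper's definition of the Green function), not in $\sobolev 2 1(\Omega)$, since $\nabla G_x\sim|x-y|^{1-n}$ is not square-integrable near the pole in any dimension $n\geq 2$. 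What the duality pairing actually needs is convergence of the \emph{traces}, and the trace of $G_\rho(x,\cdot)$ is controlled by the energy on $\Omega\setminus\ball x r$ for a fixed small $r$ (or, as the paper does it, by multiplying by a cutoff $1-\eta$ vanishing near $x$ so that $(1-\eta)G_x\in\sobolev 2 1 _D(\Omega)$ and has the same trace as $G_x$). Your dyadic-annulus energy estimate actually does establish exactly this weaker, correct statement -- uniform boundedness of $\|\nabla G_\rho(x,\cdot)\|_{L^2(\Omega\setminus\ball x r)}$ for each fixed $r$ -- so the proof is fine once the ``equivalently'' is replaced with the cutoff argument; but as written the claim misidentifies the space in which the limit lives.
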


We give a detailed proof of this Lemma when $n=2$. The proof for $n
\geq 3$ may be obtained by adapting this argument or
by a straightforward adaptation of the arguments in Gr\"uter and
Widman \cite{MR657523}.

\begin{proof}
We begin with a result of Kenig and Ni \cite{MR87f:35065} who
established the existence of global fundamental solutions in two
dimensions.  Suppose that $L = \div A\nabla$ is an elliptic operator
in two dimensions with bounded, measurable, and symmetric coefficients.
Then there exists a fundamental solution $ \Gamma
(x,y)$ which satisfies
$
\Gamma (x, \cdot ) \in \sobolev  2 1 _{loc}( \reals ^ 2 \setminus \{
x\})$,  $\Gamma (x, \cdot )  \in \sobolev 1 1 _ {loc} ( \reals ^2)$,
and
we have
$$
\int_ { \reals ^ 2}   A \nabla \Gamma (x, \cdot  ) \cdot \nabla  \phi  \, dy
= -\phi (x), \qquad \mbox{for all}\,\,\,\phi \in C_ c ^ \infty( \reals ^ 2) .
$$
Chanillo and Li  \cite[Corollary 1.1]{MR1190215} observe that the free
space fundamental solution  lies in $ BMO( \reals ^2)$ and thus if $ \bar
\Gamma = \average_{\ball z s } \Gamma (x,y) \, dy $,  we have
$$
\average _{\ball z s } ( \Gamma(x,y ) - \bar \Gamma ) ^2 \, dy \leq C
$$
where the constant depends only on the ellipticity constant for the
operator $ L$ and the bounds for the coefficients.

\note {

It is not clear we need these pointwise estimates. We may go from BMO
to pointwise estimates after we construct the Green function for the
mixed problem.

In addition, there exists a constant $C$ and $0< R_1<1 < R_2$ such that
the Green function satisfies the pointwise estimates
$$
\Gamma (x,y) \leq C\log 1/ |x-y| , \qquad |x-y | < R_1
$$
and
$$
\Gamma (x,y) \leq C\log  |x-y| , \qquad |x-y | > R_2
$$
}

Next we recall a result of Dahlberg and Kenig \cite[p.~447]{DK:1987},
 if $ \Omega' = \{ ( x_1, x_2) : x_2 > \psi (x_1)\}$ is
the domain which lies above the graph of a single Lipschitz function,
then there exists a Green function for the Neumann problem in $
\Omega' $ which can be constructed by the method by reflection. We briefly recall the construction of this
Green function.  Define $R$ a reflection in $ \partial \Omega' $ by $
R(x) = ( x_ 1, 2\psi (x_1) - x_2) $ and construct an operator $ L =
\div A \nabla $ on $ \reals ^2$ so that $ L (u\circ R) =0 $ in $
\reals ^ 2 \setminus \bar \Omega' $ if and only if $ \Delta u =0 $ in
$ \Omega' $ and $ L= \Delta$ in $ \Omega '$.  
Let $ \Gamma$ be the fundamental solution for this
operator in $ \reals ^2$ and define
$$
\neugre(x,y) =  \Gamma(x,y) + \Gamma (Rx, y) .
$$
We  have that $\neugre$ is  a fundamental solution in $ \Omega '$ with
zero Neumann data on $ \partial \Omega' $. More precisely, we have the
weak formulation
\begin{equation}
\label{NeuDefProp}
\int _ {\Omega ' } \nabla \neugre (x,\cdot ) \cdot \nabla  \phi  \, dy  =
-\phi ( x) , \qquad \mbox{for all}\,\,\,\phi \in C_c^ \infty ( \bar \Omega).
\end{equation}

We give a detailed proof of the construction of the Green function for
the mixed problem $ G(x, \cdot)$ when $x$ is near the boundary and
hence lies in some coordinate cylinder.  When $x$ is far from the
boundary, the construction of the Green function is simpler and we
omit the details.
Fix $ x $ in our original domain $ \Omega$ and assume that $ x $
lies in a coordinate cylinder $Z_r ( x_0)$ with $Z_ { 4 r } ( x_0) $
also a coordinate cylinder.   Let $ s = \min (\dist ( x, D), r )$ and
then construct a cutoff function $ \eta $ which is one on $ \ball x
{s/2}$ and zero outside $\ball x s$. 
We let  $ \neugre  $ be the Green function for the Neumann  problem in a
graph domain $\Omega '$ that  satisfies
$\Omega \cap Z _ {4r} =  \Omega ' \cap Z_ { 4r}$. 
Since $ \Gamma$ and hence $
\neugre$ lie in $ BMO( \reals ^2)$, we may choose a constant $ \bar N$
so that 
\begin{equation}\label{BMO}
\average _{ \dball x s } ( \neugre ( x, y ) -\bar \neugre ) ^ 2 \, dy \leq C,
\end{equation}
where the bound $C$ depends only on the Lipschitz constant $M$.

 We will look for the Green
function for the mixed problem in the form
$$
G(x,y) = \eta (y) ( \neugre (x,y) -\bar \neugre) + u (y) .
$$
We will show that the   function $ u$  lies in $\sobolev 2 1 _D  ( \Omega)$ with a bound
depending only on the
Lipschitz constant $M$ and the  constant in (\ref{coerce}).

If $G$ is to be a Green function, we need $u$ to satisfy
$$
\int _{\Omega } ( \eta   \nabla   \neugre ( x,\cdot ) + (\neugre
(x,\cdot )-\bar \neugre )  \nabla \eta   +  \nabla u  ) \cdot \nabla
\phi    \, dy =  -  \phi(x)
$$
for all functions $\phi $ which lie in $ C^ \infty ( \bar \Omega) $ and vanish on $D$.
We write $ \eta \nabla \phi = \nabla ( \eta \phi ) - \phi \nabla \eta$
and get
$$
\int _ \Omega
  \nabla \neugre (x,\cdot ) \cdot \nabla ( \eta \phi)
-  \phi   \nabla \neugre (x,\cdot ) \cdot  \nabla \eta   +
 (\neugre (x,\cdot )-\bar \neugre )   \nabla \eta   \cdot \nabla \phi
+ \nabla u  \cdot \nabla \phi
\, dy = -  \phi(x).
$$
Since $ \neugre $ is a  Green  function for the Neumann problem, we
may use (\ref{NeuDefProp}) to simplify the previous equation and obtain
\begin{equation}  \label{udef}
\int _ \Omega     \nabla u  \cdot \nabla \phi    \, dy =
\int_\Omega  \phi     \nabla \neugre (x,\cdot) \cdot  \nabla \eta   -
 (\neugre (x,\cdot )-\bar \neugre )   \nabla \eta   \cdot \nabla \phi  \, dy .
\end{equation}
Let $F(\phi)$ denote the right-hand side of (\ref{udef}). We claim
that $|F(\phi)| \leq C\| \phi \| _ {\sobolev 2 1 _D( \Omega)} $ where
the constant depends only on the constant $M$ and the constant in the
coercivity estimate (\ref{coerce}).  From the claim and basic results about
Hilbert spaces, it follows that there exists a solution $u$ to
(\ref{udef}) and this solution satisfies
$
\|u\|_{ \sobolev 2 1  _D ( \Omega ) } \leq C.
$

We now turn to the proof of the claim.
To estimate the first term of (\ref{udef}), we begin with  
an application of the Cauchy-Schwarz inequality
$$
\left | \int_\Omega \phi   \nabla \neugre (x,\cdot) \cdot  \nabla \eta \,
dy \right |
 \leq C \left (\int  _{\dball x s \setminus \dball x {s/2} }  | \nabla
\neugre (x,\cdot ) | ^2 \, dy \right)^ { \frac{1}{2}}
\left( \average  _ {\dball x s } \phi ^2\, dy \right)^{\frac{1}{2}}.
$$
Using the Caccioppoli inequality and that
$ \neugre$ is  in $BMO(
\reals ^ 2)$  (see (\ref{BMO})), we may conclude 
$$
\left (\int  _{\dball x s \setminus \dball x {s/2} }  | \nabla
\neugre (x,\cdot ) | ^2 \, dy \right)^ { 1/2}  \leq C,
$$
where the constant depends only on the Lipschitz constant for $ \Omega$.
Thanks to the choice of $s$, we may use that $\phi $ vanishes on $D$
and (\ref{Dbig}) to obtain  the Poincar\'e  inequality
$$
\average _{\dball x s }   \phi  ^ 2 \, dy \leq C \int _{\dball x {Cs}}|\nabla \phi |^ 2\, dy .
$$
See Section 3 of Ott and Brown \cite{OB:2009} for details.

Thus we obtain the estimate
$$
\left |\int_\Omega  \phi   \nabla \neugre (x,\cdot ) \cdot  \nabla
\eta\, dy \right |  \leq C \|\nabla \phi \|_{L^2 ( \Omega)}.
$$
The estimate for the other term,
$$
\left |\int _{\Omega}  (\neugre (x,\cdot )-\bar \neugre )   \nabla
\eta   \cdot \nabla \phi    \, dy
\right |
\leq C \|\nabla \phi \|_ { L^ 2 ( \Omega)},
$$
follows from the Cauchy Schwarz inequality since $ \neugre(x, \cdot) $
is in $BMO(\reals^2)$.

Next we recall that if $u$ is in $\sobolev 2 1  ( \Omega )$, $x \in
\Omega$, and $r > 0 $, we may find a constant $ \bar u$ such that
we have the Poincar\'e inequality
$$
\average  _ { \dball x r } ( u -\bar u ) ^ 2 \, dy \leq C \|\nabla u
\|_{L^2 ( \Omega ) } ^ 2.
$$
In other words, $u$ lies in $BMO( \Omega)$.
Since we also have that $\neugre $ is in $BMO ( \reals ^ 2)$, we may
conclude that $ G$ lies in $BMO( \Omega)$.

Now we turn to the estimates in part 3) and 4) of  Lemma \ref{Green}.
First, recall that if
$v$ is in $BMO( \Omega)$ and $ \dball  x r \cap \dball {x' } { 2r }
\neq \emptyset$, then we have
$$
\left | \average _{ \dball x r } v \, dy -  \average _{ \dball {x'} {
    2r}} v\, dy \right | \leq \|v\| _*,
$$
where $ \|v\|_*$ is the $BMO$ norm of $v$.
Using this, an iteration argument  and the local boundedness estimate
(\ref{Moser}), we obtain the
pointwise upper bound,
$$
|G(x,y )| \leq C ( 1 + \log ( d / |x-y|)),
$$
where $d$ is the diameter of $ \Omega$ and the constant $C$ depends on
$\Omega$.
\marginpar{Specify dependence of constant more precisely.}
Next we show that the Green function is H\"older continuous.
Let $v$ be a solution of $ Lv=0$ in $ \dball x r$ with zero data for
the mixed problem on $ \partial \Omega \cap \partial \dball x r$.
From the local boundedness result (\ref{Moser}) and the estimate for
H\"older continuity in Theorem \ref{Holder},
for any constant $ \bar v$ we have
$$
|v(y) -v(y') | \leq C (|y-y ' | /r)^ \alpha  \average  _{\dball x r} |v-\bar v |
\, dy, \qquad y, y' \in \dball x {r/2}   .
$$
Since  $G$ is in $BMO( \Omega)$, the H\"older estimate for $G$ in
part 4) of Lemma \ref{Green} follows by applying the above observation
to $ G(x,\cdot)$ on a ball centered at $y$ with radius $ r = |x-y|$.

Next, we claim that if $ f_N \in \sobolev 2 {-1/2} _D ( \partial
\Omega)$ and $u$ is the weak solution of the mixed problem with
Neumann data $f_N$ and zero Dirichlet data, then we have the
representation formula in part 3) of Lemma \ref{Green}
\begin{equation}
\label{RepFmla}
u(x) =  - \langle  f_N  , G_x  \rangle.
\end{equation}
Here,  $ G_x = G(x,\cdot)$ and $ \langle \cdot, \cdot \rangle $
denotes the duality pairing between  $ \sobolev 2 {
-1/2} _D ( \partial \Omega)$ and $ \sobolev 2 {1/2} _D( \partial
\Omega)$.  To begin the proof of (\ref{RepFmla}),  consider $ \int_ \Omega \nabla G(x,
\cdot ) \cdot \nabla u \, dy.  $ Fix $ x\in \Omega$, let $r = \dist
( x , \partial \Omega)/2$, and let $ \eta $ be a cutoff function with $
\eta = 1 $ on $ \ball x { r/2}$ and $ \eta = 0 $ outside $\ball x r$.  We
may approximate $\eta G(x, \cdot)$ in $ \sobolev 11 ( \Omega)$ by a
sequence of smooth functions and use that $u $ is harmonic in $\ball x
r $ to conclude that $
\int _ \Omega \nabla ( \eta G(x, \cdot ) )\cdot \nabla u \, dy = 0.  $
Since $ ( 1- \eta ) G(x, \cdot)$ lies in $ \sobolev 2 1 _D ( \Omega)$,
we may use that $u$ is a weak solution of the mixed problem to
conclude that $ \int_{ \Omega } \nabla (( 1-\eta ) G(x, \cdot )) \cdot
\nabla u \, dy = \langle f_N , G_x\rangle $. Combining these
observations gives
\begin{equation}\label{RepFmlapt1}
\int_ \Omega \nabla G(x, \cdot ) \cdot \nabla u \, dy = \langle f_N ,
G_x \rangle .
\end{equation}

We now reverse the roles of $G$ and $u$. With $ \eta $ as above, 
write $u = \eta u + ( 1-\eta ) u$. As $u $ is  harmonic and hence
smooth in the interior of $ \Omega$, there exists a sequence of
smooth, compactly supported  functions which
converge to $ \eta u$ in $\sobolev     \infty  1 ( \Omega)$.  We may use
this sequence and (\ref{DefProp}) to obtain
$\int _ \Omega \nabla G \cdot \nabla ( \eta u ) \, dy = -u(x)$.  As $
u $ lies in $ \sobolev 2 1 _D( \Omega)$ we may  find a sequence
$\{u_k\}$ of
smooth functions which vanish on $D$ and which converge in $\sobolev 2
1 (\Omega)$ to $u$. Approximating $ ( 1-\eta ) u$ by $ ( 1- \eta ) u
_k$ and using (\ref{DefProp})  gives $ \int_\Omega \nabla
G(x, \cdot)\cdot \nabla ((1-\eta)u) \, dy  = 0$. Combining these observations
implies
\begin{equation}
\label{RepFmlapt2}
\int_ \Omega \nabla G(x, \cdot ) \cdot \nabla u \, dy =  - u(x).
\end{equation}
From (\ref{RepFmlapt1}) and (\ref{RepFmlapt2}), we obtain
(\ref{RepFmla}).

\end{proof}

The next two lemmas establish higher integrability of the gradient
of weak solutions to the mixed problem. The proofs of these lemmas
appear in Ott and Brown \cite[Section ~3]{OB:2009}.
The key ingredients of the proofs appearing in the aforementioned paper are
Poincar\'{e} inequalities, and these Poincar\'{e} inequalities
continue to hold true in the current setting due to the condition
(\ref{Dbig}) on $D$. 
A similar estimate  is obtained for the mixed problem by Gr\"oger
\cite{MR990595} using the method of N. Meyers \cite{NM:1963}. However,
Gr\"oger's method requires more restrictions on the boundary between
$D$ and $N$.

\begin{lemma}\label{MSIRHI}
Let $\Omega$ and $D$ satisfy (\ref{Lip}) and (\ref{Dbig}).
Let $x\in\Omega$ and let $r$ satisfy $0<r<r_0$.
Let $u$ be a weak solution of the mixed problem for a
divergence form elliptic operator with zero Dirichlet data and
Neumann data $f_N \in L^{q}(N)$. Then $u$ satisfies the estimate
$$
\left ( \average _{ \dball x  { r} }  |\nabla u | ^ 2 \, dy \right ) ^
  { 1/2 }  \leq C \left [
\average _ {\dball x  {Cr} } |\nabla u | \, dy
+\left (  \frac 1 { r^ { n-1}} \int  _ { \sball x {Cr} }
|f_N|^ { q} \, d\sigma \right) ^ { 1/q}
\right  ] .
$$
Above, $q=2$ if $n=2$ and $q=2(n-1)/(n-2)$ for $n\geq 3$.
The constant $C$ depends on $M$ and $n$.
\end{lemma}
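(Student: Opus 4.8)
The plan is to prove the reverse H\"older inequality in the standard way --- this is the estimate which, fed into Gehring's lemma, yields the Meyers-type higher integrability of \cite{NM:1963} --- by combining a Caccioppoli inequality with a Sobolev--Poincar\'e inequality and then lowering the exponent on the right-hand side from $2n/(n+2)$ to $1$ through interpolation and a routine absorption lemma. The essential observation is that the only ingredients sensitive to the shape of $\crease$ are the Poincar\'e inequalities, and these remain valid because condition (\ref{Dbig}) --- which follows from (\ref{NTA}) via Lemma \ref{CorkEverywhere} --- is exactly what supplies the Poincar\'e inequality for functions vanishing on $D$; compare Section~3 of \cite{OB:2009}. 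I would distinguish three cases according to where $\dball x {Cr}$ sits: (i) $\dball x {Cr}$ is an interior ball; (ii) $\dball x {Cr}$ meets $\bdry\Omega$ but $\dball x {Cr}\cap D=\emptyset$; and (iii) $\dball x {Cr}\cap D\neq\emptyset$. Cases (i) and (ii) are the classical interior and Neumann estimates; only (iii) uses the full force of (\ref{Dbig}).

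The first step is a Caccioppoli inequality. Fix $\eta\in C_0^\infty(\ball x {2r})$ with $\eta\equiv 1$ on $\ball x r$ and $|\grad\eta|\leq C/r$, set $c=0$ in case (iii) and $c=\average_{\dball x {2r}}u\,dy$ in cases (i) and (ii), and test the weak formulation of $\div A\grad u=0$ with $\phi=\eta^2(u-c)$, which vanishes on $D$ and is therefore admissible. Symmetry and ellipticity of $A$ together with Young's inequality give
$$
\int_{\dball x r}|\grad u|^2\,dy\leq\frac{C}{r^2}\int_{\dball x {2r}}|u-c|^2\,dy+C\Bigl|\int_N f_N\,\eta^2(u-c)\,d\sigma\Bigr|.
$$
The Neumann term is the genuinely new point. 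Writing $g=\eta^2(u-c)$ (supported in $\dball x {2r}$), we bound $|\int_N f_N g\,d\sigma|\leq\|f_N\|_{L^q(\sball x {2r}\cap N)}\,\|g\|_{L^{q'}(\bdry\Omega)}$ with $1/q+1/q'=1$, then by the scaled trace theorem $\|g\|_{L^{q'}(\bdry\Omega)}\leq C\bigl(r^{-1}\|g\|_{L^p(\dball x{2r})}+\|\grad g\|_{L^p(\dball x{2r})}\bigr)$, where $p$ is determined by the trace embedding $W^{1,p}(\Omega)\hookrightarrow L^{q'}(\bdry\Omega)$. For $n\geq 3$ this forces $p=2n/(n+2)$ and $q=2(n-1)/(n-2)$, while for $n=2$ one takes $p=4/3$ and $q=2$; this is the origin of the stated value of $q$. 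Since $p<2$, H\"older's inequality over $\dball x {2r}$ converts the $L^p$-norms of $g$ and $\grad g$ into $L^2$-norms with the right power of $r$, and $\|\grad g\|_{L^2}+r^{-1}\|g\|_{L^2}\leq C\bigl(\|\eta\grad u\|_{L^2(\dball x{2r})}+r^{-1}\|(u-c)\chi_{\dball x{2r}}\|_{L^2}\bigr)$. Substituting back and applying Young's inequality once more, the term $\epsilon\int\eta^2|\grad u|^2$ is absorbed on the left while the data contributes $Cr^n\bigl(r^{1-n}\int_{\sball x{2r}}|f_N|^q\,d\sigma\bigr)^{2/q}$; one checks that the exponent identity $(n-1)/q+n/p=n$ holds in both cases, which is exactly what makes the powers of $r$ collapse to this scale-invariant form.

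Dividing through by $|\dball x r|\approx r^n$ leaves
$$
\average_{\dball x r}|\grad u|^2\,dy\leq\frac{C}{r^2}\average_{\dball x {2r}}|u-c|^2\,dy+C\Bigl(\frac1{r^{n-1}}\int_{\sball x{2r}}|f_N|^q\,d\sigma\Bigr)^{2/q},
$$
and the Sobolev--Poincar\'e inequality takes over: in case (iii) the Poincar\'e inequality for functions vanishing on $D$ (with $c=0$), valid thanks to (\ref{Dbig}), and in cases (i) and (ii) the ordinary Sobolev--Poincar\'e inequality on the ball or Lipschitz half-ball, yield in every case
$$
\frac1{r^2}\average_{\dball x {2r}}|u-c|^2\,dy\leq C\Bigl(\average_{\dball x {Cr}}|\grad u|^{2n/(n+2)}\,dy\Bigr)^{(n+2)/n},
$$
so that
$$
\Bigl(\average_{\dball x r}|\grad u|^2\,dy\Bigr)^{1/2}\leq C\Bigl(\average_{\dball x {Cr}}|\grad u|^{2n/(n+2)}\,dy\Bigr)^{(n+2)/(2n)}+C\Bigl(\frac1{r^{n-1}}\int_{\sball x{Cr}}|f_N|^q\,d\sigma\Bigr)^{1/q}.
$$
Finally, since $1<2n/(n+2)<2$, interpolating $L^{2n/(n+2)}$ between $L^1$ and $L^2$ and using Young's inequality gives, for each $\epsilon>0$,
$$
\Bigl(\average_{\dball x {Cr}}|\grad u|^{2n/(n+2)}\,dy\Bigr)^{(n+2)/(2n)}\leq\epsilon\Bigl(\average_{\dball x {Cr}}|\grad u|^2\,dy\Bigr)^{1/2}+C_\epsilon\average_{\dball x {Cr}}|\grad u|\,dy,
$$
and the $\epsilon$-term is removed by a standard iteration lemma applied to a nested family of radii between $r$ and $Cr$ (see Section~3 of \cite{OB:2009}), giving the asserted estimate with $C$ depending only on $M$ and $n$. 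I expect the main obstacle to be the bookkeeping in the Neumann term --- matching the trace exponent, the interior H\"older exponent and $q$, and verifying that all powers of $r$ reduce to $r^{1-n}\int_{\sball x{Cr}}|f_N|^q\,d\sigma$ --- together with stating precisely the absorption lemma used in the last step; the case $n=2$, where the trace exponent degenerates, needs a separate but easier argument.
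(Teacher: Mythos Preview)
Your proposal is correct and follows exactly the approach the paper intends: the paper does not give an independent proof of this lemma but refers to Section~3 of \cite{OB:2009}, noting only that the Poincar\'e inequalities needed there remain valid under (\ref{Dbig}), which is precisely what you have identified and supplied. Your case split, Caccioppoli-plus-trace treatment of the Neumann term (with the exponent $q$ dictated by the trace embedding $W^{1,2n/(n+2)}\hookrightarrow L^{q'}(\partial\Omega)$), Sobolev--Poincar\'e step, and final interpolation/absorption are the standard ingredients of that argument.
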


\begin{lemma}
\label{RHEstimate}
Let $\Omega$ and $D$ satisfy (\ref{Lip}) and (\ref{Dbig}).
Let $x\in\Omega$ and let $r$ satisfy $0<r<r_0$.
Let $u$ be a weak solution of the mixed problem with zero
Dirichlet data and Neumann data $f_N \in L^q(N)$ which is
supported in $ N \cap \sball x r $, with $q$ as in Lemma
\ref{MSIRHI}.
Then there exists  $ q_0=q_0(M,n) > 2 $ such that for $t $ in the range
$2\leq t < q_0$ when $n\geq 3$ and $t$ in the range $2<t<q_0$ when $n=2$,
$u$ satisfies the estimate
\begin{eqnarray*}
\lefteqn{  \left (  \average _{ \dball x r  } |\nabla u |^t \, dy  \right
)^ { 1/t}  } \\
& \leq  &   C\left[   \average _ {\dball  x {Cr} } |\nabla u  |\,dy
  +\left( \frac 1 { r^ { n-1} }
   \int  _{ \sball x {Cr}  }  |f_N|^{t(n-1)/n}\,
d\sigma\right) ^ { n/(t(n-1))}\right] .
\end{eqnarray*}
The constant above depends on $M$ and $n$.
\end{lemma}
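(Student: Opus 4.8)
The plan is to deduce the estimate, as in Section~3 of Ott and Brown \cite{OB:2009}, from a weak reverse H\"older inequality together with the self-improving (Gehring) property of such inequalities; the one step at which the present hypotheses differ from those of \cite{OB:2009} is a Poincar\'e inequality for functions vanishing on $D$, and this is exactly what the corkscrew condition, in the form (\ref{Dbig}), supplies. When $\dball x r$ reaches the boundary I would first flatten $\partial\Omega\cap\ball x{Cr}$ by a bi-Lipschitz change of variables, so that (after $\Delta$ is replaced by a divergence-form operator $L=\div A\nabla$ with bounded measurable symmetric coefficients, ellipticity depending only on $M$) the relevant piece of $\partial\Omega$ lies in a hyperplane; interior balls need no such reduction.

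The core step is the weak reverse H\"older inequality: there is $\kappa>1$ so that whenever $\ball z{\kappa\rho}\subset\ball x{\kappa r}$,
$$
\left(\average_{\dball z\rho}|\nabla u|^2\,dy\right)^{1/2}\le C\average_{\dball z{\kappa\rho}}|\nabla u|\,dy+C\left(\frac1{\rho^{n-1}}\int_{\sball z{\kappa\rho}}|f_N|^{\frac{2(n-1)}{n}}\,d\sigma\right)^{\frac{n}{2(n-1)}}.
$$
One obtains this by testing the weak formulation of $Lu=0$ with $\eta^2(u-c)$, $\eta$ a cutoff for $\ball z{\kappa\rho}$ and $c$ an average of $u$ when $\sball z{\kappa\rho}\cap D=\emptyset$, $c=0$ otherwise: the Dirichlet term vanishes since $u=0$ on $D$; the Neumann term $\int_{\sball}\eta^2 f_N(u-c)\,d\sigma$ is estimated by H\"older's inequality against the trace embedding $\sobolev 21(\dball z{\kappa\rho})\hookrightarrow L^{2(n-1)/(n-2)}(\sball z{\kappa\rho})$, whose dual exponent $2(n-1)/n$ is the power of $f_N$ appearing above, and the resulting $\|\nabla u\|_{L^2}$ is absorbed by Young's inequality; the Caccioppoli term $\rho^{-2}\int|u-c|^2$ is handled by Sobolev--Poincar\'e, after which a standard iteration lowers the exponent on the first right-hand term to $1$. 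In the case $\sball z{\kappa\rho}\cap D\ne\emptyset$ one needs the Poincar\'e inequality for $\sobolev 21$ functions vanishing only on $D\cap\sball z{\kappa\rho}$, and this holds uniformly because (\ref{Dbig}) forces that set to carry a fixed proportion of surface measure (Section~3 of \cite{OB:2009}). When $n=2$ the endpoint trace embedding into $L^\infty(\partial\Omega)$ fails, so one argues at an exponent strictly above $2$ from the start, which is why that case of the lemma excludes $t=2$.

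To pass from this $L^2$ inequality to the asserted $L^t$ bound, with the sharp power $t(n-1)/n$ of $f_N$, I would note that the data term above has precisely the scaling of $(\average_{\dball z{C\rho}}|\nabla w|^2\,dy)^{1/2}$, where $w$ is a single-layer-type potential of $f_N$ on the flattened boundary (built from the reflected fundamental solution $\neugre$ of $L$), so that $|\nabla_y w(y)|\lesssim\int|y-\zeta|^{1-n}|f_N(\zeta)|\,d\sigma(\zeta)$; fractional integration on the hyperplane then gives
$$
\left(\average_{\dball z\rho}|\nabla w|^t\,dy\right)^{1/t}\le C\left(\frac1{\rho^{n-1}}\int_{\sball z{C\rho}}|f_N|^{\frac{t(n-1)}{n}}\,d\sigma\right)^{\frac{n}{t(n-1)}}
$$
for every finite $t\ge2$ (every $t>2$ when $n=2$). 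Thus the weak reverse H\"older inequality for $\nabla u$ is of the form required by Gehring's lemma in the Giaquinta--Modica formulation, with right-hand datum $|\nabla w|$, which lies in every such $L^t$; consequently there is an exponent $q_0=q_0(M,n)>2$, depending only on the ellipticity of $L$ (hence on $M$) and on $n$, such that for $2\le t<q_0$,
$$
\left(\average_{\dball x r}|\nabla u|^t\,dy\right)^{1/t}\le C\left(\average_{\dball x{Cr}}|\nabla u|^2\,dy\right)^{1/2}+C\left(\average_{\dball x{Cr}}|\nabla w|^t\,dy\right)^{1/t}.
$$
Bounding the first term on the right by the weak reverse H\"older inequality again (and H\"older's inequality in the datum, using $t\ge2$) and inserting the previous display yields the lemma. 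The one genuinely new point, and the place where care is needed, is the boundary Poincar\'e inequality used in the core step: it is here that (\ref{Dbig}) replaces the stronger hypothesis of \cite{OB:2009} that $\crease$ is locally a Lipschitz graph. Everything else --- in particular the fractional-integration bound responsible for the exponent $t(n-1)/n$, and the bookkeeping needed to pass from the ball-by-ball datum $f_N|_{\sball z{\kappa\rho}}$ to a single global comparison function $w$ --- follows standard patterns.
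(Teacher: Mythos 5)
The paper does not prove this lemma from scratch: it cites Section~3 of Ott and Brown \cite{OB:2009} and observes that the only ingredient requiring adaptation to the present hypotheses is the boundary Poincar\'e inequality for functions vanishing on $D$, which now follows from condition (\ref{Dbig}) in place of the Lipschitz-graph assumption on $\Lambda$ made in \cite{OB:2009}. You identify exactly this as the crux, and your reconstruction of the Ott--Brown argument (flatten the boundary, Caccioppoli with cutoff $\eta^2(u-c)$, trace duality for the Neumann boundary term, Sobolev--Poincar\'e, Gehring self-improvement) is faithful to that route, so in spirit the proposal matches the paper's intended proof.

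The one step that deserves more care is the passage from the $L^2$ weak reverse H\"older inequality to the $L^t$ estimate with the $|f_N|^{t(n-1)/n}$ datum. To run the Giaquinta--Modica form of Gehring's lemma with $|\nabla w|$ as a fixed right-hand function, you need the local $L^2$-averages of $|\nabla w|$ over $\dball z{\kappa\rho}$ to \emph{dominate} the ball-by-ball data term uniformly as $z,\rho$ vary inside $\dball x{\kappa r}$; but the fractional-integration estimate you write down is a one-sided upper bound on $|\nabla w|$, which goes in the wrong direction for this purpose. You flag this as ``bookkeeping,'' and it is indeed a standard matter (one can either establish the required two-sided comparison, or avoid $w$ entirely by noting that the data term is a maximal-function-type quantity to which the Gehring machinery applies directly, as in \cite{OB:2009}); but as written your argument does not actually supply the domination, so that step should be filled in or explicitly deferred to \cite{OB:2009} along with the rest.
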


\section{Estimates for solutions with atomic data} \label{Sect3}

In this section we consider the mixed problem with
Neumann data an atom for $N$ and zero Dirichlet data.
We estimate the decay of the solution of this mixed problem
as we move away
from the support of the atom by taking $L^p$-norms of
the solution in dyadic  rings around the support of the atom.
Thus, given a surface ball $ \sball x r$,  we  define
$\Sigma_k = \sball x { 2^ k r} \setminus \sball x {2^ { k-1}
  r}$  and    $ S_k =   \dball x {2^k r } \setminus \dball
  x { 2^ {    k-1} r }  $.

\begin{theorem} \label{AtomicTheorem}
Let $\Omega$  and $D$ satisfy (\ref{Lip})
and (\ref{NTA}). Fix $x\in\partial\Omega$ and let $r$
satisfy $0<r<r_0$.
Let $u $ be a weak solution of the mixed problem
(\ref{MP}) with Neumann data $f_N=a$ an atom for $N$  which
is supported in $\sball x r$ and zero Dirichlet data. Let
$q_0>2$ be as in Lemma \ref{RHEstimate} and let $\Lambda$ satisfy
(\ref{SurfProp}) with $\epsilon$ such that
$0\leq \epsilon < (q_0-2)/(q_0-1)$. Then for
$ 1 < p < q_0((1-\epsilon)/(2-\epsilon))$, the
following estimates hold
\begin{equation} \label{LocalPart}
\left( \int _{\sball x {8r} }   |\nabla u |^p \, d \sigma  \right)^ {
  1/p}
\leq C \sigma (\sball x  {r} )^ {-1/p'},
\end{equation}
and   for $ k \geq 4$,
\begin{equation} \label{Decay}
\left ( \int _{ \sring k}  |\nabla u |^p \,d\sigma  \right) ^ { 1/p}
\leq
C 2^ {-\beta k}  \sigma(  \sring k ) ^ {- 1/p'} .
\end{equation}
Here,  $\beta $ is as in Lemma \ref{Green} and the constants in
the estimates (\ref{LocalPart}) and (\ref{Decay}) depend on
$p$ and the global character of the domain.
\end{theorem}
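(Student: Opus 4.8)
The plan is to reduce matters to an $L^\infty$ bound on $u$ and then convert that bound into $L^p(d\sigma)$ control of $\nabla u$ by combining the interior reverse H\"older inequality for the \emph{mixed} problem (Lemma~\ref{RHEstimate}, valid across the interface $\crease$) with the classical boundary estimates for the \emph{pure} Dirichlet and Neumann problems, which are available because $\Omega$ is merely Lipschitz. Throughout, (\ref{NTA}) supplies the condition (\ref{Dbig}) via Lemma~\ref{CorkEverywhere}, so Lemmas~\ref{Green} and \ref{RHEstimate} apply. First (decay of $u$): since $a\in L^\infty(\partial\Omega)\subset\sobolev 2{-1/2}_D(\partial\Omega)$, part 3) of Lemma~\ref{Green} gives $u(z)=-\int_N a(y)G(z,y)\,d\sigma(y)$; because $\int a\,d\sigma=0$ and $\mbox{supp}\,a\subset\sball xr$ we may subtract $G(z,x)$ and use the H\"older estimate for the Green function in part 4) of Lemma~\ref{Green} together with $\|a\|_{L^1}\leq1$ to get, for $z\in\ring k$ with $k\geq3$, that $|u(z)|\leq C r^\beta (2^kr)^{2-n-\beta}=:A_k$. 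Near the support of the atom one instead tests the weak formulation with $u$, so that $\int_\Omega|\nabla u|^2\,dy=\int_N a\,u\,d\sigma$, and the trace inequality together with (\ref{coerce}) yields $\|\nabla u\|_{L^2(\Omega)}\leq C\sigma(\sball xr)^{-1/2}$; then (\ref{Moser}) bounds $\sup_{\dball x{8r}}|u|$ by a constant multiple of $A_3$.

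Next (interior $L^t$ bound on a ring): fix $k\geq4$, write $c_0=c2^kr$ with $c$ small, and choose a collection of at most $C(n)$ balls $\dball{x_*}{c_0}$, centred in $\ring k$, whose union covers a neighbourhood of $\sring k$ in $\Omega$ and which are disjoint from $\dball x{2r}$. On each such ball $u$ is a weak solution of the mixed problem with zero data on $\partial\Omega\cap\partial\dball{x_*}{c_0}$ (the Neumann datum vanishes there since $k\geq4$), so Lemma~\ref{RHEstimate} with $f_N\equiv0$, the Caccioppoli inequality, and the bound from the previous step (valid on $\ring{k-1}\cup\ring k\cup\ring{k+1}$, where $|u|\leq CA_k$) give, for every $t$ with $2\leq t<q_0$,
$$
\int_{\dball{x_*}{c_0}}|\nabla u|^{t}\,dy\leq C\,(2^{k}r)^{\,n-t}A_k^{\,t}.
$$

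Then (passage to the boundary): decompose $\sring k$ into $\{w:\delta(w)>c_0\}$ together with the dyadic shells $R_j=\{w\in\sring k:2^{-j-1}c_0<\delta(w)\leq2^{-j}c_0\}$, $j\geq0$, and set $\delta_j=2^{-j}c_0$. On the first set, and on every ball $\dball{w_i}{C\delta_j}$ with $w_i\in R_j$, Lemma~\ref{newball} shows $u$ solves a pure Dirichlet or Neumann problem; the classical local boundary estimate for these problems, raised to exponent $p<q_0$ by reverse H\"older, gives $\int_{\sball{w_i}{c\delta_j}}|\nabla u|^{p}\,d\sigma\leq C\delta_j^{\,n-1}(\average_{\dball{w_i}{C\delta_j}}|\nabla u|^{2}\,dy)^{p/2}$. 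Covering $R_j$ by $N_j$ boundedly overlapping surface balls $\sball{w_i}{c\delta_j}$ with $N_j\leq C\sigma(R_j)\delta_j^{1-n}$, applying H\"older's inequality in $i$ with exponents $t/2$ and $t/(t-2)$, using Jensen to pass to $L^t$ averages, and invoking the previous display on the union of the enlarged balls, one obtains
$$
\int_{R_j}|\nabla u|^{p}\,d\sigma\leq C\,\delta_j^{\,n-1}\,N_j^{\,1-p/t}\,(\delta_j^{-n}(2^{k}r)^{\,n-t}A_k^{\,t})^{p/t}.
$$
Inserting $\sigma(R_j)\leq C\delta_j^{\,1-\epsilon}(2^{k}r)^{\,n-2+\epsilon}$ from Lemma~\ref{measure} and simplifying, the power of $\delta_j$ is $1-\epsilon-(2-\epsilon)p/t$, so the sum over $j\geq0$ converges exactly when $p<(1-\epsilon)t/(2-\epsilon)$; letting $t\uparrow q_0$ this is $1<p<q_0(1-\epsilon)/(2-\epsilon)$. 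A short computation shows the remaining powers of $2^kr$ collapse so that $\int_{\sring k}|\nabla u|^p\,d\sigma\leq CA_k^{\,p}(2^kr)^{\,n-1-p}$, and substituting $A_k=Cr^\beta(2^kr)^{2-n-\beta}$ together with $\sigma(\sring k)\approx(2^kr)^{n-1}$ yields precisely (\ref{Decay}). The local estimate (\ref{LocalPart}) follows from the same shell decomposition, now using the $L^2(\Omega)$ bound on $\nabla u$ and the data term of Lemma~\ref{RHEstimate} in place of the decay factor $A_k$.

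The main obstacle is this last step: one must use the reverse H\"older inequality for the mixed problem at the large scale $2^kr$, where it is legitimate to cross $\crease$, while only pure Dirichlet or Neumann boundary estimates are available at the small scales $\delta_j$, and the two are stitched together through the Ahlfors $(n-2+\epsilon)$-regularity of $\crease$ via Lemma~\ref{measure}. Tracking the exponents so that the geometric series over the shells $R_j$ converges up to the endpoint $p=q_0(1-\epsilon)/(2-\epsilon)$ is the delicate point; the hypothesis $0\leq\epsilon<(q_0-2)/(q_0-1)$ is precisely what makes this endpoint larger than $1$ (and larger than $2$ when $\epsilon$ is small).
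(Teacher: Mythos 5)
The core ideas are sound and the exponent bookkeeping at the end does work out, but there is one genuine gap and the overall route is genuinely different from the paper's, so both points are worth recording.

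\textbf{Gap in the decay of $u$.} You derive the pointwise bound $|u(z)|\leq Cr^\beta|x-z|^{2-n-\beta}$ by writing $u(z)=-\int_N a(y)G(z,y)\,d\sigma(y)$ and ``subtracting $G(z,x)$ because $\int a\,d\sigma=0$.'' This uses $\int_N a\,d\sigma=0$, but the atom for $N$ is a restriction to $N$ of an atom $a$ for $\partial\Omega$: the cancellation $\int_{\partial\Omega}a\,d\sigma=0$ does \emph{not} imply $\int_N a\,d\sigma=0$ when $\sball xr\cap D\neq\emptyset$. The paper handles this with a case distinction: if $\sball xr\subset N$, subtract $G(z,x)$ using the mean-zero property; if $\sball xr\cap D\neq\emptyset$, choose instead $\bar x\in D\cap\sball xr$ and subtract $G(z,\bar x)$, which is legitimate because $G(z,\cdot)$ vanishes on $D$. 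Without this second case your inequality (\ref{gclaim}) as stated is not justified.

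\textbf{Different route to the boundary $L^p$ bound.} The paper proves Theorem~\ref{AtomicTheorem} very quickly by invoking Theorem~\ref{RHBoundary}, a reverse H\"older inequality at the boundary, and (\ref{gclaim}). Theorem~\ref{RHBoundary} is in turn proved by the weighted Whitney estimate of Lemma~\ref{Whitney}, a judicious choice of the parameter $\rho$, and Lemma~\ref{integrability} for the $\delta$-weight. Your proposal bypasses Theorem~\ref{RHBoundary} entirely and goes directly: interior $L^t$ bound for $\nabla u$ on a ring via Lemma~\ref{RHEstimate} and Caccioppoli (using (\ref{gclaim})), then a dyadic-shell decomposition in $\delta$ of $\sring k$, using Lemma~\ref{newball} to reduce each small ball to a pure Dirichlet or Neumann problem, covering by boundedly overlapping surface balls, H\"older with exponents $t/p$ and $(t/p)'$ (you wrote $t/2$, $t/(t-2)$, which is a slip), Jensen, and Lemma~\ref{measure} to bound $\sigma(R_j)$. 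This is a legitimate alternative, and it makes the role of the condition $\epsilon<(q_0-2)/(q_0-1)$ very transparent — it is exactly the condition that makes the geometric series over the shells converge for some $p>1$. The trade-off is that the paper factors all of this into the reusable Theorem~\ref{RHBoundary}, which is then cited again in Section~\ref{lpsection} (Lemma~\ref{newLocal}) during the Shen-type argument; your direct route would have to reprove a version of it there. Finally, be aware that the small-scale boundary step $\int_{\sball{w_i}{c\delta_j}}|\nabla u|^p\,d\sigma\leq C\delta_j^{n-1}(\average_{\dball{w_i}{C\delta_j}}|\nabla u|^2\,dy)^{p/2}$ is obtained from Lemmas~\ref{NeumannRegularity} and \ref{DirichletRegularity} only together with H\"older when $p\leq 2$; when $q_0>4$ and $\epsilon$ is small the admissible range of $p$ exceeds $2$, and then one needs a genuine $L^p$ boundary reverse H\"older inequality for the pure problems rather than the bare $L^2$ lemmas — the paper's Theorem~\ref{RHBoundary} proof has the same subtlety in its use of H\"older with exponents $2/p$, $2/(2-p)$, so this is an issue shared by both arguments rather than a defect particular to yours.
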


In order to prove Theorem \ref{AtomicTheorem} we need
a series of lemmas, some of which require that we work in a
subdomain of $\Omega$ which is also contained in a coordinate
cylinder.
 For $x\in \partial\Omega$ and $r$ satisfying $0<r<r_0$,
 let $\locdom x r  = Z_r (x) \cap \Omega$,
where $Z_r (x)$ is a coordinate cylinder as  defined in Section
\ref{prelim}.  The sets $ \locdom xr $ are star-shaped Lipschitz
domains and for this reason they are preferable to the sets $ \dball x
r$.

The following lemmas rely on a Whitney decomposition of
$\partial\Omega \setminus \Lambda$. For simplicity, we use
surface cubes rather than the surface balls used up to this
point. A \emph{surface cube} is the image of a cube in
$\reals^{n-1}$ under the mapping $x'\rightarrow(x',\phi(x'))$.
Then we write $\partial\Omega = \Lambda \bigcup (\cup_j Q_j)$
where the collection of surface cubes $\{Q_j\}$ has the following
three properties: 1) For each $j$, either $Q_j\subset D$ or
$Q_j \subset (N\setminus \Lambda)$, 2) There exist constants $c'$ and  $c''$,
with $c''$ as small as we like, such that for each $x\in Q$ and
each $j$,
$c'\delta(x) < \mbox{diam}(Q_j) <  c'' \delta(x)$, 3) If 
$T(Q_j) = \{\, x\in\bar{\Omega}: \mbox{dist}(x,\partial\Omega)< \mbox{diam}(Q_j)\,\}$,
then provided that the constants in the previous condition are
sufficiently small, the sets $\{T(Q_j)\}$ have bounded overlaps
and thus
\begin{equation*}
\sum_j \chi_{T(Q_j)} \leq C(n,M).
\end{equation*}

We now begin the series of lemmas. The first two lemmas
give a local version of boundary regularity for the Dirichlet
and Neumann problems for the Laplacian, and they require only that
$\Omega$ be a Lipschitz domain. The proofs of the next two lemmas
appear in the previous work of Ott and Brown
\cite[Section ~4]{OB:2009}.

\begin{lemma}\label{NeumannRegularity}
Let $\Omega$ be a Lipschitz domain, let $x\in \partial\Omega$, and
assume that $r$ satisfies $0<r<r_0$.
Let $u$ be a harmonic function in $\Omega_{4r}(x)$.
If
$\nabla u \in L^{2}(\Omega_{4r}(x))$ and
$\partial u/\partial \nu \in L^{2}(\partial\Omega \cap \partial\Omega_{4r})$,
then $\nabla u \in L^{2}(\sball x r)$ and 
$$
\int _ { \sball x {r}}  (\nontan {( \nabla u )}_{r})^2 \, d\sigma
\leq C \left ( \int _ { \partial \Omega \cap \partial \locdom x {4r}  } \left |\frac { \partial u  }{ \partial
    \nu } \right | ^ 2 \, d\sigma
+ \frac 1 r  \int _ {\locdom x {4r}  } |\nabla u |^2 \,
dy\right).
$$
The constant $C$ depends on $M$ and the dimension $n$.
\end{lemma}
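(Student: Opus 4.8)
The plan is to establish the estimate in two steps: first a localized Rellich identity giving $\nabla u\in L^{2}$ on the boundary together with the bound $\int_{\sball x r}|\nabla u|^{2}\,d\sigma\le C\big(\int_{\partial\Omega\cap\partial\locdom x{4r}}|\partial u/\partial\nu|^{2}\,d\sigma+\tfrac1r\int_{\locdom x{4r}}|\nabla u|^{2}\,dy\big)$, and then an appeal to the $L^{2}$-solvability of the Dirichlet problem for the Laplacian in Lipschitz domains (Dahlberg \cite{BD:1977}, Jerison--Kenig \cite{JK:1982c}), applied to each harmonic component $\partial_{j}u$, to pass from this $L^{2}$ bound on $\nabla u|_{\partial\Omega}$ to the non-tangential maximal function estimate. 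Since only $\nabla u$ appears in the conclusion, I first replace $u$ by $u-c$ with $c=\average_{\locdom x{4r}}u\,dy$; the Poincar\'e inequality gives $\|u-c\|_{L^{2}(\locdom x{4r})}\le Cr\|\nabla u\|_{L^{2}(\locdom x{4r})}$, which is used to control the lower-order terms produced by cutting off. By scale invariance I may assume $r\approx1$ and that $\partial\Omega\cap Z_{4r}(x)$ is the graph $\{y_{n}=\phi(y')\}$ of a Lipschitz function $\phi$ with $\phi(x')=x_{n}$.

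For the first step, fix a cutoff $\psi=\psi(|y'-x'|)$ depending only on the horizontal variable, with $\psi\equiv1$ for $|y'-x'|\le r$, $\psi\equiv0$ for $|y'-x'|\ge\tfrac32 r$, and $|\nabla\psi|\le C/r$; put $\alpha=\psi e_{n}$. Since $\psi$ does not depend on $y_{n}$ we have $\div\alpha=0$, and $\alpha\cdot\nu=\psi/\sqrt{1+|\nabla\phi|^{2}}\ge0$ on the graph part of $\partial\locdom x{2r}$ (with $\alpha\cdot\nu\ge(1+M^{2})^{-1/2}$ where $\psi\equiv1$), while $\alpha$ vanishes on the lateral wall $\{|y'-x'|=2r\}$. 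The Rellich identity for the harmonic function $u$ with this vector field on $\locdom x{2r}$ reads
\[
\int_{\partial\locdom x{2r}}(\alpha\cdot\nu)|\nabla u|^{2}\,d\sigma
=2\int_{\partial\locdom x{2r}}\frac{\partial u}{\partial\nu}\,(\alpha\cdot\nabla u)\,d\sigma
-2\int_{\locdom x{2r}}(\nabla\psi\cdot\nabla u)\,\partial_{n}u\,dy .
\]
The left-hand side is at least $c\int_{\sball x r}|\nabla u|^{2}\,d\sigma$; the interior term is at most $(C/r)\int_{\locdom x{4r}}|\nabla u|^{2}\,dy$ because $\nabla\psi$ is supported in the annulus $\{r\le|y'-x'|\le\tfrac32 r\}$; and the boundary term on the right is supported on the graph part (which lies inside $\partial\Omega\cap\partial\locdom x{4r}$, where $\partial u/\partial\nu\in L^{2}$ by hypothesis) together with the top face of the cylinder, which sits at distance $\gtrsim r$ from $\partial\Omega$ and is therefore handled by interior estimates for harmonic functions. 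The Cauchy--Schwarz inequality followed by a standard hole-filling iteration over a one-parameter family of concentric cylinders then absorbs the surface integral of $|\nabla u|^{2}$ on the right and yields the displayed bound; in particular $\nabla u\in L^{2}(\sball x r)$.

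For the second step, each component $\partial_{j}u$ is harmonic in $\locdom x{4r}$, and the first step (applied at radius $2r$) shows that its boundary trace lies in $L^{2}(\sball x{2r})$. A local version of the $L^{2}$-solvability of the Dirichlet problem in Lipschitz domains --- equivalently, the Jerison--Kenig estimate $\|\nontan{(\nabla v)}\|_{L^{2}}\approx\|\nabla v|_{\partial\Omega}\|_{L^{2}}$ for $v$ harmonic --- gives
\[
\int_{\sball x r}\big(\nontan{(\nabla u)}_{r}\big)^{2}\,d\sigma
\le C\Big(\int_{\sball x{2r}}|\nabla u|^{2}\,d\sigma+\frac1r\int_{\locdom x{4r}}|\nabla u|^{2}\,dy\Big),
\]
and inserting the bound from the first step produces the assertion of the Lemma.

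The main obstacle is that, a priori, neither the surface integral $\int_{\partial\Omega}|\nabla u|^{2}$ nor the non-tangential boundary trace of $\nabla u$ is yet known to be finite, so the Rellich identity and the Dirichlet estimate cannot be applied to $u$ directly. As in \cite{JK:1982c,GV:1984,OB:2009}, this is resolved by a limiting argument: one carries out both steps on a sequence of smooth subdomains exhausting $\locdom x{2r}$ (for instance interior dilates toward a star center, on which $u$ is $C^{\infty}$ up to the boundary), obtains all estimates with constants independent of the approximation, and then passes to the limit using the weak-$L^{2}$ compactness furnished by the uniform non-tangential maximal function bounds. The remaining delicacy is the bookkeeping at the seam where the lateral wall of the coordinate cylinder meets $\partial\Omega$, which is precisely the reason the cutoff $\psi$ is arranged to vanish there and the reason the hypothesis is stated on the larger cylinder $Z_{4r}(x)$.
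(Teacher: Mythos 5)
Your first step---the localized Rellich identity with the vertical vector field $\alpha=\psi\, e_n$---is the standard route for the $L^2$ boundary bound on $\nabla u$ and is essentially what Ott--Brown do in \cite{OB:2009}. Two small remarks on that step. First, subtracting a constant from $u$ is unnecessary: the Rellich identity for $\alpha=\psi e_n$ involves only $\nabla u$, so no lower-order terms appear. Second, no hole-filling iteration is needed. Since $\alpha\cdot\nu=\psi\,(e_n\cdot\nu)\geq \psi/\sqrt{1+M^2}$ on the graph and the boundary term is bounded by $\epsilon\int\psi^2|\nabla u|^2\,d\sigma+\epsilon^{-1}\int|\partial u/\partial\nu|^2\,d\sigma$ with $\psi^2\leq\psi$, a single choice of $\epsilon<(1+M^2)^{-1/2}$ absorbs the gradient term into the left side pointwise; the iteration over concentric cylinders you describe is superfluous here.

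Your second step diverges from the paper's method. You appeal to a ``local version of $L^2$-solvability of the Dirichlet problem'' applied to each $\partial_j u$; but such a localized statement is not an off-the-shelf corollary of Dahlberg or Jerison--Kenig, and as written it leaves a real gap: you would need to decompose $\partial_j u$ into a global Dirichlet solution with truncated data plus a remainder, and then control the remainder by interior estimates---a non-trivial piece of bookkeeping you have not supplied. The paper (and \cite{OB:2009}, whose proof it cites) instead gets the non-tangential maximal bound from the $L^2$ boundary bound on $\nabla u$ by writing a representation formula for $\nabla u$ via the fundamental solution and a cutoff, and then invoking the Coifman--McIntosh--Meyer theorem on the boundedness of the Cauchy integral on Lipschitz graphs; this is inherently local, requires no localization of Dirichlet solvability, and is the same device used in the present paper in the proofs of Theorem~\ref{Hardythm} and Lemma~\ref{newLocal}. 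Your route can be made rigorous, but as stated it has a genuine gap at precisely this point; replacing it by the representation-formula argument would remove the gap and align with the cited proof. The closing paragraph about approximation by smooth interior domains to justify the Rellich identity and the limiting passage is the correct and standard remedy for the a priori unboundedness of $(\nabla u)^*$.
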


\begin{lemma}\label{DirichletRegularity}
Let $\Omega$ be a Lipschitz domain, let $x\in \partial\Omega$ and
$r$ such that $0<r<r_0$.
Let $u$ be a harmonic function in $\Omega_{4r}(x)$.
If $\nabla u  \in L^2 (\locdom x {4r} )$ and
$ \tangrad  u \in L^ 2 (\partial \Omega \cap \partial \locdom x { 4r} )$, then
$ \nabla u  \in L^2 ( \sball x r)$ and 
$$
\int _ { \sball x {r}}  (\nontan {( \nabla u )}_r)^2 \, d\sigma
\leq C \left( \int _ { \partial \Omega \cap \partial  \locdom x { 4r} }| \tangrad u |^2 \, d\sigma
+  \frac 1 r \int _ {\locdom x { 4r} }  |\nabla u |^2 \,
dy\right).
$$
The constant $C$ depends on $M$ and the dimension $n$.
\end{lemma}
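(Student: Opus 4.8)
The plan is to derive this \emph{local} regularity estimate from the \emph{global} $L^2$-regularity theorem for harmonic functions in bounded Lipschitz domains (Jerison--Kenig \cite{JK:1982c}, Verchota \cite{GV:1984}) by localizing with a cut-off function. Both sides of the asserted inequality are unchanged if $u$ is replaced by $u-c$, so I first normalize $u$ to have mean zero over $\locdom x{4r}$; the Poincar\'e and trace inequalities on the star-shaped Lipschitz domain $\locdom x{4r}$ then give $\|u\|_{L^2(\locdom x{4r})}^2\lesssim r^2\|\nabla u\|_{L^2(\locdom x{4r})}^2$ and $\|u\|_{L^2(\partial\Omega\cap\partial\locdom x{4r})}^2\lesssim r\|\nabla u\|_{L^2(\locdom x{4r})}^2$, which will let every lower-order term below be absorbed into the right-hand side of the Lemma.

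Next I would construct an auxiliary bounded Lipschitz domain $\Omega^*$, of diameter $\approx r$ and Lipschitz constant comparable to $M$, that coincides with $\Omega$ inside the coordinate cylinder $Z_{3r}(x)$: this is possible because $\partial\Omega\cap Z_{4r}(x)$ is the graph of a Lipschitz function, which one modifies outside $\{|y'-x'|<3r\}$ to close up a star-shaped domain. Fix a smooth $\eta$ with $\eta\equiv1$ on $Z_{2r}(x)$, $\operatorname{supp}\eta\subset Z_{3r}(x)$, $|\nabla\eta|\lesssim 1/r$, $|\Delta\eta|\lesssim 1/r^2$, and put $v=\eta u$, extended by zero to $\Omega^*$. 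Then $v\in\sobolev 2 1(\Omega^*)$, $\Delta v=g:=2\nabla\eta\cdot\nabla u+u\,\Delta\eta$ in $\Omega^*$ with $\operatorname{supp} g\subset\Omega\cap\bigl(Z_{3r}(x)\setminus Z_{2r}(x)\bigr)$, and the trace $v|_{\partial\Omega^*}=(\eta u)|_{\partial\Omega^*}$ lies in $\sobolev 2 1(\partial\Omega^*)$. I split $v=v_1+v_2$, where $v_1\in W^{1,2}_0(\Omega^*)$ is the solution of $\Delta v_1=g$ furnished by Lax--Milgram, and $v_2:=v-v_1$ is harmonic in $\Omega^*$ with boundary data $(\eta u)|_{\partial\Omega^*}$.

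For $v_2$ I would invoke the global $L^2$-regularity estimate in $\Omega^*$, $\|\nontan{(\nabla v_2)}\|_{L^2(\partial\Omega^*)}\lesssim\|\tangrad(\eta u)\|_{L^2(\partial\Omega^*)}$; since $\tangrad(\eta u)=\eta\,\tangrad u+u\,\tangrad\eta$, the right side is $\lesssim\|\tangrad u\|_{L^2(\partial\Omega\cap\partial\locdom x{4r})}+r^{-1}\|u\|_{L^2(\partial\Omega\cap\partial\locdom x{4r})}$, which by the normalization above is $\lesssim\|\tangrad u\|_{L^2(\partial\Omega\cap\partial\locdom x{4r})}+r^{-1/2}\|\nabla u\|_{L^2(\locdom x{4r})}$. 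For $v_1$, note $g\equiv0$ on $Z_{2r}(x)$, so $v_1$ is harmonic in $\locdom x{2r}$ and vanishes on $\partial\Omega\cap Z_{2r}(x)$; the standard local bound for harmonic functions with zero boundary data then gives $\int_{\sball x r}(\nontan{(\nabla v_1)}_r)^2\,d\sigma\lesssim\tfrac1r\int_{\locdom x{2r}}|\nabla v_1|^2\,dy$, while the energy identity for $v_1$ and Poincar\'e on $\Omega^*$ yield $\int_{\Omega^*}|\nabla v_1|^2\lesssim r^2\|g\|_{L^2(\Omega^*)}^2\lesssim\|\nabla u\|_{L^2(\locdom x{4r})}^2$ (using $\|u\|_{L^2(\locdom x{4r})}\lesssim r\|\nabla u\|_{L^2(\locdom x{4r})}$ once more). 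Finally, since $\eta\equiv1$ on $Z_{2r}(x)$, for every $p\in\sball x r$ the truncated cone $\Gamma_r(p)$ lies in $Z_{2r}(x)$, where $u=v=v_1+v_2$ and the cones for $\Omega$ and for $\Omega^*$ agree; hence $\nontan{(\nabla u)}_r\le\nontan{(\nabla v_1)}_r+\nontan{(\nabla v_2)}_r$ on $\sball x r$, and adding the two estimates gives the Lemma (the membership $\nabla u\in L^2(\sball x r)$ coming for free, since the non-tangential limit of $\nabla u$ exists a.e.\ where $\nontan{(\nabla u)}_r$ is finite).

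The step I expect to be the main obstacle is the estimate for $v_1$, that is, the local inequality $\int_{\sball x r}(\nontan{(\nabla v_1)}_r)^2\,d\sigma\lesssim r^{-1}\int_{\locdom x{2r}}|\nabla v_1|^2\,dy$ for a harmonic function vanishing on a portion of a Lipschitz boundary. Its proof requires a Whitney-decomposition argument combining interior gradient estimates, the Caccioppoli inequality at the boundary, and the Poincar\'e inequality that exploits the vanishing trace --- exactly the kind of estimate worked out in Dahlberg--Kenig \cite{DK:1987} and in Ott and Brown \cite[Section~4]{OB:2009}, which I would cite rather than reprove. A secondary technical point is to produce $\Omega^*$ with constants depending only on $M$ and $n$ and to choose the aperture of the non-tangential cones so that the truncated cones in $\Omega$ over $\sball x r$ are contained in cones for $\Omega^*$; both are routine, the latter using the comparability of non-tangential maximal functions of different apertures.
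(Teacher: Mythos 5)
Your localization strategy is sound in outline: build an auxiliary bounded Lipschitz domain $\Omega^*$ coinciding with $\Omega$ inside $Z_{3r}(x)$, cut off to $v=\eta u$, and split $v=v_1+v_2$ with $v_1\in \sobolev 2 1_0(\Omega^*)$ carrying $\Delta v=g$ (supported in the annulus $Z_{3r}\setminus Z_{2r}$) and $v_2$ harmonic in $\Omega^*$ carrying the trace. Controlling $v_2$ through the global $L^2$-regularity theorem of Jerison--Kenig/Verchota, controlling $v_1$ by the energy method plus a local zero-data estimate, and comparing cones for $\Omega$ and $\Omega^*$ on $\sball x r$ all fit together correctly, and the scaling bookkeeping (mean-zero normalization, Poincar\'e and trace inequalities with $r$-dependent constants) is right.

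Two things are worth flagging. First, the ingredient you defer for $v_1$, namely $\int_{\sball x r}(\nontan{(\nabla v_1)}_r)^2\,d\sigma\lesssim r^{-1}\int_{\locdom x{2r}}|\nabla v_1|^2\,dy$ for a harmonic $v_1$ vanishing on $\partial\Omega\cap Z_{2r}$, \emph{is} the Lemma being proved in the special case $\tangrad u\equiv 0$, and the reference you give for it (Ott--Brown, Section 4) is exactly what the present paper cites for the full Lemma. If the aim is a proof that does not reduce to ``cite \cite{OB:2009},'' this piece should be attributed to a primary source (the regularity theory of \cite{GV:1984,DK:1987}) and not back to \cite{OB:2009}. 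Second, the characterization of that step as a Whitney-decomposition argument with Caccioppoli and Poincar\'e is not really how the zero-data estimate for the pure Dirichlet problem is proved; the Whitney machinery in this paper is specific to working near $\Lambda$ for the mixed problem. The standard route to the non-tangential maximal estimate, and the one the present paper actually uses in the analogous Lemma \ref{newLocal}, is the Rellich identity on Verchota's approximating domains to obtain $\nabla u\in L^2$ of the boundary, followed by the layer-potential representation of $\nabla u$ and the Coifman--McIntosh--Meyer theorem to pass to $\nontan{(\nabla u)}$. If you run that argument directly on $\eta u$ in $\Omega^*$ you do not need the $v_1$/$v_2$ split at all, which is a somewhat cleaner route to the Lemma; your split is a legitimate alternative that trades the Rellich-plus-CMM mechanism for an appeal to the global regularity theorem plus the zero-data special case.
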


The next lemma also appears in Ott and Brown \cite[Section 4]{OB:2009}.
For the sake of completeness we sketch the proof again in
this paper.

\begin{lemma}\label{Whitney}
Let $\Omega$  and $D$ satisfy (\ref{Lip}), (\ref{SurfProp}),
and (\ref{NTA}). Let $u$ be a weak solution of the mixed problem with
Neumann data $f_N \in L^2(N)$ and zero Dirichlet data. Let $\rho \in\reals$,
$x\in\partial\Omega$, and $0<r<r_0$, and assume that for some $A>0$,
$\delta(x)\leq Ar$. Then it follows that
$$
\int _ {\sball x r    }   (\nontan{( \nabla u)}_{c\delta}) ^2 \, \delta  ^{1-\rho}  d\sigma
\leq C \left ( \int _ { \sball x {2r}}
|f_N|^2  \delta  ^ { 1- \rho } \,  d\sigma
  + \int _ { \dball x { 2r}   }  |\nabla u  |^2 \, \delta  ^ {
    -\rho } \, dy
\right ),
$$
for constants $c$ and $C$ which depend only on $M$, $n$, $A$ and $
\rho$.
\end{lemma}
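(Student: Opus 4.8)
The plan is to split $\sball x r$ into Whitney surface cubes, apply the local boundary regularity estimates of Lemmas \ref{NeumannRegularity} and \ref{DirichletRegularity} on each cube, and sum with bounded overlap. Fixing $\rho$, $x$ with $\dtb x\leq Ar$, and $0<r<r_0$, I would write $\partial\Omega=\crease\cup(\cup_jQ_j)$ for the Whitney decomposition described above, with the constants $c'$, $c''$ in property 2) taken as small as needed (how small depending only on $M$, $n$, $A$, $\rho$). Since the truncated cone $\ntar y\cap\ball y{c\dtb y}$ is empty when $y\in\crease$, the left-hand integrand vanishes on $\crease$, so it suffices to bound the contributions of the cubes $Q_j$ that meet $\sball x r$. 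For such a cube, write $d_j=\operatorname{diam}(Q_j)$ and fix $x_j\in Q_j$; property 2) gives $\dtb y\approx d_j$ for every $y\in T(Q_j)$, and, with $c''$ small, $Q_j\subset\sball x{2r}$, $T(Q_j)\subset\dball x{2r}$, and $4Cd_j<r_0$ for a fixed dimensional constant $C$.

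On each such cube I would apply the appropriate regularity estimate at scale $\approx d_j$ centered at $x_j$ (its hypotheses hold since $\grad u\in L^2(\Omega)$ and the data is $L^2$). Because $d_j\leq c''\dtb{x_j}$ with $c''$ small, a dilated surface ball $\sball{x_j}{4Cd_j}$ lies inside a coordinate cylinder and, by Lemma \ref{newball}, is entirely contained in $D$ or in $N\setminus\crease$ — the same one that contains $Q_j$. If $Q_j\subset N$, then $\partial u/\partial\nu=f_N$ on $\partial\Omega\cap\partial\locdom{x_j}{4Cd_j}$, and Lemma \ref{NeumannRegularity} yields
$$
\int_{Q_j}(\nontan{(\grad u)}_{cd_j})^2\,d\sigma\leq C\left(\int_{\sball{x_j}{4Cd_j}}|f_N|^2\,d\sigma+\frac1{d_j}\int_{T(Q_j)}|\grad u|^2\,dy\right);
$$
if $Q_j\subset D$, the zero Dirichlet data forces $\tangrad u=0$ on the corresponding piece of $\partial\Omega$, and Lemma \ref{DirichletRegularity} gives the same bound with the data term deleted. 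In either case, multiplying by $d_j^{1-\rho}\approx\delta^{1-\rho}$ and using $d_j^{1-\rho}/d_j\approx d_j^{-\rho}\approx\delta^{-\rho}$ converts this into
$$
\int_{Q_j}(\nontan{(\grad u)}_{cd_j})^2\,\delta^{1-\rho}\,d\sigma\leq C\left(\int_{\sball{x_j}{4Cd_j}}|f_N|^2\,\delta^{1-\rho}\,d\sigma+\int_{T(Q_j)}|\grad u|^2\,\delta^{-\rho}\,dy\right).
$$
Since all the truncation heights involved are comparable to $d_j$ on $Q_j$, a single constant $c$ can be chosen so that $\nontan{(\grad u)}_{c\delta}(y)\leq\nontan{(\grad u)}_{cd_j}(y)$ there.

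Summing over the cubes meeting $\sball x r$ then finishes the argument: these cubes are essentially disjoint and cover $\sball x r\setminus\crease$, so the left-hand sides add to at least $\int_{\sball x r}(\nontan{(\grad u)}_{c\delta})^2\delta^{1-\rho}\,d\sigma$; on the right, $\{T(Q_j)\}$ has bounded overlap by property 3) and lies in $\dball x{2r}$, while the dilated surface balls $\{\sball{x_j}{4Cd_j}\}$ likewise have bounded overlap (each has size $\approx d_j$ at its points and sits within $\approx d_j$ of $Q_j$, so property 3) applied with a slightly larger constant bounds how many cover a given point) and lie in $\sball x{2r}$. Hence the two sums on the right are dominated by $C\int_{\sball x{2r}}|f_N|^2\delta^{1-\rho}\,d\sigma$ and $C\int_{\dball x{2r}}|\grad u|^2\delta^{-\rho}\,dy$, which is the assertion. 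The part I expect to require the most care is the bookkeeping with the Whitney constants $c'$, $c''$: they must be chosen small enough, in terms of $M$, $n$, $A$, $\rho$, that the dilated cubes stay away from $\crease$, remain inside coordinate cylinders and inside $\sball x{2r}$ (respectively $\dball x{2r}$), and retain bounded overlap after dilation; the regularity lemmas themselves are used as black boxes.
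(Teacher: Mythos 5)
Your proposal is correct and follows essentially the same route as the paper: a Whitney decomposition of $\partial\Omega\setminus\crease$, Lemmas \ref{NeumannRegularity} and \ref{DirichletRegularity} applied at scale $\approx d_j$ on each cube (the $D$ case collapsing since $\tangrad u = 0$ there), multiplication by $d_j^{1-\rho}\approx\delta^{1-\rho}$, and summation using the bounded overlap of $\{T(Q_j)\}$. You have simply spelled out the bookkeeping the paper leaves implicit (and, incidentally, your multiplier $d_j^{1-\rho}$ corrects a small typo in the paper's sketch, which writes $r_j^{-\rho}$).
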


\begin{proof}
When $\Delta_r(x)$ is close
to $\Lambda$, using the Whitney decomposition constructed
above and the estimates of Lemma \ref{NeumannRegularity}
and \ref{DirichletRegularity}, we have
\begin{equation}\label{Whitneyeq}
\int_{Q_j} ((\nabla u)_{r_j}^{*})^2 \,d\sigma \leq
C\left(\int_{2Q_j \cap N} \left| \frac{\partial u}{\partial \nu} \right| ^2
\, d\sigma + \frac{1}{r_j}\int_{T(Q_j)} |\nabla u|^2 \, dy \right).
\end{equation}
To finish the proof of the Lemma, we multiply (\ref{Whitneyeq})
by $r_j^{-\rho}$, recall that $r_j \approx \delta(x)$ for all
$x\in T(Q_j)$, sum on the $Q_j$ that intersect $\sball x r$,
and use that the family
$\{T(Q_j)\}$ has bounded overlaps.
\end{proof}

The next result is another reverse H\"older inequality,
this time at the boundary. While at first glance the result
below may not resemble a reverse H\"older inequality,
in future applications in this paper $f_N=0$ or
a constant.

\begin{theorem}\label{RHBoundary}
Let $\Omega$ and $D$ satisfy (\ref{Lip}) and (\ref{NTA}).
Let $q_0>2$ be as in Lemma \ref{RHEstimate} and let $\Lambda$ satisfy
(\ref{SurfProp}) with $0\leq \epsilon < (q_0-2)/(q_0-1)$. Let $u$ be
the weak solution of
the mixed problem with Neumann data $f_N \in L^2(N)$ and zero
Dirichlet data.  Fix $p$ such that $1< p < q_0 (1-\epsilon)/(2-\epsilon)$.
For $x\in\partial\Omega$ and $r$ satisfying $0<r<r_0$, 
\begin{equation}\label{result62}
\left(\average_{\sball x {r}} |\nabla u |^p \, d\sigma \right)^{1/p}
\leq C \left( \average_{\dball x {2r}} |\nabla u| \, dy + r^{(1-n)/2}\|f_N \|_{L^2 (\sball
x {2r} \cap N)} \right).
\end{equation}
%
%
Above, the constant $C$ depends on $M$, the dimension $n$, and $p$.
\end{theorem}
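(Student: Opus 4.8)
Throughout, the plan is to bound $\int_{\sball x r}\bigl((\nabla u)^*_{c\delta}\bigr)^p\,d\sigma$, which dominates $\int_{\sball x r}|\nabla u|^p\,d\sigma$: indeed $|\nabla u(y)|\le (\nabla u)^*_{c\delta(y)}(y)$ for $\sigma$-a.e.\ $y$, since the non-tangential limit at $y$ is attained inside $\Gamma_{c\delta(y)}(y)$ once $\delta(y)>0$, and $\sigma(\crease)=0$ because $\mathcal H^{\,n-2+\epsilon}(\crease)<\infty$ with $n-2+\epsilon<n-1$. I separate two cases. If $\delta(x)\ge Ar$ for a large constant $A=A(M)$, then $\sball x{Cr}$ lies entirely in $D$ or entirely in $N$ by Lemma~\ref{newball}, so near $x$ the function $u$ solves a pure Dirichlet problem with zero data or a pure Neumann problem; then (\ref{result62}) follows from Lemma~\ref{DirichletRegularity} or Lemma~\ref{NeumannRegularity} — which control $\int_{\sball x r}\bigl((\nabla u)^*_{r}\bigr)^2\,d\sigma$ by $r^{\,n-1}\average_{\dball x{Cr}}|\nabla u|^2\,dy$ plus an $L^2(f_N)$ term — combined with the interior reverse H\"older inequality of Lemma~\ref{RHEstimate} to pass from the $L^2$ to the $L^1$ average of $|\nabla u|$ (and Jensen's inequality on $\sball x r$ when $p\le2$). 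From now on assume $\delta(x)<Ar$.

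In this main case the scheme is to use the weighted bound of Lemma~\ref{Whitney} to trade the $L^p$ surface norm for a weighted $L^2$ surface norm via H\"older. Pick $t\in(2,q_0)$ with $p<t(1-\epsilon)/(2-\epsilon)$ (possible by continuity, since $p<q_0(1-\epsilon)/(2-\epsilon)$) and a parameter $\rho\in(0,1)$ to be chosen. Lemma~\ref{Whitney} applied with this $\rho$, together with $|\nabla u|\le(\nabla u)^*_{c\delta}$, gives
$$
\int_{\sball x r}\bigl((\nabla u)^*_{c\delta}\bigr)^2\,\delta^{1-\rho}\,d\sigma\le C\Bigl(\int_{\sball x{2r}}|f_N|^2\,\delta^{1-\rho}\,d\sigma+\int_{\dball x{2r}}|\nabla u|^2\,\delta^{-\rho}\,dy\Bigr).
$$
Then H\"older on $\sball x r$ with exponents $2/p$ and $2/(2-p)$ (here $p<2$ is used) yields
$$
\int_{\sball x r}\bigl((\nabla u)^*_{c\delta}\bigr)^p\,d\sigma\le\Bigl(\int_{\sball x r}\bigl((\nabla u)^*_{c\delta}\bigr)^2\,\delta^{1-\rho}\,d\sigma\Bigr)^{p/2}\Bigl(\int_{\sball x r}\delta^{-(1-\rho)p/(2-p)}\,d\sigma\Bigr)^{(2-p)/2}.
$$
By Lemma~\ref{integrability} (with $\delta(x)<Ar$) the last surface integral is finite and comparable to $r^{\,n-1-(1-\rho)p/(2-p)}$, provided $(1-\rho)p/(2-p)<1-\epsilon$. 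For the interior integral in the first factor I peel off the higher integrability: H\"older with exponents $t/2,\ t/(t-2)$ bounds it by $\bigl(\int_{\dball x{2r}}|\nabla u|^t\,dy\bigr)^{2/t}\bigl(\int_{\dball x{2r}}\delta^{-\rho t/(t-2)}\,dy\bigr)^{(t-2)/t}$, where the second factor is finite by Lemma~\ref{interiorint} provided $\rho t/(t-2)<2-\epsilon$, and the first is controlled by Lemma~\ref{RHEstimate} (using H\"older on the boundary to replace the $L^{t(n-1)/n}$-norm of $f_N$ there by $r^{(1-n)/2}$ times its $L^2$-norm). The boundary term is $\le Cr^{1-\rho}\|f_N\|_{L^2(\sball x{2r}\cap N)}^2$, since $\delta\le Cr$ on $\sball x{2r}$ and $\rho<1$. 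Writing $\mathcal R=\average_{\dball x{2r}}|\nabla u|\,dy+r^{(1-n)/2}\|f_N\|_{L^2(\sball x{2r}\cap N)}$, a routine count of the powers of $r$ gives $\int_{\sball x r}\bigl((\nabla u)^*_{c\delta}\bigr)^2\,\delta^{1-\rho}\,d\sigma\le Cr^{\,n-\rho}\mathcal R^2$, and then the two factors multiply to $\int_{\sball x r}\bigl((\nabla u)^*_{c\delta}\bigr)^p\,d\sigma\le Cr^{\,n-1}\mathcal R^p$, which is (\ref{result62}).

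The heart of the matter is the choice of $\rho$. The two constraints above read $\rho>(2-\epsilon)-\tfrac{2(1-\epsilon)}{p}$ (from Lemma~\ref{integrability}) and $\rho<\tfrac{(2-\epsilon)(t-2)}{t}$ (from Lemma~\ref{interiorint}); one also wants $\rho\in(0,1)$, which is no obstruction because for $p<2$ the left endpoint is already $<1$, while the right endpoint is positive since $q_0>2$. An elementary computation shows the interval $\bigl((2-\epsilon)-\tfrac{2(1-\epsilon)}{p},\ \tfrac{(2-\epsilon)(t-2)}{t}\bigr)$ is nonempty exactly when $p<\tfrac{t(1-\epsilon)}{2-\epsilon}$, which is how $t$ was chosen; letting $t\uparrow q_0$, the admissible range produced by this argument is precisely $p<q_0(1-\epsilon)/(2-\epsilon)$, so the hypothesis on $p$ (equivalently, the restriction $\epsilon<(q_0-2)/(q_0-1)$ that makes this range non-trivial) is used exactly here. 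The main obstacle is therefore bookkeeping: one must verify that after inserting the estimates from Lemmas~\ref{Whitney}, \ref{integrability}, \ref{interiorint} and~\ref{RHEstimate} the powers of $r$ collapse to exactly $r^{\,n-1}$, and that the harmless enlargement of the interior ball implicit in Lemma~\ref{RHEstimate} (from $2r$ to a fixed multiple $Cr$) is absorbed by a standard covering/iteration. Finally, the range $p\in[2,q_0(1-\epsilon)/(2-\epsilon))$ — which can occur only if $q_0>4$, so that the H\"older step above is unavailable — is obtained by first establishing (\ref{result62}) for an exponent in $(1,2)$ and then bootstrapping via the self-improving character of reverse H\"older inequalities.
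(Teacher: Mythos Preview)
Your proof is essentially the paper's: split into cases $\delta(x)\gtrsim r$ and $\delta(x)\lesssim r$; in the far case use Lemmas~\ref{NeumannRegularity}/\ref{DirichletRegularity} together with Lemma~\ref{RHEstimate}; in the near case apply H\"older with exponents $2/p,\,2/(2-p)$, then Lemma~\ref{Whitney}, then H\"older on the solid integral with exponents $t/2,\,t/(t-2)$ followed by Lemma~\ref{RHEstimate}, with the same $\rho$-interval arithmetic and the same final covering argument. The only cosmetic differences are that the paper bounds $|\nabla u|$ directly rather than $(\nabla u)^*_{c\delta}$ (reserving the maximal-function version for Lemma~\ref{newLocal}), and that---like you---it does not spell out the case $p\ge 2$, which in generic Lipschitz domains is empty.
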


\begin{proof}
Fix $x\in \partial\Omega$ and $0<r<r_0$. We claim that 
\begin{equation}\label{claim62}
\left(\average_{\sball x {4r}} |\nabla u |^p \, d\sigma \right)^{1/p}
\leq C \left( \average_{\dball x {16r}} |\nabla u| \, dy + r^{(1-n)/2}\|f_N \|_{L^2 (\sball
x {32r} \cap N)} \right).
\end{equation}
We will separate the proof into two cases: a) $\delta(x) \leq 8r\sqrt{1+M^2}$, and
b) $\delta(x) > 8r\sqrt{1+M^2}$. Starting with case a), choose $\rho$
satisfying $2-\frac{2}{p}(1-\epsilon)-\epsilon < \rho <
2-\frac{4}{q_0}(1-\epsilon) - \epsilon$ (the
assumption that $0\leq \epsilon < (q_0-2)/(q_0-1)$ ensures that
this is a non-empty interval). Apply H\"older's inequality
with the exponents $2/p$ and $2/(2-p)$ to get
\begin{eqnarray}
\lefteqn {\left(\int_{\sball x {4r}} |\nabla u|^p\, d\sigma\right)^{1/p} } \qquad \nonumber \\
 &\leq & \left(\int_{\sball x {4r}} |\nabla u|^2 \delta^{1-\rho}\, d\sigma\right)^{\frac{1}{2}}
\left(\int_{\sball x {4r}} \delta^{(\rho-1)\frac{p}{2-p}} \, d\sigma \right)^{\frac{1}{p} -
\frac{1}{2}} \nonumber \\
&\leq &Cr^{(n-1)(\frac{1}{p}-\frac{1}{2})+\frac{\rho-1}{2}} \left(\int_{\sball x {4r}}
|\nabla u|^2 \delta^{1-\rho}\, d\sigma\right)^{1/2}, \label{eqn416}
\end{eqnarray}
where we have used that $(\rho-1)p/(2-p) < -( 1-\epsilon)$ or $\rho >
2-\frac{2}{p}(1-\epsilon)-\epsilon$, and Lemma \ref{integrability}
to ensure that the integral of $\delta^{(\rho-1)(p/(2-p))}$ is finite.
Next, we use Lemma 4.6 and our hypothesis that $\delta(x) \leq 8r\sqrt{1+M^2}$ to obtain
\begin{eqnarray}\label{eqn05}
\lefteqn{ \left(\int_{\sball x {4r}} |\nabla u|^2 \delta^{1-\rho}\, d\sigma \right)^{1/2} } \qquad \nonumber \\
& \leq & C \left[ \left(\int_{\dball x {8r}} |\nabla u|^2 \delta^{-\rho} \, dy \right)^{1/2}
+ \left(\int_{\sball x {8r} \cap N} |f_N|^2 \delta^{1-\rho}\, d\sigma \right)^{1/2}\right]\qquad\qquad \nonumber \\
& \leq & C \left[\left( \int_{\dball x {8r}} |\nabla u|^2 \delta^{-\rho}\, dy\right)^{1/2} +
r^{\frac{n-\rho}{2}} \|f_N\|_{L^2(N\cap \sball x {8r})}\right].
\end{eqnarray}

To estimate the term $(\int_{\dball x {8r}} |\nabla u|^2 \delta^{-\rho}\,dy )^{1/2}$,
choose $q>2$ such that $q<\min\{q_0,2n/(n-1)\}$ and apply H\"older's
inequality again with exponents $q/2$ and $q/(q-2)$ to conclude
\begin{eqnarray*}
\lefteqn{ \left(\int_{\dball x {8r}} |\nabla u|^2 \delta^{-\rho}\, dy \right)^{1/2} } \qquad\qquad \nonumber \\
& \leq & \left(\int_{\dball x {8r}} |\nabla u|^q \, dy \right)^{1/q} \left(\int_{\dball x {8r}}
\delta^{-\rho \frac{ q}{q-2}} \, dy\right)^{\frac{1}{2}-\frac{1}{q}}.
\end{eqnarray*}
We invoke Lemma \ref{RHEstimate} again, requiring that $q\in (2,q_0)$, and
get the following bound
\begin{eqnarray}
\lefteqn{ \left(\int_{\dball x {8r}} |\nabla u|^2 \delta^{-\rho}\, dy
  \right)^{1/2}  }    & &    \label{eqn07}  \\
& \leq & Cr^{\frac{n-\rho}{2}} \left[ \average_{\dball x {16r}} |\nabla u|\, dy +
\left( \frac{1}{r^{n-1}} \int_{\sball x {16r}\cap N}
|f_N|^{\frac{q(n-1)}{n}}\, d\sigma\right)^\frac{n}{q(n-1)}\right].
\nonumber
\end{eqnarray}
By our choice of $q$, we can apply H\"older's inequality with
exponent $2n/(q(n-1))$ to the boundary term  in (\ref{eqn07}) to obtain
\begin{eqnarray}\label{eqn08}
\lefteqn{
\left(\frac{1}{r^{n-1}} \int_{\sball x {16r}\cap N}
|f_N|^{\frac{q(n-1)}{n}} \, d\sigma \right)^{\frac{n}{q(n-1)}}
} \hspace{1in}
 \\
\nonumber
&\leq &  C\left(\frac{1}{r^{n-1}} \int_{\sball x {16r}\cap N} |f_N|^2\, d\sigma\right)^{1/2}.
\end{eqnarray}
Combining equations (\ref{eqn416}), (\ref{eqn05}), (\ref{eqn07}), and (\ref{eqn08}) we conclude that
\begin{equation*}
\left(\int_{\sball x {4r}} |\nabla u|^p\, d\sigma\right)^{\frac{1}{p}} \leq
Cr^{\frac{n-1}{p}}\left(\average_{\dball x {16r}} |\nabla u|\, dy + r^{(1-n)/2}\|f_N\|_{L^2(\sball x {16r} \cap N)}\right),
\end{equation*}
which gives the claim (\ref{claim62}).

Now we prove the claim (\ref{claim62}) under the condition in
case b). As with case a), we begin with an application
of H\"olders inequality
\begin{equation*}
\left( \int_{\sball x {4r}} |\nabla u |^p\, d\sigma\right)^{1/p}
\leq Cr^{(n-1)(1/p-1/2)} \left(\int_{\sball x {4r}} |\nabla u|^2 \, d\sigma\right)^{1/2}.
\end{equation*}
Use that $\delta(x)>8r\sqrt{1+M^2}$
and Lemma \ref{newball} to conclude that either $\sball x {8r} \subset N$ or
$\sball x {8r} \subset D$. Then we may appeal to Lemma \ref{NeumannRegularity} or Lemma
\ref{DirichletRegularity}
to obtain that
\begin{equation*}
\int_{\sball x {4r}} |\nabla u|^2 \, d\sigma \leq
C\left(\int_{\sball x {8r} \cap N } |f_N|^2 \, d\sigma + \frac{1}{r} \int_{\dball x {8r}} |\nabla u|^2\, dy\right).
\end{equation*}
From this point we must distinguish between $n=2$ and $n\geq 3$. First,
let $n\geq 3$. Then Lemma \ref{RHEstimate} and H\"older's inequality give that
\begin{equation*}
\left(\average_{\dball x {8r}} |\nabla u|^2 \, dy \right)^{\frac{1}{2}} \leq
C \left[\average_{\dball x {16r}} |\nabla u|\, dy
+ \left(\int_{\sball x {16r}\cap N} |f_N|^{\frac{2(n-1)}{n}} \, d\sigma\right)^{\frac{n}{2(n-1)}}\right].
\end{equation*}
Combining the last three displayed inequalities, we obtain
\begin{equation*}
\left(\int_{\sball x {4r}} |\nabla u|^p\, d\sigma\right)^{\frac{1}{p}}
\leq C r^{\frac{n-1}{p}} \left( \average_{\dball x {16r}} |\nabla u|\, dy
+ r^{(1-n)/2} \|f_N\|_{L^2(\sball x {16r} \cap N)}\right),
\end{equation*}
which leads immediately to the claim (\ref{claim62}). In the case $n=2$, we need
an additional application of H\"older's inequality. Choose $t$ satisfying
$2<t<q_0$. Then $(\average_{\dball x {16r}}|\nabla u|^2\, dy)^{1/2} \leq C
(\average_{\dball x {16r}} |\nabla u|^t \, dy)^{1/t}$ and from this point
we can now apply Lemma \ref{RHEstimate} to get the average of the
square of the Neumann data. This gives (\ref{claim62}).

Once (\ref{claim62}) is established, an
elementary covering argument leads to the desired estimate.
\end{proof}
%
\comment{
\begin{eqnarray*}
\left(\average_{\sball x {4r}} |\nabla u|^p\, d\sigma\right)^{1/p}
\leq C \left( \average_{\dball x {16r}} |\nabla u| \, dy +
r^{(1-n)/2}\|f_N\|_{L^2(\sball x {32r} \cap N)} \right).
\end{eqnarray*}
}
%


The final result we require before proving Theorem \ref{AtomicTheorem}
is an energy estimate. The proof appears in Ott and Brown
\cite[Section 4]{OB:2009}.

\begin{lemma}\label{Energy}
Let $u$ be a weak solution of the mixed problem with Neumann
data $f_N$. For $n\geq 3$, let $f_N\in L^{p}(N)$ with
$p=(2n-2)/n$. Then the following estimate holds
$$
\int_{\Omega} |\nabla u|^2 \, dy \leq C \|f_N\|^{2}_{L^{p}(N)}.
$$
If $n=2$, let $f_N\in H^1(N)$ and then the following estimate
holds
$$
\int_{\Omega} |\nabla u|^2 \, dy \leq C\|f_N\|^{2}_{H^{1}(N)}.
$$
In both cases, the constant $C$ depends on  the global character of
$\Omega$. 
\end{lemma}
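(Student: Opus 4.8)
The plan is to use the solution $u$ itself as a test function in the weak formulation of the mixed problem. Since $u\in\sobolev 2 1_D(\Omega)$ is an admissible test function and the Dirichlet data vanishes, this gives the energy identity
$$\int_\Omega|\nabla u|^2\,dy=\langle f_N,u\rangle ,$$
where in either case one first checks that $f_N$ represents an element of $\sobolev 2 {-1/2}_D(\partial\Omega)$, so the pairing is meaningful (it reduces to $\int_N f_N u\,d\sigma$, interpreted through the atomic decomposition of $f_N$ when $n=2$). It then suffices to bound the right-hand side by $C\|f_N\|\,\|\nabla u\|_{L^2(\Omega)}$ and cancel one factor of $\|\nabla u\|_{L^2(\Omega)}$; the constant will depend only on $M$, $n$, and the constant in the coercivity inequality (\ref{coerce}), which is what ``global character'' means here.

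When $n\geq3$, I would apply H\"older's inequality with the conjugate exponents $(2n-2)/n$ and $(2n-2)/(n-2)$ to obtain $\int_N f_N u\,d\sigma\leq\|f_N\|_{L^{(2n-2)/n}(N)}\,\|u\|_{L^{2(n-1)/(n-2)}(\partial\Omega)}$. The exponent $2(n-1)/(n-2)$ is precisely the critical trace exponent for $\Omega\subset\reals^n$, so the trace operator maps $\sobolev 2 1(\Omega)$ boundedly into $L^{2(n-1)/(n-2)}(\partial\Omega)$, giving $\|u\|_{L^{2(n-1)/(n-2)}(\partial\Omega)}\leq C\|u\|_{\sobolev 2 1(\Omega)}$; since $u\in\sobolev 2 1_D(\Omega)$, the coercivity inequality (\ref{coerce}) upgrades the right-hand side to $C\|\nabla u\|_{L^2(\Omega)}$. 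Substituting into the energy identity completes this case.

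When $n=2$ the trace exponent above degenerates, so I would instead use $H^1$--$BMO$ duality. Write $f_N=\sum_j\lambda_j a_j$ where each $a_j$ is the restriction to $N$ of an atom $\tilde a_j$ for $\partial\Omega$, supported in a surface ball $\sball{x_j}{r_j}$, with $\sum_j|\lambda_j|\approx\|f_N\|_{H^1(N)}$, and bound each term $\int_N a_j u\,d\sigma=\int_{\sball{x_j}{r_j}}\tilde a_j\,\chi_N\,u\,d\sigma$ by $C\|u\|_{BMO(\partial\Omega)}$. If $\sball{x_j}{2r_j}\subset N$, then $\chi_N\equiv1$ on $\mathrm{supp}\,\tilde a_j$ and the mean-zero property of $\tilde a_j$ lets us subtract the average of $u$ over $\sball{x_j}{r_j}$, yielding the bound by the $BMO$ seminorm. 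If instead $\sball{x_j}{2r_j}$ meets $D$, the restriction to $N$ destroys the cancellation, but here the geometry enters: picking $z\in\sball{x_j}{2r_j}\cap D$ and using $\sball z{3r_j}\supset\sball{x_j}{r_j}$, Lemma \ref{CorkEverywhere} (that is, (\ref{Dbig})) gives $\sigma(\sball z{3r_j}\cap D)\geq c\,\sigma(\sball z{3r_j})$; since $u\equiv0$ on $D$, a one-line computation gives $|\average_{\sball z{3r_j}}u\,d\sigma|\leq C\|u\|_{BMO(\partial\Omega)}$, hence $\average_{\sball z{3r_j}}|u|\,d\sigma\leq C\|u\|_{BMO(\partial\Omega)}$, and therefore $|\int_N a_j u\,d\sigma|\leq\average_{\sball{x_j}{r_j}}|u|\,d\sigma\leq C\average_{\sball z{3r_j}}|u|\,d\sigma\leq C\|u\|_{BMO(\partial\Omega)}$, using $(n-1)$-regularity of $\partial\Omega$ in the middle step. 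Summing in $j$ gives $\int_N f_N u\,d\sigma\leq C\|f_N\|_{H^1(N)}\|u\|_{BMO(\partial\Omega)}$, and since in two dimensions the trace operator maps $\sobolev 2 1(\Omega)$ boundedly into $BMO(\partial\Omega)$ (factoring through $\sobolev 2 {1/2}(\partial\Omega)$ and the critical one-dimensional Sobolev embedding), (\ref{coerce}) gives $\|u\|_{BMO(\partial\Omega)}\leq C\|\nabla u\|_{L^2(\Omega)}$; dividing finishes the proof.

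The H\"older step and the trace and Sobolev embeddings are routine. The one point that requires genuine care, and the only place the hypotheses on $D$ are used, is the two-dimensional case for atoms whose supporting ball meets $D$: there the mean-zero cancellation is lost upon restriction to $N$ and must be replaced by the Poincar\'e-type bound above, which works precisely because $u$ vanishes on the substantial subset $D\cap\sball z{3r_j}$ of the enlarged ball. (Surface balls comparable in size to $\mathrm{diam}\,\partial\Omega$ are handled more directly, since $\average_{\partial\Omega}|u|\,d\sigma\leq C\|\nabla u\|_{L^2(\Omega)}$ by (\ref{coerce}) and the trace inequality.)
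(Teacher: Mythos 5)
Your argument is correct and matches the standard route the paper defers to in \cite{OB:2009}: test the weak formulation with $u$ itself to get the energy identity, then for $n\geq3$ bound $\langle f_N,u\rangle$ via H\"older with the conjugate pair $((2n-2)/n,\ 2(n-1)/(n-2))$ and the critical trace embedding into $L^{2(n-1)/(n-2)}(\partial\Omega)$, while for $n=2$ use $H^1$--$BMO$ pairing with the trace $W^{1,2}(\Omega)\to W^{1/2,2}(\partial\Omega)\hookrightarrow BMO(\partial\Omega)$. Your treatment of atoms whose enlarged supporting ball meets $D$ — replacing the lost mean-zero cancellation with the Poincar\'e-type bound $|\average_{\sball z{3r_j}} u\,d\sigma|\leq C\|u\|_{BMO}$ via $u\equiv0$ on the substantial set $D\cap\sball z{3r_j}$ coming from (\ref{Dbig}) — is precisely where the geometric hypothesis enters, and it is the right argument.
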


We are now equipped to prove Theorem \ref{AtomicTheorem}.

\begin{proof}[Proof of Theorem \ref{AtomicTheorem}]

Fix $x\in \partial\Omega$ and let $r$ satisfy $0<r<r_0$.
The first step is to obtain an estimate for the gradient
of the solution $u$
near the support of the atom.
Estimate (\ref{LocalPart}) follows immediately from Theorem
\ref{RHBoundary}, Lemma \ref{Energy}, and the normalization of the atom.

The next step is to estimate $ \int _ {\Sigma _k } | \nabla u  |^ p\,
d\sigma  $ for $k\geq 4$.  We begin by proving that the solution $u$
satisfies the  upper bound
\begin{equation}\label{gclaim}
|u(y)| \leq \frac{Cr^{\beta}}{|x-y|^{n-2+\beta}}, \quad |x-y|> 2r,
\end{equation}
where $\beta$ is as in Lemma \ref{Green}. To establish (\ref{gclaim}),
we use the representation formula in part
3) of Lemma \ref{Green} and claim that there exists $\bar{x}$ in $\sball x r$ such that
$$
u(y) = -\int_{\sball x r \cap N} a(z) (G(y,z)-G(y,\bar{x}))\, d\sigma.
$$
If $\sball x r \subset N$, then let $\bar{x} = x$ and use that $a$ has
mean value zero to obtain the estimate (\ref{gclaim}). If $\sball x r \cap D
\neq \emptyset$, then choose $\bar{x} \in D\cap \sball x r$ and use that $G(y,\cdot)$
vanishes on $D$. Now estimate (\ref{gclaim}) follows easily from the
normalization of the atom and the estimates for the Green function in part 4) of Lemma
\ref{Green}.

The remainder of the proof of estimate (\ref{Decay}) follows from Theorem \ref{RHBoundary} and
estimate (\ref{gclaim}). The constant in the estimate will depend on $p$, $M$, the dimension $n$,
and the collection of coordinate cylinders.
\end{proof}

Next we prove that the non-tangential maximal
function of a weak solution lies in $L^1(\partial\Omega)$ when
the Neumann data is given by an atom. We introduce the following
notation to be used in the proof.
Let
$$
\mathcal{C}_t  =  \{ \, y: t< \delta(y) < 2t \, \} \cap \Omega
\qquad\mbox{and}\qquad
C_t  =  \{\, y: t< \delta(y) < 2t\, \} \cap \partial\Omega.
$$

\begin{theorem}\label{Hardythm}
Let $\Omega$ and $D$ satisfy (\ref{Lip}) and (\ref{NTA}). Let
$\Lambda$ satisfy (\ref{SurfProp}) with
$0\leq \epsilon < (q_0-2)/(q_0-1)$ with $q_0>2$ from Lemma \ref{RHEstimate}.
If $f_N \in H^{1}(N)$, then there exists a solution
$u$ of the $L^1$-mixed problem (\ref{MP}) with Neumann data $f_N$ and zero Dirichlet
data. This solution satisfies
\begin{equation*}\label{L1est}
\|(\nabla u)^{*}\|_{L^{1}(\partial\Omega)} \leq C\|f_N\|_{H^{1}(N)}.
\end{equation*}
The constant $C$ above depends on the global character of the domain.
\end{theorem}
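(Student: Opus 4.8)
The plan is to reduce to a single atom via the atomic decomposition of $H^1(N)$ and then to promote the surface-ring estimates of Theorem \ref{AtomicTheorem} to a bound on the full nontangential maximal function. First I would write $f_N=\sum_j\lambda_j a_j$ with each $a_j$ an atom for $N$ and $\sum_j|\lambda_j|\le C\|f_N\|_{H^1(N)}$; since each atom lies in $L^2(N)$ (and in $H^1(N)$ when $n=2$), Lemma \ref{Energy} produces a weak solution $u_j$ of the mixed problem with datum $a_j$ and zero Dirichlet data, with $\nabla u_j\in L^2(\Omega)$. Granting the uniform estimate $\|(\nabla u_j)^*\|_{L^1(\partial\Omega)}\le C$, the sublinearity of the maximal function gives $(\nabla u)^*\le\sum_j|\lambda_j|(\nabla u_j)^*$ for $u=\sum_j\lambda_j u_j$, the series converges in $W^{1,1}(\Omega)$, and $u$ solves the $L^1$-mixed problem with datum $f_N$ and satisfies the desired bound. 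So it suffices to prove $\|(\nabla u)^*\|_{L^1(\partial\Omega)}\le C$ when $u$ comes from a single atom $a$ supported in $\sball{x_0}{r}$.

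For that I would cover $\partial\Omega$ by the local ball $\sball{x_0}{8r}$ and the dyadic rings $\Sigma_k$, $k\ge 4$, and for each $z\in\Sigma_k$ split $\Gamma(z)$ into three pieces: the part within $c\,\delta(z)$ of $z$ (with $\delta=\dist(\cdot,\Lambda)$), which Lemma \ref{Whitney} controls in the weighted form $\int_{\sball x r}((\nabla u)^*_{c\delta})^2\delta^{1-\rho}\,d\sigma\le C\int_{\dball x {2r}}|\nabla u|^2\delta^{-\rho}\,dy$ (the Neumann term vanishing for $k\ge 4$), supplemented by Lemmas \ref{newball}, \ref{NeumannRegularity}, \ref{DirichletRegularity} where $\delta$ exceeds the scale; the part with $c\delta(z)<\dist(y,\partial\Omega)\lesssim 2^k r$, where $u$ is harmonic on an interior ball of radius $\approx\dist(y,\partial\Omega)$, so that the pointwise decay (\ref{gclaim}), $|u(y)|\le Cr^\beta|x_0-y|^{2-n-\beta}$, the boundary Hölder estimate of Theorem \ref{Holder} (for the factor measuring proximity to $D$), and an interior gradient estimate bound $|\nabla u(y)|$; and the part with $\dist(y,\partial\Omega)\gtrsim 2^k r$, bounded directly by (\ref{gclaim}) and an interior gradient estimate. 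I would then choose the weight exponent $\rho$ in the range $2-\frac{2}{p}(1-\epsilon)-\epsilon<\rho<2-\frac{4}{q_0}(1-\epsilon)-\epsilon$ — nonempty because $0\le\epsilon<(q_0-2)/(q_0-1)$ — and run Hölder's inequality with the weights $\delta^{1-\rho}$ and $\delta^{(\rho-1)p/(2-p)}$ exactly as in the proof of Theorem \ref{RHBoundary}, invoking the reverse Hölder inequalities of Lemmas \ref{MSIRHI} and \ref{RHEstimate} and the integrability bounds of Lemma \ref{integrability}, so that the first piece is estimated through the surface-ring $L^p$ bounds (\ref{LocalPart}) and (\ref{Decay}).

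Assembling these: over $\sball{x_0}{8r}$, feeding $\|a\|_{L^\infty}\le\sigma(\sball{x_0}{r})^{-1}\approx r^{1-n}$ and $\int_\Omega|\nabla u|^2\,dy\le C\|a\|_{H^1(N)}^2\le Cr^{1-n}$ (Lemma \ref{Energy}) into the above gives $\int_{\sball{x_0}{8r}}((\nabla u)^*)^2\,d\sigma\le Cr^{1-n}$, hence $\int_{\sball{x_0}{8r}}(\nabla u)^*\,d\sigma\le C$ by Cauchy--Schwarz (using $\sigma(\sball{x_0}{8r})\approx r^{n-1}$); over each $\Sigma_k$, $k\ge 4$, the construction yields $\int_{\Sigma_k}(\nabla u)^*\,d\sigma\le C2^{-\beta k}$, and summing the geometric series $\sum_{k\ge 4}2^{-\beta k}$ and adding the local term proves the theorem.

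The difficulty I expect to matter is the control of $(\nabla u)^*$ near the crease $\Lambda$: organizing the cone splitting uniformly in $k$, and, for the part of each cone sitting just above $\Lambda$, combining the pointwise decay (\ref{gclaim}), the Hölder bound of Theorem \ref{Holder} (which supplies the extra decay near $D$ needed to avoid a nonintegrable $\delta^{-1}$ blow-up), and the weighted-measure estimate of Lemma \ref{integrability}, with the weight exponent $\rho$ chosen so that every $\delta$-weighted integral converges — it is precisely here that the hypothesis $0\le\epsilon<(q_0-2)/(q_0-1)$ is used.
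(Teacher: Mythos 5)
Your plan diverges from the paper's in the key step of promoting the ring estimates of Theorem \ref{AtomicTheorem} to an $L^1$ bound on the full nontangential maximal function. The paper does this by carefully establishing the representation formula (\ref{represent}) for $\nabla u$ as a boundary singular integral of the data $\nu_i\partial_j u-\nu_j\partial_i u$ and $\partial u/\partial\nu$ (this is where the hypothesis $\epsilon<(q_0-2)/(q_0-1)$ is actually used, to justify removing the cutoff near $\Lambda$), then invokes Coifman--McIntosh--Meyer and the molecule structure of that boundary data. You instead propose to control $\nabla u$ in the interior directly by splitting each cone $\Gamma(z)$ into near/medium/far pieces and using Whitney estimates, interior gradient bounds, and the pointwise decay (\ref{gclaim}) together with the boundary H\"older bound of Theorem \ref{Holder}. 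This is a genuinely different, more hands-on route, and if it worked it would avoid the representation formula, CMM, and molecule theory entirely.

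The difficulty is exactly at the middle piece of the cone, and I do not see how your fix closes it. For $z\in\Sigma_k$ with $\delta(z)=s$ and $y\in\Gamma(z)$ with $\dist(y,\partial\Omega)=t$, $c s<t\lesssim 2^k r$, the interior estimate gives $|\nabla u(y)|\lesssim t^{-1}\sup_{B_{t/2}(y)}|u|$. Combining (\ref{gclaim}) with the boundary H\"older estimate centered at the nearest point of $\Lambda$ gives at best $\sup_{B_{t/2}(y)}|u|\lesssim (t/2^k r)^{\beta}\,2^{k(2-n-\beta)}r^{2-n}$, hence, taking the supremum over $t>cs$ (and $\beta<1$), $\sup_{\text{medium cone}}|\nabla u|\lesssim s^{\beta-1}(2^k r)^{-\beta}\,2^{k(2-n-\beta)}r^{2-n}$. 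Integrating over $\Sigma_k$ one needs $\int_{\Sigma_k}\delta(z)^{\beta-1}\,d\sigma<\infty$, and by Lemma \ref{integrability} this requires $\beta-1>-1+\epsilon$, i.e. $\beta>\epsilon$. Running the weighted H\"older argument with $\delta^{1-\rho}$ as you suggest trades this for the constraint $\epsilon<\rho<2\beta-\epsilon$ and leads to the same inequality $\beta>\epsilon$. But $\beta$ is the De~Giorgi exponent from Theorem \ref{Holder} and is not related to $q_0$ or to the threshold $(q_0-2)/(q_0-1)$ for $\epsilon$; the theorem's hypotheses do not guarantee $\beta>\epsilon$, so the argument for the middle piece breaks down. (A secondary slip: the claim ``$\int_{\sball{x_0}{8r}}((\nabla u)^*)^2\,d\sigma\le Cr^{1-n}$'' asserts an unweighted $L^2$ bound on the nontangential maximal function, which is precisely what fails for the mixed problem in general Lipschitz domains; what one actually gets from Lemma \ref{Whitney} is a $\delta^{1-\rho}$-weighted $L^2$ bound on the truncated maximal function, and then $L^p$ for $p<q_0(1-\epsilon)/(2-\epsilon)$ after H\"older. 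Also $\|a\|_{H^1(N)}^2$ scales like $r^{2-n}$, not $r^{1-n}$, though that does not affect the final balance.) To close the gap one is essentially forced back to a representation of $\nabla u$ in the interior in terms of its boundary values, which is the paper's route.
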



\note{ This is more or less a repeat of Lemma \ref{Whitney}

\begin{lemma}
Let $\rho \in \reals$ and let $u$ be a weak solution of the
mixed problem with Neumann data $f_N$ in $L^{2}(N)$ and with
zero Dirichlet data. Then there exists a constant $C$ depending
only on $\alpha$ and $M$ such that the solution $u$ satisfies
\begin{equation}
\int_{\partial\Omega} |(\nabla u)^{*}_{4\delta}|^2 \delta^{1-\rho} \,d\sigma
\leq C \left(\int_{N} f_N^2 \delta^{1-\rho}\,d\sigma + \int_{\Omega}
|\nabla u|^2 \delta^{-\rho}\,d\sigma\right).
\end{equation}
\end{lemma}

\begin{proof}
To begin, we fix $x\in\Omega$ and a Lipschitz domain $\Omega_r (x)$
where $\Omega_r(x)$ is a Lipschitz domain contained in a coordinate
cylinder as constructed earlier. In $\Omega_r (x)$ apply
Lemmas \ref{NeumannRegularity} and \ref{DirichletRegularity} to obtain
\begin{eqnarray*}
\lefteqn{\int_{\Delta_r (x)} |(\nabla u)^{*}_{4\delta}|^2 \, d\sigma }\\
\qquad  & \leq &
\int_{\partial\Omega_r \setminus \partial\Omega} |\nabla u|^2 \, d\sigma +
\int_{\Delta_{Cr}(x) \cap N} f_N^2 \, d\sigma + \int_{\Delta_{Cr}(x) \cap D}
|\nabla_t u|^2 \, d\sigma.
\end{eqnarray*}
Integrating with respect to $r$ in the inequality above yields
\begin{eqnarray}\label{tosum}
\lefteqn{ \int_{\Delta_r(x)} |(\nabla u)^{*}_{4\delta}|^2 \, d\sigma }
 \\
\qquad & \leq & \frac{1}{r} \int_{\Omega_{2r}} |\nabla u|^2 \, dy +
\int_{\Delta_{Cr}(x) \cap N} f_N^2 \, d\sigma +
\int_{\Delta_{Cr}(x)\cap D} |\nabla_t u|^2\, d\sigma.
\nonumber
\end{eqnarray}

Using the Whitney decomposition of $\partial\Omega=\Lambda \bigcup (\cup Q_j)$
that we constructed earlier in the paper, apply (\ref{tosum}) on
cubes $Q_j$. As in the proof of Lemma \ref{Whitney}, let $r_j=\mbox{diam}(Q_j)$
such that $r_j \approx \delta(x)$ for $x\in T(Q_j)$, multiply (\ref{tosum}) by
$r_j^{-\rho}$, sum over all cubes and use the fact that the sets $T(Q_j)$ have
bounded overlaps to get
\begin{eqnarray}\label{endoflemma}
\int_{\partial\Omega} |(\nabla u)^{*}_{4\delta}|^2 \delta^{1-\rho}\,
d\sigma & \leq & C \left(\int_{\Omega} |\nabla u|^2 \delta^{-\rho}\, dy +
\int_N f_N^2 \delta^{1-\rho}\,d\sigma + \int_D |\nabla_t u|^2 \delta^{1-\rho}\, d\sigma  \right),
\end{eqnarray}
which is the desired result when $f_D=0$.
\end{proof}

\noindent Note that the right hand side of (\ref{endoflemma}) is finite
when the Neumann data $f_N$ is a taken to be an atom for $N$.
}

\comment{
\begin{theorem}\label{Hardythm}
Let $f_N$ be in $H^{1}(N)$, then there exists a solution $u$ of the mixed
problem with Neumann data $f_N$ and zero Dirichlet data. This solution satisfies
\begin{equation*}\label{L1est}
\|(\nabla u)^{*}\|_{L^{1}(\partial\Omega)} \leq C\|f_N\|_{H^{1}(N)}.
\end{equation*}
\end{theorem}
}

\begin{proof}
To begin, let $f_N=a$ be an atom for $N$ and let $u$ be the weak
solution of the mixed problem with Neumann data $f_N$ and
zero Dirichlet data. The $H^{1}(N)$ estimate will follow immediately
from the estimate for an atom.

We wish to establish a representation formula for the
gradient of $u$ in terms of the boundary values of $u$. Let
$x\in\Omega$ and let $j$ be an index ranging from $1$ to $n$.
The claim is that
\begin{eqnarray}\label{represent}
\frac{\partial u}{\partial x_j} (x) = -\int_{\partial\Omega}
\sum_{i=1}^{n} \frac{\partial \Xi}{\partial y_i} (x-\cdot)
(\nu_i \frac{\partial u}{\partial y_j} -
 \frac{\partial u}{\partial y_i}\nu_j)\nonumber \\
+ \frac{\partial \Xi}{\partial y_j} (x-\cdot)
\frac{\partial u}{\partial \nu} \, d\sigma,
\end{eqnarray}
where $\Xi$ is the fundamental solution of the
Laplacian. In the case that $u$ is smooth up to the boundary,
the formula follows from the
divergence theorem. However, it will take more work to prove
(\ref{represent}) when $u$ is only a weak solution.

Let $\eta$ be a smooth function that is zero in a neighborhood
of $\Lambda$ and supported in a coordinate cylinder. Using the
coordinates of the coordinate cylinder, let $u_{\tau}(y) = u(y+\tau e_n)$,
where $e_n$ is the  unit vector in the $n$-th direction.
Applying the divergence formula gives
\begin{eqnarray}\label{dadgrumidentity}
-\int_{\partial\Omega} \eta \left(  \sum _{i=1}^n( \frac{\partial
  \Xi}{\partial y_i} (x-\cdot) (\nu_i \frac{\partial
  u_{\tau}}{\partial y_j} -
\frac {\partial u_ \tau }{ \partial y_i}  \nu_j )) + \frac{\partial\Xi}{\partial y_j} (x-\cdot)\frac{\partial u_{\tau}}{\partial \nu}\right) \, d\sigma \nonumber \\
= \eta(x) \frac{\partial u_{\tau}}{\partial x_j}(x) - \int_{\Omega} \nabla \eta \cdot \nabla_y \Xi (x-\cdot) \frac{\partial u_{\tau}}{\partial y_j} \nonumber
 - \nabla_y \Xi(x-\cdot) \nabla u_{\tau} \frac{\partial\eta}{\partial y_j} \nonumber\\
 + \frac{\partial \Xi}{\partial y_j} (x-\cdot) \nabla u_{\tau} \cdot \nabla \eta \, dy.
\end{eqnarray}
\comment{
\begin{eqnarray}\label{dadgrumidentity}
-\int_{\partial\Omega} \eta \left( \frac{\partial \Xi}{\partial \nu} (x-\cdot) \frac{\partial u_{\tau}}{\partial y_j} -\nabla_y \Xi (x-\cdot) \nabla u_{\tau} \nu_j + \frac{\partial\Xi}{\partial y_j} (x-\cdot)\frac{\partial u_{\tau}}{\partial \nu}\right) \, d\sigma \nonumber \\
= \eta(x) \frac{\partial u_{\tau}}{\partial x_j}(x) - \int_{\Omega} \nabla \eta \cdot \nabla_y \Xi (x-\cdot) \frac{\partial u_{\tau}}{\partial y_j} \nonumber
 - \nabla_y \Xi(x-\cdot) \nabla u_{\tau} \frac{\partial\eta}{\partial y_j} \nonumber\\
 + \frac{\partial \Xi}{\partial y_j} (x-\cdot) \nabla u_{\tau} \cdot \nabla \eta \, dy.
\end{eqnarray}
}
Since the cutoff function $\eta$ vanishes near $\Lambda$, we may use
the truncated maximal function estimate in Lemma \ref{Whitney} to
let $\tau$ approach 0 from above and we can conclude that the 
identity (\ref{dadgrumidentity}) continues to  hold with $u_{\tau}$ replaced by $u$.

Our next step is to remove this restriction that $ \eta $ must vanish
on $ \Lambda$.
Towards this end, suppose that $\eta$ is of the form $\eta = \eta \phi_t$, where
$\phi_t=0$ on the set $\mathcal{C}_t$, $\phi_t = 1$ on
$\Omega\setminus \mathcal{C}_{3t}$ and $|\nabla \phi_t(x)|\leq C/t$.
By our conditions on the dimension of $\Lambda$  and Lemma
\ref{interiorint} we have that
\begin{equation}\label{lambdatest}
|\mathcal{C}_t|<C t^{2-\epsilon}.
\end{equation}
According to Lemma \ref{RHEstimate},  $ \nabla u $ lies in $ L^q ( \Omega)$
for $ q < q_0$.
Using
H\"older's inequality with $ q < q_0$,
our estimate for $\nabla \phi_t$,   and   (\ref{lambdatest}), we obtain
\begin{eqnarray}\label{gtzero}
|\int_{\Omega}  \eta \nabla \phi_t \cdot \nabla_y \Xi (x-\cdot)
\frac{\partial u}{\partial y_j} \, dy | & \leq &
\frac{C \| \eta \|_{L^{\infty}}}{t} \left( \int_{\mathcal{C}_t}
|\nabla u|^q \, dy \right)^{\frac{1}{q}} \left(\int_{\mathcal{C}_t}
|\nabla \Xi|^{q'} \, dy\right)^{\frac{1}{q'}} \nonumber \\
& \leq & C t^{-1} \sigma(\mathcal{C}_t)^{1/q'}
\left(\int_{\mathcal{C}_t} |\nabla u|^q \, dy \right)^{1/q} \nonumber \\
& \leq & C t^{(2-\epsilon)(1/q')-1} \left(\int_{\mathcal{C}_t}
|\nabla u|^q \, dy \right)^{1/q},
\end{eqnarray}
where $1/q + 1/q'=1$. The  last term will go to zero as $t$ goes to
zero,  if  $\nabla u \in L^q(\Omega)$
and $(2-\epsilon)(1/q')-1>0$ or $  \epsilon < (q-2)/(q-1)$.  Our
assumption that $  \epsilon < ( q_0-2)/(q_0-1)$ implies that we may
find a $q$ for which the right-hand side of (\ref{gtzero}) vanishes as
$ t $ tends to zero.

The remaining terms in (\ref{dadgrumidentity}) can be estimated
in a similar manner, yielding
\begin{eqnarray*}
\lim_{t\rightarrow 0^{+}} -\int_{\Omega} \nabla(\phi_t \eta)
\cdot \nabla_y \Xi(x-\cdot)\frac{\partial u}{\partial y_j} -\nabla_{y}
\Xi (x-\cdot) \cdot \nabla u \frac{\partial (\phi_t \eta)}{\partial y_j}\qquad\qquad \\
 + \frac{\partial\Xi}{\partial y_j} (x-\cdot)\nabla u \cdot \nabla (\phi_t \eta) \, dy \qquad\qquad\qquad\qquad\qquad\qquad\qquad\qquad \\
 \qquad\qquad\qquad\qquad = - \int_{\Omega} \nabla \eta
 \cdot \nabla_y \Xi (x-\cdot) \frac{\partial u}{\partial y_j}
 - \nabla_y \Xi(x-\cdot) \cdot \nabla u \frac{\partial \eta}{\partial y_j} \\
 \qquad\qquad\qquad+\frac{\partial\Xi}{\partial y_j}(x-\cdot)
 \nabla u \cdot \nabla \eta \, dy.
\end{eqnarray*}
Thus we obtain (\ref{dadgrumidentity}) with $u_{\tau}$ replaced
by $u$ and $\eta$ is not required to vanish on $ \Lambda$.   Choose a partition
of unity which consists of functions that are either supported in a
coordinate cylinder, or whose support does not intersect the boundary
of $\Omega$. As $\eta$ runs over this partition, the sum gives the
representation formula for $\nabla u$ given in (\ref{represent}).
From Theorem \ref{AtomicTheorem} we have
$\nabla u \in L^{p}(\partial\Omega)$, and by the theorem of Coifman,
McIntosh and Meyer \cite{CMM:1982} it follows that
$(\nabla u)^{*} \in L^{p}(\partial\Omega)$ and thus
$(\nabla u)^{*} \in L^{1}(\partial\Omega)$ since $\Omega$ is a bounded
domain. A few more steps will provide us with the desired estimate,
$\|(\nabla u)^{*}\|_{L^{1}(\partial\Omega)} \leq C$.

Since $(\nabla u)^{*}$ lies in $L^{p}(\partial\Omega)$, we can apply
the divergence theorem twice more and obtain the
following identities
\begin{eqnarray*}
\int_{\partial \Omega} \frac{\partial u}{\partial \nu}\, d\sigma & = & 0\\
\int_{\partial\Omega} \nu_j \frac{\partial u}{\partial y_i}
- \nu_{i}\frac{\partial u}{\partial y_j}\, d\sigma & = & 0.
\end{eqnarray*}
Using these identities and the estimates for $\nabla u$ established in
Theorem \ref{AtomicTheorem}, we can conclude that the two integrands
above are molecules on the boundary of the domain, and hence it follows
from the representation formula (\ref{represent}) that $(\nabla u)^{*}$
lies in $L^{1}(\partial\Omega)$ and satisfies the estimate
\begin{equation*}
\|(\nabla u)^{*}\|_{L^{1}(\partial\Omega)} \leq C.
\end{equation*}
The estimate for solutions with Neumann data in $H^1(N)$ follows
easily from the result for solutions with atomic data. 
\end{proof}

\section{Uniqueness}

We now turn to establishing uniqueness of solutions to the
mixed problem. We rely on the results of the previous section
and uniqueness of the regularity problem due to Dahlberg and
Kenig \cite{DK:1987} (also, see the work of D.~Mitrea \cite[Corollary
  4.2]{MR1883390} for 
the result in two dimensions).  More specifically, we prove that if $u$
solves (\ref{MP}) with zero data and $(\nabla u)^{*} \in L^{1}(\partial\Omega)$,
then $u$ also solves the regularity problem with zero data and hence $u=0$.

\begin{theorem}\label{unique}
If $u$ is a solution of the $L^1$-mixed problem (1.1)  with $ f_D=0$ and
$f_N=0$, then $u=0$. 
\end{theorem}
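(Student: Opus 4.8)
The plan is to follow the route announced in the introduction to this section: show that a solution $u$ of the $L^1$-mixed problem (\ref{MP}) with $f_D=0$ and $f_N=0$ lies in the class in which the regularity problem is uniquely solvable, and then conclude $u\equiv0$. Since $\Omega$ is bounded we have $L^p(\partial\Omega)\subset L^1(\partial\Omega)$ for $p\ge1$, so by linearity this also proves part~a) of Theorem~\ref{main} at every $p\ge1$; thus it suffices to treat the case $p=1$ stated here.

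First I would extract the soft consequences of the hypothesis $(\nabla u)^*\in L^1(\partial\Omega)$. The nontangential limit of $\nabla u$ exists a.e.\ on $\partial\Omega$ and lies in $L^1(\partial\Omega)$; since $u\equiv0$ on $D$ we have $\nabla_t u=0$ a.e.\ on $D$; and the nontangential limit of $\partial u/\partial\nu$ vanishes a.e.\ on $N$. By the boundary H\"older estimate of Theorem~\ref{Holder}, applied locally after flattening the boundary, $u$ extends continuously to $\bar\Omega$ with $u=0$ on $D$. The key step is a self-improvement of the boundary regularity: one must promote $(\nabla u)^*$ from $L^1(\partial\Omega)$ to $L^p(\partial\Omega)$ for some $p\in(1,p_0)$, together with $\nabla u\in L^q(\Omega)$ for some $q$ with $2<q<q_0$. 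This is carried out with the local reverse H\"older inequalities of Section~3 (Lemma~\ref{RHEstimate}) and at the boundary (the mechanism behind Theorem~\ref{RHBoundary}), combined with the Green function estimates of Lemma~\ref{Green}; the representation formula for $\nabla u$ from the proof of Theorem~\ref{Hardythm} is established for the present $u$ in the same way. The one delicate point is the contribution of a tubular neighborhood $\mathcal{C}_t$ of $\Lambda$, which is controlled exactly as in (\ref{lambdatest})--(\ref{gtzero}): the measure bound $|\mathcal{C}_t|\le Ct^{2-\epsilon}$ (Lemma~\ref{interiorint}), the integrability $\nabla u\in L^q(\Omega)$, and the hypothesis $0\le\epsilon<(q_0-2)/(q_0-1)$ together force that contribution to vanish as $t\to0$.

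With $(\nabla u)^*\in L^p(\partial\Omega)$ for some $p>1$ in hand, $u$ is a harmonic function on the Lipschitz domain $\Omega$ whose gradient has $L^p$ nontangential maximal function and continuous boundary trace, so it is legitimate to integrate by parts: approximating $\Omega$ from inside by the usual Lipschitz subdomains and passing to the limit, $\int_{\Omega}|\nabla u|^2\,dy=\int_{\partial\Omega}u\,\tfrac{\partial u}{\partial\nu}\,d\sigma$. Splitting the boundary integral over $D$ and $N$ and using $u=0$ on $D$ and $\partial u/\partial\nu=0$ on $N$ (and $\sigma(\Lambda)=0$, since $n-2+\epsilon<n-1$), the right-hand side is zero; hence $\nabla u\equiv0$ in $\Omega$, so $u$ is constant, and since $u=0$ on $D$ with $\sigma(D)>0$ by (\ref{Dbig}), $u\equiv0$. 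Equivalently, at this stage $u$ is a solution of the regularity problem whose Dirichlet trace is constant, and one may instead invoke uniqueness for the regularity problem --- Dahlberg and Kenig \cite{DK:1987} for $n\ge3$, D.~Mitrea \cite{MR1883390} for $n=2$ --- to reach the same conclusion.

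The main obstacle is the self-improvement step: the estimates of Sections~3 and~4 that one wishes to reuse are phrased for genuine weak (finite-energy) solutions, so one must first verify that those estimates, or their local counterparts, apply to a solution known only to satisfy $(\nabla u)^*\in L^1(\partial\Omega)$, and then push the bootstrap through a neighborhood of $\Lambda$ without loss. This is precisely why the Ahlfors-regularity/dimension restriction (\ref{SurfProp}) with $\epsilon<(q_0-2)/(q_0-1)$ is needed, since it leaves room to choose an exponent $q>2$ making the neighborhood of $\Lambda$ negligible. As a variant of the endgame, once it is known that $\nabla u\in L^2(\Omega)$, the function $u$ is the weak solution of the mixed problem with zero data, and the representation formula $u(x)=-\langle f_N,G_x\rangle$ of part~3) of Lemma~\ref{Green} with $f_N=0$ yields $u\equiv0$ directly.
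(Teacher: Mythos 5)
Your overall goal matches the paper's --- reduce to uniqueness for the regularity problem --- but the route you take to get there has a genuine gap. The crux of your argument is the ``self-improvement'' step: you want to promote $(\nabla u)^*$ from $L^1(\partial\Omega)$ to $L^p(\partial\Omega)$ for some $p>1$ (and to get $\nabla u\in L^q(\Omega)$ for some $q>2$) by running the reverse H\"older machinery of Sections~3 and~4. But Lemmas~\ref{MSIRHI}, \ref{RHEstimate}, \ref{Whitney} and Theorem~\ref{RHBoundary} are all proved for \emph{weak} solutions, i.e.\ for $u$ with $\nabla u\in L^2_{\mathrm{loc}}(\Omega)$ up to the boundary; the Caccioppoli-type estimate underlying them (Lemma~\ref{Lemma0}) cannot even be stated otherwise. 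A solution of the $L^1$-mixed problem is only assumed to satisfy $(\nabla u)^*\in L^1(\partial\Omega)$, which does not give $\nabla u\in L^2(\Omega)$ --- indeed, if it did, $u$ would already be a weak solution of the mixed problem with zero data and uniqueness would follow trivially from the Lax--Milgram construction. So the bootstrap has nothing to start from, and the ``delicate point'' you flag at the end is not a technical verification but the whole difficulty. The same issue affects the preliminary boundedness claim: Theorem~\ref{Holder} also presupposes a finite-energy solution, so continuity of $u$ on $\bar\Omega$ is not available at the outset.

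The paper's actual proof sidesteps this entirely by a duality argument: it fixes an atom $a$ for $N$, lets $w$ be the solution with Neumann data $a$ constructed in Theorem~\ref{AtomicTheorem}, and shows via Green's second identity on Verchota's approximating smooth domains $\Omega_k$ that $\int_N ua\,d\sigma=0$. The passage to the limit is made rigorous using the pairing estimate of Lemma~\ref{Verchota} and the density result of Lemma~\ref{approx1} (built on the Poincar\'e inequality~(\ref{Hardy1})), together with the H\"older continuity of $w$ from Lemma~\ref{Green}. Since this holds for every atom, $u\equiv0$ on $\partial\Omega$, and \emph{then} one invokes Dahlberg--Kenig / D.~Mitrea uniqueness for the $L^1$ (Hardy) regularity problem --- without ever upgrading the integrability of $\nabla u$. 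Note in particular that their invocation of regularity-problem uniqueness happens directly at the $L^1$/Hardy level, so the $L^p$, $p>1$, improvement you are after is not needed; what is needed, and what your proposal does not supply, is a way to conclude $u|_{\partial\Omega}=0$ (or equivalently $\nabla u\in L^2(\Omega)$) from the $L^1$ hypothesis alone.
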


\noindent The proof of this theorem closely follows the
proof of uniqueness in the  paper of Ott and Brown \cite[Section 5]{OB:2009}.
We outline the main steps of the argument here and will omit many of the
technical details.

Recall the following approximation scheme of
G. Verchota (\cite[Appendix~A]{GV:1982} and \cite[Theorem~1.12]{GV:1984}). 
Given a Lipschitz domain $\Omega$, there exists a family of
smooth domains $\{\Omega_k\}$ with $\bar{\Omega}_k\subset \Omega$,
$k=1,2,\ldots$, and a family of bi-Lipschitz homeomorphisms $\Lambda_k: \partial\Omega_k \rightarrow \partial\Omega$.

The following lemma is proved via the method of Verchota. The argument
is sketched in Ott and Brown \cite[Section ~ 5]{OB:2009}. The proof uses
generalized Riesz transforms and also relies on the H\"{o}lder
continuity of the Green function established in Lemma \ref{Green}.  We
use a similar argument in Section \ref{weightedsection}.

\begin{lemma} \label{Verchota}
Let $\{\Omega_k\}$ be a family of smooth domains.
Let $u \in W^{1,1}(\partial\Omega_k)$ for $k=1,2,\ldots$. If
$w$ is a weak solution of the mixed problem in $\Omega_k$ with Neumann data
an atom for $N$ and zero Dirichlet data, then
\begin{equation*}
|\int_{\partial\Omega_k} u \frac{\partial w}{\partial \nu} \, d\sigma
| \leq C_w \| u \|_{W^{1,1}(\partial\Omega_k)}.
\end{equation*}
\end{lemma}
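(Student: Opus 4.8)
The plan is to follow Verchota's method as carried out in Ott and Brown. Write $\partial\Omega_k=D_k\cup N_k$ with $D_k=\Lambda_k^{-1}(D)$ and $N_k=\Lambda_k^{-1}(N)$; since the homeomorphisms $\Lambda_k$ are bi-Lipschitz with constant bounded independently of $k$, the domains $\Omega_k$ satisfy (\ref{Lip}) and (\ref{NTA}) and the boundary of $D_k$ relative to $\partial\Omega_k$ satisfies (\ref{SurfProp}), all with constants depending only on the global character of $\Omega$. Thus $w$ vanishes on $D_k$, its normal derivative equals on $N_k$ a fixed atom $a$ supported in a surface ball $\Delta$ of radius comparable to $r$, and Theorems \ref{AtomicTheorem} and \ref{Hardythm} applied to $\Omega_k$ give $(\nabla w)^{*}\in L^{p}(\partial\Omega_k)$ for some $p>1$, together with the ring estimates $\|\nabla w\|_{L^{p}(\Sigma_j)}\le C_w\,2^{-\beta j}\sigma(\Sigma_j)^{-1/p'}$ about $\Delta$, all with constants depending only on $r$ and on the global character of $\Omega$, hence uniform in $k$.

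First split
\begin{equation*}
\int_{\partial\Omega_k}u\,\frac{\partial w}{\partial\nu}\,d\sigma
=\int_{N_k}u\,a\,d\sigma+\int_{D_k}u\,\frac{\partial w}{\partial\nu}\,d\sigma .
\end{equation*}
In the first integral, the vanishing moment of $a$ lets us replace $u$ by $u-\bar u$, with $\bar u$ the average of $u$ over $\Delta$; the surface Poincar\'e inequality on $\Delta$ (with a constant uniform in $k$ because the $\Omega_k$ have uniformly bounded Lipschitz character) then bounds this term by $C_w\|\tangrad u\|_{L^{1}(\partial\Omega_k)}\le C_w\|u\|_{W^{1,1}(\partial\Omega_k)}$. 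The second integral is the main point, since $\partial w/\partial\nu$ is not prescribed on $D_k$. Because $(\nabla w)^{*}\in L^{p}(\partial\Omega_k)$ with $p>1$, the representation formula (\ref{represent}) is valid for $w$ in $\Omega_k$; taking nontangential limits and contracting with $\nu$ expresses $\partial w/\partial\nu$ on $\partial\Omega_k$, through the generalized Riesz transforms (Calder\'on--Zygmund operators bounded on $L^{p}(\partial\Omega_k)$ with norms uniform in $k$, by the theorem of Coifman, McIntosh and Meyer), in terms of the Neumann datum $a$ and the tangential derivatives $\nu_i\partial_j w-\nu_j\partial_i w$. These tangential derivatives vanish on $D_k$ since $w\equiv0$ there, so they are supported in $N_k$; their size and off-$\Delta$ decay (from Theorem \ref{AtomicTheorem}, which in turn rests on the H\"older continuity of the Green function in Lemma \ref{Green}), together with the cancellations $\int_{\partial\Omega_k}(\nu_i\partial_j w-\nu_j\partial_i w)\,d\sigma=0$ and $\int_{\partial\Omega_k}\partial w/\partial\nu\,d\sigma=0$ (divergence theorem, $w$ harmonic), show that $\nu_i\partial_j w-\nu_j\partial_i w$ and hence, by $H^{1}$-boundedness of the Riesz transforms, also $\partial w/\partial\nu$, are molecules centered at $\Delta$ at scale comparable to $r$, with molecular norm uniform in $k$. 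Pairing this molecule against $u$ — decomposing it into atoms, subtracting the corresponding averages of $u$, applying the surface Poincar\'e inequality on each atom's supporting ball, and summing the resulting telescoping averages against the decay of the molecule — bounds the second integral by $C_w\|u\|_{W^{1,1}(\partial\Omega_k)}$.

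The hard part is exactly this second integral: extracting molecular structure for $\partial w/\partial\nu$ on $D_k$, where no boundary data is imposed, which forces one to combine the decay of $\nabla w$ away from the support of the atom (Theorem \ref{AtomicTheorem}, built on the Green function estimates of Lemma \ref{Green}) with the $L^{p}$-boundedness of the generalized Riesz transforms, while keeping every constant uniform in $k$ via the uniformly bounded Lipschitz and global character of the approximating domains. When $n=2$ this step is much simpler: $W^{1,1}(\partial\Omega_k)$ embeds in $L^{\infty}(\partial\Omega_k)$ with a constant uniform in $k$, so $|\int_{D_k}u\,(\partial w/\partial\nu)\,d\sigma|\le\|u\|_{L^{\infty}(\partial\Omega_k)}\|(\nabla w)^{*}\|_{L^{1}(\partial\Omega_k)}\le C_w\|u\|_{W^{1,1}(\partial\Omega_k)}$ directly.
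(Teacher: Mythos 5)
You have read the hypothesis on $w$ differently than intended. In this lemma $w$ is the weak solution of the mixed problem in $\Omega$ (with $f_N$ an atom for $N$ and $f_D=0$, as in Theorem \ref{AtomicTheorem}), and one restricts it to the approximating domains $\bar\Omega_k\subset\Omega$, where $w$ is harmonic and smooth; this is exactly how the lemma is invoked in the proof of Theorem \ref{unique}. Consequently $\partial w/\partial\nu$ on $\partial\Omega_k$ is a classical normal derivative of a function smooth on $\bar\Omega_k$ and is not equal to the atom $a$ on any part of $\partial\Omega_k$, so the decomposition $\int_{\partial\Omega_k}u\,\partial w/\partial\nu = \int_{N_k}u\,a+\int_{D_k}u\,\partial w/\partial\nu$ that organizes your proof is not available. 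The real difficulty is that $\partial\Omega_k$ approaches $\Lambda$, where $\nabla w$ blows up, and one must still produce a bound that is uniform in $k$ and controlled by a $W^{1,1}$ norm of $u$.

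The phrase "generalized Riesz transforms" in the paper's remark refers to Verchota's system of harmonic conjugates, i.e.\ the local Riesz transforms $v_i$ defined around (\ref{Mixed})--(\ref{Divergence}) in Section \ref{weightedsection}, not to the layer potentials of (\ref{represent}). The intended argument is the one carried out in the proof of Lemma \ref{lma}: localize with a partition of unity adapted to the fixed coordinate cylinders, form the conjugates $w_i$ of $w_n=\eta w$ in $\Omega_k$, and integrate by parts via (\ref{Mixed}) and (\ref{Divergence}) to write $\int_{\partial\Omega_k}u\,\partial w/\partial\nu\,d\sigma = \int_{\partial\Omega_k}(uF-\sum_i w_i\,\partial u/\partial\tau_i)\,d\sigma$, which moves all derivatives onto $u$. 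The lemma then reduces to $L^\infty$ bounds on $F$ and the $w_i$ uniform in $k$, and these are exactly what the H\"older continuity of the Green function (Lemma \ref{Green}) provides: it yields $\|w\|_{L^\infty(\Omega)}\le C_w$ and, with interior gradient estimates for harmonic functions, $|\nabla w(y)|\le C_w\,\mathrm{dist}(y,\partial\Omega)^{\beta-1}$, so the line integrals defining the $w_i$ converge with a bound independent of $k$. Your molecular-pairing route does not substitute for this when $n\ge3$: the decay estimates of Theorem \ref{AtomicTheorem} are estimates on $\partial\Omega$, not on the interior surfaces $\partial\Omega_k$, and pairing an $H^1$ molecule against a $W^{1,1}$ test function is not controlled by $\|u\|_{W^{1,1}(\partial\Omega_k)}$ without additional integrability of $\partial w/\partial\nu$ beyond what the atomic theory supplies. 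Your $n=2$ shortcut using $W^{1,1}\hookrightarrow L^\infty$ on a curve together with $\|(\nabla w)^*\|_{L^1(\partial\Omega)}\le C_w$ is fine, but it is a special feature of the one-dimensional boundary.
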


Next we introduce a Poincar\'{e} inequality that will be
employed below. Let $N_{\epsilon} = \{x\in N :  \delta(x)<\epsilon\}$.
We show that there is a constant $C $ such that  for $u \in W^{1,1}(\partial\Omega)$,
with $u=0$ a.e. on $D$,
\begin{equation}\label{Hardy1}
\frac{1}{\epsilon} \int_{N_{\epsilon}} |u| \, d\sigma
\leq C \int_{N_{C\epsilon}} |\nabla u |\, d\sigma.
\end{equation}
To prove the inequality above, let $x\in N_{\epsilon}$ and let
$Q_{x,\epsilon}$ denote the surface cube centered at $x$ with side
length $\epsilon$.  If $ x \in N_ \epsilon $, then $ Q_ { x, 2 \epsilon
} \cap D \neq \emptyset$, thus we may use (\ref{Dbig}) to obtain the
Poincar\'{e} inequality
\begin{equation*}
\int_{Q_{x,4\epsilon}} |u|\, d\sigma \leq  C\epsilon
\int_{Q_{x,4\epsilon}} |\nabla u| \, d\sigma.
\end{equation*}
\note{We needed to expand the cubes a little to guarantee that the
  Poincar\'e inequality holds. }
By the Besicovitch covering lemma, we can find a finite
cover of $N_\epsilon$ by cubes $Q_{x_1, 4\epsilon}, Q_{x_2,4\epsilon},
\ldots, Q_{x_m,4\epsilon}$ such that the cubes have bounded overlaps,
\begin{equation*}
\sum_{i=1}^m  \chi_{Q_{x_i},4\epsilon}\leq C_n.
\end{equation*}
Then we have
\begin{eqnarray*}
\int_{N_\epsilon} |u|\, d\sigma & \leq &
\sum_{i=1}^m  \int_{Q_{x_i,4\epsilon}} |u|\, d\sigma \\
& \leq & C\sum_{i=1} ^m  \epsilon \int_{Q_{x_i,4\epsilon}}|\nabla u|\, d\sigma \\
& \leq & C \epsilon \int_{N_{C\epsilon}} |\nabla u|\, d\sigma.
\end{eqnarray*}

Thus we obtain (\ref{Hardy1}) which we will use to prove the
following approximation lemma. Let $\Upsilon$ denote the
collection of functions defined in $\bar{\Omega}$ that are Lipschitz and compactly supported in
$ \partial \Omega \setminus \bar D$.
The next lemma shows that a function
$u \in W^{1,1}(\partial\Omega)$ which satisfies $u=0$ a.e. on
$D$ can be  approximated in the $\sobolev 1 1 ( \partial \Omega)
$-norm by functions in $ \Upsilon$.
 This density is fairly easy for
the domains considered in
Ott and Brown \cite{OB:2009}, but requires more work under the present
assumptions.

\begin{lemma}\label{approx1}
Let $\Omega$ and $D$ satisfy (\ref{Lip}) and (\ref{Dbig}).
Suppose that $u \in W^{1,1}(\partial\Omega)$ and $u=0$ a.e. on $D$.
Then $u$ can be approximated in $ \sobolev 1 1 ( \partial \Omega)$ by
functions from $ \Upsilon $.
\end{lemma}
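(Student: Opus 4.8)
The plan is to reduce, via the Hardy-type inequality \eqref{Hardy1}, to a function that already vanishes in a neighborhood of the interface $\crease$, and then to invoke the standard density of Lipschitz functions in $\sobolev 1 1$ of the boundary of a Lipschitz domain. Since $u=0$ a.e. on the open set $D$ and $\sigma(\crease)=0$ (as $\crease$ has locally finite $\mathcal H^{n-2+\epsilon}$ measure with $\epsilon<1$), the only obstruction to $u$ lying in the $\sobolev 1 1(\partial\Omega)$-closure of $\Upsilon$ is the behavior of $u$ near $\crease$, and a single cutoff removes it.

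For $\epsilon>0$ fix a Lipschitz $\chi\colon\reals\to[0,1]$ with $\chi\equiv0$ on $(-\infty,1]$ and $\chi\equiv1$ on $[2,\infty)$, and set $\psi_\epsilon=\chi(\delta/\epsilon)$. Since $\delta$ is Lipschitz with constant one, $\psi_\epsilon$ is Lipschitz on $\partial\Omega$, vanishes on $\{\delta<\epsilon\}$, equals $1$ on $\{\delta>2\epsilon\}$, and $|\tangrad\psi_\epsilon|\le C/\epsilon$ a.e. Put $u_\epsilon=u\psi_\epsilon$. Because $u=0$ on $D$ and $\psi_\epsilon=0$ on $\{\delta<\epsilon\}$, the support of $u_\epsilon$ is contained in the closed set $N\cap\{\delta\ge\epsilon\}$, which, being disjoint from $\bar D=D\cup\crease$, is a compact subset of $\partial\Omega\setminus\bar D$. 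Writing $u-u_\epsilon=u(1-\psi_\epsilon)$ and $\tangrad(u-u_\epsilon)=(1-\psi_\epsilon)\tangrad u-u\,\tangrad\psi_\epsilon$, and observing that all three terms are supported in $N_{2\epsilon}$, we get
\begin{equation*}
\|u-u_\epsilon\|_{L^1(\partial\Omega)}+\int_{\partial\Omega}\bigl|(1-\psi_\epsilon)\tangrad u\bigr|\,d\sigma\ \le\ \int_{N_{2\epsilon}}|u|\,d\sigma+\int_{N_{2\epsilon}}|\tangrad u|\,d\sigma,
\end{equation*}
which tends to $0$ as $\epsilon\to0$ by absolute continuity of the integral, since $u,\tangrad u\in L^1(\partial\Omega)$ and $\sigma(N_{2\epsilon})\to0$. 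For the remaining term we apply \eqref{Hardy1} (with $\epsilon$ replaced by $2\epsilon$):
\begin{equation*}
\int_{\partial\Omega}|u\,\tangrad\psi_\epsilon|\,d\sigma\ \le\ \frac{C}{\epsilon}\int_{N_{2\epsilon}}|u|\,d\sigma\ \le\ C\int_{N_{C\epsilon}}|\tangrad u|\,d\sigma\ \longrightarrow\ 0,
\end{equation*}
again by absolute continuity. Hence $u_\epsilon\to u$ in $\sobolev 1 1(\partial\Omega)$.

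It then remains to approximate each $u_\epsilon$ in $\sobolev 1 1(\partial\Omega)$ by genuinely Lipschitz functions whose support stays in a compact subset of $\partial\Omega\setminus\bar D$. Since $\operatorname{supp}u_\epsilon\subset K_\epsilon:=N\cap\{\delta\ge\epsilon\}$, cover a neighborhood of $K_\epsilon$ by finitely many coordinate cylinders centered in $\{\delta\ge\epsilon/2\}$ with radii small relative to $\epsilon$; by Lemma \ref{newball} the boundary piece inside each such cylinder lies entirely in $D$ or entirely in $N$, and only the $N$-cylinders matter because $u_\epsilon\equiv0$ on $D$. Taking a smooth partition of unity subordinate to this cover, write $u_\epsilon=\sum_i\xi_iu_\epsilon$; in the graph coordinates of the $i$-th cylinder mollify $\xi_iu_\epsilon$ at a scale $\eta$ much smaller than $\epsilon$. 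Each mollified piece is smooth, hence Lipschitz on $\partial\Omega$, is supported in $N\cap\{\delta\ge\epsilon/4\}$ for $\eta$ small, and converges to $\xi_iu_\epsilon$ in $\sobolev 1 1(\partial\Omega)$ as $\eta\to0$ (the usual density of smooth functions in $\sobolev 1 1$ of a Lipschitz graph, the surface and coordinate measures being comparable). Summing over $i$ and extending the resulting boundary function to a Lipschitz function on $\bar\Omega$ with the same Lipschitz constant (a McShane extension, which leaves the boundary values unchanged) produces an element of $\Upsilon$. A diagonal choice $\epsilon\to0$, $\eta=\eta(\epsilon)\to0$ finishes the proof.

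The crux, and the only place the hypotheses \eqref{Lip} and \eqref{Dbig} are used, is the cutoff estimate: bounding $\int|u\,\tangrad\psi_\epsilon|\,d\sigma$ requires exactly the Hardy inequality \eqref{Hardy1}, which in turn rests on the lower bound \eqref{Dbig} for $D$ (via the Poincar\'e inequality on cubes meeting $D$). Everything after the reduction to $u_\epsilon$ is the standard mollification argument for $\sobolev 1 1$ of the boundary of a Lipschitz domain, rendered compatible with the decomposition $\partial\Omega=D\cup N$ only because $u_\epsilon$ is already supported away from $\crease$, so the possibly irregular interface plays no role.
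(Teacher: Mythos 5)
Your argument is correct and follows essentially the same route as the paper's proof: cut off $u$ by a Lipschitz function of $\delta$, control the cross term $u\,\tangrad\psi_\epsilon$ via the Hardy inequality \eqref{Hardy1}, handle the other two terms by absolute continuity/dominated convergence, and then regularize the resulting compactly supported function. You flesh out the final mollification step in more detail than the paper (which simply calls it ``a standard regularization argument''), but there is no substantive difference in the approach.
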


\begin{proof}
Let $u\in W^{1,1}(\partial\Omega)$ and suppose that $u=0$ a.e. on $D$.
Fix $\epsilon > 0$ and let
$\eta_{\epsilon}$ be a smooth function which is 1 if $t> 2\epsilon$
and $0$ if $t< \epsilon$, and satisfies $|\nabla \eta_{\epsilon}(x)|< C/\epsilon$.
If $u \in W^{1,1}(\partial\Omega)$ and
vanishes a.e.~on $D$, then we have that
$u_{\epsilon}(x) = \eta_{\epsilon}(\delta(x))u(x)$ is zero in  a
neighborhood of $ D\cup \Lambda $.
 Then it follows that
$\nabla_t u_\epsilon - \nabla_t u = u \nabla_t
\eta_{\epsilon}(\delta(\cdot)) + (\eta_{\epsilon}(\delta(\cdot))-1)\nabla_t u$.
From the dominated convergence theorem,
\begin{equation*}
\lim_{\epsilon\rightarrow 0^+} \int_N |\nabla_t u|
|\eta_{\epsilon}(\delta(\cdot))-1| \, d\sigma = 0.
\end{equation*}
Since $\nabla \eta_{\epsilon}(\delta(\cdot))\leq C/\epsilon$,
we may use the Poincar\'{e} inequality (\ref{Hardy1}), which
requires (\ref{Dbig}), and
the dominated convergence theorem to conclude that
\begin{equation*}
\lim_{\epsilon\rightarrow 0^+} \int_N |u \nabla_t
\eta_{\epsilon}(\delta(\cdot))| \, d\sigma = 0.
\end{equation*}
Thus, we have that $u\in W^{1,1}(\partial\Omega)$ may be
approximated by a function $u_{\epsilon}$ that is supported in $
\partial \Omega \setminus \bar D$. By a standard
regularization argument,  we can approximate $u_{\epsilon}$
by functions that are in $\Upsilon$.
\end{proof}

\begin{proof}[Proof of Theorem \ref{unique}]
Let $u$ be a solution of the mixed problem (\ref{MP}) with
$f_N=0$ and $f_D=0$. We wish to show that $u=0$. Fix an atom
$a$ for $N$ and let $w$ be a solution of the mixed problem with
$f_N=a$ and $f_D=0$ as constructed in Theorem \ref{AtomicTheorem}. Our
goal is to show that
\begin{equation}\label{AtomClaim}
\int_N ua \, d\sigma = 0.
\end{equation}
In turn, this will imply that $u$ is zero on $\partial\Omega$, and by
appealing to the uniqueness of the regularity problem  proved by
Dahlberg and Kenig \cite{DK:1987} or D.~Mitrea \cite{MR1883390} in two
dimensions, we can conclude that $u=0$ in $\Omega$.

To prove (\ref{AtomClaim}), we apply Green's second identity
in one of the smooth approximating domains from
Verchota's construction and obtain
\begin{equation}\label{uniq42}
\int_{\partial\Omega_k} w \frac{\partial u}{\partial \nu} \, d\sigma
= \int_{\partial\Omega_k} u \frac{\partial w}{\partial \nu}\, d\sigma,
\quad k=1,2,\ldots.
\end{equation}
We have that $(\nabla u)^* \in L^{1}(\partial\Omega)$ and Lemma
\ref{Green} implies that  $w$ is
H\"{o}lder continuous and hence bounded. Further, $w=0$ on $D$ and
$\frac{\partial u}{\partial \nu}=0$ on $N$. Hence by the dominated
convergence theorem,
\begin{equation}\label{uniq43}
\lim_{k\rightarrow\infty} \int_{\partial\Omega_k} w
\frac{\partial u}{\partial \nu}\, d\sigma =0.
\end{equation}
Thus, we can prove our claim by showing that
\begin{equation}\label{ClaimFollow}
\lim_{k\rightarrow\infty} \int_{\partial\Omega_k}
u \frac{\partial w}{\partial \nu}\, d\sigma = \int_{\partial\Omega}
ua \, d\sigma.
\end{equation}
Note that the existence of the limit in (\ref{ClaimFollow})
follows from (\ref{uniq42}) and (\ref{uniq43}).
Now by repeating the  argument
used to prove Lemma 5.7 in the work of
Ott and Brown \cite{OB:2009},
we can find a sequence $\{U_j\}$  of Lipschitz functions defined in $ \bar
\Omega$ such that  $ U_j |_D=0$ and
\begin{equation}\label{uj} 
\lim_{k\rightarrow \infty} \|u-U_j\|_{W^{1,1}(\partial\Omega_k)} \leq 1/j.
\end{equation}
The argument outlined above uses the density result in Lemma \ref{approx1}.
Now we have
\begin{eqnarray}
|\int_{\partial\Omega} ua \, d\sigma - \lim_{k\rightarrow\infty} \int_{\partial\Omega_k} u\frac{\partial w}{\partial \nu} \, d\sigma| \leq |\int_{\partial\Omega} ua \, d\sigma - \lim_{k\rightarrow \infty} \int_{\partial\Omega_k} U_j \frac{\partial w}{\partial \nu}\, d\sigma | \nonumber \\
\quad + \lim \sup_{k\rightarrow \infty} |\int_{\partial\Omega_k} (u-U_j) \frac{\partial w}{\partial \nu}\, d\sigma|.\label{U2}
\end{eqnarray}
Since $(\nabla w)^* \in L^1 (\partial\Omega)$ and $U_j$
is bounded, we may take the limit of the first term on the
right of (\ref{U2}). This yields
\begin{equation*}
|\int_{\partial\Omega} ua \, d\sigma - \lim_{k\rightarrow \infty}
\int_{\partial\Omega_k} u\frac{\partial w}{\partial \nu} \, d\sigma|
\leq | \int_{\partial\Omega} (u-U_j)a \, d\sigma| + C/j \leq C/j.
\end{equation*}
The second term on the right of (\ref{U2}) is bounded by $C_w/j$
by Lemma \ref{Verchota} and (\ref{uj}).  Since $j$ is arbitrary,
we have obtained (\ref{ClaimFollow}) and the proof of the Theorem
is complete.
\end{proof}

\section{$L^p$ result} \label{lpsection}

In this section, we use the existence of solutions of the
mixed problem with data from Hardy spaces
established in Section \ref{Sect3} to prove
$L^p$-estimates for the mixed problem. Our strategy is to
first recall the reverse H\"{o}lder inequality  at the boundary
which was proved in Theorem \ref{RHBoundary}.
With this estimate in hand, we then apply the method developed
by Shen \cite{ZS:2007} and adapted by Ott and Brown \cite{OB:2009} to obtain
the $L^p$-estimate.

The following lemma is a local estimate that is a consequence of
Theorem \ref{RHBoundary}.  In this Lemma we use the truncated
non-tangential maximal function defined in Section \ref{prelim}.

\begin{lemma} \label{newLocal}
Let $\Omega$ and $D$ satisfy assumptions (\ref{Lip}) and (\ref{NTA}).
Let $q_0>2$ be
as in Lemma \ref{RHEstimate} and let $\Lambda$ satisfy (\ref{SurfProp}) with
$0\leq \epsilon < (q_0-2)/(q_0-1)$.
Let $u$ be the weak solution of the mixed problem with $f_N\in L^{2}(N)$ and zero
Dirichlet data.
Let $x\in \Omega$ and $0<r<r_0$. Then for
$1<p<q_0((1-\epsilon)/(2-\epsilon))$ the following
local estimate holds
\begin{equation*}
\left(\average_{\Delta_{r} (x)}(\nabla u)^{*p}_{cr} \, d\sigma \right)^{1/p} \leq C \left( \average_{\Psi_{2r}(x)} |\nabla u|\, dy +
r ^ { (1-n)/2}\|f_N\|_{L^2(\sball x {2r} \cap N)} \right).
\end{equation*}
The constant $c=1/16$ and $C$ depends on $M$ and $n$.
\end{lemma}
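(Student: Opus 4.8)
The plan is to deduce this local estimate from the boundary reverse H\"older inequality of Theorem \ref{RHBoundary}, the interior gradient estimate for harmonic functions, and the boundary regularity Lemmas \ref{NeumannRegularity}, \ref{DirichletRegularity}, and \ref{Whitney}. First I would reduce to the case $x\in\partial\Omega$: if $\dist(x,\partial\Omega)\geq r$ then $\sball x r=\emptyset$ and there is nothing to prove, while if $\dist(x,\partial\Omega)<r$ one takes a nearest point $\hat x\in\partial\Omega$, compares $\sball x r\subset\sball {\hat x} {2r}$ and $\dball {\hat x} r\subset\dball x {2r}$, and — at the cost of harmless changes in the dilation and truncation constants — it is enough to prove the estimate with $x$ replaced by $\hat x$. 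Note also that the boundary trace $\nabla u$ lies in $L^2_{loc}(\partial\Omega)$ by Lemmas \ref{NeumannRegularity}, \ref{DirichletRegularity} and \ref{Whitney}, so all the quantities below are finite.

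Next I would establish a pointwise majorant for the truncated non-tangential maximal function. Fix $z\in\sball x r$ and $y\in\Gamma_{cr}(z)$, and set $d=\dist(y,\partial\Omega)$, so that $d\leq|y-z|<cr$ and $d\geq|y-z|/(1+\alpha)$, whence $d\approx|y-z|$. Since each component of $\nabla u$ is harmonic in $\Omega$ and $\ball y {d/2}\subset\Omega$, the interior gradient estimate gives $|\nabla u(y)|\leq C\average_{\ball y {d/2}}|\nabla u|\,dw$; if $P_y\in\partial\Omega$ satisfies $|y-P_y|=d$, then $|P_y-z|\leq 2|y-z|$ and $\ball y {d/2}\subset\dball {P_y} {Cd}$, so by comparability of the volumes of $\dball {P_y} {Cd}$ and $\dball z {Cd}$,
$$
|\nabla u(y)|\leq C\average_{\dball z {Cd}}|\nabla u|\,dw,\qquad 0<Cd<Cr .
$$
Taking the supremum over $y\in\Gamma_{cr}(z)$ shows $(\nabla u)^*_{cr}(z)\leq C\sup_{0<s<Cr}\average_{\dball z s}|\nabla u|\,dw$ for every $z\in\sball x r$. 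It therefore suffices to bound the $L^p(\sball x r)$-norm of this maximal function of solid averages of $|\nabla u|$.

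For this last step I would split the supremum according to whether $s$ is comparable to $r$ or not. When $s\approx r$ the solid average $\average_{\dball z s}|\nabla u|\,dw$ is comparable to $\average_{\dball x {2r}}|\nabla u|\,dy$, which after integration over $\sball x r$ produces the first term on the right-hand side of the asserted inequality. When $s$ is small one passes to the Whitney decomposition $\partial\Omega=\crease\cup\bigcup_j Q_j$ relative to $\crease$ constructed in Section \ref{Sect3}: on each Whitney region $T(Q_j)$ the boundary lies in a hyperplane after a change of coordinates and is entirely contained in $D$ or in $N\setminus\crease$, so Lemma \ref{NeumannRegularity} or Lemma \ref{DirichletRegularity} bounds $\int_{Q_j}((\nabla u)^*_{r_j})^2\,d\sigma$ in terms of the Neumann data on $2Q_j$ and $r_j^{-1}\int_{T(Q_j)}|\nabla u|^2\,dy$, exactly as in the proof of Lemma \ref{Whitney}. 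These $L^2$ estimates are then upgraded to the $L^p$ estimate, in the stated range $1<p<q_0(1-\epsilon)/(2-\epsilon)$, by invoking the reverse H\"older inequalities of Theorem \ref{RHBoundary} and Lemma \ref{RHEstimate} — this is where the Ahlfors-regularity hypothesis on $\crease$ and the restriction on $\epsilon$ are used, precisely as they are in the proof of Theorem \ref{RHBoundary}, so that the weighted sums of powers of $r_j\approx\delta$ converge as a geometric series — and are summed over the cubes $Q_j$ meeting $\sball x r$ via the bounded-overlap property of $\{T(Q_j)\}$. The main obstacle is carrying out this $L^2$-to-$L^p$ upgrade uniformly across the Whitney scales and reassembling the localized pieces, keeping all the geometric constants (the truncation $c=1/16$ and the dilation factors hidden in the ball $\dball z {Cd}$ above) small enough that Theorem \ref{RHBoundary} and the regularity lemmas are applied only on balls where their hypotheses hold; this bookkeeping is that of \cite[Section 4]{OB:2009}, which we follow.
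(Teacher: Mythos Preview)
Your reduction to a maximal function of solid averages is fine, but the dichotomy ``$s\approx r$'' versus ``$s$ small'' does not close. The Whitney decomposition of $\partial\Omega$ relative to $\Lambda$ has cubes $Q_j$ with $r_j\approx\delta(z)$ for $z\in Q_j$, and the estimate you quote from Lemmas \ref{NeumannRegularity}, \ref{DirichletRegularity}, \ref{Whitney} controls only $(\nabla u)^*_{r_j}$ on $Q_j$, i.e.\ the maximal function truncated at height $\approx\delta(z)$. For $z$ close to $\Lambda$ this leaves the entire range of scales $\delta(z)\ll s\ll r$ uncontrolled: the corresponding solid averages $\average_{\dball z s}|\nabla u|\,dw$ are neither bounded by the Whitney piece (the truncation there is at the smaller scale $\delta(z)$, and the local regularity lemmas do not apply once $\dball z s$ straddles $\Lambda$) nor by the large-scale average $\average_{\dball x {2r}}|\nabla u|\,dy$ (which need not dominate averages over much smaller balls). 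Summing over the Whitney cubes or invoking the reverse H\"older inequalities at scale $r_j$ does not recover these intermediate scales, because those inequalities compare the $L^t$ average on a ball with the $L^1$ average on a \emph{larger} ball, which is the wrong direction here.

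The paper avoids this difficulty by a different mechanism: it writes a local layer-potential representation for $\eta\,\partial u/\partial z_j$ (with $\eta$ a cutoff supported in $B_{4r}(x)$), exactly as in the proof of (\ref{represent}), and then invokes the Coifman--McIntosh--Meyer theorem to obtain directly
\[
\left(\average_{\sball x r}((\nabla u)^*_r)^{p}\,d\sigma\right)^{1/p}
\le C\left[\average_{\dball x {4r}}|\nabla u|\,dy
+\left(\average_{\sball x {4r}}|\nabla u|^{p}\,d\sigma\right)^{1/p}\right],
\]
after which Theorem \ref{RHBoundary} bounds the boundary $L^p$ term by the right-hand side of the lemma. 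The point is that the singular-integral bound handles all scales simultaneously; there is no need to resolve the cone into dyadic heights. Your argument would work if you replaced the Whitney step by this representation-plus-CMM step, or if you could supply a separate estimate for $\sup_{\delta(z)<s<cr}\average_{\dball z s}|\nabla u|$ in $L^p(\sball x r)$, but as written that range is not covered.
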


\begin{proof}
Let $x\in\Omega$ and $r$ satisfy $0<r<r_0$.
Theorem \ref{RHBoundary} provides  an estimate
for the $L^p$-norm of $\nabla u$ in a surface ball $\Delta_r (x)$.
To obtain the estimate for the non-tangential maximal function,
choose a cut-off function $\eta$ which is one
on $B_{3r}(x)$ and supported in $B_{4r}(x)$. Let $z\in B_r (x)$.
By repeating the argument used to prove (\ref{represent})  
in the proof of Theorem \ref{Hardythm}, we can show that the gradient
of the weak solution $u$ may be represented as
\begin{eqnarray*}
(\eta\frac {\partial  u}{\partial z_j}) (z)
 &  = &  \int _ { \partial
    \Omega }
\eta ( \frac {\partial \funsol }{ \partial \nu } ( z-\cdot ) \frac {
  \partial u }{\partial y _j }
- \nu _j  \nabla_y \funsol (z- \cdot ) \cdot \nabla u
+ \frac { \partial \funsol }{\partial y _j }(z-\cdot) \frac  {
  \partial u }{ \partial \nu }  ) \, d\sigma \\
& &
\qquad - \int _ { \Omega} \nabla \eta \cdot \nabla_y \funsol ( z-\cdot )\frac {
  \partial u }{ \partial y _j }
- \frac { \partial \eta }{ \partial y _j } \nabla_y \funsol (
z-\cdot)\cdot \nabla u  \\
& & \qquad \qquad  \qquad + \nabla \eta \cdot \nabla u \frac { \partial
  \funsol }{ \partial y _j } ( z- \cdot ) \, dy.
\end{eqnarray*}
From this representation and the theorem of Coifman, McIntosh and Meyer \cite{CMM:1982}
on the boundedness of the Cauchy integral, we get
\begin{equation}\label{eq62}
\left(\average_{\sball x r} (\nabla u)_{r}^{*p}\, d\sigma\right)^{1/p}
\leq C \left[\average_{\dball x {4r}} |\nabla u|\, dy + \left(
\average_{\sball x {4r}} |\nabla u|^p \, d\sigma\right)^{1/p}\right].
\end{equation}
From estimate (\ref{eq62}), Theorem \ref{RHBoundary}, and a covering argument, we obtain
the Theorem.
\end{proof}

Next, we outline the argument developed by Shen \cite{ZS:2007}
that we employ to obtain $L^p$-estimates in this section and weighted
$L^p$-estimates in the next section. Shen's argument is adapted
from work of Peral and Caffarelli \cite{MR1486629}.
It depends on a Calder\'{o}n-Zygmund decomposition of the boundary
and thus we will use surface cubes in this section rather than
surface balls $\Delta_r (x)$.
Before giving Shen's result,
recall that
a locally integrable
function $w$ is an \emph{$A_p(d\sigma)$ weight}, $1<p<\infty$,
provided that
\begin{equation}\label{apw}
\frac{1}{\sigma(\Delta)} \int_{\Delta} w \, d\sigma
\left(\frac{1}{\sigma(\Delta)} \int_{\Delta} w^{-p'/p}\,d\sigma\right)^{p/p'} \leq A < \infty,
\end{equation}
for all surface balls $\Delta\subset\partial\Omega$ centered on $\partial\Omega$.
Define
$A_{\infty}(d\sigma) = \cup_p A_p (d\sigma)$.

Let $Q_0$ be a surface cube and let $F$ be defined on $4Q_0$. Let
the exponents $p, q$ satisfy $1<p<q$. Assume that for each
$Q\subset Q_0$, we may find functions $F_Q$ and $R_Q$, defined
in $2Q$, satisfying
\begin{equation}\label{Shen1}
|F|\leq |F_Q| + |R_Q|,
\end{equation}
\begin{equation}\label{Shen2}
\average_{2Q} |F_Q| \, d\sigma \leq C\left( \average_{4Q} |f|^p\, d\sigma\right)^{1/p},
\end{equation}
\begin{equation}\label{Shen3}
\left(\average_{2Q} |R_Q|^q \, d\sigma \right)^{1/q} \leq C \left[ \average_{4Q} |F|\,d\sigma + \left(\average_{4Q} |f|^p \, d\sigma\right)^{1/p}\right].
\end{equation}
Going further, assume that $\mu $ is a weight in $  A_t(d\sigma)$ and that
\begin{equation}
\label{Weight4}
\left(\frac{\mu(E)}{\mu(Q)} \right)\leq
C\left(\frac{\sigma(E)}{\sigma(Q)}\right)^\theta, \quad 1<t<\theta q,
\end{equation}
\comment{
Then
\begin{equation}\label{star2}
\left(\average_{Q_0} |F|^{t}\, \mu d\sigma \right)^{1/t}
\leq C \left[ \average_{4Q_0} |F| \, d\sigma +
\left(\average_{4Q_0} f^t \, \mu d\sigma \right)^{1/t}\right].
\end{equation}
}
where $\theta$ depends on $M$. Under the assumptions (\ref{Shen1})--(\ref{Weight4}), for $s$ in the
interval $(p,\theta q)$, we have 
\begin{equation}\label{Shenconc}
\left(\average_{Q_0} |F|^{s}\, \mu d\sigma \right)^{1/s}
\leq C \left[ \average_{4Q_0} |F| \, d\sigma +
\left(\average_{4Q_0} |f|^s \, \mu d\sigma \right)^{1/s}\right].
\end{equation}
\comment{
\begin{equation}
\left(\average_{Q_0} |F|^s \, d\sigma \right)^{1/s}
\leq C \left[\average_{4Q_0} |F|\, d\sigma +
\left(\average_{4Q_0} |f|^s\, d\sigma \right)^{1/s}\right].
\end{equation}
}
The constant in the estimate above depends on the
Lipschitz constant of $\Omega$ and the constants in the
estimates (\ref{Shen2})--(\ref{Weight4}).  The argument to
obtain (\ref{Shenconc}) is essentially the same as in
Shen \cite[Theorems 3.2, 3.4]{ZS:2007}. Ott and Brown \cite[Section 7]{OB:2009}
rework Shen's argument to apply to the current situation, where
our starting point is a result in a Hardy space rather than
in an $L^p$-space.

Let $4Q_0$ be a surface cube with sidelength comparable to
$r_0$. Let $u$ be a solution of the mixed problem with Neumann
data $f\in L^{p}(N)$ and zero Dirichlet data. Since
$L^{p}(N)\subset H^{1}(N)$, we know by Theorem \ref{AtomicTheorem}
that a solution $u$ exists with
$(\nabla u)^{*}\in L^{1}(\partial\Omega)$. Let $F=(\nabla u)^{*}$.
Now given a cube $Q\subset Q_0$ with diameter $r$, we define
$F_Q$ and $R_Q$ as follows. Let $\bar{f}_{4Q}=0$ if
$4Q\cap D\neq \emptyset$ and $\bar{f}_{4Q}=\average_{4Q}f \, d\sigma$
if $4Q\subset N$. Set $g=\chi_{4Q}(f-\bar{f}_{4Q})$ and $h=f-g$.
By construction,  $g$ and $h$ are both elements of $H^{1}(N)$
and thus we may solve the mixed problem with Dirichlet data
zero and Neumann data $g$ or $h$. Let $v$ solve the  mixed
problem with Neumann data equal to $g$ and let $w$ solve the
mixed problem with Neumann data $h$. By our uniqueness result
Theorem \ref{unique}, we have that $u=v+w$. Let
$F_Q=(\nabla v)^{*}$ and $R_Q=(\nabla w)^{*}$. It follows
immediately that (\ref{Shen1}) holds. The procedures for obtaining
estimates (\ref{Shen2}) and (\ref{Shen3}) are straightforward
and were worked out in detail in Ott and Brown \cite{OB:2009}.

\note{The details of the rest of Shen's argument. Same as in
previous paper.

To establish the second estimate, we observe that
\begin{equation*}
\|g\|_{H^1 (N)} \leq C\|f\|_{L^{p}(4Q)}\sigma(Q)^{1/p'}.
\end{equation*}
Now the desired estimate (\ref{Shen2}) follows from
Theorem \ref{AtomicTheorem}.

Finally, we must prove the estimate (\ref{Shen3}) where
$R_Q = (\nabla w)^{*}$. To achieve this, first note that
the Neumann data $h$ is constant on $4Q\cap N$. Define two
maximal operators, one which takes the supremum of a function
over the part of the cone that is near the boundary and the
other which takes the supremum of a function over the part
of the cone away from the boundary. More specifically, let
\begin{equation*}
(\nabla w)^{*}_{Cr}(x) = \sup_{y\in\Gamma(x)\cap B_{Cr}(x)} |\nabla w(y)|,
\end{equation*}
\begin{equation*}
(\nabla w)^{*}_{+}(x) = \sup_{y\in \Gamma(x)\cap B^{c}_{Cr}(x)} |\nabla w(y)|,
\end{equation*}
where $C$ will be chosen in a moment.

Away from the boundary we have
\begin{equation}\label{far}
(\nabla w)^{*}_{+}(x) \leq C\average_{4Q} (\nabla w)^{*}\, d\sigma, \quad x\in 2Q.
\end{equation}
Near the boundary, we use the local estimate of Lemma \ref{newLocal} to get
\begin{eqnarray}\label{near}
\left(\average_{2Q}(\nabla w)^{*q}_{+} \, d\sigma\right)^{1/q} & \leq & C\left[ \left(\average_{4Q} |f|^p \, d\sigma\right)^{1/p} + \average _{T(3Q)}|\nabla w|\, d\sigma\right] \nonumber \\
& \leq & C\left[ \left( \average_{4Q} |f|^p\, d\sigma \right)^{1/p} + \average_{4Q} (\nabla w)^{*} \, d\sigma\right],
\end{eqnarray}
where $1<q<q_0((1-\epsilon)/(2-\epsilon))$. Above, we have chosen
the constant $C$ in the definition of $(\nabla w)^{*}_{+}$ sufficiently
large to ensure that (\ref{near}) holds. Recall that the sets $T(Q)$
were defined in the Whitney decomposition. Combining (\ref{far}) and
(\ref{near}) we conclude that
\begin{equation}\label{New}
\left(\average_{2Q} (\nabla w)^{*q}\, d\sigma \right)^{1/q}
\leq C \left[ \left(\average_{4Q} |f|^p \, d\sigma \right)^{1/p}
+ \average_{4Q} (\nabla w)^{*} \, d\sigma \right].
\end{equation}
We also know that $(\nabla w)^{*} \leq (\nabla u)^{*}
+ (\nabla v)^{*}$ and therefore we can estimate the last term
in (\ref{New}) by
\begin{eqnarray*}
\average_{4Q} (\nabla w)^{*} \, d\sigma & \leq & \average_{4Q} (\nabla u)^{*}\, d\sigma
+ \average_{4Q} (\nabla v)^{*}\, d\sigma \\
& \leq & \average_{4Q} (\nabla u)^{*} \, d\sigma
+ C\left(\average_{4Q} |f|^p\, d\sigma\right)^{1/p}.
\end{eqnarray*}
Combining this inequality with (\ref{New}) gives us
\begin{equation*}
\left(\average_{2Q} |R_Q|^q \, d\sigma \right)^{1/q}
\leq C\left[ \average_{4Q} |F|\, d\sigma
+ \left(\average_{4Q} |f|^p\, d\sigma\right)^{1/p}\right],
\end{equation*}
which completes estimate (\ref{Shen3}).
}

With (\ref{Shen1})--(\ref{Shen3}) established, we obtain
(\ref{Shenconc}) for $s$ in the interval $(p,\theta q)$ and
$q<q_0((1-\epsilon)/(2-\epsilon))$. From here, we can
now easily complete the $L^p$-estimate.

\begin{proof}[Proof of Theorem \ref{main}]
To prove part b), we use Dahlberg and Kenig's result \cite[Theorem 4.3]{DK:1987}
for the regularity problem  in Hardy spaces,
or D. Mitrea's result \cite{MR1883390} in two dimensions,
 to reduce to
 the case where
the Dirichlet data is zero. Then we take $f_N \in H^{1}(N)$
and use Theorem \ref{Hardythm} to complete the proof.

To prove part a), consider the mixed problem with zero
Dirichlet data and Neumann data in $L^{p}(N)$. Since
$L^{p}(N)$ is contained in the Hardy space $H^{1}(N)$, a
solution $u$ exists by part b). From the argument of Caffarelli
and Peral, as adapted by Shen, and Ott and Brown, we have that
$u$ satisfies the estimate
\begin{equation*}
\|(\nabla u)^{*} \|_{L^{p}(\partial\Omega)} \leq C\| f_N \|_{L^{p}(N)}.
\end{equation*}

Uniqueness of solutions of the mixed problem follows from Theorem \ref{unique}.
\end{proof}

\section{Weighted result} \label{weightedsection}

In this section we establish results for the mixed
problem with data from weighted Sobolev spaces. Throughout
this section we assume that $\Omega$ and $D$
satisfy conditions (\ref{Lip}), (\ref{SurfProp}),
and (\ref{NTA}).

To begin, we consider the regularity problem when
the data comes from a weighted Sobolev space.  We will use the
solution of the regularity problem to reduce the study of the mixed
problem to the case when the Dirichlet data is zero.
Our study of the regularity problem contained here is a small extension of work of
Shen \cite{ZS:2005} who studied the regularity problem with data
in weighted $L^2$-Sobolev spaces.
Shen's work is in turn an extension of a method used by
Verchota \cite{GV:1984} to study the (unweighted) regularity problem
in Lipschitz domains.  This method is also developed in
a recent article of Kilty and Shen \cite{KS:2010} that  studies
the relationship between the regularity problem and the Dirichlet
problem for elliptic systems.   We choose to repeat well-known arguments for several
reasons. Kilty and Shen do not give  weighted estimates and there is
a small mistake in \cite{ZS:2005}. The weight defined in equation (7.29)
on page 2868 of \cite{ZS:2005} may not be a doubling weight and hence
may  not be in any $ A_p $ class.

The heart of the matter is Lemma \ref{lma} below, which estimates
the normal derivative of a harmonic function in terms of its
boundary values. Building toward this result, we begin by recalling
that Verchota's result for the regularity problem with data in
unweighted Sobolev spaces depends on a duality argument and the solution of
the Dirichlet problem with data in weighted $L^p$-spaces.  Thus, our starting point will
be the following result of Dahlberg \cite{BD:1977} regarding the weighted
Dirichlet problem.  

In the results that follow, constants have the dependencies given in
Section 2. In addition, the  constant may depend on the weight through
the $A_p$-constant and the exponents appearing in the assumptions on the
weights. 

We begin by recalling some well-known results about the  $L^p(\mu \,
d\sigma)$-Dirichlet problem. In this problem, given $f$ on the
boundary we  look for a
function $u$ which satisfies
\begin{equation} \label{DIR1}
\left \{ \begin{array}{ll}
\Delta u = 0, \qquad & \mbox{in } \Omega \\
u = f , \qquad & \mbox{on } \partial \Omega \\
u^* \in L^ p ( \mu \, d\sigma) . 
\end{array} \right. 
\end{equation}

\begin{theorem}
\label{Dahlberg}
There exists an exponent $ s_0 <2$ such that if
 $\mu \in A_r (d\sigma)$, $r>1$,   $p>rs_0$,  and $f \in L^ p ( \mu \,
d\sigma)$,   then the $L^p( \mu \, d\sigma)$-Dirichlet
has a  unique  solution. 
\end{theorem}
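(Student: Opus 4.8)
The plan is to deduce Theorem \ref{Dahlberg} from Dahlberg's classical results on the $L^2$-Dirichlet problem in Lipschitz domains together with the Muckenhoupt theory of weighted norm inequalities for the Hardy--Littlewood maximal operator. The starting point is Dahlberg's theorem that in a bounded Lipschitz domain the harmonic measure $\omega=\omega^{X_0}$ with pole at a fixed interior point $X_0$ is mutually absolutely continuous with surface measure $\sigma$, and that its density $k=d\omega/d\sigma$ satisfies a reverse H\"older inequality: there is $q>2$ so that $(\average_\Delta k^q\,d\sigma)^{1/q}\le C\average_\Delta k\,d\sigma$ for all surface balls $\Delta$. I would set $s_0=q'$, so $1<s_0<2$. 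This reverse H\"older inequality is equivalent to $L^{s_0}(d\sigma)$-solvability of the Dirichlet problem with the non-tangential maximal function bound $\|u^*\|_{L^{s_0}(d\sigma)}\le C\|f\|_{L^{s_0}(d\sigma)}$, and, by the usual localization argument, to the pointwise estimate $u^*(Q)\le C\,(M(|f|^{s_0})(Q))^{1/s_0}$, where $u(X)=\int_{\partial\Omega}f\,d\omega^X$ and $M$ is the Hardy--Littlewood maximal operator on $\partial\Omega$ taken with respect to $\sigma$.

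Next I would check that the candidate solution makes sense. Since $k\in L^q(d\sigma)$ and, by H\"older's inequality together with local integrability of a small negative power of $\mu$, one has the embedding $L^p(\mu\,d\sigma)\subset L^{s_0}(d\sigma)$; concretely, $\int|f|^{s_0}\,d\sigma\le(\int|f|^p\mu\,d\sigma)^{s_0/p}(\int\mu^{-s_0/(p-s_0)}\,d\sigma)^{1-s_0/p}$, and $p>rs_0$ gives $s_0/(p-s_0)<1/(r-1)$, so $\mu^{-s_0/(p-s_0)}$ is a bounded power of the integrable function $\mu^{-1/(r-1)}$ and hence integrable (as $\partial\Omega$ has finite measure). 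Thus $f\in L^{s_0}(d\sigma)\subset L^1(d\omega)$, so $u(X)=\int f\,d\omega^X$ is a well-defined harmonic function, and $u\to f$ non-tangentially $\sigma$-a.e.\ by Dahlberg's Fatou theorem.

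The weighted estimate is then immediate from the pointwise bound:
\[
\int_{\partial\Omega}(u^*)^p\,\mu\,d\sigma\le C\int_{\partial\Omega}\big(M(|f|^{s_0})\big)^{p/s_0}\,\mu\,d\sigma .
\]
Since $p>rs_0$ we have $p/s_0>r$, hence $\mu\in A_r(d\sigma)\subset A_{p/s_0}(d\sigma)$, and the Muckenhoupt theorem gives that $M$ is bounded on $L^{p/s_0}(\mu\,d\sigma)$; the right-hand side is therefore bounded by $C\int|f|^p\,\mu\,d\sigma$, which proves existence together with the stated a priori estimate. For uniqueness, I would observe that an $A_r$ weight is positive $\sigma$-a.e.\ and that the same H\"older argument (now with exponents $p$ and $p'$, using $p>r$ so that $\mu^{-1/(p-1)}$ is again a bounded power of $\mu^{-1/(r-1)}$) yields $L^p(\mu\,d\sigma)\subset L^1(d\sigma)$. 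Hence any solution has $u^*\in L^1(\partial\Omega)$ with vanishing non-tangential limits a.e., and the known uniqueness for the Dirichlet problem in Lipschitz domains in this class (Dahlberg, via Green's representation in Verchota's approximating domains) forces $u\equiv0$.

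The only genuine subtlety is the exponent bookkeeping needed to line up the $A_p$ hypotheses: one must verify both that a negative power $\mu^{-s_0/(p-s_0)}$ (and, for uniqueness, $\mu^{-1/(p-1)}$) is dominated by $\mu^{-1/(r-1)}$ and hence integrable, and that $p/s_0>r$ so that $\mu\in A_{p/s_0}$. Both reductions come down to the single hypothesis $p>rs_0$, so there is no real analytic obstacle beyond care with constants; the heavy input--the reverse H\"older property of harmonic measure and the non-tangential convergence theorem--is entirely classical and can be quoted from Dahlberg \cite{BD:1977}.
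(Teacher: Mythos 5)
Your existence argument essentially reproduces the paper's proof. Both start from Dahlberg's reverse H\"older inequality for the density of harmonic measure, pass to the Hunt--Wheeden pointwise bound $u^*\le CM_\omega f$, convert $M_\omega$ to the Hardy--Littlewood maximal operator via reverse H\"older, and close with Muckenhoupt's theorem. The only bookkeeping difference is where the exponent is raised: the paper fixes $s_0$ as the dual of the supremal reverse H\"older index $t_0$, sets $s=p/r>s_0$, writes $u^*\le C_s M(|f|^s)^{1/s}$, and uses $A_r$-boundedness of $M$ on $L^r(\mu\,d\sigma)$; you fix a single $q<t_0$, set $s_0=q'$, apply the pointwise bound directly at $s_0$, and use $A_{p/s_0}$-boundedness on $L^{p/s_0}(\mu\,d\sigma)$ (which holds since $p/s_0>r$). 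Both are valid, and since the theorem only asserts the existence of \emph{some} $s_0<2$, losing the endpoint $t_0'$ is harmless.

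The uniqueness step, however, has a gap. You embed $L^p(\mu\,d\sigma)\subset L^1(d\sigma)$ and then invoke "known uniqueness" for harmonic $u$ with $u^*\in L^1(\partial\Omega)$ and vanishing non-tangential limits. But the standard uniqueness argument---Green's representation on Verchota's approximating domains---requires pairing $u^*$ against the interior Poisson kernel $k^{X_0}$, and since $k^{X_0}$ lies only in $L^t(d\sigma)$ for $t<t_0$, dominated convergence in that argument needs $u^*\in L^s(d\sigma)$ for some $s>t_0'=s_0$, not merely $s=1$. There is no off-the-shelf $L^1$-uniqueness theorem for the Dirichlet problem in a general Lipschitz domain, so as written this step is unsupported. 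The repair is exactly what the paper does: the same H\"older computation you already carried out gives the stronger embedding $L^p(\mu\,d\sigma)\subset L^{p/r}(d\sigma)$ (apply H\"older with exponents $p/s$ and $(p/s)'$ where $s=p/r$, and note $(s/p)(p/s)'=1/(r-1)$), and since $p/r>s_0$ by hypothesis, the classical $L^{p/r}$-uniqueness applies. Your proof already establishes integrability of $\mu^{-1/(r-1)}$ on $\partial\Omega$, so this fix costs you nothing.
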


\begin{proof}
Dahlberg  \cite{BD:1977}  has shown that
there exists an exponent $ t_0 > 2$ such that the
harmonic measure lies in the reverse H\"older class $ \rh _t (
d\sigma)$ for $ t< t_0$, meaning that for each $ t < t_0$, there is a
constant $C_t$ such that
\begin{equation} \label{rhomega}
\left(\average_\Delta \omega^t \,d\sigma \right)^{1/t} \leq
C_t \average_{\Delta} \omega \, d\sigma,
\end{equation}
for any surface ball $\Delta$ centered on $\partial\Omega$.
Here, $ \omega$ denotes the density with respect to surface measure  of harmonic measure at some
convenient point in $ \Omega$.
  The exponent $ s_0$ will be the dual exponent to $ t _0$, \textit{i.e.}, $1/s_0 + 1/t_0=1$.

Let $f\in L^{2}(d\sigma) \cap L^p(\mu\, d\sigma)$, and let $u$ be the solution
of the $L^2 ( d\sigma)$-Dirichlet problem. 
From Hunt and Wheeden \cite{MR0274787} (see also Jerison and Kenig
\cite{JK:1982a}),
we know that
\begin{equation*}
u^* (x) \leq C M_{\omega}f(x),
\end{equation*}
where $M_{\omega} f$ is the Hardy-Littlewood maximal function
with respect to harmonic measure given by
\begin{equation*}
M_{\omega} f(x) = \sup_{r>0} \frac{1}{\omega(\Delta_r (x))} \int_{\Delta_r (x)}
|f| \, \omega \,  d\sigma.
\end{equation*}
Since $\omega\in \rh_t (d\sigma)$ for $t<t_0$, we have
\begin{equation*}
M_{\omega} f(x) \leq C_s M(|f|^s)^{1/s} (x), \quad s_0<s<\infty,\  1/s_0 + 1/t_0=1.
\end{equation*}

Next, set $s=p/r$ and note that our assumption $p>rs_0$ implies that
$s>s_0$. Since $\mu \in A_r (d\sigma)$, it follows that with this
choice of $s$ we have the estimate
$$
\| M(|f|^s)^{1/s}\|_{L^p (\mu \,   d\sigma)}
\leq C \|f\|_{L^p(\mu  \,   d\sigma)},
$$
which implies that
\begin{equation}\label{star}
\|u^*\|_{L^{p}(\mu \,  d\sigma)} \leq C\|f\|_{L^p(\mu \,  d\sigma)}.
\end{equation}
By a standard limiting argument, we may therefore construct solutions
 $u\in L^{p}(\mu \,  d\sigma)$  to the
Dirichlet problem (\ref{DIR1}) which satisfy
the estimate (\ref{star}).

Finally, to establish uniqueness of solutions of (\ref{DIR1}), 
observe that $L^p (\mu\, d\sigma) \subset
L^s (d\sigma)$ when $s=p/r$. Since $ s> s_0$, we  may use the
uniqueness result for the 
$L^s (d\sigma)$-Dirichlet problem to conclude that if
$u$ is harmonic in $ \Omega$,
$u^*\in L^{p}(\mu \,  d\sigma)$, and $u$ has non-tangential limits of 0 a.e. on
$\partial\Omega$, then $u=0$.
\end{proof}

\comment{
\begin{proposition}\label{prop1}
Let $\omega$ be the density of harmonic measure with
respect to surface measure $d\sigma$. There is an exponent $t_0>2$
such that $\omega$ lies in the reverse H\"older class
$\rh_t (d\sigma)$ for $t<t_0$. In other words,
for all $t<t_0$ there exists a constant $C=C_t$ such that
\begin{equation*}
\left(\average_\Delta \omega^t \,d\sigma \right)^{1/t} \leq
C\average_{\Delta} \omega \, d\sigma
\end{equation*}
for all surface balls $\Delta\subset \partial\Omega$.
\end{proposition}
}

The next theorem establishes solvability of the regularity problem
when the boundary data lies in a weighted Sobolev space. 
Given a function $f$ on the boundary, the $ L^ { p'} ( \mu ^ {
  -p'/p}\, d\sigma)$-regularity problem is the problem of finding a 
function $u$ which satisfies
\begin{equation*}
\left\{ \begin{array}{ll}
\Delta u = 0, \quad & \mbox{in }\Omega\\
u = f, \quad & \mbox{on }\partial\Omega\\
(\nabla u)^* \in L^p(\mu^{-p'/p}\, d\sigma).
\end{array}\right.
\end{equation*}

\begin{theorem}\label{thmb}
Let $s_0$ be as in Theorem  \ref{Dahlberg} and
 let $ \mu \in A_ r( d\sigma)$, $r>1$.
If  $\infty>  p > r s_0$  and $ f$ lies in  $  W^{1,p'}( \mu
^{-p'/p}d\sigma)$,  then there exists a unique solution of  the
$L^{p'}( \mu ^{-p'/p} \,d\sigma)$-regularity 
problem with data $f$ which satisfies
$$
\int _ { \partial \Omega} ( \nontan{(\nabla u)}) ^ {p'} \, \mu ^ {
  -p'/p}\, d\sigma
\leq C \int _ { \partial \Omega}
( |\tangrad u |^ {p'} + |u|^{p'}  )  \,
\mu ^ {  -p'/p} \, d\sigma .
$$
\end{theorem}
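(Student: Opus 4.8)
The plan is to establish the a priori estimate of Lemma~\ref{lma} --- the normal derivative of a harmonic function controlled by its boundary values in the weighted norm --- and then to obtain existence by an approximation argument and uniqueness by reduction to the Dirichlet problem. I record first the bookkeeping on exponents that makes everything fit: the dual space of $L^{p'}(\mu^{-p'/p}\,d\sigma)$ is $L^{p}(\mu\,d\sigma)$, and the pair $(\mu,p)$ satisfies exactly the hypotheses of Theorem~\ref{Dahlberg}; since $s_{0}>1$ we have $p>rs_{0}>r$, so $\mu\in A_{p}(d\sigma)$ and hence $\mu^{-p'/p}\in A_{p'}(d\sigma)$, which permits the use of weighted Calder\'on--Zygmund theory on $L^{p'}(\mu^{-p'/p}\,d\sigma)$; and $\int_{\partial\Omega}\mu\,d\sigma<\infty$, which gives the embedding $L^{p'}(\mu^{-p'/p}\,d\sigma)\subset L^{1}(\partial\Omega)$ (with constant $(\int_{\partial\Omega}\mu\,d\sigma)^{1/p}$).

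For the key estimate of Lemma~\ref{lma}, let $u$ be harmonic in $\Omega$ with $(\nabla u)^{*}\in L^{2}(\partial\Omega)$; the target is
$$
\Big\|\frac{\partial u}{\partial\nu}\Big\|_{L^{p'}(\mu^{-p'/p}\,d\sigma)}\le C\big(\|\tangrad u\|_{L^{p'}(\mu^{-p'/p}\,d\sigma)}+\|u\|_{L^{p'}(\mu^{-p'/p}\,d\sigma)}\big).
$$
By duality this is equivalent to bounding $\big|\int_{\partial\Omega}(\partial u/\partial\nu)\,g\,d\sigma\big|$ for $g\in L^{p}(\mu\,d\sigma)$ with $\int_{\partial\Omega}g\,d\sigma=0$; the mean-zero reduction is legitimate because $\int_{\partial\Omega}\partial u/\partial\nu\,d\sigma=0$. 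Following Verchota's method, I would use the $L^{2}$-invertibility of the single layer potential to write $u=\mathcal{S}f$ in $\Omega$, express $\partial u/\partial\nu$ and $\tangrad u$ through the generalized Riesz transforms attached to $\partial\Omega$, and evaluate the pairing with $g$ using Green's second identity and the solution of the $L^{p}(\mu\,d\sigma)$-Dirichlet problem with data $g$ provided by Theorem~\ref{Dahlberg}, the remaining lower-order terms being absorbed with the aid of the H\"older continuity of the Green function from Lemma~\ref{Green} --- the same scheme that is used for Lemma~\ref{Verchota}. Making this dual pairing rigorous is the main obstacle: the Dirichlet solution has no a priori normal derivative, so the identity must be taken in a family of approximating subdomains and the cancellation extracted with care.

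Granting Lemma~\ref{lma}, existence follows by a standard limiting argument. Lipschitz functions are dense in $W^{1,p'}(\mu^{-p'/p}\,d\sigma)$ because $\mu^{-p'/p}$ is an $A_{\infty}$ weight and $d\sigma$ is finite, so given $f$ in that space pick smooth $f_{k}\to f$ in the norm and solve the unweighted $L^{2}$-regularity problem for each $f_{k}$ (\cite{GV:1984}), obtaining harmonic $u_{k}$ with $u_{k}|_{\partial\Omega}=f_{k}$ and $(\nabla u_{k})^{*}\in L^{2}(\partial\Omega)$. Lemma~\ref{lma} bounds $\|\partial u_{k}/\partial\nu\|_{L^{p'}(\mu^{-p'/p}\,d\sigma)}$ by $C\|f_{k}\|_{W^{1,p'}(\mu^{-p'/p}\,d\sigma)}$, and then the representation formula for $\nabla u_{k}$ in terms of its boundary values --- obtained from Green's formula with the fundamental solution $\funsol$, as in the proof of Theorem~\ref{Hardythm} --- together with the theorem of Coifman, McIntosh and Meyer \cite{CMM:1982} and weighted Calder\'on--Zygmund theory on $L^{p'}(\mu^{-p'/p}\,d\sigma)$ (legitimate since $\mu^{-p'/p}\in A_{p'}$), upgrades this to the uniform estimate $\|(\nabla u_{k})^{*}\|_{L^{p'}(\mu^{-p'/p}\,d\sigma)}\le C\|f_{k}\|_{W^{1,p'}(\mu^{-p'/p}\,d\sigma)}$. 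Passing to a subsequence, $u_{k}\to u$ locally uniformly in $\Omega$, $u$ is harmonic with $u|_{\partial\Omega}=f$, and $u$ inherits the stated estimate.

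For uniqueness, suppose $u$ solves the regularity problem with $f=0$. Then $u$ has nontangential limit $0$ almost everywhere on $\partial\Omega$, while $(\nabla u)^{*}\in L^{p'}(\mu^{-p'/p}\,d\sigma)\subset L^{1}(\partial\Omega)$ by the embedding noted above, so the uniqueness of the regularity problem with $L^{1}$ (Hardy-space) data, due to Dahlberg and Kenig \cite{DK:1987}, forces $u\equiv 0$. This completes the proof.
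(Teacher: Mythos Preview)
Your overall architecture matches the paper's exactly: establish Lemma~\ref{lma} by duality against the $L^{p}(\mu\,d\sigma)$-Dirichlet problem, upgrade to $(\nabla u)^{*}$ via the representation formula and weighted Calder\'on--Zygmund theory (this is the paper's Lemma~\ref{lemC}), then approximate for existence and embed into $L^{1}(d\sigma)$ for uniqueness via Dahlberg--Kenig. Your sketch of Lemma~\ref{lma} is muddled in its specifics, however: the paper invokes neither single layer potentials nor the Green function of Lemma~\ref{Green}, and the scheme of Lemma~\ref{Verchota} is not relevant here. The paper takes $g\in W^{1,2}(d\sigma)$, which is dense in $L^{p}(\mu\,d\sigma)$; this dissolves your worry about $\partial v/\partial\nu$, since the Dirichlet solution $v$ with such data has $(\nabla v)^{*}\in L^{2}(d\sigma)$ and Green's identity $\int g\,\partial u/\partial\nu=\int u\,\partial v/\partial\nu$ holds directly on $\partial\Omega$, with no passage to approximating subdomains. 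It then localizes $u$ to a coordinate cylinder and introduces \emph{local Riesz transforms} $v_{i}(x)=-\int_{x_{n}}^{\infty}\partial_{x_{i}}(\eta v)(x',t)\,dt$; the algebraic identities $\partial_{x_{j}}v_{i}=\partial_{x_{i}}v_{j}$ and $\sum_{i}\partial_{x_{i}}v_{i}=F$ (with $F$ bounded by $\sup_{K}|v|$ on a compact $K\subset\Omega$) let one rewrite $\int u\,\partial v/\partial\nu$ as $\int uF-\sum_{i}\int v_{i}\,\partial_{\tau_{i}}u$. The $v_{i}$ are controlled in $L^{p}(\mu\,d\sigma)$ by $v^{*}$ through a truncated square-function comparison valid for $A_{\infty}$ weights (Lemma~\ref{RieszEstimate}, resting on Dahlberg's weighted area-integral theorem), and Theorem~\ref{Dahlberg} bounds $\|v^{*}\|_{L^{p}(\mu\,d\sigma)}$ by $\|g\|_{L^{p}(\mu\,d\sigma)}$, closing the estimate.
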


In the following statement, let $\nabla_t$ denote the tangential
gradient at the boundary (see Section 2 for the definition).

\begin{lemma}\label{lma}
Let $\mu \in A_r(d\sigma)$ with $r>1$ and suppose that
$\infty > p>rs_0$, where $s_0$ is as in Theorem  \ref{Dahlberg}.
If $u$ is a harmonic function with
$(\nabla u)^* \in L^2(d\sigma )$, then
\begin{equation*}
\| \frac{\partial u}{\partial \nu} \|_{L^{p'}(\mu^{-p'/p} \,  d\sigma)} \leq
C \left( \|\nabla_t u\|_{L^{p'}(\mu^{-p'/p}\,  d\sigma)} + \|u\|_{L^{p'}
(\mu^{-p'/p} \,  d\sigma)}\right).
\end{equation*}
\end{lemma}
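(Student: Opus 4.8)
The plan is to prove Lemma \ref{lma} by duality against the weighted Dirichlet problem, following the scheme of Verchota \cite{GV:1984} and Shen \cite{ZS:2005}. First I would identify the dual space: with $w=\mu^{-p'/p}$, Hölder's inequality with the weight shows that, under the pairing $\langle f,g\rangle=\int_{\partial\Omega}fg\,d\sigma$, the dual of $L^{p'}(w\,d\sigma)$ is $L^p(\mu\,d\sigma)$, since $w^{-p/p'}=\mu$. Because $\mu\in A_r(d\sigma)$ forces $\mu$ and $w$ to be locally integrable, Lipschitz functions on $\partial\Omega$ are dense in $L^p(\mu\,d\sigma)$, so it suffices to prove
$$\left|\int_{\partial\Omega}\frac{\partial u}{\partial\nu}\,g\,d\sigma\right|\le C\left(\|\nabla_t u\|_{L^{p'}(w\,d\sigma)}+\|u\|_{L^{p'}(w\,d\sigma)}\right)\|g\|_{L^p(\mu\,d\sigma)}$$
for every Lipschitz function $g$ on $\partial\Omega$.

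Given such a $g$, I would use the hypotheses $\mu\in A_r(d\sigma)$ and $p>rs_0$ to apply Theorem \ref{Dahlberg}: the $L^p(\mu\,d\sigma)$-Dirichlet problem with data $g$ has a unique solution $v$ with $\|v^*\|_{L^p(\mu\,d\sigma)}\le C\|g\|_{L^p(\mu\,d\sigma)}$. Since $g$ is Lipschitz, $g\in W^{1,2}(\partial\Omega)$, so by the (unweighted) regularity problem of Verchota \cite{GV:1984} the same function $v$ also satisfies $(\nabla v)^*\in L^2(\partial\Omega)$. Together with the standing hypothesis $(\nabla u)^*\in L^2(\partial\Omega)$, this legitimizes the integration by parts below — performed either on $\Omega$ directly or, if convenient, on Verchota's smooth approximating domains $\{\Omega_k\}$ (\cite{GV:1982}), with the Hölder continuity of the Green function from Lemma \ref{Green} used to pass to the limit, exactly as in the uniqueness argument.

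The core step is to rewrite $\int_{\partial\Omega}\frac{\partial u}{\partial\nu}\,g\,d\sigma=\int_{\partial\Omega}\frac{\partial u}{\partial\nu}\,v\,d\sigma$ so that every derivative falling on $u$ is tangential. Starting from Green's representation formula for $u$ and using that the antisymmetric first-order combinations $\nu_i\partial_j-\nu_j\partial_i$ of a harmonic function are tangential operators — so that $|\nu_i\partial_j u-\nu_j\partial_i u|\le 2|\nabla_t u|$ on $\partial\Omega$, precisely as exploited in the representation (\ref{represent}) — one integrates by parts along $\partial\Omega$ to move the extra normal derivative off of $v$. This produces an identity of the form $\int_{\partial\Omega}\frac{\partial u}{\partial\nu}\,g\,d\sigma=\int_{\partial\Omega}(\nabla_t u)\cdot\mathbf{T}v\,d\sigma+\int_{\partial\Omega}u\,\mathbf{T}'v\,d\sigma$, where $\mathbf{T}$ and $\mathbf{T}'$ are built from generalized Riesz transforms together with the maximal operator controlling $v^*$; their $L^p(\mu\,d\sigma)$-boundedness is, via the non-tangential maximal function estimates, again a consequence of the Dirichlet estimate of Theorem \ref{Dahlberg} that produced $v$, so that $\|\mathbf{T}v\|_{L^p(\mu\,d\sigma)}+\|\mathbf{T}'v\|_{L^p(\mu\,d\sigma)}\le C\|v^*\|_{L^p(\mu\,d\sigma)}\le C\|g\|_{L^p(\mu\,d\sigma)}$. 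A final application of Hölder's inequality with the conjugate pair $(p',p)$ and the weight $\mu$ then yields the displayed bound.

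The main obstacle is exactly this transfer-of-derivatives step. The integration by parts must be organized so that the Dirichlet solution $v$ enters \emph{only} through $v^*$ — never through $\partial v/\partial\nu$, which is one derivative beyond what Theorem \ref{Dahlberg} controls — and so that the extra boundary derivative is absorbed into $\nabla_t u$ (plus the zero-order contribution $u$). Making all of this rigorous with only $(\nabla u)^*,(\nabla v)^*\in L^2(\partial\Omega)$ available is where the smooth approximating domains and the Green function estimates of Lemma \ref{Green} are indispensable.
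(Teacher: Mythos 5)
Your overall scheme — duality against $L^p(\mu\,d\sigma)$, solve the weighted Dirichlet problem for a test function $g$ via Theorem \ref{Dahlberg}, and then control $v$ only through $v^*$ — is exactly the paper's. But the description of the core transfer-of-derivatives step is muddled in a way that amounts to a genuine gap. You say you would start ``from Green's representation formula for $u$'' and cite the representation (\ref{represent}). That formula expresses $\nabla u$ at interior points in terms of \emph{both} $\nabla_t u$ and $\partial u/\partial\nu$; since $\partial u/\partial\nu$ is precisely the quantity to be estimated, beginning there leaves you going in circles. What you never write down is the actual pivot of the argument, namely Green's \emph{second} identity
\[
\int_{\partial\Omega} g\,\frac{\partial u}{\partial\nu}\,d\sigma \;=\; \int_{\partial\Omega} u\,\frac{\partial v}{\partial\nu}\,d\sigma,
\]
valid because $u,v$ are harmonic with $(\nabla u)^*,(\nabla v)^*\in L^2(d\sigma)$. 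This identity is what moves the normal derivative \emph{off} $u$ and \emph{onto} $v$; your phrase ``move the extra normal derivative off of $v$'' is stated backwards, since before this step there is no derivative on $v$ at all.

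Once the normal derivative sits on $v$, the mechanism that converts it into tangential derivatives of $u$ is the local Riesz transforms $v_i$ of $v$ (not of $u$), together with the identities (\ref{Mixed}) and (\ref{Divergence}). Writing $\partial v/\partial\nu$ in the graph coordinates of a coordinate cylinder, (\ref{Mixed}) turns $\partial v_n/\partial x_i$ into $\partial v_i/\partial x_n$, (\ref{Divergence}) replaces $\partial v_n/\partial x_n$ by $F-\sum_{i<n}\partial v_i/\partial x_i$, and the resulting tangential divergence of the $v_i$ is integrated by parts along $\partial\Omega$ to land on $\nabla_t u$, yielding $\int(uF - \sum_i v_i\,\partial u/\partial\tau_i)\,d\sigma$. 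Only then does the estimate for $v_i$ by $v^*$ — the content of Lemma \ref{RieszEstimate} — close the argument, after which Theorem \ref{Dahlberg} bounds $\|v^*\|_{L^p(\mu\,d\sigma)}$ by $\|g\|_{L^p(\mu\,d\sigma)}$. Two smaller misdirections: the antisymmetric combinations $\nu_i\partial_j u-\nu_j\partial_i u$ from (\ref{represent}) do not actually appear in this proof, and neither Verchota's smooth approximating domains nor the Hölder estimates for the mixed-problem Green function (Lemma \ref{Green}) are needed here — those belong to the uniqueness argument for Theorem \ref{unique}, not to Lemma \ref{lma}, where the hypothesis $(\nabla u)^*,(\nabla v)^*\in L^2(d\sigma)$ already legitimizes the integration by parts. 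Finally, you omit the preliminary localization in the paper (reducing, via the $L^2$-regularity problem and a partition of unity, to data $f$ supported in a single surface ball so that the coordinate-cylinder Riesz transforms can be deployed), but that is a routine reduction once the main identity is in place.
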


To prove this lemma, we begin by defining {\em local Riesz
  transforms}.
Fix a coordinate cylinder $Z_{r}(x)$, $ r < r_0$,  for $\partial\Omega$ such that
$Z_{2r}(x)$ is also a coordinate cylinder. Hereafter in this section
we will use $Z_r$ to denote $Z_r(x)$. Let $\eta$ be a smooth cutoff
function such that $\eta = 1$ on $Z_{r}$ and $\eta = 0 $ outside $Z_{3r/2}$.
Let $v$ be a harmonic function. Using coordinates $(x', x_n ) \in
\reals ^ { n-1} \times \reals$, for  $ i = 1,\dots, n$,  the local Riesz transforms are given by
\begin{equation*}
v_i (x) =  -\displaystyle \int_{x_n}^{\infty} \frac{\partial}{\partial
  x_i} (\eta v) (x',t) dt,
\end{equation*}
for $x\in Z_{2r}\cap \Omega$, and $v_i (x) = 0$ in
$\Omega\setminus Z_{2r}$, $i = 1, \ldots, n$.
Straightforward calculations give that
\begin{eqnarray}
\label{Mixed}
\displaystyle \frac{\partial  v_i}{\partial x_j}  & = &
\frac{\partial v_j }{\partial x_i}, \\
\displaystyle \sum_{i=1}^{n}
\frac{\partial v_i}{\partial x_i}(x) & = & - \int _ { x_n} ^ \infty
\nabla \eta (x',t) \cdot \nabla v(x',t) + v(x',t) \Delta \eta ( x',t)
\, dt  \label{Divergence} \\
\Delta v_i(x) & = & - \int_ { x_n } ^ \infty
\frac \partial { \partial x_ i } ( \nabla \eta (x',t) \cdot \nabla
v(x',t) + v(x',t) \Delta \eta ( x',t) )
\, dt.
\nonumber
\end{eqnarray}

\comment{
\begin{lemma}
\label{RieszProps}
Let $w_i$ be as in (\ref{wid}). Then we have
\begin{eqnarray*}
\displaystyle \frac{\partial}{\partial x_j} w_i & = &
\frac{\partial}{\partial x_i} w_j,\\[4pt]
\displaystyle \sum_{i=1}^{n}
\frac{\partial w_i}{\partial x_i} & = & F,
\end{eqnarray*}
where $F$ is bounded in $Z_r$ and
\begin{equation*}
\sup\{ |F(y)|: y\in Z_r\} \leq \sup_K |w|,
\end{equation*}
and, $K$ is the compact subset of $ \Omega$  $K= \{(y',y_n):
|y'-x'|\leq r, \, (1+M)2r < y_n - x_n \leq (1+M)4r\}$.
\end{lemma}
}

\begin{lemma}
\label{RieszEstimate}
Fix $x\in \partial\Omega$ and $0<r<r_0$.
Let $v_1, \dots, v_n$ be the local Riesz transforms of a harmonic function $v$
in a coordinate cylinder $ Z_ { 32r } $ and suppose that $ \mu $ lies in $
A _ \infty ( d\sigma)$. Then for $p < \infty$,  the following estimate holds
$$
\int _ { \Delta _ r (x) } (\nontan { v_{i,r}} )^ p \, \mu\,  d\sigma
\leq C ( \int _ { \sball x { 8r} } (\nontan {v _ {8r}}) ^ p \, \mu\,  d\sigma
+ \mu ( \sball x { 8r}) \sup _K |v|^ p ).
$$
Above, $K\subset Z_ {32r} $ is a compact subset of $\Omega$ and the
cone opening for the non-tangential maximal function on the right is
larger than the cone opening for the non-tangential maximal function on
the left.
\end{lemma}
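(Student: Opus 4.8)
The plan is to follow the scheme of Verchota \cite{GV:1984} and of Shen \cite{ZS:2005}: reduce to one component $v_i$, split off a part controlled by $\sup_K|v|$, and recognize the remainder as essentially a harmonic ``conjugate'' of $v$, for which the non-tangential maximal estimate is classical. First, $v_n=\eta v$, since $v_n(x)=-\int_{x_n}^\infty\partial_n(\eta v)(x',t)\,dt=(\eta v)(x)$ by the compact support of $\eta v$ in $x_n$; as $0\le\eta\le1$ the bound for $i=n$ is immediate, pointwise and with the same cone and no remainder. So fix $i<n$.

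Next I would localize. Since $\nabla\eta$ is supported on the cylinder where $\eta$ drops from $1$ to $0$, and the hypothesis keeps $v$ harmonic in the large cylinder $Z_{32r}$ while we look only at $\sball x r$, every vertical ray issuing from the Carleson region over $\sball x{8r}$ meets $\{\nabla\eta\ne0\}$ only inside a fixed compact set $K\subset\Omega$ at distance $\gtrsim r$ from $\ball x{8r}$. Feeding this into the formula for $\Delta v_i$ coming from (\ref{Mixed})--(\ref{Divergence}) and using interior derivative estimates for $v$, one gets that $v_i$ is harmonic on $\dball x{8r}$ and that $\sup_K(|v_i|+|\grad v_i|)\le C\sup_{K'}|v|$ on a slightly larger compact $K'\subset\Omega$.

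The heart of the matter is to write, on $\dball x{8r}$, $v_i=g_i+e_i$, with $g_i$ harmonic, $\sup|e_i|+\nontan{e_i}\le C\sup_{K'}|v|$, and with $g_i|_{\partial\Omega}$ on $\sball x{8r}$ a Calder\'on--Zygmund singular integral of $v|_{\partial\Omega}$, up to a $\sup_{K'}|v|$ term. For $n=2$ this is transparent: the Cauchy--Riemann equations replace $\partial_i v$ by $\partial_n\widetilde v$ for a harmonic conjugate $\widetilde v$ of $v$, and integrating by parts in $x_n$ in the defining integral gives $v_i=\eta\widetilde v$ modulo integrals of $\nabla\eta$ against $v$ and $\widetilde v$ supported in $K$; take $g_i=\eta\widetilde v$. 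In general dimension the same computation, with the Riesz-system identities (\ref{Mixed}) replacing Cauchy--Riemann and the divergence theorem used to move the vertical integral to the boundary, displays $v_i|_{\partial\Omega}$ as a bounded singular integral of $v|_{\partial\Omega}$ of Coifman--McIntosh--Meyer type \cite{CMM:1982}, again up to a $K$-error; $g_i$ is the harmonic function on $\dball x{8r}$ with that trace. I expect this to be the main obstacle: it is the Verchota machinery, and one must check that the operator produced has a genuine standard kernel on the Lipschitz graph and that the boundary integrations by parts are justified (the harmonicity of $v$, and finiteness of $\nontan v$, enter here).

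Finally, $e_i$ contributes at most $C\mu(\sball x{8r})\sup_{K'}|v|^p$ after integration, so it remains to bound $\int_{\sball x r}(\nontan{g_{i,r}})^p\mu\,d\sigma$ for the harmonic $g_i$. Combining the standard local non-tangential maximal estimate for $g_i$ on $\dball x{8r}$, which reduces $\nontan{g_{i,r}}$ over $\sball x r$ to the trace of $g_i$ over $\sball x{8r}$ plus $\sup_{K'}|v|$, the identification of $g_i|_{\partial\Omega}$, and the Coifman--Fefferman good-$\lambda$ inequality --- which for any Calder\'on--Zygmund operator $T$, any $\mu\in A_\infty(d\sigma)$, and every $0<p<\infty$ gives a distributional comparison of $Tf$ with the Hardy--Littlewood maximal function $Mf$ of $f$ on $\partial\Omega$, hence of $\nontan{g_i}$ with $\nontan v$ --- yields the claimed bound, a covering argument absorbing the various intermediate constants and the passage from $\sball x r$ to $\sball x{8r}$. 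The hypothesis $\mu\in A_\infty$ is used exactly at the good-$\lambda$ step, and must be used in that form: for a general $A_\infty$ weight and small $p$ the operator $M$ need not be bounded on $L^p(\mu\,d\sigma)$, so one cannot dominate the singular integral by $M$ in $L^p(\mu)$, whereas the good-$\lambda$ comparison only needs a weak-type control on the right. The larger cone opening on the right is exactly what lets one replace the truncated non-tangential maximal functions appearing in these intermediate steps by the one in the conclusion.
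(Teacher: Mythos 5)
The paper's proof runs entirely through square functions: it splits $v_i=v_i'+v_i''$ with $v_i''=\funsol*(\Delta v_i\,\chi_{Z_{16r}\cap\Omega})$ absorbed into $\sup_K|v|$, then chains Dahlberg's weighted $A_\infty$ equivalence $\|\nontan{w}\|_{L^p(\mu)}\approx\|S(w)\|_{L^p(\mu)}$ with Stein's pointwise comparison $S(v_i')\lesssim S(v_n')$ for a Riesz system. Your proposal replaces the whole square-function chain with a singular-integral route: identify the boundary trace of the localized $v_i$ as a Calder\'on--Zygmund operator applied to the trace of $v$ and then invoke Coifman--Fefferman. This is a genuinely different strategy, and I want to flag why the route you chose is obstructed in exactly the regime the lemma needs.

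The lemma asserts the bound for every $\mu\in A_\infty(d\sigma)$ and every $p<\infty$. Your final chain is $\nontan{g_{i,r}}\rightsquigarrow g_i|_{\partial\Omega}\rightsquigarrow Tf\rightsquigarrow Mf\rightsquigarrow\nontan{v_n}$, and two of these arrows do not hold at that level of generality. First, passing from $\nontan{g_{i,r}}$ to the boundary trace of $g_i$ is an $L^p(\mu\,d\sigma)$-Dirichlet estimate; for a bare $A_\infty(d\sigma)$ weight and small $p$ this is not available (indeed, Theorem~\ref{Dahlberg} of the paper records solvability of the weighted Dirichlet problem only under the additional constraint $p>rs_0$, $\mu\in A_r$). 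Second, Coifman--Fefferman gives $\int|Tf|^p\mu\,d\sigma\lesssim\int(Mf)^p\mu\,d\sigma$, but $M$ is not bounded on $L^p(\mu\,d\sigma)$ for arbitrary $\mu\in A_\infty(d\sigma)$ and $p$ below the $A_\infty$ threshold, and there is no pointwise domination $M(v|_{\partial\Omega})\lesssim\nontan{v}$ for sign-changing $v$; so the ``hence of $\nontan{g_i}$ with $\nontan{v}$'' is the very point that needs an argument, and Coifman--Fefferman does not supply it. Dahlberg's square-function theorem sidesteps both obstacles because its good-$\lambda$ is \emph{between $S$ and the non-tangential maximal function directly}, and it is valid for all of $A_\infty(d\sigma)$ and all $0<p<\infty$; that is precisely why the paper works on the square-function side and brings in Stein's Riesz-system comparison $S(v_i')\lesssim S(v_n')$ pointwise rather than any boundary singular-integral identity.

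A secondary concern: the claim that the boundary trace of the localized Riesz transform is a Coifman--McIntosh--Meyer singular integral of $v|_{\partial\Omega}$ ``up to a $\sup_{K'}|v|$ term'' is plausible in $n=2$ (real part of the Cauchy integral on the graph) but is stated very compactly for $n\geq3$; you would need to exhibit a standard kernel and verify the boundary integrations by parts, and this is a real piece of work (it is essentially the content of an alternative proof of Verchota's boundary estimates). Since the rest of the argument has the two gaps above, I would not undertake that verification; the square-function route is both shorter and has the correct $A_\infty$/$0<p<\infty$ scope built into Dahlberg's theorem.
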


\begin{proof}
The proof uses a truncated square function which we define by
$$
S_r(v)(x) = \left( \int _ { \Gamma_r(x) }| \nabla v(y) |^2
|x-y|^ { 2-n}\, dy\right)^ { 1/2},
$$
where $ \Gamma _r (x)$ is a truncated cone as defined in Section \ref{prelim}.
Let $ v $ be a given harmonic function and then let $ v_ i $ be
one of the local Riesz transforms of $v$ defined in a coordinate cylinder $
Z _ { 32r }  $. Write $ v_ i = v_ i ' + v_ i ''$ where $
v_ i ' $ is harmonic in $\Omega \cap  Z _ { 16r}  $ and $ v_ i ''= \funsol * (
\Delta v_i \chi _ { Z_ { 16r} \cap \Omega})$. We observe that $
\Delta v_i$ is bounded in $ Z _ {16r}$  and
$$
\sup_ {\Omega \cap  Z _ {16r}} |\Delta v_i | \leq \frac C {r ^ 2}  \sup
_K  |v|,
$$
where $K$ is the compact set
$$
K =
 \{(y',y_n): |y'-x'|\leq 32r, \, (1+M)r \leq y_n - x_n \leq
 (1+M)32r\}.
$$
 Thus, we have
\begin{equation} \label{PrimeEstimate}
\sup _{ Z _ {16r} } | v_i '' | + r |\nabla v_i ''| \leq C
\sup
_K  |v|.
\end{equation}
With these preliminaries, we can now give the main estimate
\begin{eqnarray*}
\int _ {\sball x r } ( \nontan{v_{i,r}} )^p \, \mu\,  d\sigma & \leq &
C(\int _ {\sball x r } ( \nontan{{v'_{i,r}}})^p \, \mu \, d\sigma + \mu (
\sball x r )  \sup _K |v|^p      )
\\
 & \leq & C(  \int _ {\sball x {2r}} S_{2r} (v_i  ') ^p \, \mu \,  d\sigma + \mu (
 \sball x r )  \sup _K |v|^p      )
 \\
& \leq & C(\int _ {\sball x {4r}} S_{ 4r} (v_n  ') ^p \, \mu \,  d\sigma + \mu (
\sball x r )  \sup _K |v|^p      )  \\
& \leq &  C( \int _ {\sball x {8r} } ( \nontan{{v_{n,8r}}})^p \, \mu\,
d\sigma + \mu (
\sball x r )  \sup _K |v| ^ p  ) .
\end{eqnarray*}
The first inequality follows from (\ref{PrimeEstimate}), the second
is a local version of a theorem of Dahlberg \cite[Theorem 1]{BD:1980},
the third inequality follows from a pointwise estimate which may be
found in Stein \cite[p.~213--214]{ES:1970}, and finally the fourth inequality  follows
from Dahlberg's result and
(\ref{PrimeEstimate}). Note that in each of the  inequalities above,
the cone opening for the object of the left side must be smaller than
the cone opening for the object on the right side. Our notation is
already elaborate and thus, we choose to suppress this dependence.
Once we recall that $ v_n = \eta v$, then the estimate of the Lemma
follows.
\end{proof}

We now are ready to present a proof of Lemma \ref{lma}.

\begin{proof}[Proof of Lemma \ref{lma}]
Let $u$ be a solution of the $L^2(d\sigma)$-regularity problem with data
$f\in W^{1,2}(d\sigma)$. 
Since we may solve
the $L^2(d\sigma)$-regularity problem \cite{JK:1982a}, we may assume that $f$
is supported in a surface ball $ \sball x r$, $x\in\partial\Omega$, and that
 $ Z_ {32r}  = Z_{32r}(x) $ is  a coordinate cylinder.
Suppose that $ \partial \Omega $ is given as the graph of $ \phi$
in $ Z_{ 32r}$.
We would like to show that
\begin{equation*}
\|\frac{\partial u}{\partial \nu}\|_{L^{p'}(\mu^{-p'/p}\, d\sigma)}
\leq C \|u \|_{W^{1,p'}(\mu^{-p'/p}\, d\sigma)}.
\end{equation*}

Toward this end, choose $g\in W^{1,2}(d\sigma)$ and let $v$
be the solution of the Dirichlet problem with data $g$. We
observe that
\begin{equation*}
\|\frac{\partial u}{\partial \nu}\|_{L^{p'}(\mu^{-p'/p}d\sigma)}
= \sup_{\|g\|_{L^p(\mu\,  d\sigma)} \leq 1}
\int_{\partial\Omega} g\frac{\partial u}{\partial \nu}\, d\sigma.
\end{equation*}
Since $v=g$ on $\partial\Omega$, $u$ and $v$ are harmonic, and
$(\nabla u)^* + (\nabla v)^* \in L^2(d \sigma)$, we may use
Green's second identity to conclude that
\begin{equation*}
\int_{\partial\Omega} g \frac{\partial u}{\partial \nu}\, d\sigma
= \int_{\partial\Omega} u \frac{\partial v}{\partial \nu}\, d\sigma.
\end{equation*}
As $u=0$ on $\partial\Omega \setminus Z_r$, we may use that
$v_n = \eta v=v$ on $Z_r$, (\ref{Mixed}),  (\ref{Divergence}),  some
algebra, and integration by parts to obtain
\begin{eqnarray*}
\int_{\partial\Omega} u \frac{\partial v}{\partial \nu}\, d\sigma
& = & \int_{\partial\Omega} u \frac{\partial v_n}{\partial \nu} \, d\sigma \\
& = & \int_{\reals^{n-1}}u(x', \phi(x')) \left(-\frac{\partial
  v_n}{\partial x_n}(x',\phi(x')) \right. \\
& & \left. \qquad
+\sum_{i=1}^{n-1} \phi_{x_i} \frac{\partial v_n}{\partial x_i}
(x',\phi(x')) \right)\, dx'\\
& = & \int _ { \reals ^ { n-1} } u(x', \phi(x')) (F(x', \phi (x'))+ \sum _ { i=1} ^ { n-1}
\frac \partial { \partial x_ i } v_ i ( x', \phi (x'))) \, dx' \\
& = & \int_{\partial\Omega} \left(uF - \sum_{i=1}^{n-1} v_i \frac{\partial u}{\partial \tau_i}
\right)\, d\sigma,
\end{eqnarray*}
where $\tau_i  = ( 1+ |\nabla \phi |^2) ^ { -1/2}  ( e_i + \phi _{ x_i } e_n) $ is a tangential vector, $v_i$ are the local Riesz
transforms, and $F = \sum _ { i =1 } ^ n \frac { \partial v_i } {
  \partial x_i} $ is the right-hand side of  (\ref{Divergence}).   Thus from Lemma
\ref{RieszEstimate} and Theorem \ref{Dahlberg}, it follows that
\begin{eqnarray*}
|\int_{\partial\Omega} u \frac{\partial v}{\partial \nu}\, d\sigma |
& \leq & \|u\|_{L^{1}(d\sigma)} \|F\|_{L^{\infty}(d\sigma)} +
\|v^*\|_{L^p (\mu \, d\sigma)} \|\nabla_t u\|_{L^{p'}(\mu^{-p'/p}\, d\sigma)} \\
& \leq &  C \|u \|_{W^{1,p'}(\mu^{-p'/p}\, d\sigma)} \| g\|_{L^p (\mu \, d\sigma)}.
\end{eqnarray*}
We give the details for the  estimate for the term $\|u\|_{ L^1 (
  d\sigma)} \|F\|_ {L^ \infty (
  d\sigma)}$. By a Poincar\'e inequality and H\"older's
inequality, we obtain
$$ \| u \| _ { L^ 1 ( d\sigma)} \leq C r \| \tangrad
u \|_ { L^ 1 ( d\sigma)} \leq C r \| \tangrad u \| _ { L^ {p'} ( \mu ^ {
    -p'/p}  d\sigma) }\mu ( \sball x r)^ { 1/p}.
$$
Recall that $F$  is the right-hand side of 
(\ref{Divergence}). Then it follows that
$$
\| F\|_ { L^ \infty ( d\sigma)} \leq C  r ^ { -1} 
\sup _K |v| \leq C  r ^ { -1}  \mu ( \sball x r ) ^ { -1/p }  \| \nontan v \| _ { L^
  p( \mu \,  d\sigma ) }.
$$
With these inequalities, the stated estimate follows.
\end{proof}

Before proving Theorem \ref{thmb} we require one more standard lemma.

\begin{lemma}\label{lemC}
If $\mu \in A_r(d\sigma)$, $1<r<\infty$, $\Delta u = 0$, and
$(\nabla u)^* \in L^2(d\sigma)$, then
\begin{equation*}
\int_{\partial\Omega} \left((\nabla u)^*\right)^r \, \mu \, d\sigma \leq
C\int_{\partial\Omega} |\nabla u|^r \, \mu \, d\sigma.
\end{equation*}
\end{lemma}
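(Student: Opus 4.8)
The plan is to exploit the fact that $\nabla u$ is the gradient of a harmonic function, so that it forms a Stein--Weiss conjugate harmonic system and therefore $|\nabla u|^q$ is subharmonic in $\Omega$ for $q=(n-2)/(n-1)$ (and for every $q>0$ when $n=2$). The whole point of passing to the power $q<1$ is that it lets us run the Dirichlet/maximal-function estimate at a \emph{supercritical} exponent, which is exactly what is needed to cover all $r>1$. First I would fix such a $q$, put $w=|\nabla u|$, and observe that, since $\Omega$ is bounded and $(\nabla u)^*=w^* \in L^2(d\sigma)\subset L^1(d\sigma)$, the harmonic vector field $\nabla u$ has non-tangential limits $\sigma$-a.e.\ on $\partial\Omega$ (at every point where the maximal function is finite), hence so do $w$ and $w^q$, with boundary trace $(w^q)_b=(w_b)^q$, and $(w_b)^q\in L^1(d\sigma)$.

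Next, $w^q$ is a nonnegative subharmonic function with $(w^q)^*=(w^*)^q\in L^{2/q}(d\sigma)\subset L^1(d\sigma)$, so by the maximum principle for subharmonic functions in Lipschitz domains (valid under an integrable non-tangential bound) it is dominated in $\Omega$ by the harmonic Poisson extension $\tilde h$ of its boundary trace $(w_b)^q$. Therefore $w^*\le(\nontan{\tilde h})^{1/q}$ pointwise on $\partial\Omega$.

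Now I would estimate $\nontan{\tilde h}$ using the tools already assembled for Theorem \ref{Dahlberg}. Since $\tilde h$ is the Poisson integral of the nonnegative function $(w_b)^q$, the Hunt--Wheeden estimate gives $\nontan{\tilde h}(x)\le C\,M_\omega\!\big((w_b)^q\big)(x)$ with $M_\omega$ the Hardy--Littlewood maximal operator relative to harmonic measure; and Dahlberg's reverse-H\"older inequality $\omega\in\rh_{t_0}(d\sigma)$, $t_0>2$, yields (exactly as in the proof of Theorem \ref{Dahlberg}) $M_\omega(g)\le C_s\big(M(|g|^s)\big)^{1/s}$ for every $s>s_0$, where $M$ is the Hardy--Littlewood maximal operator relative to surface measure. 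Combining these,
$$
\big(\nontan{(\nabla u)}(x)\big)^r \;=\; \big(w^*(x)\big)^r \;\le\; C\,\big(M(|w_b|^{qs})(x)\big)^{r/(qs)}, \qquad x\in\partial\Omega.
$$
Integrating against $\mu\,d\sigma$, one chooses $s$ close to $s_0$ and uses $q<1$ so that the exponent $r/(qs)$ is strictly larger than $1$ and $\mu\in A_r(d\sigma)$ lies in $A_{r/(qs)}(d\sigma)$; the Fefferman--Stein weighted maximal inequality then gives
$$
\int_{\partial\Omega}\big(\nontan{(\nabla u)}\big)^r\,\mu\,d\sigma \;\le\; C\int_{\partial\Omega}\big(M(|\nabla u|^{qs})\big)^{r/(qs)}\,\mu\,d\sigma \;\le\; C\int_{\partial\Omega}|\nabla u|^r\,\mu\,d\sigma,
$$
which is the assertion.

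The genuinely delicate point is the last bookkeeping of exponents: one must verify that the subharmonicity threshold $q=(n-2)/(n-1)$, Dahlberg's reverse-H\"older exponent $s_0<2$ for harmonic measure, and the $A_r$-index of $\mu$ combine so that $r/(qs)>1$ and $\mu\in A_{r/(qs)}(d\sigma)$ for some admissible $s>s_0$. This is immediate once $qs_0<1$ (in particular for $n\le 3$, since there $q\le 1/2$ and $s_0<2$, and for $n=2$ where $q$ is free), and it is precisely the reason the argument must pass through the sub-unit power $q$ rather than working with $|\nabla u|$ directly, for which the corresponding estimate would fail for small $r$ (it would amount to $L^r(\mu)$-solvability of the Dirichlet problem outside the range of Theorem \ref{Dahlberg}).
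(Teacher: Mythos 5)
Your proof takes a genuinely different route from the paper's.  The paper establishes the representation
$\frac{\partial u}{\partial x_j}(x) = -\int_{\partial\Omega}
\frac{\partial \funsol}{\partial y_i}(x-\cdot)\bigl(\nu_i\frac{\partial u}{\partial y_j}-\nu_j\frac{\partial u}{\partial y_i}\bigr)+\frac{\partial \funsol}{\partial y_j}(x-\cdot)\frac{\partial u}{\partial \nu}\,d\sigma$,
so that $(\nabla u)^*$ is pointwise controlled by Cauchy-type singular integrals of the boundary values of $\nabla u$; it then invokes Coifman--McIntosh--Meyer for boundedness of these operators on Lipschitz graphs and Coifman--Fefferman for the $A_p(d\sigma)$-weighted bounds.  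That machinery applies for every $p>1$, every $A_p$ weight, and every $n\geq 2$, with no exponent chase.  You instead pass through subharmonicity of $|\nabla u|^q$, the Poisson extension, Hunt--Wheeden, and the reverse-H\"older inequality for harmonic measure, which is an attractive parallel to the proof of Theorem~\ref{Dahlberg}.

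However, your route has a genuine gap in dimensions $n\geq 4$.  Your reduction requires choosing $s>s_0$ with $qs\leq 1$, both so that $r/(qs)>1$ for \emph{every} $r>1$ and so that the inclusion $A_r(d\sigma)\subset A_{r/(qs)}(d\sigma)$ holds; the latter needs $r/(qs)\geq r$, and for $r$ near $1$ there is no openness/self-improvement argument that rescues it with uniform constants.  Since the Stein--Weiss threshold forces $q\geq (n-2)/(n-1)$ and Dahlberg only gives $s_0<2$, the needed inequality $(n-2)s_0/(n-1)<1$, i.e.\ $s_0<(n-1)/(n-2)$, is guaranteed only when $n\leq 3$.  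For $n\geq 4$ and Lipschitz constant large, $s_0$ can be arbitrarily close to $2$, which exceeds $(n-1)/(n-2)\leq 3/2$, and your exponent bookkeeping fails.  You acknowledge the constraint $qs_0<1$ but do not resolve it; as written, the argument does not establish Lemma~\ref{lemC} in all dimensions the paper requires ($n\geq 2$).  The paper's layer-potential/Calder\'on--Zygmund route sidesteps this entirely and is the way to go in general dimension.
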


\begin{proof}
Let $\funsol $ be the fundamental solution for the Laplacian. We assume
that $(\nabla u)^* \in L^2(d\sigma)$. Under these conditions, it is
easy
to establish the representation formula
\begin{equation*}
\frac{\partial u}{\partial x_j}(x) = -\int_{\partial\Omega}
\frac{\partial \funsol}{\partial y_i} (x-\cdot) (\nu_i \frac{\partial u}{\partial y_j}
- \nu_j \frac{\partial u}{\partial y_i} ) + \frac{\partial \funsol}{\partial y_i} (x-\cdot)
\frac{\partial u}{\partial \nu} \, d\sigma.
\end{equation*}

The Lemma now follows from standard estimates on singular integral
operators on Lipschitz surfaces  \cite{CMM:1982} and weighted estimates
for Calder\'on-Zygmund operators \cite{CF:1974}.
\end{proof}

We are now prepared to prove Theorem \ref{thmb}.

\begin{proof}[Proof of Theorem \ref{thmb}]
Let $f\in W^{1,p'}(\mu^{-p'/p}\, d\sigma) \cap W^{1,2}(d\sigma)$
and let $u$ be the solution of the $L^2(d\sigma)$-regularity problem
with data $f$. 
From Lemma \ref{lma} and Lemma \ref{lemC}, we conclude that
\begin{equation*}
\int_{\partial\Omega} \left((\nabla u)^*\right)^{p'}
  \mu^{-p'/p}\,  d\sigma
 \leq   C \int_{\partial\Omega} (|\nabla_t u|^{p'} + |u|^{p'})
 \mu^{-p'/p}\,  d\sigma.
\end{equation*}
Now a limiting argument gives existence of solutions of the regularity problem
with boundary data $f\in W^{1,p'}(\mu^{-p'/p}\, d\sigma)$.
Since $L^{p'}(\mu^{-p'/p}\, d\sigma) \subset L^1(d\sigma)$, uniqueness follows
from Dahlberg and Kenig \cite{DK:1987}.
\end{proof}

\begin{theorem}\label{wm}
Let $t_0$ be as in the reverse H\"older inequality (\ref{rhomega}) and
$s_0 = t_0/(t_0-1)$.
Suppose that $\mu \in A_r(d\sigma)$, where $p>rs_0$. Set
$$
\alpha =\frac{p-1}{r-1}\qquad \mbox{and} \qquad
\theta = \frac{1}{\alpha'}=1-\frac{r-1}{p-1}= \frac {p-r}{p-1}.$$
Assume that
$1<p'<\theta(q_0/2)$. Consider the mixed problem with
Dirichlet data $f_D\in W^{1,p'}(\mu^{-p'/p}\, d\sigma)$ and
Neumann data $f_N \in L^{p'}(\mu^{-p'/p}\, d\sigma)$.
Then the following estimate for the
solution $u$ of the mixed problem holds
\begin{eqnarray}
\|(\nabla u)^*\|_{L^{p'}(\mu^{-p'/p}\, d\sigma)} & \leq &
C \left( \|f_N\|_{L^{p'}(\mu^{-p'/p}\, d\sigma)} +
\|f_D\|_{W^{1,p'}(\mu^{-p'/p}\, d\sigma)}\right).
\end{eqnarray}

Furthermore, there is only one solution satisfying $ ( \nabla u)^* \in
L^ { p'}( \mu ^ { -p'/p}\,d\sigma)$. 
\end{theorem}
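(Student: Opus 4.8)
The plan is to mirror the proof of the unweighted $L^p$-result of Section~\ref{lpsection}: first reduce to the case of zero Dirichlet data, then run Shen's real-variable scheme \eqref{Shen1}--\eqref{Shenconc} with the weight $w=\mu^{-p'/p}$ in place of surface measure, and finally pass from a dense class of Neumann data to all of $L^{p'}(\mu^{-p'/p}\,d\sigma)$. The reduction is immediate from the weighted regularity result just established: by Theorem~\ref{thmb}, whose hypotheses are exactly the present ones, the $L^{p'}(\mu^{-p'/p}\,d\sigma)$-regularity problem with data $f_D$ has a solution $u_D$ obeying $\|(\nabla u_D)^*\|_{L^{p'}(\mu^{-p'/p}\,d\sigma)}\le C\|f_D\|_{W^{1,p'}(\mu^{-p'/p}\,d\sigma)}$. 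Since $|\partial u_D/\partial\nu|\le(\nabla u_D)^*$ almost everywhere on $\partial\Omega$, the difference $u-u_D$ solves the mixed problem with zero Dirichlet data and Neumann data $f_N-\partial u_D/\partial\nu\in L^{p'}(\mu^{-p'/p}\,d\sigma)$, so it suffices to treat $f_D=0$, and the $f_D$-term in the final estimate is then recovered by adding $u_D$ back.

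The crux is to verify that $w=\mu^{-p'/p}=\mu^{1-p'}$ satisfies the hypotheses imposed on the weight in Shen's scheme, with precisely the exponent $\theta=(p-r)/(p-1)$. Write $w=\rho^{1/\alpha}$ with $\rho=\mu^{-1/(r-1)}$ and $\alpha=(p-1)/(r-1)>1$ (here $p>rs_0>r$). From $\mu\in A_r(d\sigma)$ we get $\rho\in A_{r'}(d\sigma)$, and a Jensen-inequality computation shows that a $(1/\alpha)$-th power of an $A_{r'}$ weight lies in $A_t(d\sigma)$ with $t=\tfrac1\alpha(r'-1)+1=p'$; hence $w\in A_{p'}(d\sigma)$. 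The $A_{r'}$ condition on $\rho$ also yields the lower bound $\average_Q\rho^{1/\alpha}\,d\sigma\ge c\bigl(\average_Q\rho\,d\sigma\bigr)^{1/\alpha}$ for every surface cube $Q$; combining this with H\"older's inequality (applied to $\rho^{1/\alpha}=\rho^{1/\alpha}\cdot1$ on $E\subset Q$, with exponents $\alpha$ and $\alpha'$) gives
$$\frac{w(E)}{w(Q)}\le C\Bigl(\frac{\rho(E)}{\rho(Q)}\Bigr)^{1/\alpha}\Bigl(\frac{\sigma(E)}{\sigma(Q)}\Bigr)^{1-1/\alpha}\le C\Bigl(\frac{\sigma(E)}{\sigma(Q)}\Bigr)^{\theta},$$
since $\rho(E)\le\rho(Q)$ and $1-1/\alpha=(p-r)/(p-1)=\theta$. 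Thus $w$ satisfies \eqref{Weight4} with $t=p'$ and the stated $\theta$, and the hypothesis on $p'$ places it inside the admissible range of exponents of Shen's theorem, with $q$ taken to be the boundary reverse H\"older exponent furnished by Theorem~\ref{RHBoundary} and Lemma~\ref{newLocal}.

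With the weight understood, the Shen decomposition is carried out just as in Section~\ref{lpsection}. For $f_N$ in the dense subclass $L^{q_1}(N)\cap L^{p'}(\mu^{-p'/p}\,d\sigma)$ with $q_1>1$ a fixed exponent close to $1$, Theorem~\ref{Hardythm} (via $L^{q_1}(N)\subset H^1(N)$) produces a solution $u$ with $(\nabla u)^*\in L^1(\partial\Omega)$; set $F=(\nabla u)^*$. For a surface cube $Q\subset Q_0$ one splits $f_N=g+h$ with $g=\chi_{4Q}(f_N-\bar f_{4Q})$, where $\bar f_{4Q}=0$ if $4Q\cap D\neq\emptyset$ and $\bar f_{4Q}=\average_{4Q}f_N\,d\sigma$ otherwise; both $g$ and $h$ lie in $H^1(N)$ (using only \eqref{Dbig}), so one may take $F_Q=(\nabla v)^*$ and $R_Q=(\nabla w_Q)^*$ for the solutions with Neumann data $g$ and $h$. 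Inequalities \eqref{Shen1}--\eqref{Shen3}, with inner exponent any fixed value in $(1,p')$, then hold verbatim by Theorem~\ref{AtomicTheorem} and Lemma~\ref{newLocal}, exactly as in Ott and Brown~\cite{OB:2009}. Applying \eqref{Shenconc} with the weight $w$ and $s=p'$, and covering $\partial\Omega$ by finitely many cubes $Q_0$ of sidelength comparable to $r_0$, we obtain
$$\|(\nabla u)^*\|_{L^{p'}(\mu^{-p'/p}\,d\sigma)}\le C\bigl(\|f_N\|_{L^{p'}(\mu^{-p'/p}\,d\sigma)}+\|(\nabla u)^*\|_{L^1(\partial\Omega)}\bigr).$$
The $L^1$-term is absorbed using $\|(\nabla u)^*\|_{L^1(\partial\Omega)}\le C\|f_N\|_{H^1(N)}$ from Theorem~\ref{Hardythm} together with the embedding $L^{p'}(\mu^{-p'/p}\,d\sigma)\hookrightarrow L^{q_1}(\partial\Omega)\subset H^1(N)$, which holds because $\mu$ satisfies a reverse H\"older inequality and $\int_{\partial\Omega}\mu\,d\sigma<\infty$. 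A routine density and limiting argument over this subclass then gives existence of a solution with the stated estimate for every $f_N\in L^{p'}(\mu^{-p'/p}\,d\sigma)$.

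Finally, uniqueness is immediate: by H\"older's inequality and $\int_{\partial\Omega}\mu\,d\sigma<\infty$, any solution with $(\nabla u)^*\in L^{p'}(\mu^{-p'/p}\,d\sigma)$ has $(\nabla u)^*\in L^1(\partial\Omega)$, so Theorem~\ref{unique} forces $u\equiv0$ when the data vanish, hence at most one solution in the indicated class. I expect the only genuinely new point to be the weight computation of the second paragraph --- establishing that $w=\mu^{-p'/p}$ lies in $A_{p'}(d\sigma)$ and satisfies \eqref{Weight4} with exactly the exponent $\theta=(p-r)/(p-1)$ dictated by the $A_r$-class of $\mu$ --- since the reduction step is quoted from Theorem~\ref{thmb} and the remainder reproduces the unweighted argument of Section~\ref{lpsection} with only cosmetic changes.
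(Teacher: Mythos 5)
Your proposal is correct and follows essentially the same route as the paper: the core step in both is to show that the weight $w=\mu^{-p'/p}$ lies in $A_{p'}(d\sigma)$ and satisfies the measure-comparison condition~\eqref{Weight4} with exponent $\theta=(p-r)/(p-1)$, and then to feed this into Shen's real-variable machinery~\eqref{Shen1}--\eqref{Shenconc} exactly as in Section~\ref{lpsection}. The paper establishes the comparison by proving directly that $\mu^{-p'/p}$ belongs to the reverse H\"older class $\rh_\alpha(d\sigma)$ with $\alpha=(p-1)/(r-1)$ via the $A_r$ condition for $\mu$ and H\"older; your reformulation writes $w=\rho^{1/\alpha}$ with $\rho=\mu^{-1/(r-1)}\in A_{r'}$ and derives the same $\rh_\alpha$ inequality for $w$ from the $A_{r'}$ condition for $\rho$ -- these are the same computation organized around the dual weight rather than $\mu$ itself. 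You are also more explicit than the paper about two of the peripheral steps: the reduction to zero Dirichlet data via Theorem~\ref{thmb}, and the passage from data in a dense subclass $L^{q_1}\cap L^{p'}(\mu^{-p'/p}\,d\sigma)$ to general data, both of which the paper leaves implicit but certainly intends. The uniqueness argument (embedding $L^{p'}(\mu^{-p'/p}\,d\sigma)\hookrightarrow L^1(d\sigma)$ followed by Theorem~\ref{unique}) is identical.
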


\comment{
The proof of Theorem \ref{wm} relies on the following
extension of the work of Shen \cite{ZS:2007}, which was
outlined in Section \ref{lpsection} of this paper. As before,
let $F$, $R_Q$ and $F_Q$ be defined on $4Q_0$ for all $Q\subset Q_0$
and let assumptions (\ref{Shen1}), (\ref{Shen2}), and (\ref{Shen3})
hold. In this application, we will also assume that $\mu \in A_t(d\sigma)$ and that
\begin{equation}
\left(\frac{\mu(E)}{\mu(Q)} \right)^{\theta} \leq
C\left(\frac{\sigma(E)}{\sigma(Q)}\right), \quad 1<t<\theta q.
\end{equation}
Then
\begin{equation}\label{star2}
\left(\average_{Q_0} |F|^{t}\, \mu\, d\sigma \right)^{1/t}
\leq C \left[ \average_{4Q_0} |F| \, d\sigma +
\left(\average_{4Q_0} f^t \, \mu \,  d\sigma \right)^{1/t}\right].
\end{equation}
}
\comment{
The proof of Theorem \ref{wm} relies on
the following extension of the work of Shen \cite{ZS:2007}.
Suppose $F$ is defined on $4Q_0$ and for all $Q\subset Q_0$,
there exists functions $R_Q$ and $F_Q$ so that
\begin{equation*}
|F|\leq |F_Q| + |R_Q|
\end{equation*}
and
\begin{eqnarray*}
\average_{2Q} |F| \, d\sigma & \leq &
C \left( \average_{4Q} f^p \, d\sigma\right)^{1/p} \\
\left(\average_{2Q} |R_Q|^q \, d\sigma \right)^{1/q} & \leq &
C\left[ \average_{4Q} |F| \, d\sigma + \left(\average_{4Q}
f^p\, d\sigma \right)^{1/p}\right].
\end{eqnarray*}
Assume that $\mu \in A_t(d\sigma)$ and that
\begin{equation}
\left(\frac{\mu(E)}{\mu(Q)} \right)^{\theta} \leq
C\left(\frac{\sigma(E)}{\sigma(Q)}\right), \quad 1<t<\theta q.
\end{equation}
Then
\begin{equation*}\label{star2}
\left(\average_{Q_0} |F|^{t}\, \mu\,  d\sigma \right)^{1/t}
\leq C \left[ \average_{4Q_0} |F| \, d\sigma +
\left(\average_{4Q_0} f^t \, \mu\,  d\sigma \right)^{1/t}\right].
\end{equation*}
}

\begin{proof}
In our application, we are given $\mu \in A_r(d\sigma)$ and
$p>r s_0$. We will apply (\ref{Shenconc})
with $\mu^{-p'/p} \in A_{p'}(d\sigma)$
and we observe that with $\alpha   = \frac{p-1}{r-1}$ we have
\begin{eqnarray*}
\int_{\Delta} (\mu^{-p'/p})^{\alpha}\, d\sigma & = &
\int_{\Delta} \mu^{-1/(r-1)} \, d\sigma \\
& \leq & \left( \int_{\Delta} \mu\, d\sigma \right)^{-1/(r-1)} \sigma(\Delta)^{r'}\\
& \leq & \left( \int_{\Delta} \mu^{-p'/p} \, d\sigma\right)^{(p-1)/(r-1)} \sigma(\Delta)^{r'}
\sigma(\Delta)^{-p'/(r-1)},
\end{eqnarray*}
for any surface ball $\Delta$. Therefore,
\begin{equation}\label{DIR-3}
\average_{\Delta} \mu^{(-p'\alpha)/p} \, d\sigma \leq
\left(\average_{\Delta} \mu^{-p'/p}\, d\sigma \right)^{\alpha},
\end{equation}
where we have used the $A_r$ condition for $ \mu$,
\begin{equation*}
\left(\int_{\Delta} \mu \, d\sigma \right)^{1/(r-1)}
\left(\int_{\Delta} \mu^{-1/(r-1)} \, d\sigma\right)
\leq \sigma(\Delta)^{r'},
\end{equation*}
and H\"older's inequality,
\begin{equation*}
\sigma(\Delta)^{1/(r-1)} \leq \left(\int_{\Delta} \mu\, d\sigma
\right)^{p  /(r-1)}
\left(\int_\Delta \mu^{-p'/p}\, d\sigma \right)^{(p-1)/(r-1)}.
\end{equation*}

From the inequality (\ref{DIR-3}) we have $\mu^{-p'/p} \in \rh _{\alpha}(d\sigma)$, and H\"older's
inequality implies
\begin{equation*}
\left(\frac{\mu(E)}{\mu(\Delta)}\right) \leq
\left(\frac{\sigma(E)}{\sigma(\Delta)}\right)^{\theta},
\quad \theta = 1-\frac{1}{\alpha} = \frac{p-r}{p-1}.
\end{equation*}
Thus, we have the conditions needed to obtain the conclusion
(\ref{Shenconc}).

To summarize, if $1<p'<\theta q_0/2$, with $q_0$ as in Lemma
\ref{RHEstimate}, and if $\mu \in A_r(d\sigma)$,
$p>rs_0$, with $s_0$ as in Theorem \ref{Dahlberg},
then we have the solution of the mixed problem
satisfies
\begin{equation*}
\|(\nabla u)^* \|_{L^{p}(\mu^{-p'/p}\, d\sigma)} \leq
C\left( \|f_N \|_{L^{p'}(\mu^{-p'/p} \,  d\sigma)} +
\|f_D \|_{W^{1,p'}(\mu^{-p'/p}\, d\sigma)}\right).
\end{equation*}

Since $ L^ { p'} ( \mu ^ { -p'/p} ) \subset L^ 1 ( d\sigma)$,
uniqueness for solutions of the mixed problem with $ ( \nabla u )^ *
\in L^ { p'}( \mu ^ { -p'/p}\, d\sigma)$ follows from the 
problem follows from part a) of Theorem 1.1. 

\end{proof}

\note{ Index of notation.

\begin{tabular}{rr}
\sc Symbol &  Meaning \rm \\
$D$ & region where we specify Dirichlet data\\
$N$ & region where we specify Neumann data \\
$\Lambda$ & boundary of $D$ relative to $\partial\Omega$\\
$\delta(y)$ & distance from $y$ to $\Lambda$\\
$dist(y,\partial\Omega)$ & distance from $y$ to $\partial\Omega$\\

\end{tabular}
}
\def\cprime{$'$} \def\cprime{$'$} \def\cprime{$'$} \def\cprime{$'$}


\end{document}